\newcommand{\R}{\mathbb{R}}
\newcommand{\N}{\mathbb{N}}
\newcommand{\M}{\mathcal{M}}
\newcommand{\diag}{\text{diag}}
\newcommand{\norm}[1]{\left\lVert#1\right\rVert}
\DeclareMathOperator{\id}{id}
\DeclareMathOperator{\grad}{grad}
\DeclareMathOperator{\Lip}{Lip}
\DeclareMathOperator{\supp}{supp}
\newtheorem{MainThm}{Theorem}
\newtheorem{definition}{Definition}[section]
\newtheorem{lemma}[definition]{Lemma}
\newtheorem{proposition}[definition]{Proposition}
\newtheorem{theorem}[definition]{Theorem}
\newtheorem{corollary}[definition]{Corollary}
\newtheorem{question}[definition]{Question}
\newtheorem{example}[definition]{Example}
\def\vol{{\rm vol }}
\newtheoremstyle{boldremark}
  {3pt}   
  {3pt}   
  {\normalfont} 
  {}      
  {\bfseries} 
  {.}     
  {.5em}  
  {}      
\theoremstyle{boldremark}
\newtheorem{remark}[definition]{Remark}
\begin{document}
\title[Geodesic stretch]{Maximal stretch and Lipschitz maps on  Riemannian manifolds of negative curvature
}
\author{Xian Dai}
\address{Laboratoire Jean-Alexandre Dieudonné, Université Côte d'Azur, Nice 06200, France}
 \email{Xian.DAI@univ-cotedazur.fr}

\author{Gerhard Knieper}
\address{Faculty of Mathematics,
Ruhr University Bochum, Bochum 44780, Germany}
 \email{gerhard.knieper@rub.de}
\date{}
\maketitle

\begin{abstract}
In his seminal work on Teichm\"uller spaces (\cite{Th98}), Thur\-ston introduced the maximal stretch for a pair of hyperbolic metrics on a closed surface of genus $\mathcal{G}\geq 2$ and showed that the logarithm of this quantity induces an asymmetric metric in the Teichmüller space. 
He also showed that the subset of the surface on which the maximal stretch is attained is a geodesic lamination. In this paper, we define the maximal stretch analogously for closed manifolds equipped with Riemannian metrics of variable negative curvature and investigate the structure of the related Mather set on the unit tangent bundle. In contrast to the Teichm\"uller space, the Mather set may not be the lifts of geodesic laminations in this broader setting. However, in our paper, we will discuss similar features shared by the Mather set with geodesic laminations. We also connect the study of the Mather set with the theory of best Lipschitz maps. 
\end{abstract}

\setcounter{tocdepth}{1}
\tableofcontents
\section{Introduction}
Given a pair of hyperbolic metrics $g_1$ and $g_2$, i.e. metrics of constant curvature $-1$, on a closed surface $S$ of genus $\mathcal{G}\geq 2$, we associate to each free homotopy class $[\gamma]$ of closed curves on $S$ the quantity
$$r_{g_1,g_2}([\gamma]):=\frac{\ell_{g_2}([\gamma])}{\ell_{g_1}([\gamma])},$$
where $\ell_{g_i}([\gamma])$ denotes the length of the unique closed geodesic in the free homotopy class $[\gamma]$ with respect to the hyperbolic metric $g_i$ for $i=1,2$. 

Let $[\pi_1 S]$ denote the set of free homotopy classes of closed curves on $S$. In his seminal work \cite{Th98}, Thurston considered the supremum of $r_{g_1,g_2}([\gamma])$ over all $[\gamma]\in [\pi_1 S]$  as a quantity for comparing $g_1$ and $g_2$,
 $$S(g_1,g_2) := \sup_{[\gamma]\in[\pi_1 (S)] } \frac{\ell_{g_2}([\gamma])}{\ell_{g_1}([\gamma])}.$$

In this note, we refer to the quantity $S(g_1,g_2)$ as the \emph{maximal stretch} between $g_1$ and $g_2$. Thurston shows that $d_T(g_1,g_2) := \log S(g_1,g_2)$, defines an asymmetric metric on the Teichm\"uller space $\mathcal{T}(S)$ of the closed surface $S$. The metric $d_T$ is called \emph{Thurston's asymmetric metric}. Moreover, he showed that the Teichm\"uller space is a geodesic metric space with respect to $d_T$ induced by a non-reversible  Finsler metric. He further demonstrated that the maximal stretch
 $S(g_1,g_2)$ can always be approximated by a sequence $\{r_{g_1,g_2}([\gamma_n])\}_{n\in \mathbb{N}}$, where each $[\gamma_n]$ is a free homotopy class that can be represented by simple closed curves. In particular, the supremum $S(g_1,g_2)$ is attained by a \emph{geodesic lamination} --- a closed subset of the hyperbolic surface that consists of a disjoint union of simple complete geodesics, which may be closed or bi-infinite. 

 The study presented in this note is motivated by the following questions: Suppose we replace the surface $S$ by a closed manifold $M$ of dimension $n\geq 2$ which admits a smooth negatively curved Riemannian metric, and suppose that the space of hyperbolic metrics on $S$ is replaced by the infinite dimensional space of smooth Riemannian metrics of negative curvature on $M$, denoted by $R^-(M)$. Then in this broader setting,  what aspects of Thurston's theory 
 remain true?

 In \cite{GKL22} Section 5, the second author, together with Guillarmou and Lefeuvre have shown the following theorem which partially 
 generalizes the work of Thurston in the framework of general negatively curved manifolds.

 \begin{theorem}\label{thm,dT=dF}
    Let $ R^-_1(M)$ be the set of metrics in $R^-(M)$ with topological entropy $1$. Then 
    $d_T(g_1, g_2) = \log S(g_1,g_2)$ descends to an asymmetric  pseudo-metric on the isometry classes of  $R^-_1(M)$. It is an
    asymmetric metric in a sufficiently small $C^k$-neighborhood of the diagonal, where $k$ depends only on the dimension of $M$. Furthermore, $d_T$ induces an asymmetric Finsler norm whose associated Finsler metric $d_F$ dominates $d_T$.
 \end{theorem}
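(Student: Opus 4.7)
I would establish the four claims in sequence.

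The pseudo-metric axioms are largely formal. Descent to isometry classes is automatic because $[\gamma]\mapsto \ell_g([\gamma])$ depends only on the isometry class of $(M,g)$. The triangle inequality $\log S(g_1,g_3)\leq \log S(g_1,g_2)+\log S(g_2,g_3)$ follows from the pointwise bound $\ell_{g_3}/\ell_{g_1} = (\ell_{g_3}/\ell_{g_2})(\ell_{g_2}/\ell_{g_1})\leq S(g_2,g_3)\,S(g_1,g_2)$ and supremum over $[\gamma]$, and $d_T(g,g)=0$ is trivial. For the nonnegativity $S(g_1,g_2)\geq 1$, I would invoke Margulis' prime geodesic theorem: if $S(g_1,g_2)<1$, then $\ell_{g_2}([\gamma])\leq c\,\ell_{g_1}([\gamma])$ for some $c<1$ and all $[\gamma]$, whence the $g_2$-counting function grows at least like $e^{T/c}/T$, forcing $h_{\mathrm{top}}(g_2)\geq 1/c>1$, a contradiction to $h_{\mathrm{top}}(g_2)=1$.

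For positive definiteness in a $C^k$-neighborhood of the diagonal, suppose $d_T(g_1,g_2)=0$. Then $\ell_{g_2}([\gamma])\leq \ell_{g_1}([\gamma])$ for every free homotopy class. Combined with the equality of topological entropies, a thermodynamic-formalism argument, using uniqueness of the measure of maximal entropy for Anosov geodesic flows together with equidistribution of closed orbits with respect to the Bowen--Margulis measure, upgrades the inequality to full equality of marked length spectra. I would then invoke the local marked-length-spectrum rigidity theorem of Guillarmou--Lefeuvre in variable negative curvature: within a sufficiently small $C^k$-neighborhood of $g_2$, with $k$ depending only on $\dim M$, coincidence of marked length spectra forces $g_1$ and $g_2$ to be isometric.

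For the Finsler structure and the domination $d_T\leq d_F$, I would differentiate the length functional along a one-parameter family $g_t=g_0+th+O(t^2)$ in $R^-_1(M)$. Using the first-variation formula $\tfrac{d}{dt}\ell_{g_t}([\gamma])\big|_{t=0}=\tfrac{1}{2\ell_{g_0}([\gamma])}\int_\gamma h(\dot\gamma,\dot\gamma)\,ds$, one defines a seminorm $F_{g_0}(h)$ as the supremum of $\tfrac{1}{2\ell_{g_0}([\gamma])^2}\int_\gamma h(\dot\gamma,\dot\gamma)\,ds$ over $[\gamma]$; by equidistribution this is a supremum of linear functionals over a weak-$*$ closed subset of invariant probability measures on $SM$, yielding a nonreversible Finsler norm on $T_{g_0}R^-_1(M)$. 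The induced length metric $d_F$ then dominates $d_T$ by integrating the infinitesimal stretch along paths. The main obstacle is the positive-definiteness step: converting the one-sided marked-length inequality into full equality via entropy rigidity, and then applying the variable-curvature marked-length-spectrum rigidity theorem, which is presently available only locally with a regularity threshold depending on $\dim M$.
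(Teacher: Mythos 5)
First, note that the paper does not prove this theorem itself: it is imported from \cite{GKL22}, so there is no internal proof to compare against. Your outline follows the same overall route as \cite{GKL22}: pseudo-metric axioms from multiplicativity of length ratios, nonnegativity and the separation property from the entropy--stretch inequality $\frac{h_{top}(\phi^{g_2})}{h_{top}(\phi^{g_1})}\,S(g_1,g_2)\ge 1$ of \cite{Kn95} (restated in Section 3 of this paper), positive definiteness near the diagonal from the local marked-length-spectrum rigidity of \cite{GL19}, and the Finsler structure from the first variation of the marked length spectrum. Your ``thermodynamic-formalism upgrade'' of $\ell_{g_2}\le\ell_{g_1}$ to equality of marked length spectra is exactly the equality case of that inequality (equality forces proportional spectra, and equal entropies force the constant to be $1$), so that step is sound once cited rather than re-derived; your Margulis-counting argument for $S(g_1,g_2)\ge 1$ is a correct, slightly more elementary substitute for the inequality itself.

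The genuine gap is in the Finsler part. What you construct is only a \emph{seminorm}: $F_{g_0}(h)=\sup_m\tfrac12\int h(v,v)\,dm$ over invariant probability measures is nonnegative, convex and positively homogeneous, but you never show $F_{g_0}(h)>0$ for $h\ne 0$ tangent to $R^-_1(M)$ modulo the diffeomorphism action, which is what ``induced by a non-reversible Finsler metric'' requires. If $F_{g_0}(h)=0$, then all periodic averages of $h(v,v)$ are $\le 0$; tangency to the entropy-one slice forces $\int h(v,v)\,dm_{BM}=0$ by the entropy-derivative formula, so the Bowen--Margulis measure is a maximizing measure; the positive Liv\v{s}ic theorem of \cite{LT05} gives $-\tfrac12 h(v,v)\ge X_{g_0}u$ with equality on $\supp m_{BM}=S^{g_0}M$, hence \emph{all} periodic averages vanish; and one then needs injectivity of the geodesic X-ray transform on symmetric $2$-tensors (linearized marked-length-spectrum rigidity) to conclude $h$ is a Lie derivative. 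None of this chain appears in your sketch, and the last input is a substantial theorem, not a formality. A minor point: your first-variation formula carries a spurious factor of $\ell_{g_0}([\gamma])^{-1}$; for unit-speed $\gamma$ one has $\frac{d}{dt}\ell_{g_t}([\gamma])\big|_{t=0}=\tfrac12\int_0^{\ell_{g_0}([\gamma])}h(\dot\gamma,\dot\gamma)\,ds$, so the logarithmic derivative has a single power of the length in the denominator, not two.
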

 
 \begin{remark}
 We recall that hyperbolic surfaces always have topological entropy one. We make the following observations related to Theorem \ref{thm,dT=dF}.
 \begin{itemize}
     \item 
The function $d_T$ is a
 metric on the isometry classes of  $R^-_1(M)$ if and only if marked length spectrum rigidity holds, that is, if $g_1,g_2 \in R^-_1(M)$ satisfy $\ell_{g_1}([\gamma]) =\ell_{g_2}([\gamma])$ for all $ [\gamma]\in [\pi_1 M]$, then  $g_1$ and $g_2$ are isometric.
 In particular, when $M$ is a surface, from the marked length spectrum rigidity theorem proved by Croke \cite{Cr90} and independently Otal \cite{Ot90} for negatively curved surfaces, $d_T$ defines a metric on $ R^-_1(M)$. 
 When $\dim M>2$, the fact that $d_T$ is  also a metric in a small neighborhood of the diagonal in  $ R^-_1(M)$ follows from the local marked length spectrum rigidity theorem proved by Guillarmou and Lefeuvre (see \cite{GL19}). 
 \item 
The asymmetric Finsler metric $d_F$ in Theorem \ref{thm,dT=dF} is defined without assuming marked length spectrum rigidity.  
While $d_F \ge d_T$, we do not know, unlike the case of Teichm\"uller space, 
    whether the equality $d_T= d_F$ holds on $R_1^-(M)$. As observed in \cite{GL19}, the equality would imply the (global) marked length spectrum rigidity (see also Question \ref{question: Finsler}
    in Appendix \ref{question: open question}).
     \end{itemize}
 \end{remark}
The purpose of this paper is to study the other parts of Thurston's theory, namely the structure of \emph{maximally stretched measures} and its relation to geodesic laminations. We explain the notion of maximally stretched measures. Fix a family of closed $g_1$-geodesics $\gamma^{g_1}_n$ so that the length ratios $r_{g_1,g_2}([\gamma_n])$ approximates the maximal stretch $S(g_1,g_2)$ and consider a sequence of closed orbit probability measures $\delta^{g_1}_{v_n}$ with initial unit tangent vector $v_n$ tangential to $\gamma^{g_1}_n$. Then up to subsequence, these Dirac measures $\delta^{g_1}_{v_n}$ converge weakly to some $\phi^{g_1}$-invariant probability measure $m$, where $\phi^{g_1}$ is the geodesic flow with respect to the metric $g_1$. We call such a measure $m$ a \emph{maximally stretched measure}. All maximally stretched measures form a subset $MS(g_1,g_2)$ in the space of $\phi^{g_1}$-invariant probability measures. The closure of the union of their supports 
 $$\mathcal{M}(g_1,g_2) :=\overline{ \bigcup_{m\in MS(g_1,g_2)} \supp m}$$
is called the \emph{Mather set}. The Mather set will be one of the central object of study in this note.

 \subsection{Topological structure of the Mather set}
 \hfill\\
When $g_1$ and $g_2$ are hyperbolic metrics representing distinct points on the Teichm\"uller space $\mathcal{T}(S)$, the Mather set $\mathcal{M}(g_1,g_2)$ always projects to a \emph{geodesic lamination} on $S$ (\cite[Section 3]{Th98}).  A natural question arises: in general, for $g_1,g_2\in R^-(M)$, does the Mather set $\mathcal{M}(g_1,g_2)$ exhibit structural similarities with geodesic laminations?

One well-known property of geodesic laminations is that they are nowhere dense. Furthermore, a surprising result  (\cite[Theorem I]{BS85}) from Birman and Series shows that the union of all simple geodesics forms a nowhere dense set of the hyperbolic surface and has Hausdorff dimension one. The same result holds for the lift of this set to the unit tangent bundle of the hyperbolic surface (\cite[Theorem III]{BS85}). 

Along these lines, we prove the following similar property for the Mather set.

\begin{MainThm} [Theorem \ref{thm, EmtpyInterior}]
Given $g_1,g_2\in R^-(M)$, the Mather set $\mathcal{M}(g_1,g_2)$ is nowhere dense if and only if the marked length spectra of $g_1$ and $g_2$ are not proportional,  i.e. there does not exist a constant $C>0$ such that $\ell_{g_2}([\gamma])=C \ell_{g_1}([\gamma])$ for all $[\gamma]\in[\Gamma]$. 
\end{MainThm}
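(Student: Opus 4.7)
The plan is to first reduce the theorem to a topological dichotomy. The Mather set $\mathcal{M}(g_1, g_2)$ is closed by definition as a closure, and it is $\phi^{g_1}$-invariant since each $\supp m$ with $m \in MS(g_1, g_2)$ is $\phi^{g_1}$-invariant. Because $g_1 \in R^-(M)$ has negative sectional curvature, the geodesic flow $\phi^{g_1}$ is Anosov and hence topologically transitive on $SM$. The standard consequence is that any closed $\phi^{g_1}$-invariant subset of $SM$ is either nowhere dense or all of $SM$: if such a set $A$ has nonempty interior $U$, then $\bigcup_t \phi^{g_1}_t(U) \subset A$, and topological transitivity makes this union dense, so $A = SM$. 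The theorem thus reduces to the equivalence
\[
\mathcal{M}(g_1, g_2) = SM \quad \Longleftrightarrow \quad \text{the marked length spectra of $g_1$ and $g_2$ are proportional.}
\]

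For the easier direction $(\Leftarrow)$, assume $\ell_{g_2}([\gamma]) = C\,\ell_{g_1}([\gamma])$ for every $[\gamma]$. Then $r_{g_1, g_2}([\gamma]) \equiv C$ on all closed $g_1$-orbits, so $S(g_1, g_2) = C$. Given any $\phi^{g_1}$-invariant probability measure $m$, Sigmund's density theorem for Anosov flows yields closed $g_1$-orbits $\gamma_n$ with $\delta^{g_1}_{\gamma_n} \to m$ weakly; since $r_{g_1, g_2}([\gamma_n]) \equiv C = S$, the definition of maximally stretched measures forces $m \in MS(g_1, g_2)$. Every invariant measure is therefore maximally stretched, and taking $m$ to be the Liouville measure (which has $\supp m = SM$) yields $\mathcal{M}(g_1, g_2) = SM$.

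For the harder direction $(\Rightarrow)$, the plan is to work with the H\"older geodesic stretch cocycle $a_{g_1, g_2}: SM \to \mathbb{R}$ from \cite{GKL22}, characterized by $\int a_{g_1, g_2}\, d\delta^{g_1}_\gamma = r_{g_1, g_2}([\gamma])$ on every closed $g_1$-orbit $\gamma$; then $S(g_1, g_2) = \sup_m \int a_{g_1, g_2}\, dm$ and $MS(g_1, g_2)$ is exactly the set of maximizers. The next step is to invoke a continuous sub-action $u: SM \to \mathbb{R}$ (in the sense of Lopes--Thieullen) for the H\"older cocycle $a_{g_1, g_2} - S$ on the topologically mixing Anosov flow $\phi^{g_1}$, satisfying
\[
\int_0^t \bigl(a_{g_1, g_2}(\phi^{g_1}_s v) - S\bigr)\, ds \;\le\; u(\phi^{g_1}_t v) - u(v)
\]
for all $v \in SM$ and $t \ge 0$, with equality along orbits in the Aubry set $\mathcal{A}$ that contains $\mathcal{M}(g_1, g_2)$. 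The hypothesis $\mathcal{M}(g_1, g_2) = SM$ then forces $\mathcal{A} = SM$, so equality holds everywhere, making $a_{g_1, g_2} - S$ a continuous coboundary. Integrating over any closed $g_1$-orbit $\gamma$ of $g_1$-period $T$ gives $\int_0^T (a_{g_1, g_2} - S)\, dt = u(\phi^{g_1}_T v) - u(v) = 0$, so $r_{g_1, g_2}([\gamma]) = S$ and $\ell_{g_2}([\gamma]) = S\,\ell_{g_1}([\gamma])$ for every $[\gamma]$, giving the desired proportionality.

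The main obstacle is in the $(\Rightarrow)$ direction: securing a continuous sub-action with the correct equality locus in the specific setting of geodesic stretch cocycles on negatively curved manifolds, and rigorously establishing the containment $\mathcal{M} \subset \mathcal{A}$. A more self-contained alternative would bypass sub-actions and instead use the Anosov closing lemma together with weak-$*$ continuity of $m \mapsto \int a_{g_1, g_2}\, dm$, attempting to derive a contradiction from the coexistence of a closed orbit $\gamma_0$ with $r_{g_1, g_2}([\gamma_0]) < S$ and the hypothesis that $\bigcup_{m \in MS(g_1, g_2)} \supp m$ is dense in $SM$, exploiting the hyperbolic isolation of $\gamma_0$ together with careful shadowing arguments.
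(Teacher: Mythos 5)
Your proposal is correct and follows essentially the same route as the paper: the continuous sub-action you invoke is exactly the supersolution the paper constructs via Lopes--Thieullen in Proposition \ref{prop:subsolution}, and the containment $\mathcal{M}(g_1,g_2)\subset\mathcal{A}_u(g_1,g_2)$ that you flag as the main obstacle is precisely Proposition \ref{prop, AubryMather}, so the coboundary argument in your $(\Rightarrow)$ direction is the paper's argument repackaged (the paper routes it through the orbit equivalence $G$ of Theorem \ref{corollary, LinearConjugacy} and a dense orbit rather than stating the coboundary identity directly). Your $(\Leftarrow)$ direction via Sigmund's theorem and density of periodic orbits is likewise the paper's.
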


In fact, if the marked length spectra of $g_1$ and $g_2$ are proportional, then the associated Mather set $\mathcal{M}(g_1,g_2)$ coincides with the entire unit tangent bundle $S^{g_1}M$. The theorem therefore exhibits a dichotomy in the structure of $\mathcal{M}(g_1,g_2)$, according to whether the marked length spectra of $g_1$ and $g_2$ are proportional. In general, the condition that two metrics have proportional marked length spectra is quite restrictive. For example, if $g_1$ and $g_2$ are conformally equivalent, then they have proportional marked length spectra if and only if $g_2$ is a constant multiple of $g_1$ (see \cite[Theorem 2]{Ka88}).

\subsection{Measures of maximal entropy on the Mather set}
 \hfill\\
A different facet of the study of the Mather set which connects the marked length spectra involves its thermodynamic properties. Section \ref{section,thermodynamics} is devoted to this topic. Using the thermodynamic formalism, we investigate measures of maximal entropy for the geodesic flow $\phi^{g_1}$ restricted to the Mather set $\mathcal{M}(g_1,g_2)$. In particular, we prove the existence of such measures and construct examples of them as weak limits of some equilibrium states (see Section \ref{subsection,MMEMatherSet}). These limiting measures are often referred to as \emph{zero-temperature limits}.

\begin{MainThm}[Theorem \ref{prop,limitIsTopEntropy}] \label{theorem,MeasureMaxEntropy}
     There exists a $\phi^{g_1}$-invariant probability measure $m_+$ arising as weak limits of some equilibrium states which is a measure of maximal entropy of the geodesic flow $\phi^{g_1}$ restricted to the Mather set $\M(g_1,g_2)$, that is,
$$h_{m_{+}} (\phi^{g_1}) = h_{top}(\phi^{g_1}, \mathcal{M}(g_1,g_2)),$$
    where $h_{top}(\phi^{g_1}, \mathcal{M}(g_1,g_2))$ is the topological entropy of $\phi^{g_1}$ on the closed invariant subset $\mathcal{M}(g_1,g_2)$ and $h_{m_{+}} (\phi^{g_1})$ denotes the metric entropy of $m_+$.
\end{MainThm}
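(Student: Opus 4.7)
The plan is to realize $m_+$ as a zero-temperature limit of equilibrium states. Let $F \colon S^{g_1}M \to \mathbb{R}_{>0}$ be the continuous geodesic stretch function $v \mapsto \|v\|_{g_2}$ on the $g_1$-unit tangent bundle, so that for any closed $g_1$-geodesic $\gamma$ with normalized orbit measure $\delta_\gamma^{g_1}$ one has $r_{g_1,g_2}([\gamma]) = \int F\, d\delta_\gamma^{g_1}$. Density of closed orbit measures together with continuity of $F$ yields
\begin{equation*}
\beta := S(g_1,g_2) = \sup_m \int F\, dm,
\end{equation*}
the supremum taken over $\phi^{g_1}$-invariant probability measures, and $MS(g_1,g_2)$ is precisely the set of measures attaining it.

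For each $t>0$, Bowen--Ruelle thermodynamic formalism for the transitive Anosov flow $\phi^{g_1}$ furnishes a unique equilibrium state $\mu_t$ for the H\"older potential $tF$, with pressure identity
\begin{equation*}
P(tF) = h_{\mu_t}(\phi^{g_1}) + t\int F\, d\mu_t.
\end{equation*}
By weak-$*$ compactness I extract a subsequence $t_k \to \infty$ with $\mu_{t_k} \to m_+$. The variational principle gives $P(tF) \leq h_{top}(\phi^{g_1}) + t\beta$, while testing against any near-maximizer provides a matching lower bound, so $P(tF)/t \to \beta$. Since $h_{\mu_t}(\phi^{g_1}) \leq h_{top}(\phi^{g_1})$ is uniformly bounded, dividing the pressure identity by $t$ forces $\int F\, d\mu_t \to \beta$; weak-$*$ convergence together with continuity of $F$ then gives $\int F\, dm_+ = \beta$, so $m_+ \in MS(g_1,g_2)$ and in particular $\supp m_+ \subseteq \M(g_1,g_2)$.

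The variational principle applied to the closed invariant subsystem $\M(g_1,g_2)$ gives immediately $h_{m_+}(\phi^{g_1}) \leq h_{top}(\phi^{g_1}, \M(g_1,g_2))$. For the reverse inequality, let $\nu$ be any $\phi^{g_1}$-invariant probability with $\supp \nu \subseteq \M(g_1,g_2)$. Relying on the auxiliary fact, to be established elsewhere in the section, that every such $\nu$ is itself maximally stretched (i.e.\ $\int F\, d\nu = \beta$), the pressure inequality $P(tF) \geq h_\nu(\phi^{g_1}) + t\beta$ combined with the identity for $\mu_t$ and the bound $\int F\, d\mu_t \leq \beta$ forces $h_\nu(\phi^{g_1}) \leq h_{\mu_t}(\phi^{g_1})$. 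Letting $t = t_k \to \infty$ and invoking upper semicontinuity of metric entropy for the expansive Anosov flow $\phi^{g_1}$, one obtains $h_\nu(\phi^{g_1}) \leq h_{m_+}(\phi^{g_1})$; taking the supremum over such $\nu$ and applying the variational principle on $\M(g_1,g_2)$ closes the argument.

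The main obstacle is the auxiliary lemma that \emph{every} invariant probability supported on $\M(g_1,g_2)$ has $F$-integral equal to $\beta$. This is automatic for measures in the weak-$*$ closed convex hull of $MS(g_1,g_2)$, but $\M(g_1,g_2)$ is defined via \emph{supports} of maximally stretched measures and so a priori may carry other invariant measures. I expect to settle this using the Anosov closing lemma or specification inside $\M(g_1,g_2)$ to approximate $\nu$ by periodic orbit measures whose underlying closed $g_1$-geodesics realize length ratios $r_{g_1,g_2}$ tending to $\beta$. A lighter technical point is the upper semicontinuity of the entropy map along $\mu_{t_k}$, which is standard for expansive flows.
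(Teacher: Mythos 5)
Your overall architecture --- zero-temperature limits of equilibrium states for $t$ times a stretch potential, the auxiliary claim that every invariant measure supported on $\M(g_1,g_2)$ is maximally stretched, and upper semicontinuity of metric entropy --- is exactly the paper's proof. But two points need repair. First, the potential is wrong: for a closed $g_1$-geodesic $\gamma$ one has $\int \|v\|_{g_2}\,d\delta^{g_1}_\gamma = L_{g_2}(\gamma)/\ell_{g_1}([\gamma])$, the $g_2$-length of the curve $\gamma$ (which is a $g_1$-geodesic, not a $g_2$-geodesic) over its $g_1$-length; this dominates $r_{g_1,g_2}([\gamma])=\ell_{g_2}([\gamma])/\ell_{g_1}([\gamma])$, with equality only when $\gamma$ happens to be a $g_2$-geodesic as well. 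Consequently $\sup_m\int\|v\|_{g_2}\,dm$ is in general strictly larger than $S(g_1,g_2)$, its maximizers are not $MS(g_1,g_2)$, and your equilibrium states $\mu_t$ and their weak limits concentrate on the wrong set. The correct potential is the infinitesimal time change $a_{g_1,g_2}(v)=g_2(B^{g_2}(\pi(v),v^{g_1}_+),v)$, a H\"older function built from the Busemann cocycle whose orbit integrals compute translation lengths; this is what the paper feeds into the thermodynamic formalism (Lemma \ref{lemma,weaklimitEquilibriumStates}).

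Second, the auxiliary lemma is the crux, and the route you sketch does not close the gap. The closing lemma produces periodic orbits shadowing recurrent orbits of $\nu$ inside $\M(g_1,g_2)$, but their length ratios satisfy $r_{g_1,g_2}([\gamma_n])\le S(g_1,g_2)$ by definition of the supremum, so this only reproves the inequality $\int a_{g_1,g_2}\,d\nu\le S(g_1,g_2)$ that you already have; it yields no lower bound. The paper's mechanism is the strong supersolution of Proposition \ref{prop:subsolution}: there is a H\"older function $u$, smooth along the flow, with $f:=S(g_1,g_2)-a_{g_1,g_2}+X_{g_1}u\ge 0$; for each maximally stretched $m$ one has $\int f\,dm=0$, hence $f\equiv 0$ on $\supp m$ and, by continuity, on all of $\M(g_1,g_2)$ (Proposition \ref{prop, AubryMather}). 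Then the Birkhoff averages of $a_{g_1,g_2}$ equal $S(g_1,g_2)$ identically on the Mather set, and integrating gives $\int a_{g_1,g_2}\,d\nu=S(g_1,g_2)$ for every invariant $\nu$ supported there. With these two corrections your argument coincides with the paper's proof of Theorem \ref{prop,limitIsTopEntropy}.
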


Moreover, we show that the value $h_{top}(\phi^{g_1}, \mathcal{M}(g_1,g_2))$ reflects whether the marked length spectra of $g_1$ and $g_2$ are proportional.

\begin{MainThm}[Theorem \ref{cor,RigidityTopEntropy}]
 Given $g_1,g_2\in R^-(M)$, the topological entropy on the Mather set $\M(g_1,g_2)$ satisfies
 $$h_{top}(\phi^{g_1}, \mathcal{M}(g_1,g_2))=h_{top}(\phi^{g_1})$$
if and only if the marked length spectra of $g_1$ and $g_2$ are proportional, i.e. there exists a constant $C>0$ such that $\ell_{g_2}([\gamma])= C \ell_{g_1}([\gamma])$ for all $[\gamma]\in[\Gamma]$. 
\end{MainThm}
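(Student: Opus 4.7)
The plan is to derive this rigidity statement as a fairly short consequence of the two preceding main theorems together with the classical uniqueness of the measure of maximal entropy for the geodesic flow of a closed negatively curved Riemannian manifold.

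The ``if'' direction is essentially automatic. If the marked length spectra of $g_1$ and $g_2$ are proportional, then as already remarked directly after Theorem~\ref{thm, EmtpyInterior}, the Mather set $\mathcal{M}(g_1,g_2)$ coincides with the entire unit tangent bundle $S^{g_1}M$. Consequently $h_{top}(\phi^{g_1},\mathcal{M}(g_1,g_2))=h_{top}(\phi^{g_1})$ trivially, so nothing further needs to be checked.

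For the ``only if'' direction, suppose $h_{top}(\phi^{g_1},\mathcal{M}(g_1,g_2))=h_{top}(\phi^{g_1})$. First I will invoke Theorem~\ref{theorem,MeasureMaxEntropy} to produce a $\phi^{g_1}$-invariant probability measure $m_+$ with $\supp(m_+)\subseteq \mathcal{M}(g_1,g_2)$ and
\[
h_{m_+}(\phi^{g_1}) \;=\; h_{top}(\phi^{g_1},\mathcal{M}(g_1,g_2)) \;=\; h_{top}(\phi^{g_1}).
\]
Thus $m_+$ is a measure of maximal entropy for $\phi^{g_1}$ on all of $S^{g_1}M$. Since $M$ is closed and negatively curved, $\phi^{g_1}$ is a transitive Anosov flow, and a classical result of Bowen and Margulis asserts that it admits a \emph{unique} measure of maximal entropy, the Bowen--Margulis measure $\mu_{BM}$, and that $\mu_{BM}$ has full support on $S^{g_1}M$. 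Therefore $m_+=\mu_{BM}$ and hence
\[
S^{g_1}M \;=\; \supp(m_+) \;\subseteq\; \mathcal{M}(g_1,g_2),
\]
so the Mather set has non-empty interior (in fact it is everything). Applying Theorem~\ref{thm, EmtpyInterior} (the contrapositive of its ``only if'' direction) then forces the marked length spectra of $g_1$ and $g_2$ to be proportional, completing the argument.

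I do not anticipate any substantial obstacle in this proof: the two nontrivial inputs (existence of a maximal-entropy measure on $\mathcal{M}(g_1,g_2)$ and the nowhere-dense dichotomy for the Mather set) are already the content of the earlier main theorems, and the uniqueness and full support of $\mu_{BM}$ for geodesic flows on negatively curved closed manifolds are well-established. The only point worth double-checking carefully is that the measure produced by Theorem~\ref{theorem,MeasureMaxEntropy} is genuinely supported inside $\mathcal{M}(g_1,g_2)$ (this is built into the statement), since the entire argument hinges on concluding that $\mu_{BM}$ itself must live on the Mather set.
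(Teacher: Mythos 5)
Your proof is correct, but it follows a genuinely different route from the paper's. The paper derives the rigidity from the quantitative formula of Corollary \ref{cor formulaTwoEntropy},
$h_{top}(\phi^{g_1},\mathcal{M}(g_1,g_2)) = h_{top}(\phi^{g_1}) - \int_0^\infty r\,\textnormal{Var}(P_{m_r}(a_{g_1,g_2}),m_r)\,dr$,
and then observes (as in the proof of Corollary \ref{cor,dichotomyEntropy}) that the variance vanishes if and only if $a_{g_1,g_2}$ is Liv\v{s}ic-cohomologous to a constant, which by Corollary \ref{cor:equiv} is equivalent to proportionality of the marked length spectra. You instead argue softly: the zero-temperature limit $m_+$ of Theorem \ref{prop,limitIsTopEntropy} is a maximally stretched measure (Lemma \ref{lemma,weaklimitEquilibriumStates}), hence supported in $\mathcal{M}(g_1,g_2)$; under the entropy equality it is a measure of maximal entropy for the full flow, so uniqueness and full support of the Bowen--Margulis measure force $\mathcal{M}(g_1,g_2)=S^{g_1}M$, and Theorem \ref{thm, EmtpyInterior} closes the loop. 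Both arguments are sound; yours is essentially the same mechanism the authors themselves use in the remark following Theorem \ref{prop,limitIsTopEntropy} to show that maximally stretched measures cannot be equilibrium states. Your route is shorter and avoids the analyticity and second-derivative machinery for the pressure, needing only the existence statement of Theorem \ref{prop,limitIsTopEntropy}, the classical uniqueness and full support of the measure of maximal entropy for a transitive Anosov flow, and the nowhere-dense dichotomy. What the paper's route buys in exchange is the explicit entropy-gap formula, which quantifies the failure of equality rather than merely characterizing it. The ``if'' direction is identical in both treatments.
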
  

 When the marked length spectra of $g_1$ and $g_2$ are not proportional, the metric entropies of the equilibrium states used for constructions in Theorem \ref{theorem,MeasureMaxEntropy} strictly monotonically decrease to the metric entropy of $m_+$, which is the topological entropy $h_{top}(\phi^{g_1}, \mathcal{M}(g_1,g_2))$  (see Corollary \ref{cor,dichotomyEntropy}). 
 
 When $g_1,g_2$ are hyperbolic metrics on a closed surface representing distinct points in the Teichmüller space $\mathcal{T}(S)$, a maximally stretched measure can be given by a \emph{measured lamination}. A \emph{measured lamination}\footnote{This definition differs from the more standard presentation commonly used in the Teich\-müller theory community (see, for example, \cite{Bon88}). But it serves the topics of this note better.} on $(S, g_1)$ is a geodesic lamination that possesses a $\phi^{g_1}$-invariant reflexive \footnote{A measure $m$ on $S^{g_1}M$ is reflexive if $\iota^*m=m$, where $\iota:S^{g_1}M \to S^{g_1}M$ is the involution given by $\iota(v)=-v$. } probability measure: the projection of the full support of this invariant measure onto $S$ is the underlying geodesic lamination. We will often drop the distinction in notation between a lamination with measure and the measure itself. 
 
 Measured geodesic laminations have zero metric entropies (see \cite[Lemma 2.3.5]{Na03}). As a consequence, the topological entropy $h_{top}(\phi^{g_1}, \mathcal{M}(g_1,g_2))$ is zero in this case. One might then naively expect, for all metrics $g_1,g_2$ in $R^-(M)$ with non-proportional marked length spectra, the topological entropy $h_{top}(\phi^{g_1}, \mathcal{M}(g_1,g_2))$ also equals zero. This is however not always the case. Later, in Section \ref{section,BLM}, we give an example of $g_1,g_2$ in $R^-(M)$ with non-proportional marked length spectra and maximally stretched measures supported on the Mather set $\mathcal{M}(g_1,g_2)$ with positive metric entropy (see Example \ref{MatherNotLamination}).

\subsection{ The Mather set is maximally stretched}
 \hfill\\
In the previous section, we mentioned that when $g_1,g_2$ are hyperbolic metrics representing distinct points in the Teichmüller space $\mathcal{T}(S)$, a maximally stretched measure in $MS(g_1,g_2)$ can be taken as a measured lamination. One feature of measured laminations (and also geodesic laminations) is that they are topological objects by nature, regardless of the auxiliary hyperbolic metric chosen for their definitions. We specifically write $\lambda_g$ to represent the realization of a measured lamination $\lambda$ with respect to a hyperbolic metric $g$.

In the Teichm\"uller space $\mathcal{T}(S)$, the maximally stretched measures which are measured geodesic laminations have a deep connection with Lipschitz maps. Thurston proved, for hyperbolic metrics $g_1$ and $g_2$ representing distinct points in $\mathcal{T}(S)$ and for any measured lamination $\lambda$ in $MS(g_1,g_2)$, there exists a Lipschitz homeomorphism  $ f \colon (S,g_1) \to (S,g_2)$ homotopic to the identity with the property that each leaf of the measured lamination $\lambda_{g_1}$ is mapped by $f$ to a corresponding leaf of $\lambda_{g_2}$ and each leaf is linearly stretched by a maximal factor --- the number equals both the Lipschitz constant of $f$ and the maximal stretch $S(g_1,g_2)$ (\cite[Theorem 8.1, Theorem 8.5]{Th98}, see also \cite{GK17}, \cite{PW22}). This property helps reveal a profound equality in the Teichm\"uller theory, which will be discussed in Section \ref{subsection weightedBestLip}.

 In the setting of general negatively curved metrics on the $n$-dimensional closed manifold $M$, we would like to understand whether orbits in the support of a maximally stretched measure are also in some sense maximally stretched. Since the theory of Lipschitz maps in this broader setting is still quite obscure, we defer the demonstration of some partial results in this direction until Subsection \ref{subsection weightedBestLip} and Section  \ref{subsection StrechLocus}. Here we first exhibit an analogous and better understood phenomenon on the unit tangent bundles and for H\"older orbit equivalences between geodesic flows. We show that orbits of the geodesic flow $\phi^{g_1}$ are ``maximally stretched" by some ``good" H\"older orbit equivalences on the Mather set. 

 Denote by $S^g M$ the unit tangent bundle of $M$ with respect to a metric $g$.
  
\begin{MainThm}[Theorem \ref{corollary, LinearConjugacy}, Proposition \ref{prop, AubryMather}]\label{theorem, maximalConjugacy}
Suppose $g_1,g_2\in R^-(M)$. Then there exists a H\"older orbit equivalence $G \colon S^{g_1}M \to S^{g_2}M $
 between the geodesic flows  $\phi^{g_1}$ and $\phi^{g_2}$ given by
$$
\phi^{g_2}_{\tau(v,t)} G(v) = G(\phi_t^{g_1}(v)),
$$
where $\tau: S^{g_1}M \times \mathbb{R} \to \R$ is a time change function that satisfies, for all $v\in S^{g_1}M$,
$$\tau(v,t)\leq  S(g_1,g_2)t.$$
Moreover, if $v\in \mathcal{M}(g_1,g_2)$, then for all $t\in \R$,
$$\tau(v,t) =S(g_1,g_2)t.$$
\end{MainThm}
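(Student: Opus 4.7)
The plan is to start from a classical Hölder orbit equivalence between the two Anosov geodesic flows and then tune it along each orbit so that the time-change cocycle becomes pointwise dominated by $S(g_1,g_2)\,t$. By structural stability of Anosov flows (equivalently, by matching endpoints on the common Gromov boundary of $\tilde M$) one first obtains a Hölder orbit equivalence $G_0: S^{g_1}M \to S^{g_2}M$ whose time-change cocycle can be written as $\tau_0(v,t)=\int_0^t a(\phi^{g_1}_s v)\,ds$ for some positive Hölder function $a$. The first-return identity along a closed $g_1$-geodesic of $g_1$-period $T=\ell_{g_1}([\gamma])$ forces $\int_0^T a(\phi^{g_1}_s v)\,ds=\ell_{g_2}([\gamma])$, so the Birkhoff average of $a$ along any periodic orbit equals $r_{g_1,g_2}([\gamma])\leq S(g_1,g_2)$.

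Density of periodic orbit measures in the space of $\phi^{g_1}$-invariant probability measures then upgrades this to $\int a\,dm\leq S(g_1,g_2)$ for every invariant $m$, with equality precisely when $m\in MS(g_1,g_2)$ by the defining property of maximally stretched measures as weak-$\ast$ limits of equidistributions along stretch-maximizing closed geodesics. Setting $\psi:=S(g_1,g_2)-a$ produces a Hölder observable whose minimum ergodic average is zero, attained exactly on $MS(g_1,g_2)$. The desired upper bound is therefore an ergodic-optimization statement for $\psi$.

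The next step is a Mañé-type calibrated subaction lemma for Anosov flows, furnishing a continuous (indeed Hölder) function $u: S^{g_1}M \to \R$ such that
$$u(\phi^{g_1}_t v) - u(v) \leq \int_0^t \psi(\phi^{g_1}_s v)\,ds \qquad \text{for all } v\in S^{g_1}M,\ t\geq 0,$$
with equality whenever $v$ lies on an orbit contained in $\mathcal{M}(g_1,g_2)$. Defining $G(v):=\phi^{g_2}_{u(v)}\circ G_0(v)$ translates the cocycle by a coboundary, yielding $\tau(v,t)=S(g_1,g_2)\,t-\int_0^t \psi\,ds+u(\phi^{g_1}_t v)-u(v)$; the subaction inequality then gives $\tau(v,t)\leq S(g_1,g_2)\,t$, while the calibration gives equality on $\mathcal{M}(g_1,g_2)$. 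Equality for negative $t$ on $\mathcal{M}(g_1,g_2)$ follows from the cocycle relation and the $\phi^{g_1}$-invariance of the Mather set.

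The main obstacle I anticipate is producing the subaction $u$ in continuous time with the right calibration on the \emph{entire} Mather set, not merely on the support of one maximizing measure; this is the content of Proposition \ref{prop, AubryMather}. One would adapt the Lopes--Thieullen/Bousch construction of calibrated subactions to the Anosov-flow setting, combine it with Anosov shadowing, and then use continuity of $u$ together with the definition of $\mathcal{M}(g_1,g_2)$ as a closure to propagate equality from the support of individual maximally stretched measures to the whole Mather set.
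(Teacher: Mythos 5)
Your proposal is correct and follows essentially the same route as the paper: the boundary-matching orbit equivalence with time-change cocycle $\int_0^t a_{g_1,g_2}$, the observation that periodic Birkhoff averages of $a_{g_1,g_2}$ are bounded by $S(g_1,g_2)$ (upgraded to all invariant measures by density of periodic orbit measures), a Lopes--Thieullen subaction (the paper's ``strong supersolution'') used as a coboundary to shift the orbit equivalence, and finally the propagation of equality to the whole Mather set by noting that the non-negative continuous defect integrates to zero against every maximally stretched measure, hence vanishes on its support, and then passing to the closure. The obstacle you flag at the end is resolved exactly as you suggest, so there is no gap.
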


In fact, the above equality $\tau(v,t) =S(g_1,g_2)t$ in Theorem \ref{theorem, maximalConjugacy} holds for a potentially larger set than the Mather set $\mathcal{M}(g_1,g_2)$. This set is called the \emph{Aubry set} $\mathcal{A}(g_1,g_2)$, motivated by Fathi's monograph on weak KAM theory (see \cite{Fat08}), and is closely related to the study of the Mather set. The Aubry set is defined using \emph{(weak) supersolutions} --- functions on the unit tangent bundle $S^{g_1}M$ that satisfy certain inequalities involving the maximal stretch $S(g_1,g_2)$. These inequalities become equalities along ``extremal orbits", which are orbits of the Aubry set (of some supersolution).  We refer the reader to Section \ref{subsection,SupersolutionAubry} for precise definitions and details of the Aubry set. We highlight in the end that the Mather set, which characterizes extremal orbits purely in a measure-theoretic sense, is always contained in the Aubry set.

 \subsection{Weighted least Lipschitz constants} \label{subsection weightedBestLip}
 \hfill\\
As mentioned before, the study of Mather sets and maximal stretches involves fundamentally  the theory of Lipschitz maps in the classical Teichm\"uller space. A natural question, motivated from Theorem \ref{theorem, maximalConjugacy}, is the following: in the setting of general negatively curved metrics on the $n$-dimensional closed manifold $M$, when restricting to the Mather set $\mathcal{M}(g_1,g_2)$, can the H\"older orbit equivalences described in Theorem \ref{theorem, maximalConjugacy} on unit tangent bundles be induced from some Lipschitz maps defined on the base manifold $M$? 

In the Teichm\"uller space $\mathcal{T}(S)$, the Lipschitz maps that maximally stretch measured geodesic laminations in the Mather set are \emph{best Lipschitz maps} between hyperbolic surfaces. Given $g_1,g_2\in R^-(M)$, a \emph{best Lipschitz map} is a Lipschitz map from $(M,g_1)$ to $(M,g_2)$ whose Lipschitz constant equals the \emph{least Lipschitz constant} $L(g_1,g_2)$, which is defined as the infimum of Lip\-schitz constants $\Lip (f, g_1,g_2)$ among all Lipschitz maps $f \colon(M,g_1)\to (M,g_2)$ homotopic to the identity. Best Lipschitz maps between $(M,g_1)$ and $(M,g_2)$ always exist for compactness reasons.

  When $g_1,g_2$ are hyperbolic metrics on a closed surface $S$, the maximal stretch $S(g_1,g_2)$ exhibits surprising relations with the least Lipschitz constant $L(g_1,g_2)$. A fundamental result of Thurston \cite{Th98} shows that the equality $S(g_1,g_2)= L(g_1,g_2)$ always holds.  Moreover, as already partially mentioned in the last subsection, any best Lipschitz map $f \colon  (S,g_1)\to (S,g_2)$, homotopic to the identity, always maximally stretches every measured geodesic lamination $\lambda$ in the Mather set, in the sense that each leaf of $\lambda_{g_1}$ is mapped by $f$ to a corresponding leaf of $\lambda_{g_2}$ with each leaf being linearly stretched by $L(g_1,g_2)$.

In the setting of general negatively curved metrics on $M$, it is straightforward to see that $S(g_1,g_2) \leq L(g_1,g_2)$ for $g_1,g_2\in R^-(M)$ (see Proposition \ref{prop,LgeqS}). However, equality may fail even when $M$ is a surface: There exist examples of variable negatively curved metrics $g_1$ and $g_2$ where the strict inequality $S(g_1,g_2)< L(g_1,g_2)$ holds (see Appendix \ref{ExampleS<L}).

As a more flexible alternative to the best Lipschitz constant of Lipschitz maps, we introduce a new constant that takes into account both Lipschitz maps and invariant measures. Specifically, for a Lipschitz map $f$, we consider its average with respect to a  $\phi^{g_1}$-invariant probability measure $m$ as follows,
  \begin{align*}
L_m(f) := \int_{S^{g_1}M} \norm{Df(v)}_{g_2} dm(v).
 \end{align*}

Conceptually, this integral is the ``local Lipschitz constant" of $f$ weighted by the $\phi^{g_1}$-invariant probability measure $m$.  We  define the \emph{ $m$-weighted least Lipschitz constant } $L_m(g_1,g_2)$ as $$L_m(g_1,g_2)=\inf\limits_{f\in \Lip_{\id}(M, g_1, g_2)}\int_{S^{g_1}M} \norm{Df(v)}_{g_2} dm(v),$$
 where $\Lip_{\id}(M, g_1, g_2)$ denotes the space of all Lipschitz maps from $(M,g_1)$ to $(M,g_2)$ that are homotopic to the identity. By definition, it is clear that $L_m(g_1,g_2)\leq L(g_1,g_2)$ regardless of the choice of invariant measures.

 We have the following characterization of $L_m(g_1,g_2)$.

\begin{MainThm}[Corollary \ref{cor,MainThm3}] \label{mainThm3}
 Suppose $g_1,g_2 \in R^-(M)$. For any maximally stretched measure, we have
\begin{align*} 
S(g_1, g_2) \leq L_m(g_1,g_2).
\end{align*}
Furthermore, if for some maximally stretched measure $m_0$, there exists  $f_0 \in \Lip_{\id}(M, g_1, g_2)$  such that $$S(g_1,g_2) =\int_{S^{g_1}M} \norm{Df_0(v)}_{g_2} dm_0(v)=L_{m_0}(g_1,g_2) ,$$
then  $f_0$ maps any $g_1$-geodesic in the support of $m_0$ to a corresponding $g_2$-geodesic (up to parametrization).
\end{MainThm}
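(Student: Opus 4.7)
For the first assertion, I would fix any $f\in \Lip_{\id}(M,g_1,g_2)$ and compare, for each closed $g_1$-geodesic $\gamma^{g_1}$ in class $[\gamma]$, the length ratio $r_{g_1,g_2}([\gamma])$ with the average of $\|Df\|_{g_2}$ along the orbit. Because $f$ is homotopic to the identity, $f\circ\gamma^{g_1}$ is freely homotopic to the $g_2$-geodesic representative, giving
$$\ell_{g_2}([\gamma]) \leq \ell_{g_2}(f\circ\gamma^{g_1}) = \int_0^{\ell_{g_1}(\gamma)}\|Df(\dot\gamma^{g_1}(s))\|_{g_2}\,ds.$$
Dividing by $\ell_{g_1}(\gamma)$ rewrites this as $r_{g_1,g_2}([\gamma]) \leq \int_{S^{g_1}M}\|Df\|_{g_2}\,d\delta_v^{g_1}$. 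A maximally stretched measure $m$ is by definition a weak-$*$ limit of such $\delta_{v_n}^{g_1}$ with $r_{g_1,g_2}([\gamma_n]) \to S(g_1,g_2)$, so passing to the limit yields $S(g_1,g_2)\leq \int\|Df\|_{g_2}\,dm$ and hence $S(g_1,g_2)\leq L_m(g_1,g_2)$ after taking the infimum over $f$. The one subtlety is that $\|Df\|_{g_2}$ is only a bounded Borel function, which I would handle either by approximating $f$ uniformly by $C^1$ maps with controlled derivative norms, or by replacing $\|Df\|_{g_2}$ with the upper semicontinuous metric differential $\limsup_{t\to 0^+}d_{g_2}(f(\pi\phi_t^{g_1}v),f(\pi v))/t$, which agrees with it $m$-almost everywhere.

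For the rigidity statement, assume $\int\|Df_0\|_{g_2}\,dm_0 = S(g_1,g_2)$. The strategy is to reduce to an ergodic component and then analyze $\tilde f_0\circ\tilde\gamma_v$ in the universal cover. Since the maximally stretched measures coincide with the maximum level set of the affine geodesic-stretch functional from \cite{GKL22}, hence form a face of the simplex of invariant probability measures, every ergodic component $m_\alpha$ of $m_0$ is itself maximally stretched; combined with the first assertion and the averaging identity, this forces $\int\|Df_0\|_{g_2}\,dm_\alpha = S(g_1,g_2)$ for $\mu$-a.e.\ $\alpha$, so I may assume $m_0$ is ergodic. Lift $f_0$ to an equivariant $\tilde f_0:\tilde M\to\tilde M$, for which $\tilde f_0-\id$ is uniformly bounded since $f_0$ is homotopic to the identity. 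For $m_0$-a.e.\ $v$, Birkhoff gives $\tfrac{1}{T}\ell_{g_2}(\tilde f_0\circ\tilde\gamma_v|_{[0,T]})\to S(g_1,g_2)$; this length bounds $d_{g_2}(\tilde f_0(\tilde\gamma_v(0)),\tilde f_0(\tilde\gamma_v(T)))$ from above, while boundedness of $\tilde f_0-\id$ combined with the ergodic identity $\tfrac{1}{T}d_{g_2}(\tilde\gamma_v(0),\tilde\gamma_v(T))\to S(g_1,g_2)$ (a hallmark of maximally stretched measures) supplies the matching lower bound, so the excess of image length over endpoint distance along $[0,T]$ is $o(T)$. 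By $\phi^{g_1}$-invariance of $\supp m_0$, the same holds from every base time, and negative curvature of $g_2$ (stability of geodesics) then identifies $\tilde f_0\circ\tilde\gamma_v$, as a set, with the unique $g_2$-geodesic joining its endpoints at infinity. The conclusion extends to all $v\in\supp m_0$ by density of $m_0$-generic orbits and continuity of $f_0$.

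The principal obstacle is the last step: asymptotic length-distance matching alone does not preclude sublinear wiggles of $\tilde f_0\circ\tilde\gamma_v$ around a geodesic. Both the $\phi^{g_1}$-invariance of $\supp m_0$ (to reposition the base time arbitrarily) and the negative curvature of $g_2$ are used essentially to upgrade asymptotic equality to a pointwise geodesic identification. A cleaner alternative would be to combine Theorem~\ref{theorem, maximalConjugacy} with a transfer from the flow-side orbit equivalence back to a base-manifold map, but this would require additional regularity arguments.
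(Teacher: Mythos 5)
Your proposal has two genuine gaps, one in each half. For the first inequality, your limit passage goes the wrong way. You correctly obtain $r_{g_1,g_2}([\gamma_n])\le \int \|Df\|_{g_2}\,d\delta^{g_1}_{v_n}$, but to conclude $S(g_1,g_2)\le \int\|Df\|_{g_2}\,dm$ from $\delta^{g_1}_{v_n}\rightharpoonup m$ you need the functional $m\mapsto\int\|Df\|_{g_2}\,dm$ (equivalently $m\mapsto \int L_{g_2}(f(c^{g_1}_v[0,1]))\,dm$) to be \emph{upper} semicontinuous, whereas it is only \emph{lower} semicontinuous: this is exactly what Lemma \ref{lem,correctCompactness} and Proposition \ref{prop,PropertiesG3} establish, and it yields $\int\|Df\|_{g_2}\,dm\le\liminf_n\int\|Df\|_{g_2}\,d\delta^{g_1}_{v_n}$, which says nothing about the desired comparison. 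Neither proposed repair closes this: uniform $C^1$-smoothing only controls the global Lipschitz constant, so it degrades the bound to $S\le\Lip(f,g_1,g_2)$ rather than $S\le\int\|Df\|_{g_2}\,dm$; and the directional $\limsup$ difference quotient is not upper semicontinuous (an infimum over $\delta$ of suprema of continuous functions need not be), while the genuinely u.s.c.\ local Lipschitz constant $\Lip_x(f)$ overestimates $\|Df(v)\|_{g_2}$ and so again gives only the weaker bound. The paper avoids the limit of measures entirely: Lemma \ref{lem, GeodesicStretchLipStretch} applies the subadditive/Birkhoff ergodic theorem directly to $l_f(v,t)=d_{g_2}(\widetilde f\pi(\widetilde v),\widetilde f\pi(\phi^{g_1}_t\widetilde v))$, uses boundedness of $d_{g_2}(x,\widetilde f(x))$ to identify the time average with $I_m(g_1,g_2,v)$, and then bounds the Busemann derivative $g_2(B^{g_2}(\widetilde f\pi \widetilde v,\widetilde v^{g_1}_+),D\widetilde f(\widetilde v))\le\|Df(v)\|_{g_2}$ pointwise under the integral. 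You should adopt that route (or prove upper semicontinuity along your specific approximating sequence, which is false in general).

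For the rigidity statement, the obstacle you flag yourself is fatal to your argument as written: knowing that $L_{g_2}(\widetilde f_0\circ\widetilde\gamma_v|_{[0,T]})-d_{g_2}(\widetilde f_0\widetilde\gamma_v(0),\widetilde f_0\widetilde\gamma_v(T))=o(T)$ allows the image curve to drift a sublinear but unbounded distance from any geodesic, and even fellow-travelling at bounded distance would not make the image \emph{equal} to a geodesic. Negative curvature and reparametrizing the base time do not upgrade an asymptotic $o(T)$ estimate to the exact pointwise identification you need. The paper's proof of Theorem \ref{prop,GeodesicToGeodesic} is structurally different: the equality case of Lemma \ref{lem, GeodesicStretchLipStretch} forces, for $m_0$-a.e.\ $v$, the \emph{exact} identity $b^{g_2}_{v_+}(\widetilde f_0(c^{g_1}_{\widetilde v}(0)),\widetilde f_0(c^{g_1}_{\widetilde v}(1)))=L_{g_2}(\widetilde f_0(c^{g_1}_{\widetilde v}[0,1]))$ on every unit segment; since Busemann increment $\le$ distance $\le$ length, each image of a unit segment is a distance-realizing arc, and concatenation over all $t$ makes the whole image a geodesic. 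The extension from a.e.\ $v$ to all of $\supp m_0$ then uses continuity of the Busemann cocycle against lower semicontinuity of length, not density of generic orbits alone. Your ergodic-decomposition reduction is correct but unnecessary once the argument is run at the level of the exact local identity.
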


The Lipschitz map $f_0$ in Theorem \ref{mainThm3} is called a \emph{$m_0$-weighted best Lip\-schitz map} with respect to the maximally stretched measure $m_0$. It shares the following common feature with the usual best Lipschitz maps in the Teichm\"uller space $\mathcal{T}(S)$: it maps geodesics in the support of a maximally stretched measure to geodesics in the target space.

 For any pair of metrics $g_1,g_2\in R^-(M)$ the functional  $f \mapsto L_m(f)$ is lower-semicontinuous and convex (see 
 Proposition \ref{prop,PropertiesG3} and Proposition \ref{prop,PropertiesG}). Furthermore, if we can restrict  $L_m$  to Lipschitz maps with a uniformly bounded Lipschitz constant (see Proposition \ref{prop,infAchieved}), then the infimum $L_m(g_1,g_2)$ is achieved by some Lipschitz maps.  However, in general, we do not know whether the infimum can be achieved by a Lipschitz map. Moreover, it remains unclear to us whether there always exists a maximally stretched measure $m_0$ for which the equality
 $S(g_1, g_2) = L_{m_0}(g_1,g_2)$ holds (see also Question \ref{question: WeightedLipMap} in Appendix \ref{question: open question}).

\subsection{The Mather set and the stretch locus}
\hfill\\
Weighted best Lipschitz maps also provide new tools for understanding the Mather set and the \emph{stretch locus}. In \cite{GK17}, Gu\'{e}ritaud and Kassel introduce the notion of \emph{stretch locus}, originally in the context of hyperbolic $n$-spaces $\mathbb{H}^n$. For our purpose, we explain this concept in the context of general negatively curved metrics: Given $g_1,g_2\in R^-(M)$, the stretch locus is a subset of $M$ that is maximally stretched by every best Lipschitz map from $(M,g_1)$ to $(M,g_2)$. More precisely, given a best Lipschitz map $f \colon  (M, g_1) \to (M,g_2)$, its stretch locus $E_f(g_1,g_2)$ is the set of points $x$ such that the restriction of $f$ to any neighborhood of $x$ achieves the Lipschitz constant $L(g_1,g_2)$. The \emph{stretch locus} $E(g_1,g_2)$ is then defined as the intersection of the stretch loci of all best Lipschitz maps (see Definition \ref{defn, stretchLocus}). 

In the setting of constant negatively curved (not necessarily compact) manifolds of dimension $\geq 2$, Gu\'{e}ritaud and Kassel proved in \cite{GK17}, when $L(g_1,g_2)>1$, the equality $L(g_1,g_2)=S(g_1,g_2)$ still holds by showing that the stretch locus $E(g_1,g_2)$ forms a maximally stretched geodesic lamination (see Theorem \ref{GK, geodesicLamination} for details). This generalizes Thurston's results in \cite{Th98} to higher dimensions in the constant negative curvature setting. Motivated by their work and as a corollary of Theorem \ref{mainThm3}, we characterize a relation between the Mather set $\mathcal{M}(g_1,g_2)$ and the stretch locus $E(g_1,g_2)$ in the setting of $g_1,g_2\in R^-(M)$ and when the maximal stretch equals the least Lipschitz constant, i.e. $S(g_1,g_2)=L(g_1,g_2)$, where $M$ is assumed to be closed and of dimension $n\geq 2$ as usual.

\begin{MainThm}[see Proposition \ref{proposition,MatherStretchLocus}] \label{thm,ProjMatherStretchLocus}
   Suppose for $g_1,g_2\in R^{-} (M)$, we have
   $$S(g_1,g_2)= L(g_1,g_2).$$
Then the projection of the Mather set $\mathcal{M}(g_1,g_2)$ onto $M$ is contained in the stretch locus from $g_1$ to $g_2$, that is,
     $$\pi(\mathcal{M}(g_1,g_2)) \subset E(g_1,g_2),$$
     where $\pi \colon S^{g_1}M \to M$ is the projection map.
\end{MainThm}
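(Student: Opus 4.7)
The plan is to combine the hypothesis $S(g_1,g_2)=L(g_1,g_2)$ with Theorem \ref{mainThm3} to force, for every best Lipschitz map and every maximally stretched measure, the local stretch factor to attain $L(g_1,g_2)$ on a set of full measure, and then upgrade this to a pointwise statement about neighborhoods of projected Mather vectors.

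First I would fix a best Lipschitz map $f\in\Lip_{\id}(M,g_1,g_2)$, so $\Lip(f,g_1,g_2)=L(g_1,g_2)$, and a maximally stretched probability measure $m\in MS(g_1,g_2)$. By Theorem \ref{mainThm3} one has $S(g_1,g_2)\leq L_m(g_1,g_2)$, while by the very definition of $L_m(g_1,g_2)$ as an infimum and the pointwise bound $\|Df(v)\|_{g_2}\leq \Lip(f,g_1,g_2)$, also $L_m(g_1,g_2)\leq L_m(f)=\int_{S^{g_1}M}\|Df(v)\|_{g_2}\,dm(v)\leq \Lip(f,g_1,g_2)$. Under the hypothesis, the chain
\begin{equation*}
L(g_1,g_2)=S(g_1,g_2)\leq L_m(g_1,g_2)\leq L_m(f)\leq \Lip(f,g_1,g_2)=L(g_1,g_2)
\end{equation*}
collapses to equalities. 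Since $m$ is a probability measure with integrand bounded above by $L(g_1,g_2)$, I conclude
\begin{equation*}
\|Df(v)\|_{g_2}=L(g_1,g_2) \qquad\text{for $m$-almost every }v\in S^{g_1}M.
\end{equation*}

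Next I would promote this to the containment $\pi(\supp m)\subset E_f(g_1,g_2)$. Fix $v_0\in\supp m$ and any neighborhood $V$ of $\pi(v_0)$ in $M$. Then $\pi^{-1}(V)$ is an open neighborhood of $v_0$ in $S^{g_1}M$, so $m(\pi^{-1}(V))>0$, and the previous step supplies some $w\in\pi^{-1}(V)$ with $\|Df(w)\|_{g_2}=L(g_1,g_2)$. Interpreting $\|Df(w)\|_{g_2}$ as the directional local Lipschitz constant of $f$ at $\pi(w)$ along $w$, pairs of points inside $V$ arbitrarily close to $\pi(w)$ realize stretch ratios arbitrarily close to $L(g_1,g_2)$, forcing $\Lip(f|_V,g_1,g_2)=L(g_1,g_2)$. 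Since $V$ was an arbitrary neighborhood of $\pi(v_0)$, the definition of $E_f$ yields $\pi(v_0)\in E_f(g_1,g_2)$; and since $f$ was an arbitrary best Lipschitz map, $\pi(v_0)\in E(g_1,g_2)$. Finally, each $E_f(g_1,g_2)$ is closed (its defining condition is stable under limits of the base point), so $E(g_1,g_2)$ is closed as an intersection, and continuity of $\pi$ gives $\pi(\mathcal{M}(g_1,g_2))\subset \overline{\pi(\bigcup_{m}\supp m)}\subset E(g_1,g_2)$.

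The main delicate point I anticipate is the interpretation of $\|Df(v)\|_{g_2}$ for merely Lipschitz $f$ on sets possibly singular with respect to Lebesgue measure on $M$: the integrand in $L_m(f)$ must be the metric (directional) derivative that is defined everywhere and actually controls local stretch ratios, rather than the Rademacher derivative on a Lebesgue-full set only; otherwise the passage from $\|Df(w)\|_{g_2}=L(g_1,g_2)$ at a single $w$ to $\Lip(f|_V,g_1,g_2)=L(g_1,g_2)$ could fail. I expect the paper's definition of $L_m$ together with the properties collected around Proposition \ref{prop,PropertiesG3} to supply exactly this interpretation, so the argument should go through without further effort.
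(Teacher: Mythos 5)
Your proof is correct, and its first half (the collapsing chain $L(g_1,g_2)=S(g_1,g_2)\le L_m(g_1,g_2)\le L_m(f)\le \Lip(f,g_1,g_2)=L(g_1,g_2)$, hence $\|Df(v)\|_{g_2}=L(g_1,g_2)$ for $m$-a.e.\ $v$) is exactly the paper's starting point in Proposition \ref{proposition,MatherStretchLocus}. Where you diverge is the passage from the a.e.\ statement to membership in $E_f(g_1,g_2)$: the paper invokes Theorem \ref{prop,GeodesicToGeodesic} to show that the lift of $f$ maps \emph{every} $g_1$-geodesic in the support of the maximal current to the corresponding $g_2$-geodesic with exact speed $L(g_1,g_2)$ (upgrading the a.e.\ equality to the whole support via lower semicontinuity of the length functional), and then reads off the local Lipschitz constant from this geodesic-to-geodesic structure. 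You instead use the softer observation that $\supp m$ meets every neighborhood $\pi^{-1}(V)$ of $v_0$ in positive measure, so $V$ contains a point where the directional derivative along the flow attains $L(g_1,g_2)$, which already forces $\Lip(f|_V,g_1,g_2)=L(g_1,g_2)$. Your route is more elementary and suffices for the containment; the paper's route is heavier but yields the stronger structural fact (support geodesics are mapped to geodesics) that is used elsewhere. The delicate point you flag does resolve: by Proposition \ref{prop,whyIntegralWellDefined} and Lemma \ref{lem: Reinterpret} the integrand is the derivative of $s\mapsto f(c_v^{g_1}(s))$ at $s=0$, which by Rademacher along each geodesic together with the product structure $d\mu\times dt$ exists classically for $m$-a.e.\ $v$; at such a $w$ the difference quotient $d_{g_2}(f(c_w(t)),f(c_w(0)))/|t|$ converges to $\|Df(w)\|_{g_2}=L(g_1,g_2)$, which is all your local argument needs.
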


Generally speaking, the projection of the Mather set $\pi(\mathcal{M}(g_1,g_2))$ represents the largest measured part of the stretch locus $E(g_1,g_2)$ when $S(g_1,g_2)= L(g_1,g_2)$. In the Teichm\"uller space $\mathcal{T}(S)$, the equality $S(g_1,g_2)= L(g_1,g_2)$ always holds and the phenomenon described in Theorem \ref{thm,ProjMatherStretchLocus} is well understood --- thanks to the fact that, in this setting, stretch loci are always maximally stretched geodesic laminations (\cite[Section 1]{Th98}, \cite[Section 5]{GK17}). However, for general $g_1,g_2\in R^-(M)$, the situation is more mysterious: on the one hand, we do not yet know how large  the subset of pairs of metrics in $R^-(M)$ is for which the equality $S(g_1,g_2)= L(g_1,g_2)$ holds, except some partially understood examples provided in the next subsection. On the other hand, the condition $S(g_1,g_2)= L(g_1,g_2)$ does not imply (see Example \ref{MatherNotLamination}) that the stretch locus $E(g_1,g_2)$ is a lamination, even though it contains the projection of the Mather set. This suggests new complexity in the structure of stretch loci beyond the classical setting.

 \subsection{Curvature bounds which imply that the stretch locus is a geodesic lamination.} 
 \label{subsection StrechLocus}
\hfill\\
From the picture of the Teichm\"uller Theory, it may be appealing at the beginning to guess that the Mather set $\mathcal{M}(g_1,g_2)$  always projects to some geodesic laminations for general negatively curved metrics $g_1,g_2$. However as already mentioned in previous subsections, this is indeed not the case. Even when $M$ is a closed surface, and two metrics $g_1,g_2$ are chosen to be conformal and ``near"  hyperbolic metrics, the Mather set $\mathcal{M}(g_1,g_2)$ can contain countably many closed orbits with self-intersections (see Example \ref{MatherNotLamination}).

Nevertheless, the methods of \cite{GK17} generalize partially to $R^-(M)$ and give many examples in the variable negative curvature metric setting such that the stretch loci $E(g_1,g_2)$ and projection of the Mather sets $\pi(\mathcal{M}(g_1,g_2))$ are geodesic laminations. Their methods yield the following.

For a smooth Riemannian metric $g$ on $M$, denote the minimal upper bound and the maximal lower bound of its sectional curvature as $ K^+_g $ and $K^-_g$, respectively.

\begin{MainThm}[Theorem \ref{GK, geodesicLamination}, and  \cite{GK17}] \label{thm,GKIntro} 
Suppose $g_1,g_2 \in R^-(M)$ satisfies 
   \begin{equation}\label{eqtn,LaminationConditions}
        0 < \frac{K^-_{g_1}}{K^+_{g_2}}<L(g_1,g_2)^2, 
        \end{equation}  
    Then 
    $$S(g_1,g_2) = L(g_1,g_2).$$
    Moreover, $E(g_1,g_2)$ is a geodesic lamination.
\end{MainThm}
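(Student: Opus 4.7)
The plan is to adapt the approach of Gu\'eritaud--Kassel \cite{GK17} from the hyperbolic setting to variable negative curvature, replacing their explicit hyperbolic trigonometry by Rauch/Toponogov comparison controlled by the ratio $K^-_{g_1}/K^+_{g_2}$. First I would fix a best Lipschitz map $f \in \Lip_{\id}(M, g_1, g_2)$ with $\Lip(f) = L := L(g_1, g_2)$, whose existence follows from Arzel\`a--Ascoli. For any $x \in E_f$, the definition of the stretch locus provides pairs $(y_n, z_n) \to (x, x)$ with $d_{g_2}(f(y_n), f(z_n))/d_{g_1}(y_n, z_n) \to L$. Lifting to the universal cover, joining $y_n$ to $z_n$ by unit-speed $g_1$-geodesic segments and extracting a subsequential limit, I would obtain a complete $g_1$-geodesic $\gamma_x$ through $x$ that is linearly $L$-stretched by $f$:
\[
d_{g_2}\bigl(f\!\circ\!\gamma_x(s),\, f\!\circ\!\gamma_x(t)\bigr) = L\,|s-t|,\qquad s,t\in\mathbb{R},
\]
so that $f\!\circ\!\gamma_x$ is a reparametrized $g_2$-geodesic.

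Next I would rule out transverse intersections of two such stretched geodesics, which yields the lamination property for $E_f$. Suppose $\gamma_1, \gamma_2$ are linearly $L$-stretched $g_1$-geodesics meeting at $p$ at a $g_1$-angle $\theta>0$, and let $\theta'$ denote the $g_2$-angle at $f(p)$ between the initial velocities of $f\!\circ\!\gamma_1$ and $f\!\circ\!\gamma_2$. At a Rademacher point of $f$, applying the Lipschitz inequality to $Df_p(v_1-v_2)$ yields $\theta'\leq\theta$, while applying it to the midpoint direction $(v_1+v_2)/\|v_1+v_2\|_{g_1}$ --- whose $Df$-image has $g_2$-norm $L\cos(\theta'/2)/\cos(\theta/2)$ --- forces $\theta'\geq\theta$, hence $\theta'=\theta$. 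Writing $-a^2=K^-_{g_1}$ and $-b^2=K^+_{g_2}$, Rauch comparison bounds $d_{g_1}(\gamma_1(r),\gamma_2(r))$ above by the model distance in constant curvature $-a^2$, and $d_{g_2}(f\!\circ\!\gamma_1(r), f\!\circ\!\gamma_2(r))$ below by the model distance at parameter $Lr$ in curvature $-b^2$. Combined with the Lipschitz inequality and the asymptotics $\sinh(cr)\sim\tfrac12 e^{cr}$, sending $r\to\infty$ collapses to
\[
\tfrac{1}{b}\log\sin(\theta/2) \;\leq\; \tfrac{L}{a}\log\sin(\theta/2),
\]
i.e.\ $a\geq bL$, contradicting the hypothesis $a^2/b^2<L^2$.

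Since $E_f$ is a geodesic lamination for every best Lipschitz $f$, the stretch locus $E(g_1,g_2)=\bigcap_f E_f$ is a closed flow-invariant sublamination, non-empty by a compactness argument on $\Lip_{\id}(M,g_1,g_2)$. To establish the equality $S(g_1,g_2)=L(g_1,g_2)$ --- the inequality $\leq$ is already Proposition \ref{prop,LgeqS} --- I would apply Krylov--Bogolyubov to the geodesic flow $\phi^{g_1}$ restricted to the unit tangent bundle of $E(g_1,g_2)$ to produce an invariant probability measure $m$. Since every leaf is $L$-stretched, a generic orbit of $m$ has asymptotic $g_2$-to-$g_1$ length ratio $L$; the Anosov closing lemma then supplies closed $g_1$-geodesics $\gamma_n$ with $r_{g_1,g_2}([\gamma_n])\to L$, forcing $S(g_1,g_2)\geq L$.

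The main obstacle will be the Rauch comparison step: whereas \cite{GK17} use a single explicit hyperbolic identity, here one has to track the comparison error terms and verify that the leading-order asymptotic inequality closes exactly under the hypothesis $a<bL$. A secondary difficulty is the possible non-smoothness of $f$ at points of $E_f$: the angle equality $\theta'=\theta$ must be justified either by approximating $p$ with nearby Rademacher points and passing to the limit, or by reformulating the argument via finite differences so as to avoid invoking $Df_p$ directly.
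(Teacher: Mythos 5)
Your route is genuinely different from the paper's, and more ambitious: the paper does not redo the Gu\'eritaud--Kassel argument at all, but simply rescales $g_1$ to $g_1'=\lambda g_1$ with $L(g_1',g_2)>1$ and $K^-_{g_1'}\geq K^+_{g_2}$ and then invokes Theorem 5.1 and Lemma 4.6 of \cite{GK17} directly. If you want to reprove the lamination structure from scratch in variable curvature, two of your steps do not work as written.

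First, the passage from ``$x\in E_f$'' to ``a complete $g_1$-geodesic through $x$ that is linearly $L$-stretched'' does not follow from taking limits of the segments $[y_n,z_n]$: those segments collapse to the point $x$, so their limit is at best a direction at $x$, and the local Lipschitz constant is an infimum over radii, so nothing forces $f$ to realize the factor $L$ on any segment of positive length through $x$. This is precisely the hard content of Lemma 5.2 of \cite{GK17}, whose proof replaces $f$ on a small ball by its Kirszbraun--Valentine minimal extension and then shows the extension is $L$-stretched only along a unique diameter; that replacement step requires $f$ to be an \emph{optimal} map (one with $E_f(g_1,g_2)=E(g_1,g_2)$), and your stronger claim that $E_f$ is a lamination for \emph{every} best Lipschitz $f$ is not available. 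Second, your comparison at infinity uses the curvature bounds in the wrong directions: the hypothesis gives $K_{g_1}\geq K^-_{g_1}$ and $K_{g_2}\leq K^+_{g_2}$, so Toponogov yields a \emph{lower} bound on $d_{g_1}(\gamma_1(r),\gamma_2(r))$ by the $K^-_{g_1}$-model and an \emph{upper} bound on $d_{g_2}(f\gamma_1(r),f\gamma_2(r))$ by the $K^+_{g_2}$-model, whereas your squeeze needs the opposite bounds (an upper bound on $d_{g_1}$ would require controlling $K^+_{g_1}$, a lower bound on $d_{g_2}$ would require $K^-_{g_2}$, neither of which is hypothesized), so the limit $r\to\infty$ does not produce $a\geq bL$. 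The inequality that does close is the \emph{infinitesimal} one: expanding $d^2(\gamma_1(t),\gamma_2(t))=2t^2(1-\cos\theta)-\tfrac{K}{3}t^4\sin^2\theta+O(t^5)$, the one-sided bounds enter with the correct signs and, together with $\theta'=\theta$ and $d_{g_2}\leq L\,d_{g_1}$, force $-K^+_{g_2}L^4\leq -K^-_{g_1}L^2$, i.e.\ $K^-_{g_1}/K^+_{g_2}\geq L^2$, contradicting \eqref{eqtn,LaminationConditions}. Your final step (an invariant measure on the unit tangent bundle of the lamination has geodesic stretch $L$, whence $S\geq L$) is fine and is essentially what Lemma 4.6 of \cite{GK17} provides.
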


In particular, their conditions for $E(g_1,g_2)$ (and hence $\pi(\mathcal{M}(g_1,g_2))$) to be a geodesic lamination is an open condition in $R^-(M)\times R^-(M)$ (see Proposition \ref{prop,LaminationConditions}). Therefore, when $M=S$ is a closed surface, one can find many examples in $R^-(S)$ for which the stretch locus $E(g_1,g_2)$ is a geodesic lamination. They are obtained by perturbing hyperbolic metrics away from the Teichm\"uller space $\mathcal{T}(S)$ (see Remark \ref{remark,LaminationSurvive}). On the other hand, we also know that the conditions provided by \cite{GK17} are not necessary conditions for $E(g_1,g_2)$ to be a geodesic lamination (see examples in Appendix \ref{appendix, C}). It remains an interesting question to understand in this general setting, what are necessary and sufficient conditions for the stretch locus $E(g_1,g_2)$ (and $\pi(\mathcal{M}(g_1,g_2))$) to be geodesic laminations.

\subsection{Other related works}
\hfill\\
 Recently, Georgios Daskalopoulos and Karen Uhlenbeck study in a sequence of papers  \cite{DU22}  \cite{DU24II} \cite{DU24} infinite harmonic maps as best Lipschitz maps in variable curvature metrics setting. They construct Lie algebra valued transverse measures as dual to best Lipschitz maps. It would be interesting to understand the relation between maximally stretched measures of this note and their work. 
 
 In addition to Gu\'{e}ritaud and Kassel's paper \cite{GK17} on best Lipschitz maps and maximal stretch in the setting of equivariant representations in $\mathbb{H}^n$, there has recently been many interests in higher rank symmetric spaces and higher Teichm\"uller theory. Thurston's asymmetric metric has been introduced in higher Teichm\"uller spaces \cite{CDPW24}. Our note might provide useful ideas in further study of these directions.

\subsection{Structure of the paper}
\begin{itemize}
    \item  In Section \ref{section, GeodesicStretch} we recall basic facts about the geodesic stretch which is a major tool of this note. We start by introducing the geodesic stretch with respect to invariant probability measures. Then, after introducing geodesic currents, we connect the theory of geodesic currents and intersections with the geodesic stretch.  We also recall in Section \ref{section, GeodesicStretch} orbit equivalence of geodesic flows on manifolds of negative curvature. 
    \item   In Section \ref{sec:MC}, we introduce a special geodesic stretch, called the maximal stretch. This leads to the study of Mather sets and Aubry sets in Section \ref{subsection, MCMather}.  In Section \ref{subsection, propMCMather} we collect properties of the Mather set that are important in this paper. Finally, in Subsection \ref{subsection AubrySetPeierlBarrier}, we introduce the Peierls barrier and its zero level set. We then explain their relations with the Aubry set and the Mather set.
    \item  Section \ref{section,thermodynamics} is devoted to the study of thermodynamic properties of the Mather set. We use equilibrium states theory to investigate measures of maximal entropy for the geodesic flow on the Mather set. 
     \item  Starting from Section \ref{section, StretchLip}, we turn our attention to Lipschitz maps. Section \ref{section, StretchLip} investigates the relation between geodesic stretch and Lipschitz maps with their Lipschitz constants weighted by some invariant probability measure. 
      \item  In Section \ref{section,BLM}, we review the theory of best Lipschitz maps and stretch loci of best Lipschitz maps from \cite{GK17}. We relate the stretch locus of best Lipschitz maps with the Mather set. We give examples for which both the stretch locus and the projection of the Mather set are not geodesic laminations.
       \item Finally in the last Section \ref{section,StretchLipVol}, we collect many known inequalities about maximal stretch, least Lipschitz constants and volumes. The equalities in these results are related to interesting rigidity statements. 
\end{itemize}

\subsection{Some notations}
\hfill\\
We collect here some commonly used notations in the note for the readers' convenience. 

\begin{table}[h]
\centering
\begin{tabular}{ll}
\hline
Notation & Meaning \\
\hline
$M$ & smooth closed manifold admitting Riemannian metrics\\
    & of negative sectional curvature \\
$S$ & orientable smooth closed topological surface of genus $\geq 2$ \\    
$\widetilde{M}$ & universal cover of $M$\\
$\Gamma$ &   fundamental group $\pi_1(M)$ \\
$[\Gamma]$ & space formed by conjugacy classes of elements in the group $\Gamma$,\\ 
& equivalently, the space of free homotopy classes of closed curves on $M$\\  
$[\gamma]$ & element in $[\Gamma]$\\
$TM$ & tangent bundle of $M$\\
$S^gM$ &  unit tangent bundle of $M$ with respect to a metric $g$\\
$\pi$ &  canonical projection from $TM$ to $M$ \\
$\phi^{g}$ & geodesic flow of $g$ \\
$R^-(M)$ & set of smooth negatively curved Riemannian metrics on $M$\\
$R^-_1(M)$ & subset of metrics with topological entropy $1$\\
$\mathcal{M} (\phi^{g})$ & space of $\phi^{g}_t$-invariant Borel measures \\
$\mathcal{M}^1 (\phi^{g})$ & subset of $\phi^{g}_t$-invariant probability measures\\
$X_g$ & infinitesimal generator of the geodesic flow $\phi_t^g$,\\ &given by $X_g(v)=\left.\frac{d}{dt} \right |_{t=0} \phi^g_t(v)$\\
$\partial^{(2)} \widetilde M$ & space of unparametrized oriented geodesics of $\widetilde{M}$,\\
& identified with the set 
$
\partial^{(2)} \widetilde M = \partial \widetilde M \times \partial \widetilde M \setminus \diag
$\\
$\iota$ & flip map on $\partial^{(2)} \widetilde{M}$ exchanging two factors of $\partial^{(2)} \widetilde{M}$,\\
& or involution on $S^{g_1}M$ given by $\iota(v)=-v$\\
$\mathcal{C}(\Gamma)$ & space of (oriented) geodesic currents\\
\hline
\end{tabular}
\end{table}

\begin{table}[h]
\centering
\begin{tabular}{ll}
\hline
Notation & Meaning \\
\hline
$\mathcal{PC}(\Gamma)$ &  projectivized space of (oriented) geodesic currents\\
$\mathcal{C}^{sym}(\Gamma)$ & space of unoriented geodesic currents\\
$L_g(\alpha)$ & $g$-length of a general Lipschitz curve $\alpha$ \\
$\ell_{g}([\gamma])$ &  $g$-length of the uniquely determined closed $g$-geodesic $\gamma^{g}$ \\
&up to parametrization in the free homotopy class $[\gamma]$\\
\hline
\end{tabular}
\vspace{0.1cm}
\caption{Notation for some common objects.}
\end{table}

\subsection*{Acknowledgements.}
We are grateful to much help of Stefan Nemirovski during the preparation of this note. The first author thanks Huiping Pan for useful explanations concerning the Teichm\"uller theory. We also thank Fanny Kassel for pointing out nice examples in Section 10.3 and Section 10.4 of their paper (\cite{GK17}). X.D. acknowledges funding by the European Research Council under ERC-Advanced grant 101095722.\\
 Both authors were partially supported by the German Research Foundation (DFG),
CRC TRR 191, \textit{Symplectic structures in geometry, algebra and dynamics.}\\

\section{Geodesic stretch} \label{section, GeodesicStretch}
\subsection{Basics on geodesic stretch} \label{subsection, BasicGeodesicStretch}
\hfill\\
Throughout this note, let $M$ be a closed oriented $n$-dimensional manifold admitting a Riemannian metric $g$ of negative sectional curvature. We will denote $\widetilde{M}$ as the universal covering of $M$ and $\Gamma$ the group of covering transformations on $\widetilde{M}$, identified with $\pi_1(M)$.
To simplify notation, we will always denote the lift of a metric $g$ on $\widetilde{M}$ as $g$ as well. 

Fix two Riemannian metrics $g_1$ and $g_2$ on $M$.  For each vector $v$
in the unit tangent bundle $S^{g_1}M$ of $g_1$, let us
 consider the arclength parametrized geodesic $c^{g_1}_v(s)$ on $M$ with respect to the Riemannian metric $g_1$, with initial condition
$\dot{c}^{g_1}_v(0) = v$.  We denote $\widetilde{c}^{g_1}_v(s)$ to be one of the lifts of $c^{g_1}_v(s)$ on the universal cover $(\widetilde{M},g_1)$. Then for each $t\in \R$, we let
 $$a(v,t) := d_{g_2}(\widetilde{c}^{g_1}_v(0),\widetilde{c}^{g_1}_v(t)) $$ be the $g_2$-distance of the endpoints of the
segment $\{\widetilde{c}^{g_1}_v(s),
0 \le s \le t \}$ with respect to $g_1$.

 It is an easy consequence of the triangle 
inequality that
the map $a(v,t)$ is a subadditive cocycle for
the geodesic flow $\phi^{g_1}$ induced by the metric $g_1$, i.e.
for all $v \in S^{g_1}M$ and 
$t_1, t_2 \in \R$,
$$a(v, t_1 + t_2) \le a(v, t_1 ) + a(\phi^{g_1}_{t_1} v, t_2).$$

Let us denote the space of $\phi^{g_1}_t$-invariant measures as $\mathcal{M} (\phi^{g_1})$ and the subset of $\phi^{g_1}_t$-invariant probability measures as $\mathcal{M}^1 (\phi^{g_1})$.   Suppose $m$ is an element in $\mathcal{M}^1 (\phi^{g_1})$. Then the subadditive ergodic 
theorem \cite[Theorem 10.1]{Wa82} implies that the limit
$$
I_m(g_1, g_2, v) := \lim_{t \to \infty} \frac{a(v,t)}{t}
$$
exists for $m$ almost every $v \in S^{g_1}M$ and defines a 
$m$-integrable function on $S^{g_1}M$ invariant under the geodesic
flow $\phi^{g_1}$. Furthermore, we have:
$$
\int_{S^{g_1}M} I_m (g_1, g_2, v) d m  =  \lim_{t \to \infty} \frac{1}{t} \int_{S^{g_1}M}
a(v,t) dm = \inf_{t > 0} \frac{1}{t} \int_{S^{g_1}M} a(v,t) dm\;.
$$
We refer to \cite{Kn95} for details.
\begin{definition}
Let $g_1, g_2$ be two Riemannian metrics on $M$ and let $m$ be an element in $\mathcal{M}^1 (\phi^{g_1})$. Then 
$$
I_m (g_1, g_2):= \int_{S^{g_1}M} I_{m}(g_1, g_2, v) d m(v)
$$
is called the \emph{geodesic stretch} of $g_1, g_2$ with respect to $m$. 
\end{definition}
Now we want to calculate the geodesic stretch of closed Riemannian manifolds  of negative curvature. We denote $R^-(M)$ to be the set of smooth negatively curved Riemannian metrics on $M$. Since for two different metrics $g_1,g_2$ in $R^-(M)$, the visual boundary $\partial_{g_1} \widetilde M$ is identified with the visual boundary of $\partial_{g_2} \widetilde M$, we denote the boundary simply by $\partial\widetilde M$. Moreover, this identification is H\"older continuous with respect to the visual metrics induced by both $g_1$ and $g_2$. The boundary $\partial\widetilde M$ has a natural H\"older structure (\cite[Chapter III.H.3]{BH99}).

For $\xi \in  \partial \widetilde M$ and $x_0 \in \widetilde M$, the Busemann function $x \mapsto b_\xi^g(x_0, x)$ is defined by
$$
 b_\xi^g(x_0, x) = \lim_{z \to \xi} d_g(x_0, z) - d_g(x,z).
$$
The Busemann functions are $C^2$ convex functions. They are also 1-Lipschitz. Furthermore, the following relations hold:
\begin{enumerate} \label{busemann cocycle}
    \item  \label{prop1bus} $b_\xi^g(x_0, x) =  b_\xi^g(x_0, x_1)+  b_\xi^g(x_1, x) \quad \quad $ (\text{cocycle property})
\item \label{prop2bus} $b_{\gamma \xi}^g(\gamma x_0, \gamma x )=  b_\xi^g(x_0, x) $ \; \text{for all}  \; $\gamma \in \Gamma$ \quad ($\Gamma $ \text{invariance})
\end{enumerate}
We call the map $\partial\widetilde M \times \widetilde M \times  \widetilde M  \to \R $ with $(\xi, x_0,x_1) \mapsto 
b_\xi^g(x_0, x_1)$ the Busemann cocycle. Besides being $C^2$ in the variables $x_0$ and $x_1$, the map is H\"older 
in $\xi$ with respect to the visual metric (\cite[Section 2.6]{Bou95}).

We define $B^g(x, \xi) : = \grad_{x} b_\xi^g(x_0,x)$.  This is well defined by property (\ref{prop1bus}). Regardless of choices of $x$ and $\xi$, we have $||B^g(x, \xi)||_g=1$.
Furthermore, for $v \in S^g \widetilde M $, we set
$v^g_\pm = \widetilde{c}^{g}_v(\pm \infty)  \in \partial \widetilde M$, where $\widetilde{c}^{g}_v(s)$ is the $g$-arclength parameterized geodesic on $\widetilde{M}$, with initial condition
$\dot{\widetilde{c}}^g_v(0) = v$. We write $\pi \colon T \widetilde M \to \widetilde M$ and $\pi \colon T  M \to M$ as the canonical projections from the tangent bundle to its base.

The set 
$$
\partial^{(2)} \widetilde M = \partial \widetilde M \times \partial \widetilde M \setminus \diag
$$
corresponds to the set of unparametrized oriented geodesics on $\widetilde M$ for any metric of negative curvature on $M$.  
Given $g \in R^-(M) $ and $p_0$ a fixed reference point in $\widetilde M $, the map $H^g_{p_0}: S^g \widetilde M \to  \partial^{(2)} \widetilde M \times \R$ given by
\begin{equation} \label{Hopfmap}
H^g_{p_0}(v) = (v^g_-, v^g_+,  b_{ v^g_+}^{\scriptscriptstyle g}(p_0, \pi (v)))
\end{equation}
is a H\"older homeomorphism which is called the \emph{Hopf parametrization} of $S^g \widetilde M$ (see also \cite{ST21}).
\begin{lemma}
We have  for all $v \in S^g \widetilde M $ and $t\in \R$,
$$
H^g_{p_0}( \phi_t^gv) =  (v^g_-, v^g_+,  b_{v^g_+}^{\scriptscriptstyle g}(p_0, \pi (v)) +t),
$$
and for $ \gamma \in \Gamma$,
$$
H^g_{p_0}( \gamma_* v) = \gamma_H H^g_{p_0}(v),
$$
where the action of $\Gamma$ on $\partial^{(2)} \widetilde M \times \R$ is defined by
$$
\gamma_H (\xi_-, \xi_+, t) = ( \gamma \xi_-, \ \gamma \xi_+, \ t+ b_{\xi_+}^{g}(\gamma^{-1}p_0,  p_0) ).
$$
\end{lemma}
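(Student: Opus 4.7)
The plan is to verify both identities by direct manipulation using only the cocycle property (\ref{prop1bus}), the $\Gamma$-invariance (\ref{prop2bus}), and the elementary observation that the Busemann function at $v^g_+$ measures signed arclength along the geodesic $\widetilde c^g_v$. Everything reduces to bookkeeping of Busemann cocycles; no genuine analysis is required.

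For the first identity, I would first note that the endpoints $v^g_\pm$ depend only on the unparametrized oriented geodesic through $v$, so $(\phi^g_t v)^g_\pm = v^g_\pm$. The third coordinate of $H^g_{p_0}(\phi^g_t v)$ is $b^g_{v^g_+}(p_0, \widetilde c^g_v(t))$. I split this via the cocycle property:
\[
b^g_{v^g_+}(p_0, \widetilde c^g_v(t)) \;=\; b^g_{v^g_+}(p_0, \pi(v)) + b^g_{v^g_+}(\pi(v), \widetilde c^g_v(t)).
\]
The remaining term equals $t$ by the standard fact that for any $x \in \widetilde M$ and any geodesic ray $s \mapsto \widetilde c^g_v(s)$ going to $\xi = v^g_+$, one has $b^g_\xi(x, \widetilde c^g_v(t)) = t$ for $x = \pi(v)$; this follows from the definition $b^g_\xi(x_0,x) = \lim_{z\to\xi} (d_g(x_0,z) - d_g(x,z))$ by letting $z = \widetilde c^g_v(s)$ for $s \to \infty$ and using that $\widetilde c^g_v(s)$ is unit-speed.

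For the second identity, since $\gamma$ acts by isometry on $(\widetilde M, g)$, it sends the geodesic $\widetilde c^g_v$ to the geodesic $\widetilde c^g_{\gamma_* v}$, and hence $(\gamma_* v)^g_\pm = \gamma v^g_\pm$, and $\pi(\gamma_* v) = \gamma \pi(v)$. The third coordinate becomes $b^g_{\gamma v^g_+}(p_0, \gamma \pi(v))$. I then rewrite $p_0 = \gamma \cdot (\gamma^{-1} p_0)$ and apply the $\Gamma$-invariance property (\ref{prop2bus}) to obtain
\[
b^g_{\gamma v^g_+}(p_0, \gamma \pi(v)) \;=\; b^g_{v^g_+}(\gamma^{-1} p_0, \pi(v)),
\]
and splitting by the cocycle property at the base point $p_0$,
\[
b^g_{v^g_+}(\gamma^{-1} p_0, \pi(v)) \;=\; b^g_{v^g_+}(\gamma^{-1} p_0, p_0) + b^g_{v^g_+}(p_0, \pi(v)).
\]
Matching this with the definition of $\gamma_H$ (and noting that, under the $\Gamma$-action, the relevant horocyclic offset is read against $\xi_+ = v^g_+$ after transport, which is exactly what $b^g_{\xi_+}(\gamma^{-1} p_0, p_0)$ encodes) gives the claim.

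There is no real obstacle: the only possible pitfall is a sign or ordering slip in checking that $b^g_{v^g_+}(\pi(v), \widetilde c^g_v(t)) = +t$ (rather than $-t$), which is why I would be explicit that the Busemann function defined by the displayed limit is increasing along the geodesic towards $\xi$. Once that convention is fixed, both identities are immediate from (\ref{prop1bus}) and (\ref{prop2bus}).
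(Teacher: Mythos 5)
Your proof is correct, and it is exactly the routine verification the paper has in mind: the paper states this lemma without proof, treating it as an immediate consequence of the cocycle property (\ref{prop1bus}), the $\Gamma$-invariance (\ref{prop2bus}), and the fact that $s\mapsto b^g_{v^g_+}(\pi(v),\widetilde c^g_v(s))=s$ along the unit-speed geodesic toward $v^g_+$. Your explicit check of the sign in that last identity, and your identification of $\xi_+=v^g_+$ in the formula for $\gamma_H$, are precisely the two points where one could slip, and you handle both correctly.
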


\begin{lemma}\label{lem,Busem-distance}
Let $g_1, g_2$ be two Riemannian metrics in $R^-(M)$.
Then there is a constant $k =k(g_1, g_2)$ such that for all $v \in S^{g_1} \widetilde M$ and $\xi = v^{g_1}_+$,
$$
| d_{g_2}(\pi (v) , \pi (\phi_t^{g_1} v)) -b_\xi^{g_2}(\pi(v),  \pi(\phi_t^{g_1} v))| \le k.
$$
\end{lemma}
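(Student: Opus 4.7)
The plan is to apply the Morse stability lemma for quasi-geodesics in a Gromov hyperbolic space, comparing the $g_1$-geodesic ray $\sigma(s) := \pi(\phi^{g_1}_s v)$, $s \ge 0$, with the $g_2$-geodesic ray $\tau$ from $\pi(v)$ to $\xi$. Because $M$ is compact and both metrics are smooth, the identity $(\widetilde{M}, g_1) \to (\widetilde{M}, g_2)$ is bi-Lipschitz, with constants $A, B > 0$ depending only on the pair $(g_1, g_2)$. Hence $\sigma$, parametrized by $g_1$-arclength, satisfies
$$
A \, |s_1 - s_2| \;\le\; d_{g_2}(\sigma(s_1), \sigma(s_2)) \;\le\; B \, |s_1 - s_2|,
$$
so it is a $g_2$-quasi-geodesic ray with uniform constants.

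Since $g_2$ has strictly negative sectional curvature bounded above by some $-\kappa < 0$, $(\widetilde{M}, g_2)$ is a CAT$(-\kappa)$ Hadamard manifold and in particular Gromov hyperbolic with a constant $\delta = \delta(g_2)$. The visual boundaries for $g_1$ and $g_2$ are canonically identified (as recalled just before the lemma), so $\sigma(s) \to \xi$ in the $g_2$-compactification as $s \to \infty$. The Morse stability lemma then supplies a constant $C = C(g_1, g_2)$ such that for every $t \ge 0$ there exists $s \ge 0$ with $d_{g_2}(\sigma(t), \tau(s)) \le C$. For any point $\tau(s)$ on the $g_2$-geodesic ray to $\xi$ one reads directly from the definition that $b^{g_2}_\xi(\pi(v), \tau(s)) = d_{g_2}(\pi(v), \tau(s))$. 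Combining this with the fact that $d_{g_2}(\pi(v), \cdot)$ and $b^{g_2}_\xi(\pi(v), \cdot)$ are both $1$-Lipschitz in the $g_2$-metric, the triangle inequality yields
$$
\bigl| d_{g_2}(\pi(v), \sigma(t)) - b^{g_2}_\xi(\pi(v), \sigma(t)) \bigr|
\;\le\; 2 \, d_{g_2}(\sigma(t), \tau(s)) \;\le\; 2C,
$$
so one may take $k := 2C$. The statement is most natural for $t \ge 0$, which is also the regime in which the lemma feeds into the subadditive cocycle $a(v,t)$; the analogous bound fails for $t < 0$ since $\sigma(t)$ then moves away from $\xi = v^{g_1}_+$, and one should instead work with $v^{g_1}_-$.

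The main step requiring care is verifying that the Morse constant $C$ can be chosen independently of $v$. This is automatic here: the bi-Lipschitz constants $A, B$ depend only on $(g_1, g_2)$ by compactness of $M$, and the hyperbolicity constant $\delta$ depends only on the upper sectional curvature bound of $g_2$. No estimate specific to $v$ enters, so the final bound is truly uniform.
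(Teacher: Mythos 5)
Your proof is correct and follows essentially the same route as the paper: both compare the $g_1$-geodesic ray through $\pi(v)$ with the $g_2$-geodesic ray from $\pi(v)$ toward $\xi$ (the orbit of $B^{g_2}(\pi(v),\xi)$) via the Morse lemma with uniform quasi-geodesic constants from compactness, use that $b_\xi^{g_2}(\pi(v),\cdot)$ equals $d_{g_2}(\pi(v),\cdot)$ along that $g_2$-ray, and conclude by the $1$-Lipschitz property of both functions to obtain $k=2C$. Your caveat that the bound only holds for $t\ge 0$ is a fair observation, but it applies equally to the paper's own argument, which also implicitly works with $t\ge 0$ (the only regime in which the lemma is subsequently used).
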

\begin{proof}
For  $v \in S^{g_1} \widetilde M$, consider $w \in S^{g_2} \widetilde M$ given by  $w = B^{g_2}(\pi(v), \xi)$. Since $(\widetilde M,g_1)$ and $(\widetilde M,g_2)$ are quasi-isometric, by the Morse Lemma (\cite{Mo24}), there exists a constant $R_1=R_1(g_1,g_2)>0$ such that for each $t \ge 0$, we can find some $s(t) \in \R$ and
$$
d_{g_2}( \pi (\phi_t^{g_1} v ),\pi (\phi_{s(t)}^{g_2} w )) \le R_1(g_1, g_2). 
$$
This implies that
$$
|d_{g_2}( \pi (v), \pi (\phi_t^{g_1} v )) -d_{g_2}( \pi (v), \pi (\phi_{s(t)}^{g_2} w ))| 
\le R_1(g_1,g_2),
$$
and
$$
|b^{g_2}_\xi( \pi (v), \pi (\phi_t^{g_1} v )) -b^{g_2}_\xi( \pi (v), \pi (\phi_{s(t)}^{g_2} w ))|
 \le R_1(g_1,g_2),
$$
where the first inequality follows from the triangle inequality and the second from the fact that Busemann functions are $1$-Lipschitz.
Hence
\begin{equation*}
\begin{split}
 & | d_{g_2}(\pi (v) , \pi (\phi_t^{g_1} v))  -b_\xi^{g_2}(\pi(v),  \pi (\phi_t^{g_1} v))| \\ 
 = &
   | d_{g_2}(\pi (v) , \pi (\phi_t^{g_1} v)) - d_{g_2}( \pi (v), \pi (\phi_{s(t)}^{g_2} w ))
  +  d_{g_2} ( \pi (v), \pi (\phi_{s(t)}^{g_2} w))\\
 &  - b_\xi^{g_2}(\pi(v),  \pi(\phi_{s(t)}^{g_2} w))
  +b_\xi^{g_2}(\pi(v),  \pi(\phi_{s(t)}^{g_2} w)) -b_\xi^{g_2}(\pi(v),  \pi(\phi_t^{g_1} v))| \\
  \le  &
 | d_{g_2}(\pi (v) , \pi (\phi_t^{g_1} v)) - d_{g_2}( \pi (v), \pi (\phi_{s(t)}^{g_2} w ))| +|b_\xi^{g_2}(\pi(v),  \pi(\phi_{s(t)}^{g_2} w)) -b_\xi^{g_2}(\pi(v),  \pi(\phi_t^{g_1} v))|\\
  \le & 2 R_1(g_1, g_2) =: k(g_1,g_2).
\end{split}
\end{equation*}
\end{proof}

\begin{corollary} \label{corollary, GeodesicStretchBusemannFunction} 
Let $g_1, g_2$ be two Riemannian metrics in $R^-(M)$
and  let $m$ be a measure in $\mathcal{M}^1 (\phi^{g_1})$. Then for $m$-almost  all $v \in  S^{g_1}  M$, any of its lifts $\widetilde{v} \in  S^{g_1} \widetilde M$ satisfies 
$$
\lim_{t \to \infty} \frac{1}{t}d_{g_2} (\pi (\widetilde{v}), \pi(\phi_t^{g_1} \widetilde{v})) = \lim_{t \to \infty} \frac{1}{t}\int_0^t g_2(B^{g_2}(\pi (\phi_s^{g_1}\widetilde{v}), \widetilde{v}^{g_1}_+), \phi_s^{g_1}\widetilde{v})ds.
$$
Furthermore, 
$$
I_m(g_1,g_2) = \int_{S^{g_1}M} g_2(B^{g_2}(\pi (\widetilde{v}), \widetilde{v}^{g_1}_+), \widetilde{v}) dm.
$$
\end{corollary}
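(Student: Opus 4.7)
My plan is to establish the first equality by combining Lemma \ref{lem,Busem-distance} with the fundamental theorem of calculus applied to the Busemann function along a $g_1$-geodesic, and then obtain the second equality by invoking the Birkhoff ergodic theorem for $\phi^{g_1}$.

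First, I would use Lemma \ref{lem,Busem-distance} to replace $d_{g_2}(\pi(\widetilde v),\pi(\phi_t^{g_1}\widetilde v))$ by the Busemann quantity $b_{\widetilde v^{g_1}_+}^{g_2}(\pi(\widetilde v),\pi(\phi_t^{g_1}\widetilde v))$; the two differ by an additive constant $k=k(g_1,g_2)$ uniform in $v$ and $t$, which vanishes after division by $t$ and the limit $t\to\infty$. Then, writing $\gamma(s):=\pi(\phi_s^{g_1}\widetilde v)=\widetilde c^{g_1}_{\widetilde v}(s)$ and noting that $\gamma'(s)=\phi_s^{g_1}\widetilde v$ and $\gamma(s)^{g_1}_+=\widetilde v^{g_1}_+$ because all points of the $g_1$-geodesic share the same forward endpoint, the cocycle property of the Busemann function together with the fundamental theorem of calculus yields
\begin{equation*}
b_{\widetilde v^{g_1}_+}^{g_2}(\pi(\widetilde v),\pi(\phi_t^{g_1}\widetilde v))=\int_0^t \frac{d}{ds}\, b_{\widetilde v^{g_1}_+}^{g_2}(\pi(\widetilde v),\gamma(s))\,ds=\int_0^t g_2\bigl(B^{g_2}(\gamma(s),\widetilde v^{g_1}_+),\gamma'(s)\bigr)\,ds,
\end{equation*}
which, after dividing by $t$ and passing to the limit, gives the first claimed equality.

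For the second statement, I would introduce the integrand
\begin{equation*}
\psi(v):=g_2\bigl(B^{g_2}(\pi(\widetilde v),\widetilde v^{g_1}_+),\widetilde v\bigr),
\end{equation*}
which is well-defined on $S^{g_1}M$ by the $\Gamma$-invariance property \eqref{prop2bus} of the Busemann cocycle. Since $B^{g_2}(\cdot,\xi)$ has unit $g_2$-norm as a gradient of a Busemann function and $\widetilde v$ has unit $g_1$-norm, equivalence of metrics on the compact $M$ gives $\psi\in L^\infty(m)$ by Cauchy--Schwarz. Noting that $\psi(\phi_s^{g_1}\widetilde v)=g_2(B^{g_2}(\pi(\phi_s^{g_1}\widetilde v),\widetilde v^{g_1}_+),\phi_s^{g_1}\widetilde v)$ (again because $(\phi_s^{g_1}\widetilde v)^{g_1}_+=\widetilde v^{g_1}_+$), the Birkhoff ergodic theorem applied to the $\phi^{g_1}$-invariant probability measure $m$ gives
\begin{equation*}
\lim_{t\to\infty}\frac{1}{t}\int_0^t \psi(\phi_s^{g_1}\widetilde v)\,ds=\widetilde\psi(v)\qquad m\text{-a.e.},
\end{equation*}
with $\int\widetilde\psi\,dm=\int\psi\,dm$. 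Combining with the first equality and the definition $I_m(g_1,g_2)=\int I_m(g_1,g_2,v)\,dm$ concludes the proof.

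The only delicate point I foresee is verifying that $\psi$ descends to a well-defined (Borel) function on $S^{g_1}M$ and that the identification of the tangent vector of $\widetilde c^{g_1}_v$ with $\phi_s^{g_1}\widetilde v$ is compatible with $g_2$-pairing; both are routine given the H\"older structure on $\partial\widetilde M$ and the $\Gamma$-equivariance of the Busemann cocycle, so no real obstacle is expected.
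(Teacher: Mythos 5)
Your proposal is correct and follows essentially the same route as the paper: replace the $g_2$-distance by the Busemann increment via Lemma \ref{lem,Busem-distance} (the bounded error dies after dividing by $t$), rewrite that increment as $\int_0^t a_{g_1,g_2}(\phi_s^{g_1}\widetilde v)\,ds$ by the fundamental theorem of calculus, and then apply the Birkhoff ergodic theorem to identify $I_m(g_1,g_2)$ with $\int g_2(B^{g_2}(\pi(\widetilde v),\widetilde v^{g_1}_+),\widetilde v)\,dm$. Your extra care about the $\Gamma$-invariance and boundedness of the integrand is consistent with what the paper implicitly uses.
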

\begin{proof}
For $m$-almost  all $v \in  S^{g_1}  M$ and any lift $\widetilde{v} \in  S^{g_1} \widetilde M$ of $v$ the limit $\lim\limits_{t \to \infty} \frac{1}{t}d_{g_2} (\pi (\widetilde{v}), \pi(\phi_t^{g_1} \widetilde{v}))$ exists and is equal to $I_m(g_1, g_2, v)$. If $\xi = \widetilde{v}^{g_1}_+$, we obtain by Lemma \ref{lem,Busem-distance},
\begin{equation*}
\begin{split}
 I_m(g_1, g_2, v) =&\lim_{t \to \infty}  \frac{1}{t}d_{g_2} (\pi (\widetilde{v}), \pi(\phi_t^{g_1} \widetilde{v})) \\
=&\lim_{t \to \infty}  \frac{1}{t} b_\xi^{g_2}(\pi (\widetilde{v}),  \pi(\phi_t^{g_1} \widetilde{v}))\\
=&\lim_{t \to \infty}  \frac{1}{t} \int_0^t g_2(\grad_{\pi (\phi_s^{
 g_1}\widetilde{v})} b_\xi^{g_2}(\pi (\widetilde{v}), \pi (\phi_s^{g_1}\widetilde{v})), \phi_s^{g_1}\widetilde{v})ds\\
=&\lim_{t \to \infty}  \frac{1}{t} \int_0^t g_2(B^{g_2}(\pi (\phi_s^{g_1}\widetilde{v}),   \widetilde{v}^{g_1}_+), \phi_s^{g_1}\widetilde{v})ds.
\end{split}
\end{equation*}
By the Birkhoff ergodic theorem for invariant measures (\cite[Theorem 1.14]{Wa82}), we obtain
\begin{equation*}
\begin{split}
I_m(g_1,g_2) &=    \int_{S^{g_1}M} \lim_{t \to \infty} \frac{1}{t}d_{g_2} (\pi (\widetilde{v}), \pi(\phi_t^{g_1} \widetilde{v})) dm(v)\\
&= \int_{S^{g_1}M}  \lim_{t \to \infty} \frac{1}{t} \int_0^t g_2(B^{g_2}(\pi (\phi_s^{g_1}\widetilde{v}),   \widetilde{v}^{g_1}_+ ), \phi_s^{g_1}\widetilde{v})ds dm(v) \\
&= \int_{S^{g_1}M} g_2(B^{g_2}(\pi (\widetilde{v}), \widetilde{v}^{g_1}_+ ), \widetilde{v}) dm(v).
\end{split}
\end{equation*}
\end{proof}

\subsection{Orbit equivalence of geodesic flows on manifolds of negative curvature}
\hfill\\
In this subsection, we construct an orbit equivalence between geodesic flows with respect to different metrics in $R^-(M)$ which will be useful in the sequel. This construction is also in \cite{ST21}.

\begin{lemma} \label{lem:conj}

Let $g_1, g_2$ be two Riemannian metrics in $R^-(M)$.
Consider the map $\Psi_{g_1, g_2}: S^{g_1}\widetilde M  \to  S^{g_2}\widetilde M $ defined by $\Psi_{g_1, g_2}(v)  =w$ where 
$w \in  S^{g_2}\widetilde M $ is the unique vector with $w^{g_2}_+ = v^{g_1}_+$ and  $w^{g_2}_-= v^{g_1}_-$ and 
$b^{g_2}_{v^{g_1}_+}( \pi(v), \pi(w)) =0$. Then $\Psi_{g_1, g_2}$ is a H\"older continuous surjective map with
$$
\phi^{g_2}_{\tau(v,t)}(\Psi_{g_1, g_2}(v)) = \Psi_{g_1, g_2}( \phi_t^{g_1}(v)),
$$
where 
$$
\tau(v,t) = b^{g_2}_{v^{g_1}_+} ( \pi(v), \pi(\phi_t^{g_1}(v)) ) =\int_{0}^t g_2( B^{g_2}(\pi \phi_s^{g_1}v, v^{g_1}_+) , \phi_s^{g_1}v) ds,
$$
 for all $v \in S^{g_1}\widetilde M $. Furthermore, for all $\gamma \in \Gamma$
we have 
$$
\gamma_{\ast}\Psi_{g_1, g_2}(v) =\Psi_{g_1, g_2} (\gamma_{\ast}v),
$$ and 
${\tau(\gamma_{\ast}v,t)} = \tau(v,t)$. Therefore, the map $\Psi_{g_1, g_2}$ descends to an orbit equivalence between the geodesic flows on the quotients $S^{g_1}M$ and $S^{g_2}M$.\\ 
Moreover
$$
\Psi_{g_2, g_1} \circ \Psi_{g_1, g_2}(v) =  \phi_{t(v)}^{g_1}(v)
$$
for all $v  \in S^{g_1}\widetilde M$ where $t(v) = b^{g_1}_{v^{g_1}_+}( \pi(v), \pi( \Psi_{g_1, g_2}(v) ))$.
\end{lemma}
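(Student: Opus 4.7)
The plan is to verify each claim by writing everything in Hopf coordinates (\ref{Hopfmap}) and exploiting the cocycle property of Busemann functions; the $\Gamma$-equivariance then drops out from the uniqueness step combined with the invariance (\ref{prop2bus}) of the cocycle.

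\textbf{Construction, H\"older regularity, and surjectivity.} Two distinct points of $\partial\widetilde M$ determine a unique unparametrized oriented $g_2$-geodesic, so the requirements $w^{g_2}_\pm = v^{g_1}_\pm$ single out the $\phi^{g_2}$-orbit of $w = \Psi_{g_1,g_2}(v)$, and the Busemann normalization $b^{g_2}_{v^{g_1}_+}(\pi(v),\pi(w))=0$ picks out a unique vector on that orbit. By the cocycle property, the Hopf coordinates of this vector are
$$
H^{g_2}_{p_0}(\Psi_{g_1,g_2}(v)) = (v^{g_1}_-,\,v^{g_1}_+,\,b^{g_2}_{v^{g_1}_+}(p_0,\pi(v))).
$$
Since $H^{g_1}_{p_0}$ and $H^{g_2}_{p_0}$ are H\"older homeomorphisms and the Busemann cocycle is H\"older in its boundary variable, each component above depends H\"older continuously on $v$, which gives H\"older continuity of $\Psi_{g_1,g_2}$. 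Surjectivity is immediate by running the same construction with $g_1$ and $g_2$ swapped.

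\textbf{Conjugacy and integral formula.} Applying the displayed formula to both sides of the desired identity $\phi^{g_2}_{\tau(v,t)}\Psi_{g_1,g_2}(v) = \Psi_{g_1,g_2}(\phi^{g_1}_t v)$ and using $H^{g_2}_{p_0}(\phi^{g_2}_s w) = H^{g_2}_{p_0}(w)+(0,0,s)$, the identity reduces via the cocycle property to
$$
\tau(v,t) = b^{g_2}_{v^{g_1}_+}(\pi(v),\pi(\phi^{g_1}_t v)).
$$
The integral expression is then obtained by differentiating in $s$: using $B^{g_2}(x,\xi) = \grad_x b^{g_2}_\xi(\cdot,x)$ together with $\tfrac{d}{ds}\pi(\phi^{g_1}_s v)=\phi^{g_1}_s v$, the chain rule gives $\tfrac{d}{ds} b^{g_2}_{v^{g_1}_+}(\pi(v),\pi(\phi^{g_1}_s v)) = g_2(B^{g_2}(\pi(\phi^{g_1}_s v),v^{g_1}_+),\phi^{g_1}_s v)$, and integrating from $0$ to $t$ yields the claim.

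\textbf{Equivariance and the inverse formula.} For $\gamma\in\Gamma$, the vector $\gamma_*\Psi_{g_1,g_2}(v)$ has endpoints $(\gamma_* v)^{g_1}_\pm$ and by (\ref{prop2bus}) satisfies $b^{g_2}_{\gamma v^{g_1}_+}(\pi(\gamma_*v),\pi(\gamma_*\Psi_{g_1,g_2}(v)))=0$; the uniqueness in the first step then forces $\gamma_*\Psi_{g_1,g_2}(v)=\Psi_{g_1,g_2}(\gamma_*v)$, and the same invariance immediately implies $\tau(\gamma_*v,t)=\tau(v,t)$, so $\Psi_{g_1,g_2}$ descends to a map $S^{g_1}M \to S^{g_2}M$. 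For the final identity, set $w=\Psi_{g_1,g_2}(v)$ and $u=\Psi_{g_2,g_1}(w)$; since $u^{g_1}_\pm = w^{g_2}_\pm = v^{g_1}_\pm$ we can write $u=\phi^{g_1}_{t(v)}v$, and the defining normalization $b^{g_1}_{w^{g_2}_+}(\pi(w),\pi(u))=0$, combined with the Hopf identity $b^{g_1}_{v^{g_1}_+}(\pi(v),\pi(\phi^{g_1}_s v))=s$ and the cocycle property, pins down $t(v)=b^{g_1}_{v^{g_1}_+}(\pi(v),\pi(w))$. The only delicate point in the whole argument is keeping the signs consistent in this last step; everything else is routine Busemann bookkeeping given the H\"older regularity of the Hopf parametrization already recorded in the set-up.
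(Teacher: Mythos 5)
Your proof is correct and follows essentially the same route as the paper: both arguments rest on the cocycle property of the Busemann function together with the uniqueness coming from the normalization $b^{g_2}_{v^{g_1}_+}(\pi(v),\pi(w))=0$, the only cosmetic difference being that you phrase the conjugacy computation through Hopf coordinates while the paper verifies $b^{g_2}_{v^{g_1}_+}(\pi\phi_t^{g_1}v,\pi\phi^{g_2}_{\tau(v,t)}\Psi_{g_1,g_2}(v))=0$ directly. You also supply the H\"older-continuity and surjectivity arguments that the paper leaves implicit, and your sign bookkeeping in the final step $t(v)=b^{g_1}_{v^{g_1}_+}(\pi(v),\pi(\Psi_{g_1,g_2}(v)))$ is the correct one.
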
 
\begin{proof}
 By the cocycle property (\ref{prop1bus}) of the Busemann function and the assumption that $b^{g_2}_{v^{g_1}_+}( \pi(v), \pi(\Psi_{g_1, g_2}(v) )) =0$ , for each $(v,t) \in  S^{g_1}\widetilde M \times \R$, we have 
 \begin{align*}
&b^{g_2}_{v^{g_1}_+}( \pi \phi_t^{g_1}(v), \pi \phi^{g_2}_{\tau(v,t)}\Psi_{g_1, g_2}(v)) = b^{g_2}_{v^{g_1}_+}(  \pi  \phi_t^{g_1}(v),\pi \Psi_{g_1, g_2}(v)) +\tau(v,t)\\
=& b^{g_2}_{v^{g_1}_+}( \pi(  \phi_t^{g_1}(v)), \pi(v)) +  b^{g_2}_{v^{g_1}_+}( \pi(v),   \pi(\Psi_{g_1, g_2}(v))  ) +\tau(v,t) =0.
 \end{align*}
 This implies by the definition of $\Psi_{g_1, g_2}$,
$$
\Psi_{g_1, g_2}( \phi_t^{g_1}(v))  = \phi^{g_2}_{\tau(v,t)}(\Psi_{g_1, g_2}(v)),
$$
for $\tau(v,t) = b^{g_2}_{v^{g_1}_+} ( \pi(v),\pi(\phi_t^{g_1}(v)))$.

The   $\Gamma$-equivariance of the orbit equivalence and the time change follow from the  $\Gamma$-invariance (\ref{prop2bus}) of the Busemann function. Furthermore, for
each $v  \in S^{g_1}\widetilde M$, there exists a number $t(v) \in \R$ such that
$$
\Psi_{g_2, g_1} \circ \Psi_{g_1, g_2}(v) =  \phi_{t(v)}^{g_1}(v).
$$
The definition of the orbit equivalence $\Psi_{g_2, g_1}$ gives
$$
0 = b^{g_1}_{v^{g_1}_+}( \pi(\Psi_{g_1, g_2}(v) ), \pi(\phi_{t(v)}^{g_1}(v))) =   b^{g_1}_{v^{g_1}_+}( \pi(\Psi_{g_1, g_2}(v) ), \pi(v)))  - t(v),
$$
which yields the last assertion.
\end{proof}

With abuse of notation, we also denote the quotient orbit equivalence as $\Psi_{g_1,g_2}: S^{g_1}M \to S^{g_2}M$. The next corollary follows immediately from the construction of the orbit equivalence.

\begin{corollary}
Let $g_1, g_2$ be two metrics in $R^-(M)$ and let $m$ be an ergodic measure in $\mathcal{M}^1 (\phi^{g_1})$. Then for $m$-almost every $v \in  S^{g_1} \widetilde M$, 
we have
$$
\lim_{t \to \infty} \frac{\tau(v,t)}{t} = I_m(g_1,g_2) = \int_{S^{g_1}M} g_2(B^{g_2}(\pi (v), v^{g_1}_+) , v )dm.
$$
\end{corollary}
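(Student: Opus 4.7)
The plan is to combine the integral formula for $\tau(v,t)$ from Lemma \ref{lem:conj} with the Birkhoff ergodic theorem and the Busemann-function characterization of $I_m(g_1,g_2)$ established in Corollary \ref{corollary, GeodesicStretchBusemannFunction}.

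First, I introduce the observable $F : S^{g_1}\widetilde M \to \R$ defined by
$$
F(v) := g_2\bigl(B^{g_2}(\pi(v),\, v^{g_1}_+),\, v\bigr).
$$
By Lemma \ref{lem:conj}, and using that the forward endpoint map is flow-invariant (so $(\phi_s^{g_1}v)^{g_1}_+ = v^{g_1}_+$), I get $\tau(v,t) = \int_0^t F(\phi_s^{g_1} v)\, ds$. Next I would verify that $F$ is $\Gamma$-invariant and therefore descends to a function on $S^{g_1}M$: this is immediate from the $\Gamma$-equivariance of the Busemann cocycle (property \ref{prop2bus}), the $\Gamma$-equivariance of the endpoint map $v \mapsto v^{g_1}_+$, and the $\Gamma$-invariance of the metric $g_2$.

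Second, I would check that $F$ is bounded, hence in $L^1(m)$. The vector $B^{g_2}(\pi(v), v^{g_1}_+)$ has $g_2$-norm equal to one since it is the gradient of a Busemann function for $g_2$, while $v$ has $g_1$-norm one. Compactness of $M$ together with continuity of both metrics furnishes a constant $C>0$ with $\|w\|_{g_2}\le C\|w\|_{g_1}$ for every tangent vector $w$, so Cauchy--Schwarz yields $|F(v)| \le C$.

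Finally, applying the Birkhoff ergodic theorem to the ergodic measure $m$ and the bounded observable $F$ gives, for $m$-almost every $v \in S^{g_1}M$,
$$
\lim_{t \to \infty} \frac{\tau(v,t)}{t} \;=\; \lim_{t \to \infty} \frac{1}{t}\int_0^t F(\phi_s^{g_1} v)\, ds \;=\; \int_{S^{g_1}M} F\, dm,
$$
and Corollary \ref{corollary, GeodesicStretchBusemannFunction} identifies the last integral with $I_m(g_1,g_2)$, yielding both equalities in the statement. There is no real obstacle here: the only points requiring minor care are the integrability of $F$ (elementary by compactness) and the $\Gamma$-invariance needed to descend $F$ to the quotient (routine from the cocycle properties of the Busemann function); the rest is a direct application of Birkhoff combined with the already-computed Busemann formula for the geodesic stretch.
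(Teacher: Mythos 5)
Your proposal is correct and follows essentially the same route as the paper, which derives the corollary directly from the formula $\tau(v,t)=\int_0^t a_{g_1,g_2}(\phi_s^{g_1}v)\,ds$ in Lemma \ref{lem:conj} together with the Birkhoff ergodic theorem and the identification of $\int a_{g_1,g_2}\,dm$ with $I_m(g_1,g_2)$ from Corollary \ref{corollary, GeodesicStretchBusemannFunction}. The details you supply ($\Gamma$-invariance and boundedness of the observable) are exactly the routine verifications the paper leaves implicit.
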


Since the Busemann function is $\Gamma$ invariant, the definition for $\tau(v,t)$ descends to $S^{g_1}M$.
\begin{definition}\label{def:timechange}  
We call the function $\tau=\tau_{g_1,g_2}: S^{g_1}M \times \R  \to \R$ given by
$$
\tau_{g_1,g_2}(v,t) =  b^{g_2}_{\widetilde{v}^{g_1}_+} ( \pi(\widetilde{v}), \pi(\phi_t^{g_1}(\widetilde{v})) )
$$
the \emph{time change} and  $a_{g_1,g_2} : S^{g_1}M  \to \R$  given by
$$
a_{g_1,g_2}(v) = \left.\frac{d}{dt}  \right|_{t = 0} 
  b^{g_2}_{\widetilde{v}^{g_1}_+} ( \pi(\widetilde{v}), \pi(\phi_t^{g_1}(\widetilde{v})) )  = g_2(B^{g_2}(\pi (\widetilde{v}), \widetilde{v}^{g_1}_+) , \widetilde{v} )
$$
the \emph{infinitesimal time change} of $\phi^{g_1}$ to  $\phi^{g_2}$, where $\widetilde{v}$ is a lift of $v$ in $S^{g_1}\widetilde{M}$.
\end{definition}

It is clear that the Busemann function and its derivatives are defined on the universal cover $S^{g_1}\widetilde{M}$. We will often drop the distinction between $v\in S^{g_1}M$ and $\widetilde{v}\in S^{g_1}\widetilde{M}$ and write
$
a_{g_1,g_2}(v) = \left.\frac{d}{dt}  \right|_{t = 0} 
  b^{g_2}_{v^{g_1}_+} ( \pi(v), \pi(\phi_t^{g_1}(v)) ) 
  = g_2(B^{g_2}(\pi (v), v^{g_1}_+) , v )
$.
 
\begin{remark} \label{remark,regularityTimeChange}
We make the following remarks about time changes and infinitesimal time changes.
\begin{enumerate}
\item
The H\"older structure of the boundary $\partial M$ implies the H\"older continuity of $a_{g_1g_2}$. 

\item 
The time change $\tau_{g_1,g_2}$ is an additive (H\"older) cocycle. It satisfies for $t,s\in \R$
$$
\tau_{g_1,g_2}(v,t+s) = \tau_{g_1,g_2}(v,t) + \tau_{g_1,g_2}(\phi_t^{g_1}v,s)
$$
and 
$$
\tau_{g_1,g_2}(v,t) = \int_0^t a_{g_1,g_2}(\phi_s^{g_1}v) ds.
$$
\end{enumerate}
\end{remark}

We discuss in the next remark the existence of a bijective orbit equivalence.

\begin{remark} 
Note that $\Psi_{g_1, g_2}: S^{g_1}M  \to S^{g_2}M$ maps orbits of the geodesic flow of $g_1$ surjectively onto orbits of the geodesic flow of $g_2$ but is not necessarily
injective.
An injective map can be obtained by modifying the time change and the infinitesimal time change in its Liv\v{s}ic cohomology class (see Gromov \cite{Gr00} and also \cite{Kn02} for details). Since these infinitesimal time changes are Liv\v{s}ic cohomologous, we are free to choose which one to use for our theory.
Throughout this note, we prefer to work with the non-modified 
orbit equivalence and the non-modified infinitesimal time change $a_{g_1,g_2}$.
\end{remark}

Recall $X_g(v) =\left.\frac{d}{dt} \right |_{t=0} \phi^g_t(v)$ the infinitesimal generator of the geodesic flow
$\phi^g_t : S^gM \to S^gM$.

\begin{corollary}\label{cor:equiv}

Let $g_1, g_2$ be two Riemannian metrics in $R^-(M)$. Then the following are equivalent.
\begin{enumerate}
\item
The metrics $g_1$ and $g_2$ have the same marked length spectrum.
\item
The infinitesimal time change is cohomologous to $1$, i.e. there exists a H\"older continuous function $u: S^{g_1}M \to \R$ 
such that for any $t\in\R$
$$
a_{g_1,g_2}(v) -1 = X_{g_1}(u)(v).
$$
\item
The geodesic flows of $g_1$ and $g_2$ are time-preserving conjugate. More precisely, 
 there exists a H\"older continuous map $G: S^{g_1}M \to S^{g_2}M$  such that
 $$
 \phi_t^{g_2}(G(v)) =   G (\phi_t^{g_1}(v)),
 $$
and  $G(v)^{g_2}_{\pm} = v^{g_1}_{\pm}$.
 \end{enumerate}
\end{corollary}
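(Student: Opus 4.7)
The structure of the argument is a cycle $(1) \Rightarrow (2) \Rightarrow (3) \Rightarrow (1)$, with the Livšic cohomology theorem for Anosov flows as the central tool. Since $g_1 \in R^-(M)$, the geodesic flow $\phi^{g_1}$ on $S^{g_1}M$ is a transitive Anosov flow, so Livšic's theorem applies to the Hölder cocycle $\tau_{g_1,g_2}(v,t) - t$.

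For $(1) \Rightarrow (2)$, I would observe that for any closed $\phi^{g_1}$-orbit of period $T$ through a vector $v$ (corresponding to a closed $g_1$-geodesic in a free homotopy class $[\gamma]$ with $T = \ell_{g_1}([\gamma])$), the orbit equivalence $\Psi_{g_1,g_2}$ of Lemma \ref{lem:conj} sends this orbit to a closed $\phi^{g_2}$-orbit whose period is precisely $\tau_{g_1,g_2}(v,T) = \int_0^T a_{g_1,g_2}(\phi_s^{g_1} v)\,ds$, and this period is exactly $\ell_{g_2}([\gamma])$ since $\Psi_{g_1,g_2}(v)^{g_2}_\pm = v^{g_1}_\pm$. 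Thus the assumption $\ell_{g_1} = \ell_{g_2}$ on all free homotopy classes translates into $\int_0^T (a_{g_1,g_2} - 1)(\phi_s^{g_1}v)\,ds = 0$ on every periodic orbit. Livšic's theorem for the Hölder function $a_{g_1,g_2} - 1$ (recall $a_{g_1,g_2}$ is Hölder by Remark \ref{remark,regularityTimeChange}) then yields a Hölder $u: S^{g_1}M \to \R$ with $a_{g_1,g_2} - 1 = X_{g_1}(u)$.

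For $(2) \Rightarrow (3)$, the idea is to correct $\Psi_{g_1,g_2}$ by a time shift. Integrating the coboundary relation along an orbit gives
\[
\tau_{g_1,g_2}(v,t) - t = u(\phi_t^{g_1}v) - u(v).
\]
I would then define $G: S^{g_1}M \to S^{g_2}M$ by $G(v) := \phi_{-u(v)}^{g_2}(\Psi_{g_1,g_2}(v))$. A direct computation using the intertwining relation $\Psi_{g_1,g_2}(\phi_t^{g_1}v) = \phi_{\tau(v,t)}^{g_2}(\Psi_{g_1,g_2}(v))$ and the displayed equation above shows $G(\phi_t^{g_1}v) = \phi_t^{g_2}(G(v))$, giving a genuine time-preserving conjugacy. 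Hölder continuity is inherited from $\Psi_{g_1,g_2}$, $u$, and the flow $\phi^{g_2}$; the endpoint condition $G(v)^{g_2}_\pm = v^{g_1}_\pm$ follows because flowing along $\phi^{g_2}$ does not change the pair of boundary endpoints, and $\Psi_{g_1,g_2}$ preserves them by its construction.

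Finally, $(3) \Rightarrow (1)$ is immediate: a time-preserving conjugacy that preserves boundary endpoints carries each closed $\phi^{g_1}$-orbit in a free homotopy class $[\gamma]$ bijectively onto the closed $\phi^{g_2}$-orbit in the same class, with equal period, so $\ell_{g_1}([\gamma]) = \ell_{g_2}([\gamma])$. The main technical step is the Livšic theorem invocation in $(1) \Rightarrow (2)$, but since $\phi^{g_1}$ is an Anosov flow and $a_{g_1,g_2}$ is Hölder, this is standard and no further obstacle should arise.
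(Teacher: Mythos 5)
Your proposal is correct and follows essentially the same route as the paper: identify $\ell_{g_2}([\gamma])$ with the time change $\tau(v,\ell_{g_1}([\gamma]))$ over periodic orbits, apply Liv\v{s}ic's theorem to the H\"older function $a_{g_1,g_2}-1$, and then define $G(v)=\phi^{g_2}_{-u(v)}\Psi_{g_1,g_2}(v)$ to upgrade the orbit equivalence to a time-preserving conjugacy, with $(3)\Rightarrow(1)$ read off from the periods. No substantive differences from the paper's argument.
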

\begin{proof}
We prove $(1)\Rightarrow (2) \Rightarrow (3) \Rightarrow (1)$. Assume that $g_1, g_2$ have the same marked length spectrum. Let $\phi^{g_1}_t(v) $ be a periodic orbit with period $l>0$. Then
$$
\phi^{g_2}_{\tau(v,l)}\circ \Psi_{g_1, g_2}(v) = \Psi_{g_1, g_2}\circ\phi^{g_1}_{l}(v),
$$
with $ l= \tau(v,l) = \int_0^l a_{g_1,g_2}( \phi_s^{g_1}(v)) ds$. By Liv\v{s}ic theorem \cite[Chapter 19]{KH95}, there exists a H\"older continuous function $u: S^{g_1}M \to \R$ differentiable along the orbits of the geodesic flow $\phi_t^{g_1}$
such that
$$
a_{g_1,g_2}(v) -1 = X_{g_1}(u)(v).
$$
We obtain (2). Therefore
$$
\tau(v,t) = t + u(\phi_t^{g_1}(v)) -u(v).
$$
 Hence 
 $$
 \phi^{g_2}_t \phi^{g_2}_{u(\phi_t^{g_1}(v)) -u(v)}\Psi_{g_1, g_2}(v) =\Psi_{g_1, g_2}(\phi_t^{g_1}(v)),
$$
and $G(v) = \phi^{g_2}_{-u(v)}\Psi_{g_1, g_2}(v) $ defines the required orbit equivalence, which preserves the points at infinity. This yields (3).
Under this assumption the marked length spectrum of $g_1$ and $g_2$ are the same. This finishes the proof. 
\end{proof}

\subsection{Geodesic currents, intersections, and geodesic stretches}
\hfill\\
In this section, we reinterpret the geodesic stretch using the framework of \emph{geodesic currents}, introduced by Bonahon (see \cite{Bon91} and \cite{ST21}). In certain contexts, geodesic currents offer a more 
convenient and flexible tool than invariant measures 
for the purposes of our theory. This is, for instance, evident in Proposition \ref{prop,triangleInequality} and later Proposition \ref{prop,whyIntegralWellDefined}. Throughout the note, we will alternate between  the perspectives of invariant probability measures and geodesic currents, depending on which is more suitable to the context.

\subsubsection{Geodesic currents} We introduce \emph{geodesic currents} in this subsection and discuss many of its properties. We omit many proofs in this subsection since they are classical in the field. Standard references for this topic include \cite{Bon88} and \cite{Bon91}.

\begin{definition}
A \emph{geodesic current}\footnote{We consider in this note oriented geodesic currents in contrast to unoriented geodesic currents introduced in \cite{Bon88}.} is a locally finite $\Gamma$-invariant positive Borel measure $\mu$ on $\partial^{(2)} \widetilde M$.
\end{definition}

Let us denote the space of $\Gamma$-invariant geodesic currents by $\mathcal{C} (\Gamma) $. It is equipped with the weak-* topology of convergence of Borel measures, that is,  a sequence $\mu_n \in \mathcal{C} (\Gamma) $
converges to  $\mu \in \mathcal{C} (\Gamma) $ if and only if for all continuous functions $\varphi: \partial^{(2)} \widetilde M  \to \R$
with compact support, we have 
$$ \lim_{n \to \infty} \int \varphi d\mu_n =  \int \varphi d\mu. $$ 

The simplest examples of geodesic currents are Dirac currents. For any $\gamma\in \Gamma$, let $\delta_{\gamma}$ denote the Dirac measure supported at $(\gamma^-,\gamma^+)\in \partial^{(2)} \widetilde M$, where $\gamma^-$ (resp. $\gamma^+$) is the repelling (resp. attractive) fixed point of $\gamma$ on $\partial \Gamma$. Then associated to the conjugacy class $[\gamma]\in[\Gamma]$ is the \emph{Dirac current} in $\mathcal{C} (\Gamma) $ given by
$$\delta_{[\gamma]}=\sum_{\alpha\in[\gamma] }\delta_{\alpha}.$$

Let $ \R_{>0} $ denote the set of all positive real numbers. The following lemma is classical (see, for example, \cite{Bon91}).
\begin{lemma}\label{prop:proj-current}
 Denote by $\mathcal{PC}(\Gamma) =  \mathcal{C} (\Gamma)/ \R_{>0} $ the projectivized space of currents. Then $\mathcal{PC}(\Gamma)$ is compact.
\end{lemma}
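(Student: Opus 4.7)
The plan is to identify an explicit compact cross-section in $\mathcal{C}(\Gamma)\setminus\{0\}$ for the $\R_{>0}$-scaling action, and conclude compactness of the quotient $\mathcal{PC}(\Gamma)$ from it. The whole argument is a weak-$*$ compactness argument in the Riesz sense, powered by the cocompactness of $\Gamma$ on the unit tangent bundle.

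First I would fix an auxiliary metric $g\in R^-(M)$ and use the Hopf parametrization (\ref{Hopfmap}) $H_{p_0}^g \colon S^g\widetilde M \to \partial^{(2)}\widetilde M \times \R$. Since $\Gamma$ acts properly and cocompactly on $S^g\widetilde M$, there exists a relatively compact open set $\widetilde V \subset S^g\widetilde M$ with $\Gamma\cdot\widetilde V = S^g\widetilde M$. Let $U \subset \partial^{(2)}\widetilde M$ be the image of $\widetilde V$ under the projection onto the first two factors of the Hopf parametrization. Then $U$ is open, $\overline{U}$ is compact, and $\Gamma \cdot U = \partial^{(2)}\widetilde M$. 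Choose a continuous function $\varphi \colon \partial^{(2)}\widetilde M \to [0,1]$ with compact support and $\varphi \equiv 1$ on $\overline{U}$, and define
\[
\Sigma \;=\; \Bigl\{\mu \in \mathcal{C}(\Gamma) \;:\; \int \varphi\, d\mu = 1\Bigr\}.
\]

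Next I would show $\Sigma$ meets every projective class exactly once. The key point is that for every nonzero $\mu \in \mathcal{C}(\Gamma)$ one has $\mu(U)>0$: any nonzero locally finite measure has positive mass on some open set, and since $\Gamma U$ covers $\partial^{(2)}\widetilde M$, the $\Gamma$-invariance of $\mu$ transfers this mass to $U$. Inner regularity then yields $\int \varphi\, d\mu \geq \mu(\overline{U})\geq \mu(U) > 0$, so each ray $\R_{>0}\cdot \mu$ hits $\Sigma$ in a unique point. It remains to prove $\Sigma$ is weak-$*$ compact. For any compact $L\subset \partial^{(2)}\widetilde M$, the open cover $\{\gamma U\}_{\gamma\in\Gamma}$ admits a finite subcover, say $L \subset \gamma_1 U \cup \dots \cup \gamma_n U$, hence by $\Gamma$-invariance
\[
\mu(L)\;\leq\;\sum_{i=1}^n \mu(\gamma_i \overline{U})\;=\;n\,\mu(\overline{U})\;\leq\; n\int \varphi\, d\mu \;=\; n
\]
for every $\mu\in\Sigma$. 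This uniform local bound, together with the Banach--Alaoglu theorem applied to the dual of $C_c(\partial^{(2)}\widetilde M)$, gives weak-$*$ precompactness of $\Sigma$. The set $\Sigma$ is also closed in the weak-$*$ topology: the functional $\mu \mapsto \int \varphi\, d\mu$ is continuous, and $\Gamma$-invariance passes to weak-$*$ limits since for each $\gamma \in \Gamma$ the equality $\int \psi \circ \gamma^{-1}\, d\mu = \int \psi\, d\mu$ is closed in $\mu$ over test functions $\psi \in C_c(\partial^{(2)}\widetilde M)$. Hence $\Sigma$ is compact.

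Finally, the quotient map $\mathcal{C}(\Gamma)\setminus\{0\} \to \mathcal{PC}(\Gamma)$ restricts to a continuous surjection $\Sigma \to \mathcal{PC}(\Gamma)$, so the latter is compact. The only delicate point I expect is the verification that $\int \varphi\, d\mu > 0$ for every nonzero $\Gamma$-invariant current, which is why the choice of $U$ with $\Gamma U = \partial^{(2)}\widetilde M$ and of $\varphi$ dominating $\mathbf{1}_{\overline U}$ must be made carefully; everything else is then a routine weak-$*$ compactness argument.
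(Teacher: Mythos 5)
Your proof is correct. The paper does not actually prove this lemma --- it records it as classical and cites Bonahon --- so there is no in-text argument to compare against; what you have written is essentially the standard (Bonahon-style) argument: cut out a compact cross-section $\Sigma=\{\mu:\int\varphi\,d\mu=1\}$ for the scaling action using a compactly supported test function $\varphi$ dominating the indicator of a set $U$ with $\Gamma U=\partial^{(2)}\widetilde M$, get uniform mass bounds on compacta from $\Gamma$-invariance, and apply vague (weak-$*$) compactness. Two small points worth making explicit: the positivity $\int\varphi\,d\mu>0$ for $\mu\neq 0$ uses that $\Gamma$ is countable, so $\partial^{(2)}\widetilde M=\bigcup_{\gamma}\gamma U$ is a countable union and some $\gamma U$, hence $U$ itself by invariance, carries positive mass; and the statement implicitly means $\mathcal{PC}(\Gamma)=(\mathcal{C}(\Gamma)\setminus\{0\})/\R_{>0}$, since the class of the zero current is a fixed point of the scaling action that your section $\Sigma$ (rightly) does not meet.
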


We then explain the relation between geodesic currents and invariant probability measures. Recall for $g \in R^-(M)$ and $p_0 \in \widetilde M$, we have introduced the Hopf map 
$H^g_{p_0} : S^g \widetilde M \to \partial^{(2)} \widetilde M \times \R$ in Section \ref{subsection, BasicGeodesicStretch}.
\begin{proposition}\label{prop:m-current}  
 For $g \in R^-(M)$, each geodesic current $\mu\in \mathcal{C}(\Gamma)$ defines a  measure $m_{\mu}^g$ which is both $\Gamma$-invariant and $\phi_t^g$-invariant, with local product structure $dm_{\mu}^g= d\mu \times dt$ in the Hopf coordinate $H^g_{p_0}$. This map $\mu\mapsto m^g_{\mu}$ yields a homeomorphism between $\mathcal{C} (\Gamma)$ and $ \mathcal{M}(\phi^g)$ which is linear in the sense that $k_1\mu_1+k_2\mu_2$ is mapped to $k_1 m_{\mu_1}^g+k_2 m_{\mu_2}^g$ for $k_1,k_2\geq 0$.

Moreover, for a measurable and non-negative function $f \colon  S^g \widetilde M \to \R$, its Lebesgue integral with respect to $m^g_{\mu}$ can be expressed as 
$$
\int_{S^g\widetilde M} \ f dm_{\mu}^g = \int_{\partial^{(2)} \widetilde M}
\int_{\R} f(\phi_t^g (v_{(\xi_-, \xi_+)}))dt \ d\mu (\xi_-, \xi_+).
$$
where $v_{(\xi_-, \xi_+)} = (H^g_{p_0})^{-1} (\xi_-, \xi_+, 0)$,
\end{proposition}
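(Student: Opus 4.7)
The plan is to construct $m_\mu^g$ via pullback and verify the required invariance properties, using the explicit formula for how $\phi_t^g$ and $\Gamma$ act in Hopf coordinates.

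\textbf{Step 1: Construction of $m_\mu^g$.} I define $m_\mu^g$ on $S^g\widetilde{M}$ to be the pullback $(H^g_{p_0})_\ast^{-1}(\mu \times dt)$, where $dt$ denotes Lebesgue measure on $\R$. Since $H^g_{p_0}$ is a H\"older homeomorphism, this gives a well-defined locally finite Borel measure.

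\textbf{Step 2: Invariance.} For $\phi_t^g$-invariance, the first displayed formula of the Lemma following \eqref{Hopfmap} states that under $H^g_{p_0}$ the geodesic flow acts on the third coordinate by the translation $s \mapsto s+t$, fixing the first two. Since $dt$ is translation-invariant, $\mu \times dt$ is preserved, hence so is $m_\mu^g$. For $\Gamma$-invariance, the same Lemma expresses $\gamma_\ast$ in Hopf coordinates as $(\xi_-,\xi_+,s) \mapsto (\gamma\xi_-, \gamma\xi_+, s + b^g_{\xi_+}(\gamma^{-1}p_0,p_0))$, which is a $\gamma$-action on the $\partial^{(2)}\widetilde{M}$ factor combined with a translation on the $\R$ factor that depends only on the boundary coordinates. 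Since $\mu$ is $\Gamma$-invariant and Lebesgue measure is translation invariant, Fubini yields invariance of $\mu \times dt$. Therefore $m_\mu^g$ descends to a $\phi^g$-invariant measure on the quotient $S^gM$, which I also call $m_\mu^g$.

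\textbf{Step 3: Inverse map and bijectivity.} Given $m \in \mathcal{M}(\phi^g)$, lift it to a $\Gamma$- and $\phi^g$-invariant Borel measure $\tilde{m}$ on $S^g\widetilde{M}$. In Hopf coordinates, $\phi^g$-invariance forces $\tilde{m}$ to be invariant under translation in the $\R$-factor; by the disintegration theorem it must then be of the form $\mu \times dt$ (up to a normalization absorbed into $\mu$) for a unique Borel measure $\mu$ on $\partial^{(2)}\widetilde{M}$. The $\Gamma$-invariance of $\tilde{m}$, combined with translation-invariance of $dt$, forces $\mu$ to be $\Gamma$-invariant, giving $\mu \in \mathcal{C}(\Gamma)$. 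This inverse is clearly linear and inverse to the map $\mu \mapsto m_\mu^g$.

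\textbf{Step 4: Continuity and the integral formula.} Weak-$\ast$ continuity in both directions follows from the H\"older continuity of $H^g_{p_0}$ and the definition of the product measure: testing against continuous compactly supported functions on $S^g\widetilde{M}$ corresponds via $H^g_{p_0}$ to testing against continuous compactly supported functions on $\partial^{(2)}\widetilde{M} \times \R$, and these test functions can be approximated (by Stone-Weierstrass) by sums $\sum \varphi_i(\xi_-,\xi_+)\psi_i(t)$, reducing weak-$\ast$ convergence to convergence of the marginals. Finally, the integral formula is simply Fubini applied in Hopf coordinates: writing $v = \phi_t^g(v_{(\xi_-,\xi_+)})$ parametrizes $S^g\widetilde{M}$ by $\partial^{(2)}\widetilde{M}\times \R$ via $(H^g_{p_0})^{-1}$, and $dm_\mu^g = d\mu \times dt$ in these coordinates immediately gives
\[
\int_{S^g\widetilde{M}} f\, dm_\mu^g \;=\; \int_{\partial^{(2)}\widetilde{M}} \int_\R f\big(\phi_t^g(v_{(\xi_-,\xi_+)})\big)\, dt\, d\mu(\xi_-,\xi_+).
\]
The main obstacle I expect is Step 3, namely justifying cleanly that every $\phi^g$-invariant measure in Hopf coordinates disintegrates as a product $\mu \times dt$; one must invoke the fact that the $\R$-action by translation has trivial orbit space $\partial^{(2)}\widetilde{M}$ with measurable section, so that invariance of $\tilde m$ really forces this product structure rather than just conditional invariance.
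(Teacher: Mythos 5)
Your construction is correct and is exactly the classical argument (Bonahon's correspondence via the Hopf parametrization) that the paper itself omits, deferring to \cite{Bon88} and \cite{Bon91}: pull back $\mu\times dt$ under $H^g_{p_0}$, read off both invariances from the explicit formulas for $\phi^g_t$ and $\gamma_H$ in Hopf coordinates, and recover $\mu$ from an invariant measure by disintegrating over $\partial^{(2)}\widetilde M$ and using that a translation-invariant locally finite measure on $\R$ is a multiple of Lebesgue. The one point you rightly flag — justifying the product decomposition in Step 3 — is handled by the standard disintegration theorem for locally finite Borel measures on the standard Borel space $\partial^{(2)}\widetilde M\times\R$, so no genuine gap remains.
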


The \emph{flip invariant currents}, which are oriented currents invariant under the flip map $\iota$ exchanging two factors of $\partial^{(2)} \widetilde{M}$, correspond to \emph{reflexive invariant measure} which are invariant under the involution on $S^{g_1}M$ given by $\iota(v)=-v$. In particular, the Liouville current is flip invariant, since the Liouville measure is reflexive. 

To further consider integration over the quotient unit tangent bundle $S^g M$, we need to take a \emph{fundamental domain} $\mathcal{F}$ for the $\Gamma$ action on $\widetilde M$, that is,  a connected and contractible set of $\widetilde M$ such that each $\Gamma$-orbit $\Gamma\cdot p$ meets the interior of $\mathcal{F}$
at exactly one point. We let
$$ S^g \mathcal{F} := \bigcup_{p \in \mathcal{F} }S^g_p \widetilde {M}
$$
be a fundamental domain for the $\Gamma$ action on $S^g \widetilde {M}$.

\begin{remark}\label{remark:CurrentMeasure}
For each metric $g\in R^-(M)$, the $\Gamma$-invariant measure $m_{\mu}^g$ on $S^g \widetilde M$ induces a  $\phi^g$-invariant measure on $S^g M$ as follows. 
If $f \colon S^g M \to \R $ is a bounded measurable function and $ \widetilde f \colon S^g \widetilde {M} \to \R $ is its lift, then with abuse of notation, we let
$$
\int_{S^g M} f \ dm_{\mu}^g := \int_{S^g \widetilde M} \widetilde f  \cdot \chi_{_{S^g \mathcal{F}}}     \ dm_{\mu}^g
$$
where $\chi_{_{S^g \mathcal{F}}}$ is the characteristic function of the set $S^g \mathcal{F}$. The definition is independent of the choice of the fundamental domain $\mathcal{F}$.
\end{remark}

\subsubsection{Geodesic stretches and intersections via geodesic currents} \label{subsection GeodesicStretchIntersection}
We reinterpret the geodesic stretch from the viewpoint of geodesic currents. 

\begin{definition}\label{def:defGeodesicStretch}
The  geodesic stretch is
 \begin{align*}
 &I:  \mathcal{C}(\Gamma) \times R^-(M) \times R^-(M) \to \mathbb{R}_{\geq 0}\\
& I_\mu(g_1,g_2) := I_{\hat m_{\mu}^{g_1}}(g_1, g_2),
\end{align*}
where
$$\hat m_{\mu}^{g_1} :=\frac{m_{\mu}^{g_1}}{m_{\mu}^{g_1}(S^{g_1} M)}\in \mathcal{M}^1(\phi^{g_1})$$ and the geodesic stretch $I_{\hat m_{\mu}^{g_1}}(g_1, g_2)$ defined using geodesic flow invariant probability measure has been introduced in Section \ref{subsection, BasicGeodesicStretch}.
\end{definition}

There is a pairing called the \emph{intersection} that is closely related to the geodesic stretch. 

\begin{definition}\label{def:intersection}
The \emph{intersection} $i: R^-(M) \times \mathcal{C}(\Gamma) \to \mathbb{R}_{\geq 0}$  is given by
$$
i(g, \mu) := m_{\mu}^g(S^gM).
$$
\end{definition}

\begin{remark} \label{remark,intersection}
The intersection $i(g, \cdot)$ is the translation distance function (length function) with respect to $g$ (see \cite{Bon91}). In particular, when $\mu=\delta_{[\gamma]}$ is the Dirac current associated to $[\gamma]\in [\Gamma]$, the intersection $i(g, \delta_{[\gamma]})$ equals the $g$-length of the closed $g$-geodesic in the free homotopy $[\gamma]\in [\Gamma]$. 
\end{remark}

To relate the intersection and the geodesic stretch, we need the following proposition. For its proof, we refer to \cite[Proposition 2.13, Remark 2.14]{ST21}.
\begin{proposition} \label{prop: transformation-formula}
Let $g_1, g_2\in R^-(M)$  and  $\mu \in \mathcal{C} (\Gamma)$.
If $G:S^{g_2}M \to \R$ is a $ m_{\mu}^{g_2}$ integrable function, then $G\circ \Psi_{g_1, g_2} : S^{g_1}M \to \R$ is  $ m_{\mu}^{g_1}$ integrable 
and 
$$
\int_{S^{g_2}M} G \ dm_{\mu}^{g_2} = \int_{S^{g_1}M} G \circ \Psi_{g_1, g_2}  a_{g_1,g_2} dm_{\mu}^{g_1},
$$
where $\Psi_{g_1, g_2}$ is the map introduced in Lemma \ref{lem:conj}.
In particular,
$$
m_{\mu}^{g_2}(S^{g_2}M) = \int_{S^{g_1}M}   a_{g_1,g_2} dm_{\mu}^{g_1}.
$$

\end{proposition}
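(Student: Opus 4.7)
The plan is to lift both sides to the universal cover, express the integrals via the Hopf parametrization, reparameterize each geodesic orbit using the orbit equivalence $\Psi_{g_1,g_2}$, and then descend back to the quotient. By decomposing $G$ into positive and negative parts and applying monotone convergence, I would first reduce to the case of nonnegative, bounded $G$.

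First, I would fix a fundamental domain $\mathcal{F}\subset\widetilde{M}$ and write $\widetilde{G}$ for the $\Gamma$-invariant lift of $G$ to $S^{g_2}\widetilde{M}$. Using Remark \ref{remark:CurrentMeasure} together with the product structure $dm_\mu^{g_2} = d\mu\times dt$ from Proposition \ref{prop:m-current}, the left-hand side rewrites as
$$
\int_{S^{g_2}M}G\,dm_\mu^{g_2}=\int_{\partial^{(2)}\widetilde{M}}\int_\mathbb{R}\bigl(\widetilde{G}\cdot\chi_{S^{g_2}\mathcal{F}}\bigr)\bigl(\phi_t^{g_2}(w_{(\xi_-,\xi_+)})\bigr)\,dt\,d\mu(\xi_-,\xi_+),
$$
where $w_{(\xi_-,\xi_+)}=(H^{g_2}_{p_0})^{-1}(\xi_-,\xi_+,0)$.

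Next, for each $(\xi_-,\xi_+)$ I would set $v_0:=(H^{g_1}_{p_0})^{-1}(\xi_-,\xi_+,0)$. Since $\Psi_{g_1,g_2}(v_0)$ lies on the same $g_2$-geodesic as $w_{(\xi_-,\xi_+)}$, there is some $c(\xi_-,\xi_+)\in\mathbb{R}$ with $\Psi_{g_1,g_2}(v_0)=\phi^{g_2}_{c(\xi_-,\xi_+)}(w_{(\xi_-,\xi_+)})$. Lemma \ref{lem:conj} then supplies the identity $\phi^{g_2}_{\tau(v_0,s)+c(\xi_-,\xi_+)}(w_{(\xi_-,\xi_+)})=\Psi_{g_1,g_2}(\phi_s^{g_1}v_0)$, and Remark \ref{remark,regularityTimeChange} yields the Jacobian $dt=a_{g_1,g_2}(\phi_s^{g_1}v_0)\,ds$; the reparameterization $s\mapsto t$ is a bijection of $\mathbb{R}$ because $\Psi_{g_1,g_2}$ is a $\Gamma$-equivariant bijection on the universal cover that conjugates the flows orbit-wise. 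Performing this change of variables and integrating against $\mu$, I would obtain
$$
\int_{S^{g_2}M}G\,dm_\mu^{g_2}=\int_{S^{g_1}\widetilde{M}}(\widetilde{G}\circ\Psi_{g_1,g_2})\cdot a_{g_1,g_2}\cdot\psi\,dm_\mu^{g_1},
$$
with $\psi(v):=\chi_{S^{g_2}\mathcal{F}}(\Psi_{g_1,g_2}(v))$.

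Finally, I would observe that the $\Gamma$-equivariance of $\Psi_{g_1,g_2}$ makes $\{\psi(\gamma_*\cdot)\}_{\gamma\in\Gamma}$ a partition of unity on $S^{g_1}\widetilde{M}$, so the same reasoning as in Remark \ref{remark:CurrentMeasure} identifies the right-hand side with $\int_{S^{g_1}M}(G\circ\Psi_{g_1,g_2})\cdot a_{g_1,g_2}\,dm_\mu^{g_1}$. Setting $G\equiv 1$ and invoking Definition \ref{def:intersection} would recover the second assertion. The step I expect to be the main conceptual obstacle is reconciling the $g_2$-fundamental-domain indicator $\chi_{S^{g_2}\mathcal{F}}$ with a legitimate partition of unity on $S^{g_1}\widetilde{M}$, which rests on $\Psi_{g_1,g_2}$ being $\Gamma$-equivariant and bijective on the universal cover; the change-of-variables calculation in Hopf coordinates is otherwise routine.
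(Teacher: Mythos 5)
The paper does not actually prove this proposition; it defers to \cite[Proposition 2.13, Remark 2.14]{ST21}. Your strategy --- write both integrals in Hopf coordinates, reparametrize each orbit through the time change $\tau$, and then unfold over fundamental domains --- is the standard argument and is essentially the right one. The step you single out as the main obstacle is in fact unproblematic: since $\Psi_{g_1,g_2}(\gamma_*v)=\gamma_*\Psi_{g_1,g_2}(v)$, one has $\sum_{\gamma\in\Gamma}\chi_{S^{g_2}\mathcal{F}}(\gamma_*\Psi_{g_1,g_2}(v))=\sum_{\gamma\in\Gamma}\chi_{S^{g_2}\mathcal{F}}(\gamma_*w)=1$ for a.e.\ $v$, and the usual unfolding computation shows that integrating a $\Gamma$-invariant nonnegative function against any such ``partition of unity'' with respect to a $\Gamma$-invariant measure gives the same value as integrating against $\chi_{S^{g_1}\mathcal{F}}$ (modulo the same boundary-null-set convention already implicit in Remark \ref{remark:CurrentMeasure}).

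The one genuine flaw is your justification of the orbit-wise change of variables: you assert that $s\mapsto t=\tau(v_0,s)+c$ is a bijection ``because $\Psi_{g_1,g_2}$ is a bijection on the universal cover.'' The paper explicitly states the opposite --- $\Psi_{g_1,g_2}$ is surjective but \emph{not necessarily injective} (see the remark preceding Corollary \ref{cor:equiv}); equivalently, $a_{g_1,g_2}$ need not be everywhere positive, so $s\mapsto\tau(v_0,s)$ need not be monotone. The identity $\int_{\R}F(t)\,dt=\int_{\R}F(\tau(v_0,s)+c)\,a_{g_1,g_2}(\phi^{g_1}_s v_0)\,ds$ is still true, but it must be justified by the one-dimensional area/degree formula rather than by monotone substitution: $\tau(v_0,\cdot)$ is $C^1$ with derivative $a_{g_1,g_2}\circ\phi^{g_1}_\cdot$, and it is proper with $\tau(v_0,s)\to\pm\infty$ as $s\to\pm\infty$ (by Lemma \ref{lem,Busem-distance} and the quasi-isometry of $d_{g_1}$ and $d_{g_2}$), so its signed multiplicity $\sum_{s\in\tau^{-1}(t)}\mathrm{sgn}\,\tau'(v_0,s)$ equals $1$ for a.e.\ $t$, which yields the formula for $F\ge 0$ and hence for integrable $F$. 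With that substitution your argument is complete; the alternative of first replacing $\Psi_{g_1,g_2}$ by the injective representative in its Liv\v{s}ic cohomology class is messier here, since both $G\circ\Psi_{g_1,g_2}$ and $a_{g_1,g_2}$ would change.
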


The following relates the intersection and the geodesic stretch for geodesic currents. 

\begin{corollary}\label{cor:s-current} 
Given  a geodesic current  $\mu\in\mathcal{C}(\Gamma) $ and given $g_1,g_2 \in R^-(M)$ so that $m_{\mu}^{g_1} \in \mathcal{M}(\phi^{g_1})$ has finite mass, then 
$$
I_\mu(g_1,g_2) = \frac{i(g_2, \mu)}{i(g_1, \mu)}.
$$
In particular, for all $\lambda >0$,
$$I_{\lambda\mu}(g_1,g_2) = I_{\mu}(g_1,g_2) $$  and therefore  $I_{\mu}(g_1,g_2)$ descends to be defined on the projective space $\mathcal{PC}(\Gamma)$. We denote the descended map as $I([\mu], g_1,g_2)$.

\end{corollary}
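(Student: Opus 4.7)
The plan is to unfold the definition of $I_\mu(g_1,g_2)$ and reduce everything to the transformation formula in Proposition \ref{prop: transformation-formula}, together with Corollary \ref{corollary, GeodesicStretchBusemannFunction} which identifies the geodesic stretch with an integral of the infinitesimal time change $a_{g_1,g_2}$.

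First, I would recall that by Definition \ref{def:defGeodesicStretch},
$$I_\mu(g_1,g_2) = I_{\hat m_\mu^{g_1}}(g_1,g_2), \qquad \hat m_\mu^{g_1} = \frac{m_\mu^{g_1}}{m_\mu^{g_1}(S^{g_1}M)} = \frac{m_\mu^{g_1}}{i(g_1,\mu)},$$
which is legitimate because the hypothesis ensures $i(g_1,\mu)=m_\mu^{g_1}(S^{g_1}M) \in (0,\infty)$. Next, applying Corollary \ref{corollary, GeodesicStretchBusemannFunction} to the probability measure $\hat m_\mu^{g_1}$, I get
$$I_{\hat m_\mu^{g_1}}(g_1,g_2) = \int_{S^{g_1}M} g_2\bigl(B^{g_2}(\pi(v), v_+^{g_1}), v\bigr)\, d\hat m_\mu^{g_1}(v) = \frac{1}{i(g_1,\mu)} \int_{S^{g_1}M} a_{g_1,g_2}(v)\, dm_\mu^{g_1}(v),$$
where the identification of the integrand with $a_{g_1,g_2}$ follows from Definition \ref{def:timechange}.

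The key step is then to invoke Proposition \ref{prop: transformation-formula}, which tells us that $\int_{S^{g_1}M} a_{g_1,g_2}\, dm_\mu^{g_1} = m_\mu^{g_2}(S^{g_2}M) = i(g_2,\mu)$. Combining gives $I_\mu(g_1,g_2) = i(g_2,\mu)/i(g_1,\mu)$, as desired. For the scaling statement, the linearity clause in Proposition \ref{prop:m-current} yields $m_{\lambda\mu}^g = \lambda m_\mu^g$ for every $\lambda>0$ and $g\in R^-(M)$, whence $i(g,\lambda\mu) = \lambda i(g,\mu)$; the ratio $i(g_2,\lambda\mu)/i(g_1,\lambda\mu)$ is therefore independent of $\lambda$, and $I_\mu(g_1,g_2)$ descends to the projective space $\mathcal{PC}(\Gamma)$.

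There is no real obstacle here — the statement is essentially a bookkeeping corollary of the transformation formula. The only subtlety worth double-checking is that the finite-mass hypothesis $m_\mu^{g_1}(S^{g_1}M)<\infty$ makes the normalization $\hat m_\mu^{g_1}$ well-defined and lets us apply the integrated version of the subadditive ergodic theorem underlying Corollary \ref{corollary, GeodesicStretchBusemannFunction}; this finiteness is preserved when passing to $g_2$ because $a_{g_1,g_2}$ is bounded (it is H\"older continuous on the compact space $S^{g_1}M$ by Remark \ref{remark,regularityTimeChange}), so $m_\mu^{g_2}(S^{g_2}M) = \int a_{g_1,g_2}\, dm_\mu^{g_1}$ is automatically finite as well.
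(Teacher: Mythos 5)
Your proposal is correct and follows essentially the same route as the paper: unfold the definition of $I_\mu(g_1,g_2)$ via the normalized measure $\hat m_\mu^{g_1}$, express the stretch as $\int a_{g_1,g_2}\,dm_\mu^{g_1}/i(g_1,\mu)$ using Corollary \ref{corollary, GeodesicStretchBusemannFunction}, and then apply the transformation formula of Proposition \ref{prop: transformation-formula} to identify the numerator with $i(g_2,\mu)$. The scaling/projectivization remark via linearity of $\mu\mapsto m_\mu^g$ is the same bookkeeping the paper leaves implicit.
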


\begin{proof}

From the definition of the geodesic stretch (Definition \ref{def:defGeodesicStretch}) and Proposition \ref{prop: transformation-formula}, for $\mu\in\mathcal{C}(\Gamma) $, we have
    $$
I_\mu(g_1,g_2) = \frac{1}{m_{\mu}^{g_1}(S^{g_1}M)}\int_{S^{g_1}M}    a_{g_1,g_2} dm_{\mu}^{g_1} = \frac{m_{\mu}^{g_2}(S^{g_2}M) }{m_{\mu}^{g_1}(S^{g_1}M)} = \frac{i(g_2, \mu)}{i(g_1, \mu)}.
$$
\end{proof}

The space of \emph{unoriented currents}, denoted as $\mathcal{C}^{sym}(\Gamma)$, is a quotient of the space $\mathcal{C}(\Gamma)$ by the flip map $\iota$ exchanging two factors of $\partial^{(2)} \widetilde{M}$.. The flip invariant currents, which are oriented geodesic currents invariant under $\iota$, are naturally identifed as currents in $\mathcal{C}^{sym}(\Gamma)$. 

When $M=S$ is a closed surface, the intersection generalizes to the geometric intersection number for curves and extends to be a continuous symmetric function $i:\mathcal{C}^{sym} (\Gamma) \times  \mathcal{C}^{sym} (\Gamma) \to \mathbb{R}$, called the \emph{intersection number} (\cite[Proposition 3]{Bon88}, see also \cite{Ot90}). We will present the magic of this concept in a proof in the last section (Proposition \ref{prop, bonahonIntersection}). Unfortunately, in higher dimensions, a similar pairing of geodesic currents like the intersection number is not known. When $M$ is a surface, the intersection is identified with the intersection number as follows. 
  
 \begin{proposition}
   When $\dim M=2$, there is an embedding of  $R^-(M) $ into $\mathcal{C} (\Gamma)$ via Liouville currents: for each $g \in R^-(M)$, we associate  the Liouville current $\lambda_g$ in $\mathcal{C}(\Gamma) $  defined by demanding that
    $m_{\lambda_g}^g $ is the non-normalized Liouville measure with
   $m_{\lambda_g}^g (S^gM) = 2 \pi \vol(M,g)$. Furthermore,  this embedding identifies the intersection and the intersection number for flip invariant currents. For each flip invariant current $\mu \in \mathcal{C}(\Gamma)$,
     $$
    i(\lambda_g, \mu) = i(g, \mu) =  m_{\mu}^g(S^gM).
    $$  
 \end{proposition}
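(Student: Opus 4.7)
The plan is to define $\lambda_g$ directly from the Liouville measure using the correspondence of Proposition~\ref{prop:m-current}, then establish the identity $i(\lambda_g, \mu) = m^g_\mu(S^gM)$ by first verifying it on dirac currents and extending by density, and finally to deduce injectivity of $g \mapsto \lambda_g$ from marked length spectrum rigidity.

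First I would construct $\lambda_g$. The Liouville measure $m^g_{\mathrm{Liouv}}$ on $S^g\widetilde M$ associated to the Sasaki metric of $g$ is both $\Gamma$-invariant and $\phi^g$-invariant, and in dimension two its push-down to $S^g M$ has total mass $2\pi \vol(M,g)$ because each spherical fiber has circumference $2\pi$. By the homeomorphism $\mu \mapsto m^g_\mu$ of Proposition~\ref{prop:m-current}, there is a unique current $\lambda_g \in \CC(\Gamma)$ with $m^g_{\lambda_g} = m^g_{\mathrm{Liouv}}$; concretely, in the Hopf coordinates the Liouville measure factors as $d\lambda_g \times dt$. The rightmost equality $i(g, \mu) = m^g_\mu(S^gM)$ is then just Definition~\ref{def:intersection}, and in particular $i(g, \lambda_g) = 2\pi \vol(M,g)$.

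The main step is the middle identity $i(\lambda_g, \mu) = i(g, \mu)$. I would first verify it on dirac currents $\mu = \delta_{[\gamma]}$: the right-hand side equals $\ell_g([\gamma])$ by Remark~\ref{remark,intersection}, and the left-hand side equals $\ell_g([\gamma])$ by Bonahon's formula for the intersection of the Liouville current with a closed geodesic, originally proved for hyperbolic metrics in \cite{Bon88} and extended to variable negative curvature in \cite{Ot90}. Since finite weighted combinations of dirac currents are dense in $\CC(\Gamma)$ and both sides are continuous in $\mu$ (the right-hand side by Proposition~\ref{prop:m-current}, the left-hand side by the continuity of Bonahon's intersection number recorded in the paragraph preceding the proposition), the identity extends to all $\mu \in \CC(\Gamma)$.

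Finally, injectivity of $g \mapsto \lambda_g$ follows by pairing against dirac currents: if $\lambda_{g_1} = \lambda_{g_2}$, then $\ell_{g_1}([\gamma]) = \ell_{g_2}([\gamma])$ for every $[\gamma] \in [\Gamma]$, and the marked length spectrum rigidity theorem of Otal and Croke for negatively curved closed surfaces gives an isometry between $g_1$ and $g_2$ isotopic to the identity. The hard part of this plan is the rigorous transfer of Bonahon's intersection formula from the hyperbolic setting to variable negative curvature; this requires identifying $\lambda_g$ through its factorization in the Hopf parametrization of the metric $g$ and then computing $i(\lambda_g, \delta_{[\gamma]})$ by integrating the length contributions of the short $g$-geodesic arcs crossing the closed geodesic representing $[\gamma]$, which forces one to work in variable curvature rather than rely on the explicit $\mathrm{PSL}(2,\R)$-invariant kernel available in the hyperbolic case.
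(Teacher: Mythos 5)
Your proposal is correct and follows essentially the same route as the paper: both verify $i(\lambda_g,\delta_{[\gamma]})=\ell_g([\gamma])$ on Dirac currents via Bonahon's formula and then extend to all of $\mathcal{C}(\Gamma)$ using density of (scaled) Dirac currents and continuity of the intersection number, with the transformation formula identifying $m_\mu^g(S^gM)$ with $\int_{S^{g_0}M} a_{g_0,g}\, dm_\mu^{g_0}$. Your added injectivity argument via marked length spectrum rigidity addresses the ``embedding'' claim, which the paper's proof leaves implicit, and is a reasonable supplement.
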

   \begin{proof}
   As has been observed by Otal (see \cite{Ot90}) for negatively curved surfaces the Liouville current is determined by the marked length spectrum. Using this he obtained
   the rigidity of the marked length spectrum which yields 
     the first statement. Note that his proof cannot be extended to higher dimensions.
     
     We want to show $ i(\lambda_g, \mu) = i(g, \mu)$.  For a fixed $g_0  \in R^-(M) $ and all free homotopy classes $[\gamma]\in [\Gamma]$, we can associate a flip invariant current $\frac{1}{2}(\delta_{[\gamma]}+\delta_{[\gamma^{-1}]})$. By the definition of infinitesimal time change $a_{g_0,g}$, we have
    $$
    i(\lambda_g, \frac{1}{2}(\delta_{[\gamma]}+\delta_{[\gamma^{-1}]}))= \ell_{g}([\gamma])= \int _{S^{g_0} M} a_{g_0,g} dm^{g_0}_{\delta_{[\gamma]}}, 
$$ 
Hence for any flip invariant current $\mu$, we can find by Sigmund's theorem (\cite[Theorem 1]{Sig72}) a sequence $\delta_{[\gamma_n]}$ converging weakly to $ \mu$, after scaling by some real multiplies. By the flip invariant property, it holds indeed $\frac{1}{2}(\delta_{[\gamma_n]}+\iota^*\delta_{[\gamma_n]})=\frac{1}{2}(\delta_{[\gamma_n]}+\delta_{[\gamma_n^{-1}]})$ also converging weakly to $\mu$. Using the continuity of the intersection number and  Proposition 
\ref{prop: transformation-formula} again, we obtain
$$
i(\lambda_g, \mu)= \int _{S^{g_0} M} a_{g_0,g} dm^{g_0}_{\mu} =  m_{\mu}^g(S^gM) = i(g, \mu). 
$$
   \end{proof}

We continue to study properties of the geodesic stretch. The following lemma is an immediate consequence of Corollary \ref{cor:s-current}.
\begin{lemma} \label{lemma,cocycleRelationStretch}

 The geodesic stretch $I: \mathcal{PC}(\Gamma) \times R^-(M) \times R^-(M) \to \mathbb{R}_{\geq 0}$ given by
$$
I([\mu], g_1,g_2) =I_{\mu}(g_1,g_2)=  \frac{i(g_2, \mu)}{i(g_1, \mu)}
$$
satisfies for any $\mu\in\mathcal{C}(\Gamma)$ and any $g_1, g_2, g_3\in R^-(M)$,
 $$
I_\mu(g_1, g_3) =I_\mu(g_1, g_2) I_\mu(g_2, g_3).
$$

In particular,
$$
I_\mu(g_1, g_2) = \frac{1}{I_\mu(g_2, g_1)}.
$$   
\end{lemma}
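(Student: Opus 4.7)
The plan is essentially a direct computation, since all the heavy lifting has already been done in Corollary \ref{cor:s-current}. By that corollary, for any geodesic current $\mu\in\mathcal{C}(\Gamma)$ and any pair of metrics $g_i,g_j\in R^-(M)$, the geodesic stretch admits the closed form
\[
I_\mu(g_i,g_j) \;=\; \frac{i(g_j,\mu)}{i(g_i,\mu)},
\]
so the entire statement reduces to elementary algebraic manipulations of ratios of the nonzero positive real numbers $i(g_k,\mu)$ for $k=1,2,3$. Note that $i(g_k,\mu) > 0$ whenever $\mu$ is a nonzero current (so that $m_\mu^{g_k}(S^{g_k}M) \neq 0$); on the projective space $\mathcal{PC}(\Gamma)$ the zero current is excluded, so these denominators are automatically strictly positive and the ratios are well defined.

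For the cocycle identity, I would simply write
\[
I_\mu(g_1,g_2)\,I_\mu(g_2,g_3) \;=\; \frac{i(g_2,\mu)}{i(g_1,\mu)}\cdot\frac{i(g_3,\mu)}{i(g_2,\mu)} \;=\; \frac{i(g_3,\mu)}{i(g_1,\mu)} \;=\; I_\mu(g_1,g_3),
\]
where the factor $i(g_2,\mu)$ cancels between numerator and denominator. The inversion formula then follows instantly by specializing to $g_3=g_1$, since $I_\mu(g_1,g_1) = i(g_1,\mu)/i(g_1,\mu) = 1$, so $I_\mu(g_1,g_2)\,I_\mu(g_2,g_1) = 1$.

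There is no genuine obstacle here: everything nontrivial was packaged into Proposition \ref{prop: transformation-formula} and Corollary \ref{cor:s-current}, whose point was precisely to rewrite the a priori dynamical quantity $I_\mu(g_1,g_2) = \frac{1}{m_\mu^{g_1}(S^{g_1}M)} \int a_{g_1,g_2}\,dm_\mu^{g_1}$ as the manifestly symmetric-in-numerator/denominator ratio $i(g_2,\mu)/i(g_1,\mu)$. Once the geodesic stretch is expressed in this form, the ``cocycle'' property of $I_\mu$ in its metric arguments is nothing more than a telescoping of fractions, and no further use of the geodesic flow, Busemann functions, or the orbit equivalence $\Psi_{g_1,g_2}$ is required. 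The only sanity check worth mentioning is that the descent to $\mathcal{PC}(\Gamma)$ is consistent with the identity, which is immediate since both sides scale as $\lambda^0$ under $\mu\mapsto \lambda\mu$.
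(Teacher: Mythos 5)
Your proof is correct and matches the paper's approach: the paper simply declares the lemma an immediate consequence of Corollary \ref{cor:s-current}, and your telescoping of the ratios $i(g_k,\mu)$ is exactly the computation being implicitly invoked. The remarks on positivity of the denominators and invariance under scaling of $\mu$ are sensible sanity checks but add nothing beyond what the corollary already guarantees.
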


Moreover,

\begin{lemma}[Continuity of geodesic stretches]\label{lemma, GeodesicStretchContinuous}
The geodesic stretch 
$$I: \mathcal{PC}(\Gamma) \times R^-(M) \times R^-(M) \to \mathbb{R}_{\geq 0}$$
$$
I([\mu], g_1,g_2) =I_{\mu}(g_1,g_2)=  \frac{i(g_2, \mu)}{i(g_1, \mu)}
$$
 is continuous.   
\end{lemma}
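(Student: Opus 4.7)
The plan is to reduce the continuity of $I$ to the joint continuity of the intersection pairing
$$
i:R^-(M)\times \mathcal{C}(\Gamma)\to \mathbb{R}_{\geq 0},\qquad (g,\mu)\mapsto i(g,\mu)=m_\mu^g(S^gM).
$$
Indeed, writing $I([\mu],g_1,g_2)=i(g_2,\mu)/i(g_1,\mu)$, the ratio is scale-invariant in $\mu$ and thus defines a function on $\mathcal{PC}(\Gamma)\times R^-(M)\times R^-(M)$ that is continuous as soon as $i$ is jointly continuous and the denominator never vanishes on nonzero currents. The non-vanishing is automatic: by Proposition~\ref{prop:m-current}, $\mu\mapsto m_\mu^g$ is a homeomorphism onto $\mathcal{M}(\phi^g)$, so a nonzero $\mu$ produces a nonzero positive Radon measure on the compact space $S^gM$, which therefore has strictly positive total mass $i(g,\mu)$. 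To descend from $\mathcal{C}(\Gamma)$ to $\mathcal{PC}(\Gamma)$, I fix any reference metric $g_0\in R^-(M)$ and use the normalizing section $[\mu]\mapsto \mu/i(g_0,\mu)$, whose continuity relies only on the continuity of $i(g_0,\cdot)$ and its positivity on nonzero currents.

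To prove joint continuity of $i$, the key step is to pull everything back to a fixed space using the transformation formula (Proposition~\ref{prop: transformation-formula}): for a reference $g_0\in R^-(M)$,
$$
i(g,\mu)\;=\;\int_{S^{g_0}M} a_{g_0,g}\,dm_\mu^{g_0}.
$$
This decouples the $g$-dependence (confined to the integrand $a_{g_0,g}$) from the $\mu$-dependence (confined to the measure $m_\mu^{g_0}$ on the fixed compact space $S^{g_0}M$). Two continuity inputs remain. (a)~The map $g\mapsto a_{g_0,g}$ is continuous from $R^-(M)$, in a sufficiently strong $C^k$ topology, into $C^0(S^{g_0}M)$; this follows from the explicit formula $a_{g_0,g}(v)=g(B^g(\pi(v),v^{g_0}_+),v)$ combined with the continuous dependence of the boundary identification $\partial\widetilde M$, the Busemann cocycle $b^g_\xi$, and its gradient field $B^g$ on $g$. (b)~The map $\mu\mapsto m_\mu^{g_0}$ is weak$^*$-continuous into Radon measures on the compact manifold $S^{g_0}M$, which is part of the content of Proposition~\ref{prop:m-current}.

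Given (a) and (b), joint continuity of $i$ follows from a standard splitting: if $(g_n,\mu_n)\to(g,\mu)$, then
$$
\Bigl|\int a_{g_0,g_n}dm_{\mu_n}^{g_0}-\int a_{g_0,g}dm_\mu^{g_0}\Bigr|\;\leq\; \norm{a_{g_0,g_n}-a_{g_0,g}}_\infty\cdot m_{\mu_n}^{g_0}(S^{g_0}M)+\Bigl|\int a_{g_0,g}\,d(m_{\mu_n}^{g_0}-m_\mu^{g_0})\Bigr|.
$$
The first term tends to zero by (a) together with uniform boundedness of the total masses (itself a consequence of testing the weak$^*$-convergent sequence $m_{\mu_n}^{g_0}$ against the constant function $1$ on the compact space $S^{g_0}M$), while the second tends to zero by (b) applied to the continuous test function $a_{g_0,g}$.

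The main obstacle is really step~(a): controlling the $C^0$-variation of $a_{g_0,g}$ under perturbations of $g$. This reduces to the metric dependence of the boundary identification and of the Busemann gradient $B^g$, which in turn rests on the Anosov-type regularity of the stable and unstable foliations of $\phi^g$ under perturbation. This constrains the admissible topology on $R^-(M)$ (some $C^k$ with $k$ large enough to guarantee $C^1$-stability of the weak stable foliation), but once such a topology is fixed the required continuity is classical, and the rest of the argument is routine.
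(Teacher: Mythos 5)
Your proposal is correct and follows essentially the same route as the paper: both reduce to the formula $I_\mu(g_1,g_2)=\int a_{g_1,g_2}\,d\widehat{m}_{\mu}^{g_1}$ (equivalently, a ratio of intersections pulled back to a fixed unit tangent bundle), prove uniform $C^0$-continuity of $g\mapsto a_{g_0,g}$ for $g$ in a $C^2$-neighborhood, and combine this with the weak$^*$-continuity of $\mu\mapsto m_{\mu}^{g_0}$ via the same two-term splitting. The only differences are cosmetic: the paper establishes your step (a) by a direct compactness argument on convergence of the geodesics defining $B^{g}(\pi(v),v^{g_1}_+)$ (no appeal to stability of Anosov foliations is needed), and it assembles joint continuity in all three variables from the partial continuities using the identities $I_\mu(g_1,g_2)=1/I_\mu(g_2,g_1)$ and $I_\mu(g_1,g_3)=I_\mu(g_1,g_2)I_\mu(g_2,g_3)$ rather than your fixed-reference-metric ratio of intersections.
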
  
\begin{proof}
According to Corollary \ref {cor:s-current},
we have
$$
I_\mu(g_1,g_2) = \frac{1}{m_{\mu}^{g_1}(S^{g_1}M)}\int_{S^{g_1}M}    a_{g_1,g_2} dm_{\mu}^{g_1} =\int_{S^{g_1}M}    a_{g_1,g_2} d\widehat{m}_{\mu}^{g_1},
$$
where  $a_{g_1,g_2}$ is the infinitesimal time change given by
$$
a_{g_1,g_2}(v) = \left.\frac{d}{dt}  \right|_{t = 0} 
  b^{g_2}_{v^{g_1}_+} ( \pi(v), \pi(\phi_t^{g_1}(v)) )  = g_2(B^{g_2}(\pi (v), v^{g_1}_+) , v )
$$
We first show the continuity for $(g,\mu) \mapsto I_\mu(g_1,g)$.
For this we claim that for any $g_2 \in R^-(M)$ all $\varepsilon >0$ there exists a $C^2$ open neighborhood $U$ of $g_2$ such that
$$
 \|B^{g}(\pi (v), v^{g_1}_+) -B^{g_2}(\pi (v), v^{g_1}_+) \|_{g_2} < \varepsilon
$$
for all $v \in S^{g_1}\widetilde M$. 
If this is not the case, there exists $\varepsilon_0 >0$ and sequences $\{h_n\}_{n\geq0} \subset R^-(M) $ and $\{v_n\}_{n\geq0} \subset S^{g_1} \widetilde M$ where  $h_n$ converges to $g_2$ such that
$$
\|B^{h_n}(\pi (v_n), v^{g_1}_{n_+}) -B^{g_2}(\pi (v_n), v^{g_1}_{n_+}) \|_{g_2} \ge \varepsilon_0.
$$
By compactness of $M = \widetilde M / \Gamma$, we can assume that $v_n$ converges to some $v \in S^{g_1} \widetilde M $. 
Note that $B^{h_n}(\pi (v_n), v^{g_1}_{n_+} ) \in S^{h_n}M$ is the initial vector
of the $h_n$-geodesic $c_n: [0, \infty) \to \widetilde M$ with  $c_n(0) = \pi (v_n)$ and $c_n(\infty) = v^{g_1}_{n_+}$. By choosing a subsequence of $c_n$, we can assume that $c_n$ converges to a $g_2$ geodesic $c$ with $c(0) = \pi (v)$ and $c(\infty) =  v^{g_1}_+$
we have $\dot c(0) = B^{g_2}(\pi (v), v^{g_1}_+)$ and therefore $B^{h_n}(\pi (v_n), v^{g_1}_{n_+})$ converges to $B^{g_2}(\pi (v), v^{g_1}_+) $. Since by a similar argument, $B^{g_2}(\pi (v_n), v^{g_1}_{n_+})$ converges to $B^{g_2}(\pi (v), v^{g_1}_+)$, we obtain a contradiction.
This yields that for all  $\varepsilon >0$ there exists a $C^2$ open neighborhood $U$ of $g_2$ such that
$$
|a_{g_1,g}(v) - a_{g_1,g_2}(v)| < \varepsilon,
$$
for all $g \in U$ and $v \in S^{g_1}M $. Now let $\mu_n \in \mathcal{C}(\Gamma)$
converge weakly to $\mu$ and $h_n \in R^-(M)$ converge in the $C^2$-topology to $g$. We obtain
\begin{align*}
\left|I_{\mu_n}(g_1,h_n) -I_{\mu}(g_1,g_2)\right|& = \left|\int_{S^{g_1}M}   a_{g_1,h_n} d\widehat{m}_{\mu_n}^{g_1} -\int_{S^{g_1}M} a_{g_1,g_2} d\widehat{m}_{\mu}^{g_1}\right|\\
& \le \left|\int_{S^{g_1}M}a_{g_1,h_n} d\widehat{m}_{\mu_n}^{g_1} -\int_{S^{g_1}M}a_{g_1,g_2} d\widehat{m}_{\mu_n}^{g_1} \right|\\
& \quad
 + \left|\int_{S^{g_1}M}a_{g_1,g_2} d\widehat{m}_{\mu_n}^{g_1} - \int_{S^{g_1}M} a_{g_1,g_2} d\widehat{m}_{\mu}^{g_1} \right|.
\end{align*}
Since $a_{g_1,h_n}$ converges uniformly to $a_{g_1,g_2}$ and $\widehat{m}_{\mu_n}^{g_1}$ converges weakly to $\widehat{m}_{\mu}^{g_1}$,
we obtain that $I_{\mu_n}(g_1,h_n)$ converges to  $I_{\mu}(g_1,g_2)$. This implies the continuity of  $(\mu,g) \mapsto I_\mu(g_1,g)$ for all $g_1 \in R^-(M)$.

Since $I_\mu(g_1, g_2) = \frac{1}{I_\mu(g_2, g_1)} $, 
we also obtain the continuity of $(g , \mu) \mapsto I_\mu(g, g_2)$.
Finally for jointly continuity, consider for any $g \in R^-(M)$ the following equality
$$
I_\mu(g_1, g_2) = I_\mu(g_1, g)I_\mu(g, g_2)
$$
This yields the continuity of $ (\mu, g_1, g_2) \mapsto I_\mu(g_1, g_2)$  
and hence proves the proposition. 
\end{proof}

\section{Maximal stretch and Aubry-Mather theory}\label{sec:MC}

\subsection{The maximal stretch} \label{subsection, The maximal stretch}
\hfill\\
In this section, we focus on the \emph{maximal stretch}, which is a major object of study in this note. After introducing the maximal stretch in Subsection \ref{subsection, The maximal stretch} and verifying some simple properties of this quantity, we turn in Subsection \ref{subsection, MCMather} to measure-theoretic objects associated with the study of the maximal stretch: these are \emph{maximal currents}, \emph{maximally stretched measures}, and the \emph{Mather set}; we also discuss the non-measure-theoretic counterpart of the Mather set, namely the \emph{Aubry set}, which is defined using weak supersolutions. Then, in Subsection \ref{subsection, propMCMather}, we investigate further properties of the Mather set. Finally, in Subsection \ref{subsection AubrySetPeierlBarrier}, we discuss a closely related concept, the \emph{Peierls barrier} and its properties.

\begin{definition}\label{def:maximal stretch}
For $g_1,g_2 \in R^-(M)$, the \emph{maximal stretch} $S:  R^-(M) \times R^-(M) \to \mathbb{R}_{\geq 0}$ is defined as
$$
S(g_1,g_2) = \sup_{\mu \in \mathcal{C}(\Gamma)} I_\mu(g_1,g_2)= \sup_{\mu \in \mathcal{C}(\Gamma)} \int a_{g_1,g_2}(v) d\hat{m}^{g_1}_{\mu}.
$$

\end{definition}

\begin{remark} \label{remark, supremumGeodesicStretch}
   We make the following remarks about the maximal stretch.
    \begin{enumerate} 
 
      \item Since $I_{\mu}(g_1,g_2) =I([\mu], g_1,g_2) $ is defined on the compact space of projective currents $\mathcal{PC}(\Gamma)$ (Corollary \ref{cor:s-current}) and $I_{\mu}(g_1,g_2)$ is continuous (Lemma \ref{lemma, GeodesicStretchContinuous}). The supremum is realized by some currents, 
    $$ S(g_1,g_2) = \sup_{\mu \in \mathcal{C}(\Gamma)} I_\mu(g_1,g_2)=\max_{\mu \in \mathcal{C}(\Gamma)} I_\mu(g_1,g_2). $$

\item Since periodic orbit measures are dense in the space of invariant probability measures \cite[Theorem 1]{Sig72} and real multiple of Dirac currents are dense in $\mathcal{C}(\Gamma)$, 

 $$ S(g_1,g_2) = \sup_{[\gamma] \in [\Gamma]} I_{\delta_{[\gamma]}}(g_1,g_2)=\sup_{[\gamma] \in [\Gamma]} \frac{\ell_{g_2}([\gamma])}{\ell_{g_1}([\gamma])} , $$
 where  $\ell_{g}([\gamma])$ is the length of the closed $g$-geodesic $\gamma^{g}$ in the free homotopy $[\gamma]\in [\Gamma]$.
      \end{enumerate}

\end{remark}

We explain our notation of lengths of curves.
\begin{remark}
    The following distinction for lengths of general Lipschitz curves and lengths of geodesics is adopted throughout the note. 
    \begin{itemize}
        \item 
    Given $g\in R^-(M)$, we denote $L_g(\alpha)$ as the $g$-length of a general Lipschitz curve $\alpha$ on $M$. 
    \item
    We denote $\ell_{g}([\gamma])$ as the $g$-length of the uniquely determined closed $g$-geodesic $\gamma^{g}$ up to parametrization in the free homotopy $[\gamma]\in [\Gamma]$, that is, $\ell_{g}([\gamma])=L_g(\gamma^{g})$.
    \end{itemize}
\end{remark}

We have the following continuity result for maximal stretch.
\begin{proposition} \label{propositionGK, GeodesicStretchContinuous}
    The maximal stretch $S:  R^-(M) \times R^-(M) \to \mathbb{R}_{\geq 0}$ given by
$$
S(g_1,g_2) = \sup_{[\mu] \in \mathcal{PC}(\Gamma)} I([\mu], g_1,g_2)= \max_{[\mu] \in \mathcal{PC}(\Gamma)} I([\mu], g_1,g_2)
$$
is continuous.
\end{proposition}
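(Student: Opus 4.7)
The plan is to deduce continuity of $S$ from the already-established joint continuity of $I$ on $\mathcal{PC}(\Gamma) \times R^-(M) \times R^-(M)$ (Lemma \ref{lemma, GeodesicStretchContinuous}) together with compactness of $\mathcal{PC}(\Gamma)$ (Proposition \ref{prop:proj-current}). This is the standard fact that the maximum of a jointly continuous function over a compact parameter space depends continuously on the remaining parameters, so the argument splits naturally into checking upper and lower semicontinuity separately.

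For lower semicontinuity, fix $(g_1, g_2) \in R^-(M) \times R^-(M)$ and choose $[\mu_0] \in \mathcal{PC}(\Gamma)$ with $I([\mu_0], g_1, g_2) = S(g_1, g_2)$; such a maximizer exists by Remark \ref{remark, supremumGeodesicStretch}. For any sequence $(g_1^n, g_2^n) \to (g_1, g_2)$, the definition of $S$ gives $S(g_1^n, g_2^n) \geq I([\mu_0], g_1^n, g_2^n)$, and the right-hand side converges to $I([\mu_0], g_1, g_2) = S(g_1, g_2)$ by Lemma \ref{lemma, GeodesicStretchContinuous}. Hence $\liminf_{n \to \infty} S(g_1^n, g_2^n) \geq S(g_1, g_2)$.

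For upper semicontinuity, for each $n$ choose $[\mu_n] \in \mathcal{PC}(\Gamma)$ with $S(g_1^n, g_2^n) = I([\mu_n], g_1^n, g_2^n)$. By compactness of $\mathcal{PC}(\Gamma)$, after passing to a subsequence we may assume $[\mu_n] \to [\mu_\infty]$ for some $[\mu_\infty] \in \mathcal{PC}(\Gamma)$. Joint continuity of $I$ then gives
\[
\lim_{n \to \infty} S(g_1^n, g_2^n) = \lim_{n \to \infty} I([\mu_n], g_1^n, g_2^n) = I([\mu_\infty], g_1, g_2) \leq S(g_1, g_2).
\]
Since every subsequence of $\{S(g_1^n, g_2^n)\}$ has a further subsequence with limit at most $S(g_1, g_2)$, we conclude $\limsup_{n \to \infty} S(g_1^n, g_2^n) \leq S(g_1, g_2)$. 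Combined with the lower bound above, $S$ is continuous.

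I do not expect a genuine obstacle here: the two preceding results (joint continuity of $I$ and compactness of $\mathcal{PC}(\Gamma)$) do all the work, and the argument is a standard max-compactness argument. The only place where care is needed is to remember to extract a subsequence in the upper semicontinuity step, and to note that the conclusion for the full sequence follows by the usual subsequence-of-subsequence argument.
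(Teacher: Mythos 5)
Your proposal is correct and follows essentially the same route as the paper: the authors also combine the joint continuity of $I$ (Lemma \ref{lemma, GeodesicStretchContinuous}) with compactness of $\mathcal{PC}(\Gamma)$, merely packaging the max-over-a-compact-set continuity argument as a separate abstract lemma (Lemma \ref{simpleLemma} in Appendix \ref{Appendix simpleLemma}) rather than running the semicontinuity argument inline. The only internal difference is cosmetic: the appendix lemma establishes uniform convergence of $F(\cdot,p_n)$ on $K$ by contradiction, whereas you extract a convergent subsequence of maximizers, and both are standard and correct.
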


\begin{proof}
The proposition follows from Lemma \ref{lemma, GeodesicStretchContinuous} and Lemma
 \ref{simpleLemma} in Appendix \ref{Appendix simpleLemma}.
 \end{proof}

\subsection{Maximally stretched measures and Aubry-Mather theory}\label{subsection, MCMather}
\hfill\\
In this subsection, we introduce maximal currents, the Mather set, and the Aubry set arising from the study of maximal stretches. The terminology and many ideas come from Aubry--Mather theory for Lagrangian systems (see, for example, \cite{Fat08}). We characterize some dynamical properties of the Mather set and the Aubry set in this and the next subsections.

\subsubsection{Maximal currents}
 We denote the set of \emph{maximal currents} of $g_1$ to $g_2$ as
$$
MC(g_1,g_2) = \{ \mu \in \mathcal{C}(\Gamma) \mid I_\mu(g_1,g_2)  =S(g_1,g_2)\}.
$$

We verify here a simple but interesting feature of maximal currents. It mainly exploits Lemma \ref{lemma,cocycleRelationStretch}.
\begin{proposition} \label{prop,triangleInequality}
For $g_1,g_2 ,g \in R^-(M)$,
we have
$$
S(g_1, g_2) \le S(g_1, g)  S(g, g_2) 
$$
where equality holds if and only if there is a current $\mu_0$ in the set $MC(g_1,g) \cap MC(g,g_2)$. 
In particular, in this case,
$ \mu_0 \in MC(g_1,g_2)$. 

Moreover, if the equality holds, then any current $\mu\in MC(g_1,g_2)$ is also in $MC(g_1,g) \cap MC(g,g_2)$.
\end{proposition}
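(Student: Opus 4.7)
The plan is to exploit directly the cocycle identity for the geodesic stretch established in Lemma \ref{lemma,cocycleRelationStretch}, namely $I_\mu(g_1,g_2)=I_\mu(g_1,g)\,I_\mu(g,g_2)$, and the fact from Remark \ref{remark, supremumGeodesicStretch}(1) that the supremum defining $S(g_1,g_2)$ is actually attained on $\mathcal{PC}(\Gamma)$. No hard estimates are needed; everything reduces to inserting a common test current $\mu$ into the multiplicative cocycle.

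First I would prove the inequality. For an arbitrary $\mu\in\mathcal{C}(\Gamma)$, the cocycle identity gives $I_\mu(g_1,g_2)=I_\mu(g_1,g)\,I_\mu(g,g_2)\le S(g_1,g)\,S(g,g_2)$, since both factors are bounded by the corresponding maximal stretches (and all quantities are positive). Taking the supremum over $\mu$ yields $S(g_1,g_2)\le S(g_1,g)\,S(g,g_2)$.

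Next I would handle the equivalence and the ``in particular'' clause together. If there exists $\mu_0\in MC(g_1,g)\cap MC(g,g_2)$, then
\[
S(g_1,g)\,S(g,g_2)=I_{\mu_0}(g_1,g)\,I_{\mu_0}(g,g_2)=I_{\mu_0}(g_1,g_2)\le S(g_1,g_2),
\]
which combined with the inequality of the previous paragraph forces equality throughout; in particular $I_{\mu_0}(g_1,g_2)=S(g_1,g_2)$, so $\mu_0\in MC(g_1,g_2)$. For the converse, suppose $S(g_1,g_2)=S(g_1,g)\,S(g,g_2)$. By Remark \ref{remark, supremumGeodesicStretch}(1), the supremum in $S(g_1,g_2)$ is attained, so pick any $\mu_0\in MC(g_1,g_2)$. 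Then
\[
S(g_1,g)\,S(g,g_2)=S(g_1,g_2)=I_{\mu_0}(g_1,g_2)=I_{\mu_0}(g_1,g)\,I_{\mu_0}(g,g_2)\le S(g_1,g)\,S(g,g_2),
\]
forcing $I_{\mu_0}(g_1,g)=S(g_1,g)$ and $I_{\mu_0}(g,g_2)=S(g,g_2)$, i.e. $\mu_0\in MC(g_1,g)\cap MC(g,g_2)$. The last sentence (``moreover'') is just this same pinching argument applied to an arbitrary $\mu\in MC(g_1,g_2)$ rather than to a specifically chosen one.

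I do not anticipate a real obstacle: the only subtle ingredient is invoking the compactness of $\mathcal{PC}(\Gamma)$ together with continuity of $I([\mu],g_1,g_2)$ (Proposition \ref{propositionGK, GeodesicStretchContinuous}) to guarantee the existence of a maximizer in $MC(g_1,g_2)$, without which the converse direction could fail to produce a witness $\mu_0$. Once that is in hand, the statement is a formal consequence of the multiplicativity of $\mu\mapsto I_\mu(\cdot,\cdot)$.
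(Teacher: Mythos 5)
Your proposal is correct and follows essentially the same route as the paper: the inequality via the multiplicative cocycle identity of Lemma \ref{lemma,cocycleRelationStretch}, and both directions of the equality case via the same pinching of $I_{\mu_0}(g_1,g)\,I_{\mu_0}(g,g_2)$ between $I_{\mu_0}(g_1,g_2)$ and $S(g_1,g)\,S(g,g_2)$, using attainment of the suprema on the compact space $\mathcal{PC}(\Gamma)$. Your write-up is in fact slightly more explicit than the paper's about why the pinching forces both factors to be maximal (positivity of the stretches) and about the ``moreover'' clause, but there is no substantive difference.
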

\begin{proof}
The inequality follows since for all $\mu \in \mathcal{C}(\Gamma)$, we have
$$
I_\mu(g_1, g_2) =I_\mu(g_1, g) I_\mu(g, g_2) \le S(g_1,g) S(g,g_2).
$$
The equality 
$$
S(g_1, g_2) = S(g_1, g)  S(g, g_2) 
$$
is equivalent to
$$
\sup_{ \mu \in  \mathcal{C}(\Gamma) } I_\mu(g_1,g) \sup_{ \mu \in  \mathcal{C}(\Gamma) } I_\mu(g,g_2) =\sup_{ \mu \in  \mathcal{C}(\Gamma) } I_\mu(g_1,g_2).
$$
Suppose $ I_{\mu_0}(g_1,g) = \sup_{ \mu \in  \mathcal{C}(\Gamma) } I_\mu(g_1,g)$ and $ I_{\mu_0}(g,g_2) = \sup_{ \mu \in  \mathcal{C}(\Gamma) } I_\mu(g,g_2)$.
Then
\begin{eqnarray*}
\sup_{ \mu \in  \mathcal{C}(\Gamma) } I_\mu(g_1,g_2) &=& \sup_{ \mu \in  \mathcal{C}(\Gamma) } I_\mu(g_1,g) I_\mu(g,g_2) \\
&\le&\sup_{ \mu \in  \mathcal{C}(\Gamma) } I_\mu(g_1,g) \sup_{ \mu \in  \mathcal{C}(\Gamma) }I_\mu(g,g_2)\\
&=& I_{\mu_0}(g_1,g) I_{\mu_0}(g,g_2) \\
&=&I_{\mu_0}(g_1,g_2)\le \sup_{ \mu \in  \mathcal{C}(\Gamma) }I_\mu(g_1,g_2).
\end{eqnarray*}
In particular, $\sup_{ \mu \in  \mathcal{C}(\Gamma) }I_\mu(g_1,g_2) =I_{\mu_0}(g_1,g_2)$.

On the other hand, let us suppose $S(g_1, g_2) = S(g_1, g) S(g, g_2) $ and assume $\sup_{ \mu \in  \mathcal{C}(\Gamma) }I_\mu(g_1,g_2) =I_{\mu_0}(g_1,g_2)$
for some $\mu_0 \in   \mathcal{C}(\Gamma)$. Then
$$
S(g_1,g_2) = I_{\mu_0}(g_1,g_2) =  I_{\mu_0}(g_1,g)   I_{\mu_0}(g,g_2) =S(g_1,g) S(g,g_2) 
$$
which implies the other claim.
\end{proof}

Recall $\iota: \partial^{(2)} \widetilde{M}  \to \partial^{(2)} \widetilde{M}$ is the
flip map exchanging two factors of $\partial^{(2)} \widetilde{M}$.

\begin{lemma}
    Suppose $\mu_0$ is a maximal current.  Then $\iota^*\mu_0$ is also a maximal current. Therefore we can symmetrize $\mu_0$ to obtain a flip invariant maximal current as $\frac{\mu_0+ \iota^*\mu_0}{2}$.
\end{lemma}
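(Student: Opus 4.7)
The plan is to reduce the statement to the invariance of the intersection under the flip, $i(g,\iota^*\mu)=i(g,\mu)$ for every $g\in R^-(M)$, and then invoke Corollary \ref{cor:s-current} together with linearity of $\mu\mapsto m_\mu^g$ from Proposition \ref{prop:m-current}.

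First I would prove the key identity $i(g,\iota^*\mu)=i(g,\mu)$ for all $g\in R^-(M)$ and $\mu\in\mathcal{C}(\Gamma)$ by tracking the antipodal involution $\sigma\colon S^g\widetilde M\to S^g\widetilde M$, $v\mapsto -v$, through the Hopf parametrization $H^g_{p_0}$. Since $\pi(-v)=\pi(v)$, $(-v)^g_+=v^g_-$ and $(-v)^g_-=v^g_+$, the map $\sigma$ reads in Hopf coordinates as
$$
(\xi_-,\xi_+,s)\ \longmapsto\ \bigl(\xi_+,\xi_-,\; C(\xi_-,\xi_+,\pi(v))-s\bigr),
$$
where $C(\xi_-,\xi_+,x):=b^g_{\xi_+}(p_0,x)+b^g_{\xi_-}(p_0,x)$. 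A direct application of the cocycle property of the Busemann function shows that $C$ is constant along the geodesic with endpoints $\xi_\pm$, so it depends only on $(\xi_-,\xi_+)$ along each leaf of the product structure. Because $m_\mu^g=d\mu\times ds$ and one-dimensional Lebesgue measure is invariant under the reflection $s\mapsto C-s$, we get $\sigma_* m_\mu^g=m_{\iota^*\mu}^g$. As $\sigma$ projects to the identity on $M$, it preserves any fundamental domain $S^g\mathcal F$, whence
$$
i(g,\iota^*\mu)=m_{\iota^*\mu}^g(S^gM)=m_\mu^g(\sigma^{-1}(S^gM))=m_\mu^g(S^gM)=i(g,\mu).
$$

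Given this, Corollary \ref{cor:s-current} immediately yields
$$
I_{\iota^*\mu_0}(g_1,g_2)=\frac{i(g_2,\iota^*\mu_0)}{i(g_1,\iota^*\mu_0)}=\frac{i(g_2,\mu_0)}{i(g_1,\mu_0)}=I_{\mu_0}(g_1,g_2)=S(g_1,g_2),
$$
so $\iota^*\mu_0\in MC(g_1,g_2)$. For the symmetrization, observe that by the linearity of $\mu\mapsto m_\mu^g$ recorded in Proposition \ref{prop:m-current}, the intersection is additive in its current argument, hence
$$
I_{\frac{\mu_0+\iota^*\mu_0}{2}}(g_1,g_2)=\frac{i(g_2,\mu_0)+i(g_2,\iota^*\mu_0)}{i(g_1,\mu_0)+i(g_1,\iota^*\mu_0)}=\frac{2\,i(g_2,\mu_0)}{2\,i(g_1,\mu_0)}=S(g_1,g_2),
$$
showing that $(\mu_0+\iota^*\mu_0)/2$ is a flip invariant maximal current.

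The only mildly delicate step is the first: pinning down the action of $\sigma$ on the third Hopf coordinate and checking that the required reflection preserves the Lebesgue factor of the product measure. Everything else is a mechanical consequence of Corollary \ref{cor:s-current} and linearity.
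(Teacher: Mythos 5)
Your proof is correct, but it takes a genuinely different route from the paper's. The paper argues by approximation: it writes $\mu_0$ as a weak* limit of (scaled) Dirac currents $\delta_{[\gamma_n]}$, notes that $\iota^*\delta_{[\gamma_n]}=\delta_{[\gamma_n^{-1}]}$ and that $\ell_{g_i}([\gamma])=\ell_{g_i}([\gamma^{-1}])$, so each Dirac current has the same stretch as its flip, and then passes to the limit using the continuity of $I_\mu(g_1,g_2)$ in $\mu$. You instead prove the exact identity $i(g,\iota^*\mu)=i(g,\mu)$ for \emph{every} current by tracking the antipodal involution $v\mapsto -v$ through the Hopf parametrization: your computation that the third coordinate transforms as $s\mapsto C(\xi_-,\xi_+)-s$ with $C=b^g_{\xi_+}(p_0,\cdot)+b^g_{\xi_-}(p_0,\cdot)$ constant along each leaf is right (the Busemann cocycle increases by $t$ toward $\xi_+$ and decreases by $t$ toward $\xi_-$ under the flow), and reflection invariance of the Lebesgue factor then gives $\sigma_*m^g_\mu=m^g_{\iota^*\mu}$, while $\pi\circ\sigma=\pi$ preserves the fiberwise fundamental domain $S^g\mathcal F$. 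Your route is longer but buys more: it avoids the density-of-Dirac-currents and continuity inputs entirely, yields the measure-theoretic identity $\sigma_*m^g_\mu=m^g_{\iota^*\mu}$ as a byproduct, and makes the symmetrization step an immediate consequence of the additivity of $\mu\mapsto i(g,\mu)$ rather than a separate convexity remark. The paper's route is shorter given that Sigmund's density theorem and Lemma \ref{lemma, GeodesicStretchContinuous} are already in place. Both are valid; the only point worth making explicit in your write-up is that $\iota$ commutes with the diagonal $\Gamma$-action on $\partial^{(2)}\widetilde M$, so $\iota^*\mu_0$ is indeed a geodesic current.
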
  
\begin{proof}
    Since $$S(g_1,g_2)= I_{\mu_0}(g_1,g_2)= \sup_{[\gamma]\in[\Gamma]} I_{\delta_{[\gamma]}} (g_1,g_2).$$
    If $\mu_0$ is a maximal current and if we write $\mu_0=\lim_{n\to \infty} \delta_{[\gamma_n]}$ in weak-* topology, then $\iota^*\mu_0$ which is a limit of $\delta_{[\gamma_n^{-1}]}=\iota^* \delta_{[\gamma_n]}$, is also a maximal current. This follows from the observation that
    $\ell_{g_i}([\gamma])=\ell_{g_i}([\gamma^{-1}])$ and hence $I_{\delta_{[\gamma]}} (g_1,g_2)=I_{\delta_{[\gamma^{-1}]}} (g_1,g_2)$.
\end{proof}
For maximally stretched measures introduced next, we can also symmetrize them and obtain reflexive measures (see footnotes in the introduction). However, we do not emphasize this feature in this note, and we work with unsymmetrized objects.

\subsubsection{Maximally stretched measures and the Mather set}

We describe the measure-theoretic counterpart of maximal currents. For $g_1,g_2 \in R^-(M)$, the maximal stretch of $g_1$ to $g_2$ is also
$$
S(g_1,g_2) = \max_{m \in \mathcal{M}^{1}(\phi^{g_1})} I_m(g_1,g_2).
$$

\begin{definition}
Let $g_1, g_2$ be two Riemannian metrics in $R^-(M)$. The set of \emph{maximally stretched measures} of $g_1$ to $g_2$ is defined by
$$
MS(g_1,g_2) = \{ m \in \mathcal{M}^{1}(\phi^{g_1}) \mid I_m(g_1,g_2)  =S(g_1,g_2)\}.
$$
The \emph{Mather set} of $g_1$ to $g_2$ is defined by
$$
\mathcal{M}(g_1,g_2) :=\overline{ \bigcup_{m\in MS(g_1,g_2)} \supp m}.$$
\end{definition}

Using maximal currents, the Mather set of $g_1$ to $g_2$ can be equivalently written as

$$
\mathcal M(g_1, g_2) = \overline{\bigcup_{\mu \in MC(g_1,g_2)}\supp \hat{m}^{g_1}_{\mu} }.
$$

Mather sets in the Teichm\"uller space always project to geodesic laminations; In contrast, for metrics in $R^-(M)$, the Mather set is in general not related to geodesic laminations (see Example \ref{MatherNotLamination}). Nevertheless, some structural features of the Teichm\"uller space are still preserved in this general setting (Theorem \ref{corollary, LinearConjugacy} and Proposition \ref{thm, EmtpyInterior}). Before turning to an analysis of further properties of the Mather set, we complete the picture of the Aubry-Mather theory in our setting by introducing another closely related object, the \emph{Aubry set}.  
 
\subsubsection{Supersolutions and the Aubry set} \label{subsection,SupersolutionAubry}

We first consider \emph{(strong) supersolutions}  which in our context were defined in \cite{LT05}. They are analogous to weak KAM subsolutions studied in Fathi's monograph \cite{Fat08}. 

\begin{definition}\label{def,StrongSuper}
    Let $g_1, g_2  \in R^-(M)$. We call a H\"older continuous function $u:  S^{g_1}M \to \R$ a \emph{strong supersolution} for $a_{g_1,g_2}$, if it is smooth along the flow direction of the geodesic flow $\phi^{g_1}$ and
 for all $v\in S^{g_1}M$,
    $$
S(g_1, g_2) -a_{g_1, g_2}(v) + X_{g_1} u(v) \ge 0,
$$
where $X_{g_1}$ is the infinitesimal generator of the geodesic flow $\phi^{g_1}$ and $X_{g_1}u$ is H\"older continuous.

\end{definition}

We also introduce a weaker version of supersolutions as follows. They will be used in later subsections.
\begin{definition}\label{def,weakSuper}
    We say a continuous function $u:S^{g_1}M \to \mathbb{R}$ is a \emph{weak supersolution} for $a_{g_1,g_2}$, if for all $t_1 \le t_2$, we have
    $$
u(\phi_{t_2}^{g_1}v)- u(\phi_{t_1}^{g_1}v) \ge \int_{t_1}^{t_2} a_{g_1, g_2} (\phi_s^{g_1}v )ds - (t_2-t_1) S(g_1, g_2) .
$$
\end{definition}

\begin{proposition}\label{prop:subsolution}
Let $g_1, g_2  \in R^-(M)$. Then there exists a strong supersolution for $a_{g_1,g_2}$.
\end{proposition}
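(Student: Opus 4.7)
The plan is to realize $u$ as a sub-action for the H\"older cocycle $f := a_{g_1,g_2} - S(g_1,g_2)$ over the Anosov geodesic flow $\phi^{g_1}$, following the general scheme of \cite{LT05}. By Definition \ref{def:maximal stretch} and Remark \ref{remark, supremumGeodesicStretch}, $\sup_{m\in\mathcal{M}^1(\phi^{g_1})}\int f\,dm = 0$ with this supremum attained, and $f$ is H\"older continuous by Remark \ref{remark,regularityTimeChange}(1). The sought function $u$ must satisfy $X_{g_1}u \ge f$ with $X_{g_1}u$ itself H\"older.

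For each $\epsilon > 0$ I would introduce the regularized candidate
\[
u_\epsilon(v) := \sup_{T\ge 0} \int_0^T \bigl(f(\phi_s^{g_1}v) - \epsilon\bigr)\,ds.
\]
Since $\sup_m \int(f-\epsilon)\,dm = -\epsilon < 0$, weak-$*$ compactness of the empirical measures $\frac{1}{T}\int_0^T \delta_{\phi_s^{g_1}v}\,ds$ forces the Birkhoff averages $\frac{1}{T}\int_0^T(f-\epsilon)(\phi_s^{g_1}v)\,ds$ to lie below $-\epsilon/2$ for $T$ large enough (uniformly in $v$ by compactness of $S^{g_1}M$), so $u_\epsilon$ is finite and uniformly bounded. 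Exploiting exponential contraction along local stable leaves of $\phi^{g_1}$ together with the H\"older estimate for $f$, the Birkhoff integrals along stable leaves differ by at most $C\,d(v,v')^\alpha$ uniformly in $T$ and $\epsilon$. Combining this with a symmetric backward-time argument on unstable leaves via local product structure yields that $u_\epsilon$ is equi-H\"older in $\epsilon$.

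Next I would normalize $\tilde u_\epsilon := u_\epsilon - u_\epsilon(v_0)$ at a fixed basepoint $v_0$ and extract a H\"older limit $u := \lim_{\epsilon_n\to 0} \tilde u_{\epsilon_n}$ via Arzela--Ascoli on the compact bundle $S^{g_1}M$. By construction, $u$ satisfies the integrated inequality
\[
u(\phi_t^{g_1}v) - u(v) \ge \int_0^t f(\phi_s^{g_1}v)\,ds, \qquad v\in S^{g_1}M,\ t\ge 0,
\]
so $t\mapsto u(\phi_t^{g_1}v) - \int_0^t f(\phi_s^{g_1}v)\,ds$ is non-decreasing. To upgrade this integrated statement to the pointwise bound $X_{g_1}u\ge f$ with $X_{g_1}u$ itself H\"older, I would convolve $u$ along the flow direction against a smooth mollifier, exploiting that the monotonicity survives mollification and that the resulting discrepancy is controlled by the H\"older modulus of $f$.

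The main obstacle is obtaining uniform H\"older regularity of $u_\epsilon$ in the unstable directions; this does not follow directly from the forward-time supremum and requires the symmetrization machinery of \cite{LT05}. A secondary delicate point is the final smoothing along the flow ensuring that $X_{g_1}u$ is itself H\"older. An alternative route for this last step is provided by the Peierls-barrier construction of Subsection \ref{subsection AubrySetPeierlBarrier}, whose resulting supersolution is smooth along the flow by construction.
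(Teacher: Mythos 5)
Your overall strategy is the right one, and in fact the paper's proof is far shorter: it simply checks the hypothesis of the sub-action theorem of Lopes--Thieullen --- namely that $\int (a_{g_1,g_2}-S(g_1,g_2))\,dm \le 0$ for every invariant probability measure, hence in particular along every periodic orbit --- and then invokes \cite[Theorem 1]{LT05}, which already produces a H\"older function $u$, differentiable along the flow with $X_{g_1}u$ H\"older, satisfying $X_{g_1}u \ge a_{g_1,g_2}-S(g_1,g_2)$. Everything you are trying to rebuild (the Lax--Oleinik-type supremum, the equi-H\"older estimates, the regularity along the flow) is the content of that cited theorem, and at both places where the construction is genuinely hard you defer back to \cite{LT05} anyway. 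So your proposal is essentially a partial re-proof of the black box the paper uses, rather than an independent argument.

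Beyond that, the final step as you describe it has a real gap. If $u$ only satisfies the integrated inequality $u(\phi_t^{g_1}v)-u(v)\ge \int_0^t f(\phi_s^{g_1}v)\,ds$ with $f=a_{g_1,g_2}-S(g_1,g_2)$, and you mollify along the flow, $u_\delta(v)=\int \rho_\delta(r)\,u(\phi_r^{g_1}v)\,dr$, then the monotonicity gives
\[
u_\delta(\phi_h^{g_1}v)-u_\delta(v)\ \ge\ \int_0^h f_\delta(\phi_s^{g_1}v)\,ds,
\qquad f_\delta(w):=\int\rho_\delta(r)\,f(\phi_r^{g_1}w)\,dr,
\]
so that $X_{g_1}u_\delta \ge f_\delta \ge f - C\delta^\alpha$. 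This is a strong supersolution for the constant $S(g_1,g_2)+C\delta^\alpha$, not for $S(g_1,g_2)$: the H\"older modulus of $f$ does not let you absorb the error, since the inequality you need is sharp on the Aubry set. The pointwise bound $X_{g_1}u\ge f$ with $X_{g_1}u$ H\"older is obtained in \cite{LT05} by a different mechanism (a Liv\v{s}ic-type decomposition $f = X_{g_1}u + g$ with $g\le 0$ H\"older), not by smoothing a weak supersolution. The suggested fallback via the Peierls barrier is also circular in the paper's logic, since the finiteness and the key properties of $\mathcal{H}_{g_1,g_2}$ are established there using an already-constructed supersolution. The clean fix is to do what the paper does: verify the periodic-orbit hypothesis (which you have) and cite \cite[Theorem 1]{LT05} for the full conclusion.
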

\begin{proof}
For all $\mu \in \mathcal{C}(\Gamma)$, we know $$\int_{S^{g_1}M} a_{g_1,g_2}(v) d\hat{m}^{g_1}_{\mu} \le S(g_1,g_2) .$$ 

As a consequence, the integral of  $a_{g_1,g_2} -S(g_1,g_2) $ is nonpositive along
all periodic orbits of the geodesic flow $\phi^{g_1}$. Then from \cite[Theorem 1]{LT05}, we obtain that there exists a H\"older continuous function $u:  S^{g_1}M \to \R$ smooth along the flow direction of $\phi^{g_1}$ so that for any $v\in S^{g_1}M$,
 $$X_{g_1} u(v) \ge  a_{g_1,g_2}(v)- S(g_1,g_2). $$
 Therefore $u$ is a supersolution for $a_{g_1,g_2}$.
\end{proof}

Motivated by the weak KAM-theory, we define the following.
\begin{definition}
Let $g_1, g_2  \in R^-(M)$ and let $u:  S^{g_1}M \to \R$ be a weak supersolution for $a_{g_1,g_2}$. We define the set
\begin{align*}
\mathcal A_u(g_1, g_2) = \{ v \in S^{g_1}M \mid u(\phi_{t_2}^{g_1}v)  &= \int_{t_1}^{t_2} a_{g_1, g_2} (\phi_s^{g_1}v )ds - (t_2-t_1) S(g_1, g_2) \\
&+u(\phi_{t_1}^{g_1}v),
 \;\text{for all $t_1,t_2\in\R$ with $t_1 \leq t_2$} \}
\end{align*}
as the  \emph{Aubry set} of $u$.\\ 

The \emph{Aubry set} is then  $$\mathcal{A}(g_1,g_2)= \bigcap_u \mathcal{A}_u(g_1,g_2),$$
where $u$ is any weak supersolution.
\end{definition}

\begin{remark}  \label{remark, AubrySet}
The following statements are clear from the definitions of $\mathcal A_u(g_1, g_2)$ and $\mathcal{A}(g_1,g_2)$.
\begin{enumerate}
\item  $\mathcal A_u(g_1, g_2)$ and $\mathcal{A}(g_1,g_2)$ are compact $\phi^{g_1}_t$-invariant subsets of $S^{g_1}M$. 
\item If we have for $t_1 \leq t_2$
\begin{equation*}
u(\phi_{t_2}^{g_1}(v))= \int_{t_1}^{t_2}  a_{g_1, g_2} (\phi_s^{g_1}(v) )ds - (t_2-t_1) S(g_1, g_2) 
+u(\phi_{t_1}^{g_1}(v)).
\end{equation*}
Then for any subinterval $[s_1,s_2] \subset [t_1,t_2]$, the above equality still holds  by replacing $t_j$ by $s_j$ for $j=1,2$.
\end{enumerate} 
\end{remark}

We would like to relate the Aubry set of a supersolution to the Mather set. For this, it is more convenient to work with strong supersolutions. 
\begin{proposition} \label{prop, AubryMather}
Let $g_1, g_2  \in R^-(M)$ and let $u:  S^{g_1}M \to \R$ be a strong supersolution. Then the Mather set $\mathcal M(g_1, g_2) $ is contained in the Aubry set $\mathcal A_u(g_1, g_2)$ of $u$.
\end{proposition}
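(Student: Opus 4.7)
The plan is to show that any strong supersolution $u$ gives equality in the supersolution inequality along every orbit in $\supp m$ for each maximally stretched measure $m$, and then take the closure.

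First I would verify that every strong supersolution is automatically a weak supersolution. Indeed, applying the defining inequality of Definition~\ref{def,StrongSuper} at $\phi_s^{g_1}v$ and integrating over $s \in [t_1,t_2]$ yields
\begin{equation*}
(t_2-t_1)S(g_1,g_2) - \int_{t_1}^{t_2} a_{g_1,g_2}(\phi_s^{g_1}v)\,ds + u(\phi_{t_2}^{g_1}v) - u(\phi_{t_1}^{g_1}v) \ge 0,
\end{equation*}
which is exactly the weak supersolution condition. So it makes sense to compare $\mathcal{A}_u(g_1,g_2)$ with $\mathcal{M}(g_1,g_2)$.

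Next, for any $m \in MS(g_1,g_2)$ I would integrate the strong supersolution inequality
\begin{equation*}
S(g_1,g_2) - a_{g_1,g_2}(v) + X_{g_1}u(v) \ge 0
\end{equation*}
against $m$. Since $m$ is $\phi^{g_1}$-invariant and $u$ is smooth along the flow with $X_{g_1}u$ continuous, the integral of the coboundary term $X_{g_1}u$ vanishes (this is where the "smooth along the flow" part of Definition~\ref{def,StrongSuper} is crucial). By definition of $MS(g_1,g_2)$, $\int a_{g_1,g_2}\,dm = I_m(g_1,g_2) = S(g_1,g_2)$. Hence the integral of the non-negative integrand is exactly $0$, forcing the integrand to vanish $m$-almost everywhere, and then by continuity of $a_{g_1,g_2}$ and $X_{g_1}u$, to vanish on all of $\supp m$.

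From this pointwise equality $S(g_1,g_2) - a_{g_1,g_2} + X_{g_1}u \equiv 0$ on $\supp m$ (which is $\phi^{g_1}$-invariant), I would integrate along any orbit from $t_1$ to $t_2$ to obtain the Aubry equality
\begin{equation*}
u(\phi_{t_2}^{g_1}v) - u(\phi_{t_1}^{g_1}v) = \int_{t_1}^{t_2} a_{g_1,g_2}(\phi_s^{g_1}v)\,ds - (t_2-t_1)S(g_1,g_2)
\end{equation*}
for every $v\in\supp m$ and all $t_1 \le t_2$. Thus $\supp m \subset \mathcal{A}_u(g_1,g_2)$. Taking the union over $m \in MS(g_1,g_2)$ and then the closure, and using that $\mathcal{A}_u(g_1,g_2)$ is closed (the defining equality is preserved under uniform limits in $v$ thanks to the continuity of $u$ and $a_{g_1,g_2}$), gives $\mathcal{M}(g_1,g_2) \subset \mathcal{A}_u(g_1,g_2)$.

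The only subtle point — and what I would expect to be the main thing to check carefully — is the step $\int X_{g_1}u\,dm = 0$. For $C^1$ functions this is the standard invariance identity, but here $u$ is only Hölder, with derivative along the flow defined pointwise and Hölder. One can justify it either by approximating $t\mapsto u(\phi_t^{g_1}v)$ and differentiating, or more cleanly by using Birkhoff's theorem: $\frac{1}{T}\int_0^T X_{g_1}u(\phi_s^{g_1}v)\,ds = \frac{u(\phi_T^{g_1}v)-u(v)}{T} \to 0$ as $T\to\infty$ for $m$-a.e.\ $v$ (since $u$ is bounded), and then dominated convergence combined with Birkhoff's ergodic theorem yields $\int X_{g_1}u\,dm = 0$.
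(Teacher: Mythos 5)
Your proof is correct and follows essentially the same route as the paper's: integrate the non-negative function $S(g_1,g_2)-a_{g_1,g_2}+X_{g_1}u$ against a maximally stretched measure to see it vanishes on the support, integrate along orbits to get the Aubry equality, and pass to the closure by continuity. Your two additions — checking that a strong supersolution is a weak one, and justifying $\int X_{g_1}u\,dm=0$ via Birkhoff for a merely H\"older $u$ that is smooth along the flow — are points the paper leaves implicit, and both are handled correctly.
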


\begin{proof}
Consider $v \in \mathcal M(g_1, g_2)$. Then there exists $m_n\in MS(g_1,g_2)$ and a sequence $v_n \in \supp m_n$ so that $v_n$ converges to $v$. By assumption, the function
$f \colon  S^{g_1} M \to \mathbb{R}$ given by
$$f(v):= S(g_1, g_2) -a_{g_1, g_2}(v) + X_{g_1} u(v) $$
is non-negative. Therefore,
$$
\int_{S^{g_1}M} f  d m_n = S(g_1, g_2) -\int_{S^{g_1}M} a_{g_1, g_2} d m_n =0.
$$

Since the integrand is continuous and non-negative, we have that $f$ vanishes identically on the invariant set $\supp m_n$.
In particular, for $t_1\leq t_2$,
$$
u(\phi_{t_2}^{g_1}(v_n)) = \int_{t_1}^{t_2} a_{g_1, g_2} (\phi_s^{g_1}(v_n) )ds - (t_2-t_1) S(g_1, g_2) +u(\phi_{t_1}^{g_1}(v_n)) 
$$
and by continuity, $v$ is also contained in the Aubry set $A_u(g_1, g_2)$ of $u$.
 \end{proof}

The next theorem is the main result of this subsection: we show that one can always find an orbit equivalence between $S^{g_1}M$ and $S^{g_2}M$ so that the time change is maximally linear on the Aubry set of a weak supersolution.

Recall we introduced the time change function $\tau: S^{g_1}M \times \mathbb{R} \to \mathbb{R}$ (Definition \ref{def:timechange}) as
\begin{align*}
    \tau(v,t) =  b^{g_2}_{v^{g_1}_+} ( \pi(v), \pi(\phi_t^{g_1}(v))).
\end{align*}

\begin{theorem}
     \label{corollary, LinearConjugacy}
Let $g_1, g_2  \in R^-(M)$ and let $u:  S^{g_1}M \to \R$ be a weak supersolution.
Let $G: S^{g_1}M \to S^{g_2}M$ be a H\"older orbit equivalence given by 
$$G(v) := \phi^{g_2}_{- u(v)}\Psi_{g_1, g_2}(v).$$ Then 
$$
\phi^{g_2}_{\tau_u(v,t)} G(v) = G(\phi_t^{g_1}(v)),
$$
where $\tau_u(v,t):=\tau(v,t) -u(\phi_t^{g_1}(v))+u(v) \le tS(g_1, g_2)$.
In particular, the geodesic flows $ \phi^{g_1}$ and  $ \phi^{g_2}$ are homothetic on the Aubry set $\mathcal A_u(g_1, g_2) $, in the sense that
$$
 \phi^{g_2}_{ S(g_1,g_2)t} (G(v))  = G( \phi_t^{g_1}(v)),
 $$
for all $v \in \mathcal A_u(g_1, g_2)$.

Furthermore, for all $g_1$-periodic orbits  $\phi_t^{g_1}(v)$ with $v \in \mathcal A_u(g_1, g_2)$ and with period $\ell_{g_1}(v)$,  the orbit $  \phi^{g_2}_{t} (G(v)) $ is periodic with period $ \ell_{g_2}(G(v))$ that satisfies
 $$ \ell_{g_2}(G(v)) = S(g_1, g_2) \ell_{g_1}(v).$$
\end{theorem}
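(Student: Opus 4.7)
The plan is to derive the orbit equivalence identity by direct computation from Lemma~\ref{lem:conj} and the definition of $G$, then obtain the bound on $\tau_u$ directly from the weak supersolution inequality, and finally deduce the periodic orbit statement from the resulting homothety combined with injectivity of the orbit equivalence on individual closed orbits.

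First I would verify the identity $G(\phi^{g_1}_t v) = \phi^{g_2}_{\tau_u(v,t)} G(v)$. Starting from $G(\phi^{g_1}_t v) = \phi^{g_2}_{-u(\phi^{g_1}_t v)} \Psi_{g_1,g_2}(\phi^{g_1}_t v)$ and applying Lemma~\ref{lem:conj} to rewrite $\Psi_{g_1,g_2}(\phi^{g_1}_t v) = \phi^{g_2}_{\tau(v,t)} \Psi_{g_1,g_2}(v)$, the group law for $\phi^{g_2}$ together with inserting $\phi^{g_2}_{u(v)} \phi^{g_2}_{-u(v)} = \operatorname{id}$ yields $G(\phi^{g_1}_t v) = \phi^{g_2}_{\tau(v,t) - u(\phi^{g_1}_t v) + u(v)} G(v)$, which is exactly $\phi^{g_2}_{\tau_u(v,t)} G(v)$. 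This step is formal. For the bound, I would apply the weak supersolution inequality from Definition~\ref{def,weakSuper} with $t_1=0$ and $t_2=t\geq 0$, using the expression $\tau(v,t) = \int_0^t a_{g_1,g_2}(\phi^{g_1}_s v)\, ds$ from Remark~\ref{remark,regularityTimeChange}, to get $u(\phi^{g_1}_t v) - u(v) \geq \tau(v,t) - t S(g_1,g_2)$, which rearranges precisely to $\tau_u(v,t) \leq t S(g_1,g_2)$.

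On the Aubry set $\mathcal{A}_u(g_1,g_2)$, the supersolution inequality becomes an equality by definition, so $\tau_u(v,t) = t S(g_1,g_2)$ for all $t$; substituting this into the identity from the previous step yields the homothety $\phi^{g_2}_{S(g_1,g_2)\, t}(G(v)) = G(\phi^{g_1}_t v)$. For the periodic statement, given $v \in \mathcal{A}_u(g_1,g_2)$ with minimal period $\ell_{g_1}(v)$, plugging $t = \ell_{g_1}(v)$ into the homothety gives $G(v) = \phi^{g_2}_{S(g_1,g_2)\, \ell_{g_1}(v)} G(v)$, so $S(g_1,g_2)\, \ell_{g_1}(v)$ is \emph{a} period of $G(v)$ under $\phi^{g_2}$. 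To conclude it is the minimal one, I would lift to the universal cover: the closed $\phi^{g_1}$-orbit through $v$ lifts to the $g_1$-axis of a representative $\gamma \in [\gamma] \subset \Gamma$, with endpoints $v^{g_1}_\pm \in \partial \widetilde{M}$. By construction in Lemma~\ref{lem:conj}, $\Psi_{g_1,g_2}$ maps this axis bijectively onto the $g_2$-axis of the same $\gamma$, which has the same endpoints at infinity and hence descends to the $\phi^{g_2}$-closed orbit in the free homotopy class $[\gamma]$; the further shift $\phi^{g_2}_{-u(v)}$ is a bijection on that orbit, so $G$ restricts to a bijection between the two closed orbits. Thus a single traversal on the $g_1$-side, of length $\ell_{g_1}(v)$, corresponds under the time change $t \mapsto S(g_1,g_2)\, t$ to a single traversal on the $g_2$-side, pinning down the minimal period as $\ell_{g_2}(G(v)) = S(g_1,g_2)\, \ell_{g_1}(v)$.

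The routine content is Steps~1--2, which are one-line calculations once the weak supersolution inequality is invoked. The main subtlety is the minimality assertion in the periodic case: one must rule out that $G$ wraps the closed $g_1$-orbit around its $g_2$-counterpart multiple times, which would produce a period that is a proper divisor of $S(g_1,g_2)\, \ell_{g_1}(v)$. The Hopf-coordinate description of $\Psi_{g_1,g_2}$, together with the shared pair of boundary endpoints $(\gamma^-,\gamma^+)$ between the two axes, is precisely what ensures single-cover bijectivity on each closed orbit, and this is the only geometric input beyond the formal manipulations.
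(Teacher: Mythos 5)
Your computation of the orbit-equivalence identity, the bound $\tau_u(v,t)\le tS(g_1,g_2)$ from the weak supersolution inequality, and the homothety on $\mathcal A_u(g_1,g_2)$ follow exactly the same route as the paper's proof, which is a direct manipulation using Lemma~\ref{lem:conj} and the defining equality of the Aubry set; these parts are correct.

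Where you go beyond the paper is the final periodic-orbit claim: the paper's proof stops at the homothety and leaves the statement $\ell_{g_2}(G(v)) = S(g_1,g_2)\,\ell_{g_1}(v)$ (in particular the minimality of the period) unargued, whereas you supply the missing step. Your idea — lift to the universal cover and use that $\Psi_{g_1,g_2}$ carries the $g_1$-axis of a primitive $\gamma$ to its $g_2$-axis — is the right one, but the word ``bijectively'' is slightly stronger than what is established: the paper itself remarks that $\Psi_{g_1,g_2}$ need not be injective. The clean way to finish is via $\Gamma$-equivariance: if $\widetilde v$ lies on the $g_1$-axis of the primitive element $\gamma$, then $\gamma_*\widetilde v = \phi^{g_1}_{\ell_{g_1}(v)}\widetilde v$, so equivariance of $G$ gives $\gamma_*\widetilde G(\widetilde v) = \phi^{g_2}_{S(g_1,g_2)\ell_{g_1}(v)}\widetilde G(\widetilde v)$; since $\gamma_*$ acts on its own $g_2$-axis as translation by exactly the primitive length $\ell_{g_2}([\gamma])$, one reads off $S(g_1,g_2)\ell_{g_1}(v)=\ell_{g_2}([\gamma])$ with no wrapping ambiguity. (Alternatively, on the Aubry set $t\mapsto \tau_u(v,t)=S(g_1,g_2)t$ is strictly monotone, so $G$ is injective along these orbits even though $\Psi_{g_1,g_2}$ may not be.) With that small repair your argument is complete and, for this last part, more detailed than the paper's.
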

\begin{proof}
In Lemma \ref{lem:conj}, we constructed a H\"older continuous homeomorphism $\Psi_{g_1,g_2}: S^{g_1}M \to S^{g_2}M $ such that
$$
\phi^{g_2}_{\tau(v,t)}\Psi_{g_1, g_2}(v) = \Psi_{g_1, g_2}( \phi_t^{g_1}(v)),
$$
where 
$$
\tau(v,t) =  b^{g_2}_{v^{g_1}_+} ( \pi(v), \pi(\phi_t^{g_1}(v)) )  =\int_0^t a_{g_1, g_2} (\phi_s^{g_1}(v) )ds,
$$
and  
$$
 a_{g_1, g_2} (v)= g_2( B^{g_2}(\pi (v), v^{g_1}_+) , v).
$$

  This yields
  \begin{align*}
  \phi^{g_2}_{- u(v)}   \phi^{g_2}_{\tau(v,t)} \Psi_{g_1, g_2}(v)
  &= \phi^{g_2}_{-u(v)}\Psi_{g_1, g_2}( \phi_t^{g_1}(v))\\
   &= \phi^{g_2}_{u(\phi_t^{g_1}(v))- u(v)}  \phi^{g_2}_{-u(\phi_t^{g_1}(v))} \Psi_{g_1, g_2}( \phi_t^{g_1}(v))\\
    &=\phi^{g_2}_{u(\phi_t^{g_1}(v))- u(v)} G(( \phi_t^{g_1}(v)).
\end{align*}
And therefore,
$$
\phi^{g_2}_{\tau(v,t)- u( \phi_t^{g_1}(v)) + u(v) }\phi^{g_2}_{- u(v)}\Psi_{g_1, g_2}(v) =      G(( \phi_t^{g_1}(v)).  
$$ 
Recall the definition of the orbit equivalence $G$,
$$
\phi^{g_2}_{\tau(v,t)- u( \phi_t^{g_1}(v)) + u(v) }G(v) =      G(( \phi_t^{g_1}(v)).  
$$  
  In particular, for $v \in \mathcal A_u(g_1, g_2)$, we have
  $$
  \tau(v,t)= u(\phi_t^{g_1}(v))- u(v) + t S(g_1,g_2).
  $$
 This concludes, for all $v \in \mathcal A_u(g_1, g_2)$ and for all $t\in \mathbb{R}$,  
 $$
\phi^{g_2}_{ S(g_1,g_2)t}G(v) = G(( \phi_t^{g_1}(v)). 
$$
\end{proof}

\subsection{More properties of maximally stretched measures, maximal currents and the Mather set} \label{subsection, propMCMather}
\hfill\\
We discuss in this subsection more properties of maximally stretched measures, maximal currents and the Mather set from different aspects.

\subsubsection{Ergodic maximally stretched measures}

This subsection discuss ergodic theory in our setting from the perspective of invariant measures. Parallel statements extend to geodesic currents.

A first observation is that  the space of maximally stretched measures or  the space of maximal currents forms a convex set as follows.
\begin{lemma}
   Let $g_1, g_2$ be two Riemannian metrics in $R^-(M)$. The set of maximally stretched measures is a closed convex subset of $\mathcal{M}^1(\phi^{g_1})$. Similarly, the set of maximal currents $MC(g_1,g_2)$ is a closed convex subset in $\mathcal{C}(\Gamma)$.
\end{lemma}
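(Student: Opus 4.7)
The plan is to handle convexity and closedness for both sets separately, using the integral/ratio representations of the geodesic stretch and the Hölder regularity of the infinitesimal time change $a_{g_1,g_2}$.

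For the measure case, I would first observe that the functional $m \mapsto I_m(g_1,g_2) = \int_{S^{g_1}M} a_{g_1,g_2}\,dm$ is affine in $m$ (by linearity of integration). Hence if $m_1, m_2 \in MS(g_1,g_2)$ and $t \in [0,1]$, then $I_{tm_1+(1-t)m_2}(g_1,g_2) = t S(g_1,g_2) + (1-t) S(g_1,g_2) = S(g_1,g_2)$, since invariant probability measures form a convex set too. This establishes convexity. For closedness, the set $\mathcal{M}^1(\phi^{g_1})$ is itself weak-$\ast$ closed; moreover $a_{g_1,g_2}$ is bounded continuous (even Hölder, by Remark \ref{remark,regularityTimeChange}(1)), so the map $m \mapsto \int a_{g_1,g_2}\,dm$ is weak-$\ast$ continuous. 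Thus $MS(g_1,g_2)$ is the preimage of the closed point $\{S(g_1,g_2)\}$ under a continuous map, hence closed.

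For the current case, the cleanest route is to use Corollary \ref{cor:s-current}, which rewrites
\[
I_\mu(g_1,g_2) = \frac{i(g_2,\mu)}{i(g_1,\mu)}.
\]
The intersection $i(g,\cdot)$ is linear in the current variable (it is $\mu \mapsto m^g_\mu(S^gM)$ and the map $\mu \mapsto m^g_\mu$ is linear by Proposition \ref{prop:m-current}). So for $\mu_1,\mu_2 \in MC(g_1,g_2)$ and $t\in [0,1]$, letting $\mu_t = t\mu_1 + (1-t)\mu_2$, the numerator and denominator of $I_{\mu_t}(g_1,g_2)$ each scale by $S(g_1,g_2)$ in the expected way, yielding $I_{\mu_t}(g_1,g_2) = S(g_1,g_2)$.

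Closedness of $MC(g_1,g_2)$ in $\mathcal{C}(\Gamma)$ then follows from the joint continuity of the geodesic stretch on projective currents, which is Lemma \ref{lemma, GeodesicStretchContinuous}: if $\mu_n \to \mu$ in $\mathcal{C}(\Gamma)$ with $\mu \neq 0$ and each $\mu_n \in MC(g_1,g_2)$, then $[\mu_n] \to [\mu]$ in $\mathcal{PC}(\Gamma)$ (which is compact by Lemma \ref{prop:proj-current}), so $S(g_1,g_2) = I_{\mu_n}(g_1,g_2) \to I_\mu(g_1,g_2)$, putting $\mu$ in $MC(g_1,g_2)$. The only mild wrinkle is handling the zero current, which I would exclude from consideration since $I_\mu$ is only defined for currents giving finite positive mass to $S^{g_1}M$; by convention $MC(g_1,g_2) \subset \mathcal{C}(\Gamma)\setminus\{0\}$. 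No step here is difficult; the main conceptual point is simply that both convexity and closedness reduce to linearity and continuity of the intersection pairing.
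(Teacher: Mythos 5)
Your proof is correct and follows essentially the same route as the paper: convexity comes from the (affine) linearity of $m\mapsto\int a_{g_1,g_2}\,dm$ (resp. of the intersection pairing in the current variable), and closedness from the continuity of the geodesic stretch (Lemma \ref{lemma, GeodesicStretchContinuous}). Your explicit treatment of the current case via the ratio $i(g_2,\mu)/i(g_1,\mu)$, and your remark about excluding the zero current, are reasonable elaborations of the paper's terse ``the proof works similarly.''
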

\begin{proof}
    Suppose $m_1,m_2\in MS(g_1,g_2)$ and $m_0=tm_1+(1-t)m_2$ for some $t\in[0,1]$. Then from the properties of maximal currents for $m_1,m_2$, it follows,
    $$\int a_{g_1,g_2}(v) dm_0=S(g_1,g_2).$$
    Therefore $m_0\in MS(g_1,g_2)$ and $MS(g_1,g_2)$ is a convex subset of $\mathcal{M}^1(\phi^{g_1})$. The closedness condition of $MS(g_1,g_2)$ follows from continuity of the geodesic stretch (Lemma \ref{lemma, GeodesicStretchContinuous}).
    
\end{proof}

We then discuss the existence of ergodic maximally stretched measures, or ergodic maximal currents. A current $\mu$ is said to be \emph{ergodic} if the action of $\Gamma$ on $\partial^{(2)}M$ is ergodic with respect to $\mu$. Given a Riemannian metric $g\in R^{-}(M)$, this is equivalent to the fact that the geodesic flow $\phi^{g_1}$on $S^{g_1}M$ is ergodic with respect to its associated invariant measure $m^{g}_{\mu}$. It is a property independent of chosen metrics (see \cite{Su79}, \cite[Proposition 2.3]{ST21}). Therefore, we can consider the subset of ergodic currents in $\mathcal{C} (\Gamma)$, denoted as $\mathcal{C}_{erg}(\Gamma)$. Similarly, the subset of ergodic invariant probability measures is denoted as $\mathcal{M}^1_{erg}(\phi^{g_1})$.

The following is proved in \cite{GKL22} using the Choquet representation theorem.

\begin{lemma} [\cite{GKL22}, Lemma 5.5] \label{lemma, ergodicMaximalCurrent}
Let $g_1, g_2$ be two Riemannian metrics in $R^-(M)$. Then
$$S(g_1,g_2)=\sup_{\mathcal{M}^{1}_{erg}(\phi^{g_1})} I_{m}(g_1,g_2)=\sup_{\mathcal{C}_{erg}(\Gamma)} I_{\mu}(g_1,g_2).$$
Moreover, there exists a maximally stretched measure $m_0\in MS(g_1,g_2)$ that is ergodic. Similarly, there also exists a maximal current $\mu_0$ in $MC(g_1,g_2)$ that is ergodic.
\end{lemma}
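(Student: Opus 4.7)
The plan is to exploit the affine structure of the geodesic stretch as a functional on the space of invariant measures, and then apply the Krein--Milman theorem. By Corollary \ref{corollary, GeodesicStretchBusemannFunction} combined with Definition \ref{def:timechange}, for every $m \in \mathcal{M}^1(\phi^{g_1})$ we have
\begin{equation*}
I_m(g_1,g_2) \;=\; \int_{S^{g_1}M} a_{g_1,g_2}(v)\, dm(v),
\end{equation*}
and since $a_{g_1,g_2}$ is H\"older continuous (Remark \ref{remark,regularityTimeChange}), this expresses $m \mapsto I_m(g_1,g_2)$ as a continuous affine functional on the weak*-compact convex space $\mathcal{M}^1(\phi^{g_1})$. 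In particular the supremum defining $S(g_1,g_2)$ is attained, so $MS(g_1,g_2)$ is non-empty, closed, and convex.

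Next I would show that $MS(g_1,g_2)$ is a \emph{face} of $\mathcal{M}^1(\phi^{g_1})$: if $m \in MS(g_1,g_2)$ and $m = t m_1 + (1-t) m_2$ with $t\in(0,1)$ and $m_1,m_2 \in \mathcal{M}^1(\phi^{g_1})$, then by affineness
\begin{equation*}
S(g_1,g_2) = I_m(g_1,g_2) = t\, I_{m_1}(g_1,g_2) + (1-t)\, I_{m_2}(g_1,g_2) \;\le\; S(g_1,g_2),
\end{equation*}
which forces $I_{m_1}(g_1,g_2) = I_{m_2}(g_1,g_2) = S(g_1,g_2)$, so $m_1,m_2 \in MS(g_1,g_2)$. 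Applying Krein--Milman to the non-empty compact convex set $MS(g_1,g_2)$ produces an extreme point $m_0$. Because $MS(g_1,g_2)$ is a face, $m_0$ is also extremal in $\mathcal{M}^1(\phi^{g_1})$, and the classical identification of extreme points of $\mathcal{M}^1(\phi^{g_1})$ with ergodic measures (\cite[Theorem 6.10]{Wa82}) shows that $m_0$ is ergodic. This simultaneously furnishes the ergodic maximally stretched measure and proves
\begin{equation*}
S(g_1,g_2) \;=\; I_{m_0}(g_1,g_2) \;\le\; \sup_{m\in\mathcal{M}^1_{erg}(\phi^{g_1})} I_m(g_1,g_2) \;\le\; S(g_1,g_2).
\end{equation*}

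Finally, I would transfer the conclusion to geodesic currents via the homeomorphism $\mu \mapsto \hat m^{g_1}_\mu$ from Proposition \ref{prop:m-current}. Since the $\Gamma$-ergodicity of $\mu$ on $\partial^{(2)}\widetilde{M}$ is equivalent to the $\phi^{g_1}$-ergodicity of $\hat m^{g_1}_\mu$ on $S^{g_1}M$ (as recalled just before the lemma), and since $I_\mu(g_1,g_2) = I_{\hat m^{g_1}_\mu}(g_1,g_2)$ by Definition \ref{def:defGeodesicStretch}, the measure-theoretic statement immediately yields an ergodic current $\mu_0 \in MC(g_1,g_2)$ and the supremum identity over $\mathcal{C}_{erg}(\Gamma)$.

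The only delicate step is the face argument, but once the affineness of $m \mapsto I_m(g_1,g_2)$ is recorded it is essentially automatic; no explicit ergodic decomposition is required, which is why this approach matches the ``Choquet representation theorem'' remark attributed to \cite{GKL22}. The rest is soft convex-analytic machinery applied on top of the continuity of the geodesic stretch (Lemma \ref{lemma, GeodesicStretchContinuous}) and the compactness of $\mathcal{M}^1(\phi^{g_1})$.
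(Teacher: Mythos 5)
Your proof is correct, but it takes a genuinely different route from the paper's. The paper starts from an arbitrary maximizer $m_0$ and invokes the Choquet representation theorem to write $m_0$ as a barycenter of a probability measure $\tau_0$ supported on $\mathcal{M}^1_{erg}(\phi^{g_1})$; the chain $S(g_1,g_2)=\int I_m\,d\tau_0(m)\le \sup_{erg} I_m\le S(g_1,g_2)$ then forces equality and shows that every ergodic measure in $\supp\tau_0$ is maximally stretched. You instead observe that $m\mapsto I_m(g_1,g_2)=\int a_{g_1,g_2}\,dm$ is a continuous affine functional, deduce that $MS(g_1,g_2)$ is a closed face of $\mathcal{M}^1(\phi^{g_1})$, and apply Krein--Milman to that face; an extreme point of a face is extreme in the ambient simplex, hence ergodic. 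Both arguments are sound and rest on the same two inputs (affineness/continuity of the stretch functional and weak* compactness), and the transfer to currents via Proposition \ref{prop:m-current} is identical. Your version uses the weaker tool (Krein--Milman rather than Choquet) and is arguably more self-contained; the paper's version gives slightly more for free, namely that \emph{every} ergodic component appearing in the decomposition of a maximizer is itself a maximizer, which is the face property you prove separately. One cosmetic remark: your closing sentence says your approach ``matches'' the Choquet remark in \cite{GKL22}, but in fact you have deliberately replaced Choquet by Krein--Milman, so it would be cleaner to say it \emph{replaces} that step.
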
 
\begin{proof}
    Suppose for $m_0\in\mathcal{M}^{1}(\phi^{g_1})$, we have
    
      $$S(g_1,g_2)=\int a_{g_1,g_2}(v) dm_0.$$
     
    By the Choquet representation theorem (see, for example, \cite[Section 3]{Ph66}), for the compact convex metric space $\mathcal{M}^{1}(\phi^{g_1})$, there exists a probability measure $\tau_0$ in $\mathcal{M}^{1}(\phi^{g_1})$ which represents $m_0$ and is supported on the set $\mathcal{M}^{1}_{erg}(\phi^{g_1})$ of extreme points of $\mathcal{M}^{1}(\phi^{g_1})$. Therefore 
    \begin{align*}
         S(g_1,g_2)&=  \int_{\mathcal{M}^{1}_{erg}(\phi^{g_1})} I_m(g_1,g_2) d\tau_0(m)\\
           &\leq  \sup_{m \in \mathcal{M}^{1}_{erg}(\phi^{g_1})}I_m(g_1,g_2)\le S(g_1,g_2).
    \end{align*}
    This implies that the equality holds. Moreover, for any ergodic $m$ that is in the support of $\tau_0$, we have $S(g_1,g_2) =I_{m}(g_1,g_2).$
    
    The proof works similarly for ergodic maximal currents.
\end{proof}

\subsubsection{The Mather set is nowhere dense}

We prove in this subsection that for generic metrics in $R^{-}(M)$, the Mather set is nowhere dense. Precisely, the Mather set is not nowhere dense only when the marked length spectra of $g_1$ and $g_2$ are \emph{proportional},  that is, there exists a constant $C>0$ such that $\ell_{g_2}([\gamma])=C \ell_{g_1}([\gamma])$ for all $[\gamma]\in[\Gamma]$.  

Denote by $h_{top} (\phi^{g})$ the topological entropy of the geodesic flow $\phi^g$ for a Riemannian metric $g \in R^{-}(M)$. We first recall a related theorem from previous works of the second author.

\begin{theorem}[\cite{Kn95} Theorem 1.2, \cite{GKL22} Proposition 5.4] \label{thm, proportMLS}
   Given two Riemannian metrics $g_1, g_2  \in R^{-}(M)$, we have
    \begin{equation*}\label{eqtn Kn95}
        \frac{h_{top} (\phi^{g_2})}{h_{top} (\phi^{g_1}) }S(g_1,g_2) \ge 1.
    \end{equation*}
    The equality holds precisely when the marked length spectra of $g_1,g_2$ are proportional.
\end{theorem}

Our next key theorem shows that the proportionality of marked length spectra also characterize the topological structure of the Mather set.

\begin{theorem} \label{thm, EmtpyInterior}
   Given $g_1,g_2\in R^-(M)$, the following dichotomy holds:
   \begin{itemize}
       \item If the marked length spectra are not proportional, then the interior of the Mather set $\mathcal{M}(g_1,g_2)$ is empty.
       \item If the marked length spectra are proportional, then the Mather set $\mathcal{M}(g_1,g_2)=S^{g_1}M$.
   \end{itemize}
\end{theorem}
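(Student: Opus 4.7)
My plan is to handle the two halves of the dichotomy separately, using Liv\v{s}ic cohomology, the Aubry-set inclusion from Proposition \ref{prop, AubryMather}, and topological transitivity of the Anosov geodesic flow.

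The proportional case is straightforward: if $\ell_{g_2}([\gamma]) = C\,\ell_{g_1}([\gamma])$ for every $[\gamma]\in[\Gamma]$, then along each closed $\phi^{g_1}$-orbit the integral of $a_{g_1,g_2}$ equals $C$ times the period, so Liv\v{s}ic's theorem (applied as in Corollary \ref{cor:equiv}) yields a H\"older $u:S^{g_1}M\to\R$ with $a_{g_1,g_2} - C = X_{g_1}u$. Integrating against any $m\in\mathcal{M}^1(\phi^{g_1})$ gives $I_m(g_1,g_2) = C$, so $S(g_1,g_2)=C$ and $MS(g_1,g_2) = \mathcal{M}^1(\phi^{g_1})$. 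Since the $g_1$-Liouville measure belongs to $MS(g_1,g_2)$ and has full support in $S^{g_1}M$, it follows that $\mathcal{M}(g_1,g_2) = S^{g_1}M$.

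For the non-proportional case I argue by contradiction. Suppose the marked length spectra are not proportional but $\mathcal{M}(g_1,g_2)$ has a non-empty interior $U$. Because $\mathcal{M}(g_1,g_2)$ is closed and $\phi^{g_1}$-invariant, $U$ is a non-empty open $\phi^{g_1}$-invariant subset of $S^{g_1}M$. The geodesic flow on a closed negatively curved manifold is Anosov, hence topologically transitive, and topological transitivity forces every non-empty open invariant set to be dense; therefore $\mathcal{M}(g_1,g_2) = \overline{U} = S^{g_1}M$. Now fix a strong supersolution $u$ via Proposition \ref{prop:subsolution}. By Proposition \ref{prop, AubryMather} the Aubry set $\mathcal{A}_u(g_1,g_2)$ contains the Mather set, hence equals all of $S^{g_1}M$, so for every $v\in S^{g_1}M$ and every $t_1\le t_2$,
\begin{equation*}
u(\phi^{g_1}_{t_2}v) - u(\phi^{g_1}_{t_1}v) = \int_{t_1}^{t_2} a_{g_1,g_2}(\phi^{g_1}_s v)\,ds - (t_2-t_1)\,S(g_1,g_2).
\end{equation*}
Applied to $v$ on a closed $g_1$-orbit in the class $[\gamma]$ with $t_1=0$ and $t_2=\ell_{g_1}([\gamma])$, the left-hand side vanishes while the integral on the right equals $\ell_{g_2}([\gamma])$ by Corollary \ref{cor:s-current}. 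This forces $\ell_{g_2}([\gamma]) = S(g_1,g_2)\,\ell_{g_1}([\gamma])$ for every $[\gamma]\in[\Gamma]$, contradicting the hypothesis.

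The step I expect to require the most care is the topological upgrade from non-empty interior to the whole unit tangent bundle via topological transitivity. Once that is in hand, the inclusion of the Mather set in the Aubry set converts a global topological equality into a pointwise cohomological identity, and testing this identity on closed orbits yields proportionality of the marked length spectra essentially automatically. The only remaining bookkeeping -- that the one-period integral of $a_{g_1,g_2}$ along a closed $\phi^{g_1}$-orbit $[\gamma]$ equals $\ell_{g_2}([\gamma])$ -- follows directly from Corollary \ref{cor:s-current} applied to the dirac current $\delta_{[\gamma]}$, which identifies $I_{\delta_{[\gamma]}}(g_1,g_2)$ with the ratio $\ell_{g_2}([\gamma])/\ell_{g_1}([\gamma])$.
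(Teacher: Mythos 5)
Your proof is correct, and both halves rest on the same ingredients the paper uses: topological transitivity, the inclusion of the Mather set in the Aubry set of a supersolution, and evaluation of the resulting cocycle identity on periodic orbits. The one genuine difference is in the non-proportional case: the paper keeps the hypothesis that the Mather set merely contains an open set $U$, picks a dense orbit through a point of $U$, and uses continuity of the orbit equivalence $G$ from Theorem \ref{corollary, LinearConjugacy} to propagate the homothety relation $\phi^{g_2}_{S(g_1,g_2)t}\circ G = G\circ\phi^{g_1}_t$ from that orbit to all of $S^{g_1}M$; you instead observe that the interior of the closed invariant Mather set is itself open and invariant, hence dense by transitivity, so the Mather set already equals $S^{g_1}M$, and then the Aubry-set identity applies to every periodic vector directly. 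Your version avoids the continuity argument for $G$ at the cost of the (easy) observation that interiors of invariant sets are invariant, and is if anything slightly cleaner. In the proportional case you invoke Liv\v{s}ic plus full support of the Liouville measure where the paper uses density of periodic orbits; both are valid, and Liv\v{s}ic is not strictly needed there since $I_{\delta_{[\gamma]}}\equiv C$ together with density of periodic-orbit measures already gives $MS(g_1,g_2)=\mathcal{M}^1(\phi^{g_1})$.
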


\begin{proof}
    Note that for each $v\in \mathcal{M}(g_1,g_2)$, we have for all $t\in \mathbb{R}$,
    $$\phi^{g_2}_{S(g_1,g_2)t}\circ G(v)=G \circ \phi^{g_1}_{t}(v),$$
    where $G: S^{g_1}M \to S^{g_2}M$ is the orbit equivalence defined in Theorem \ref{corollary, LinearConjugacy}.
        Suppose for the sake of contradiction,
      the  Mather set $\mathcal{M}(g_1,g_2)$  contains an open set $U\subset S^{g_1}M$.  
Since the geodesic flow $\phi^{g_1}$ is topologically transitive, there exists a dense orbit $\phi^{g_1}_t(v_0)$ with $v_0 \in U$. Therefore, the continuity of $G$ implies
  
  $$\phi^{g_2}_{S(g_1,g_2)t}\circ G(v)=G \circ \phi^{g_1}_{t}(v),$$
for any $v\in S^{g_1}M$.

    This implies that for any $[\gamma]\in[\Gamma]$, we have $\ell_{g_2}([\gamma])= S(g_1,g_2) \ell_{g_1}([\gamma])$. Therefore, the marked length spectra are proportional, which leads to a contradiction.

On the other hand, when the marked length spectra are proportional, that is, there exists $C>0$ so that $\ell_{g_2}([\gamma])=C\ell_{g_1}([\gamma])$ for all $[\gamma]\in[\Gamma]$. Then
$ S(g_1,g_2)=\sup\limits_{[\gamma]\in[\Gamma]} \frac{\ell_{g_2}([\gamma])}{\ell_{g_1}([\gamma])}=C$. Therefore, every Dirac measure obtained from a periodic orbit is a maximally stretched measure. Because periodic orbits of $\phi^{g_1}$ are dense in $S^{g_1}M$, it follows
 $$\mathcal{M}(g_1,g_2) =\overline{ \bigcup_{m\in MS(g_1,g_2)} \supp m}= S^{g_1}M.$$
Moreover, in this case, every $\phi^{g_1}$-invariant measure is a maximally stretched measure by Sigmund's Theorem (\cite[Theorem 1]{Sig72}).
\end{proof}

\subsubsection{Minimal subsets of the Mather set} \label{subsubsection,minimalComponents}

Recall in topological dynamical system, a \emph{minimal subset} is a nonempty, closed and invariant subset of a dynamical system such that no proper subset has these three properties. A minimal subset of a dynamical system always exists due to Zorn Lemma. 

Motivated by minimal laminations in Teichmüller theory (see, for example, \cite[Chaper I.4]{CEG06}), we study  minimal subsets of the Mather set $\mathcal{M}(g_1,g_2)$. We show 
in Proposition \ref{Prop, OneOrUncountable} that a minimal subset in the Mather set $\mathcal{M}(g_1,g_2)$ either contains a single closed orbit or it must contain uncountably many orbits (leaves).

To prove this, we introduce the definition of an \emph{isolated orbit}. We say an orbit $\mathcal{O}=\{\phi_t^{g_1}(v)\}_{t\in\R}$ of a minimal subset $\Lambda$ is \emph{isolated}, if for any $w\in\mathcal{O}$, there exists $\varepsilon>0$ and a metric ball $B_w(\varepsilon)\subset S^{g_1} M$ centered at $w$ of the radius $\varepsilon$  with respect to the Sasaki metric $g_1^s$ such that $B_w(\varepsilon)\cap \Lambda$ is a connected geodesic segment homeomorphic to the unit interval.

\begin{proposition} \label{Prop, OneOrUncountable}
   Let $g_1, g_2$ be two Riemannian metrics in $R^-(M)$. Suppose that $\Lambda$ is a minimal subset of $\mathcal{M}(g_1,g_2)$. Then 
    \begin{itemize}
        \item either $\Lambda$ is the orbit of a closed $g_1$-geodesic on $ S^{g_1} M$;
        
        \item or $\Lambda$ contains uncountably many orbits. Moreover, for every point $v\in\Lambda$, we can find a transversal section $\mathcal{P}_v$ to the flow $\phi^{g_1}$ at $v$ so that $\mathcal{P}_v\cap \Lambda$ is a perfect set.
    \end{itemize}
\end{proposition}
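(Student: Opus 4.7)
The plan is to follow the classical dichotomy for minimal sets of continuous flows, adapted to the geodesic flow $\phi^{g_1}$ on the compact phase space $S^{g_1}M$. The argument breaks into a periodic case, a non-isolation step, a transversal cross-section step, and a cardinality count.

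First I would dispose of the periodic case. If $\Lambda$ contains the orbit $\mathcal{O}$ of a closed $g_1$-geodesic, then $\mathcal{O}$ is a nonempty closed $\phi^{g_1}$-invariant subset of $\Lambda$, so by minimality $\Lambda=\mathcal{O}$ and the first alternative holds. Henceforth assume $\Lambda$ contains no periodic orbit. Under this assumption, I would first check that no orbit $\mathcal{O}\subset\Lambda$ is isolated in the sense stated before the proposition. Indeed, were $\mathcal{O}$ isolated, one could pick $w\in\mathcal{O}$ and $\varepsilon>0$ such that $B_w(\varepsilon)\cap\Lambda$ is a geodesic arc of $\mathcal{O}$, parametrized as $\{\phi^{g_1}_s(w):s\in(-\delta,\delta)\}$. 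By minimality the forward orbit of $w$ is dense in $\Lambda$, so there exists $t>\delta$ with $\phi^{g_1}_t(w)\in B_w(\varepsilon)\cap\Lambda$. Then $\phi^{g_1}_t(w)=\phi^{g_1}_s(w)$ for some $|s|<\delta$, forcing $w$ to be periodic of period $t-s>0$, a contradiction.

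Next, fix $v\in\Lambda$ and choose a small smooth codimension-one transversal $\mathcal{P}_v$ to $X_{g_1}$ at $v$, together with a flowbox neighborhood $V\cong\mathcal{P}_v\times(-\delta,\delta)$ via the map $(u,s)\mapsto\phi^{g_1}_s(u)$. The set $\mathcal{P}_v\cap\Lambda$ is closed in $\mathcal{P}_v$. To argue that it has no isolated points, suppose some $w\in\mathcal{P}_v\cap\Lambda$ is isolated in $\mathcal{P}_v\cap\Lambda$. After shrinking $\mathcal{P}_v$ around $w$, one may assume $\mathcal{P}_v\cap\Lambda=\{w\}$; the flowbox identification then yields $V\cap\Lambda=\{\phi^{g_1}_s(w):|s|<\delta\}$, which is a single geodesic arc of the orbit of $w$. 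But this says precisely that the orbit of $w$ is isolated in the sense of the definition, contradicting the previous paragraph. Hence $\mathcal{P}_v\cap\Lambda$ is a nonempty closed perfect subset of the smooth manifold $\mathcal{P}_v$, and therefore uncountable. Since no orbit in $\Lambda$ is periodic, each orbit meets the transversal $\mathcal{P}_v$ in a discrete, hence at most countable, subset. Consequently, the uncountable set $\mathcal{P}_v\cap\Lambda$ is partitioned into uncountably many orbit traces, and $\Lambda$ contains uncountably many orbits, as required.

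The main technical point will be the flowbox argument in the third paragraph: one must choose the transversal $\mathcal{P}_v$ and the time $\delta$ carefully so that the map $(u,s)\mapsto\phi^{g_1}_s(u)$ is a genuine diffeomorphism onto $V$, and so that isolation of $w$ inside $\mathcal{P}_v\cap\Lambda$ really transfers to the isolation of the entire orbit $\mathcal{O}_w$ in the metric-ball sense used in the paper's definition. Once this local product structure is set up, the rest of the argument is a routine combination of minimality with the absence of periodic orbits.
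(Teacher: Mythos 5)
Your proof is correct and follows essentially the same route as the paper's: both arguments turn on the dichotomy between isolated and non-isolated orbits, use a transversal/flowbox to show that $\mathcal{P}_v\cap\Lambda$ is perfect, and conclude uncountability from the fact that each orbit meets a transversal only countably often. The only differences are organizational — you split cases on periodicity rather than on isolation, and you prove perfectness by contradiction (an isolated point of $\mathcal{P}_v\cap\Lambda$ would force an isolated orbit), whereas the paper propagates an accumulation property (its ``Property (A)'') along orbits — but the underlying ideas coincide.
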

  \begin{figure}[!htb]
   \centering    \includegraphics[width=60mm,scale=0.5]{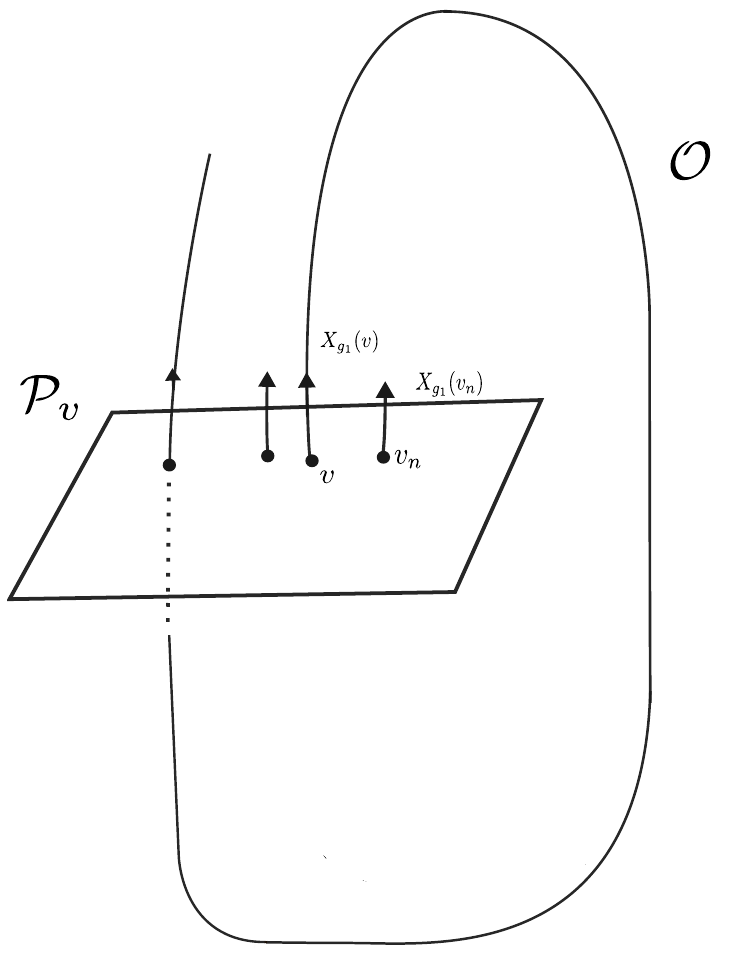}
    \caption{A figure for Property (A). The orbit $\mathcal{O}$ is a non-isolated leaf and $\mathcal{P}_v$ is a traversal to $\mathcal{O}$. } 
    \label{fig: figure1}
\end{figure}
\begin{proof}
    Suppose $\mathcal{O} \subset \Lambda$ is a nonempty isolated orbit. Take a (countable) cover of $\mathcal{O}$ by metric balls $B_{w_i}(\varepsilon_i)$ so that each $B_{w_i}(\varepsilon_i)\cap \Lambda$ is a connected geodesic segment on $\mathcal{O}$ as defined before the proposition. Then 
    $$\Lambda \backslash \mathcal{O} =\Lambda \backslash (\cup_{i} B_{w_i}(\varepsilon_i) ) $$ is a closed invariant set. If $\Lambda \backslash \mathcal{O} $ is empty, i.e. $\Lambda=\mathcal{O} $,  then the isolated orbit $\mathcal{O}$ can not be bi-infinite. This is because $S^{g_1}M$ is compact. A  bi-infinite orbit of the geodesic flow $\phi^{g_1}$ has an accumulation point on $S^{g_1}M$ which contradicts the isolated orbit condition.  Therefore $\mathcal{O}$ must be the orbit of a closed $g_1$-geodesic. On the other hand, if $\Lambda \neq \mathcal{O} $, then it leads to a contradiction with the fact that $\Lambda$ is minimal.
Next we assume all orbits of $\Lambda$ are not isolated. We take one orbit $\mathcal{O}$. Since $\mathcal{O}$ is not isolated, there exists a point $v$ in $\mathcal{O}$ that satisfies the following property: it is an accumulation point of $\Lambda \cap \mathcal{P}_v$ in $\Lambda$, if $\mathcal{P}_v$ is any transversal section to $\mathcal{O}$ at $v$ in $S^{g_1}M$ (i.e. $T_v S^{g_1}M = T_v \mathcal{P}_v \bigoplus X_{g_1}(v)$.  See Figure \ref{fig: figure1}.

    We call the above property the \emph{Property (A)}. The point $v\in \mathcal{O}$  satisfies the Property (A).  Therefore, there exists $ \{v_n\}\subset \Lambda \cap \mathcal{P}_v $ such that $v_n \to v$.  We then want to show  that Property (A) holds for any point in $\Lambda$ (not only at $v$). We start from showing that it holds for any point on $\mathcal{O}$. Because every point $w$ on $\mathcal{O}$ is a translation of $v$ as $w=\phi_T^{g_1}(v)$. For the fixed real number $T$, the map $\phi_T^{g_1}$ is uniformly continuous on the compact space $S^{g_1}M$.  Therefore $d_{g_1^s}(w_n, w)= d_{g_1^s}(\phi_T^{g_1}(v_n),\phi_T^{g_1}(v)) \to 0$. This implies that every point $w$ on $\mathcal{O}$ satsifies Property (A): for any transversal $\mathcal{P}_w$  to $w$, we find $w$ is an accumulation point of $\mathcal{P}_w\cap \Lambda$ in $\Lambda$. Since every orbit of $\Lambda$ is not isolated, we obtain any point of $\Lambda$ satisfies the Property (A) in the same manner.

    Now we go back to the point $v$ which is the limit of $v_n$. Taking $\mathcal{P}_v $ small enough, then we can make sure that it is transversal to the orbit of $v_n$ at $v_n$ for $n$ big enough (i.e. $T_{v_n} S^{g_1}M = T_{v_n} \mathcal{P}_v \bigoplus X_{g_1}(v_n)$ ). Therefore since $v_n$ satisfies the Property (A), it is also an accumulation point of $\Lambda \cap \mathcal{P}_v$ in $\Lambda$. As every point of $\Lambda \cap \mathcal{P}_v$ in $\Lambda$ is an accumulation point, the set $\mathcal{P}_v\cap \Lambda$ is a perfect set. Lastly, since the obit $\mathcal{O}$ can only intersects $\mathcal{P}_v$ countably many times, $\Lambda$ must have uncountably many orbits.
    
\end{proof}

\subsection{The Aubry set and the Peierls barrier} \label{subsection AubrySetPeierlBarrier}
We discuss in this subsection the \emph{Peierls barrier} and its properties. This concept is also motivated from Aubry-Mather theory and Lagrange dynamics (see, for example, \cite{Fat08}). In particular, the zero level set of the Peierls barrier plays an important role in this subsection.

\subsubsection{The Peierls barrier}

\begin{definition} \label{def, PeierlsBarrier}

Given $g_{1}, g_{2} \in R^{-}(M)$, we define the \emph{Peierls barrier} $\mathcal{H}_{g_{1}, g_{2}}: S^{g_{1}} M \times S^{g_{1}} M \rightarrow \mathbb{R} \cup \{-\infty\}$ by
\begin{align*}
\mathcal{H}_{g_{1}, g_{2}}(v_{1}, v_{2}):= 
 \lim _{\varepsilon \rightarrow 0} 
 \lim_{T\to \infty} \sup\bigg\{ &\int_{0}^{t} a_{g_{1}, g_{2}}(\phi_{s}^{g_{1}} v) d s-t \cdot S(g_{1}, g_{2}) \mid   \\
 & d_{g_{1}^{s}}\left(v, v_{1}\right)<\varepsilon, d_{g_{1}^{s}}\left(\phi_{t}^{g_{1}}v, v_{2}\right)<\varepsilon,t\geq T \bigg\}.
\end{align*}

\end{definition}

\begin{remark}
We make the following remarks about the definition of the Peierls barrier.
    \begin{enumerate}
        \item The well-definedness of the Peierls barrier follows from the topological transitivity of the geodesic flow $\phi_{t}^{g_{1}}$. 
        \item The value $-\infty$ can possibly be 
achieved. For example, consider $[\gamma_0]\in[\Gamma]$ so that $\frac{\ell_{g_2}([\gamma_0])}{\ell_{g_1}([\gamma_0])}<S(g_1,g_2)$, and take $v_0\in S^{g_1}M$ tangential to $\gamma_0^{g_1}$, then  
$\mathcal{H}_{g_{1}, g_{2}}(v_{0},v_0)=-\infty$. 
\item In contrast, as we will soon see in Proposition \ref{prop,ManeAubrySet}, when $v_0\in\mathcal{M}(g_1,g_2)$, the Peierls barrier is finite and verifies $\mathcal{H}_{g_{1}, g_{2}}(v_{0},v_0)=0$.

    \end{enumerate}
\end{remark}

 In its original form, the Peierls barrier describes the energy cost, in solid-state physics, associated with the transition from a metallic phase to an insulating phase (see \cite{Au78}).
For our Riemannian geometry setting, the Peierls barrier characterizes the ``infinite-time optimal cost'' of the potential $a_{g_1,g_2}-S(g_1,g_2)$ from $v_1$ to $v_2$.
It provides us a way to detect those optimal orbits of the potential  $a_{g_1,g_2}-S(g_1,g_2)$ which can not support maximally stretched measures and which are independent of the choice of a weak supersolution defining an Aubry set. 

Because the Aubry set, being the intersection of the Aubry sets $\mathcal{A}_u(g_1,g_2)$ associated with all weak supersolutions $u$, is also independent of a certain weak supersolution. We would like to seek for the relation between the Peierls barrier and the Aubry set. More precisely, our goal in this subsection is to relate the zero level set of the Peierls barrier,
\[
\mathcal{H}_0(g_1,g_2)
:=\{v\in S^{g_1}M \mid \mathcal{H}_{g_1,g_2}(v,v)=0\},
\]
to the Aubry set $\mathcal{A}(g_1,g_2)$. To this end, it is convenient to work with a natural \emph{continuous additive cocycle} associated with weak supersolutions.

A weak supersolution $u$ induces a continuous function $c_u:S^{g_1}M \times \mathbb{R}_{\geq 0}  \to \mathbb{R}_{\leq0}$ by letting
$$c_u(v,t):=\int_{0}^{t} a_{g_1, g_2} (\phi_s^{g_1}v )ds - t\cdot S(g_1, g_2)- \big(u(\phi^{g_1}_tv)-u(v)\big).$$
It is easy to check that $c_u$ is nonpositive and verifies the \emph{additive cocycle} condition, that is, for any $v\in S^{g_1}M$ and $t_1,t_2\in \mathbb{R}_{\geq0}$, 
$$c_u(v,t_1+t_2)=c_u(v,t_1)+c_u(\phi^{g_1}_{t_1}v,t_2).$$

\begin{remark}
    We make the following remarks for the cocycle $c_u(v,t)$.
    \begin{itemize}
        \item As a consequence of nonpositiveness, the cocycle $c_u(v,t)$ is monotonically decreasing in $t\geq 0$ with $c_u(v,0)=0$. 
        \item  When $u$ is a strong supersolution, we have $c_u(v,t)= \int_{0}^{t} f\left(\phi_{s}^{g_{1}} v\right) d s$, where $f(v)=a_{g_1,g_2}(v) -S(g_1,g_2)- X_{g_1}u(v)$.
    \end{itemize}
\end{remark}

\begin{lemma}\label{lemma, ManePotentialLessZero}  
 Let $g_1, g_2$ be two Riemannian metrics in $R^-(M)$. For any $v_{0} \in S^{g_{1}} M$, the Peierls barrier satisfies
$$
\mathcal{H}_{g_{1}, g_{2}}\left(v_{0}, v_{0}\right) \leq 0.
$$
\end{lemma}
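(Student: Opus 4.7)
The plan is to reduce the estimate to a simple consequence of the existence of a weak (in fact strong) supersolution, combined with the continuity of such a supersolution. The strong supersolution $u$ will play the role of a ``potential'' whose boundary evaluation along an orbit segment controls the action $\int_0^t a_{g_1,g_2}(\phi_s^{g_1}v)\,ds - tS(g_1,g_2)$ from above.

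First, I would invoke Proposition \ref{prop:subsolution} to produce a strong supersolution $u: S^{g_1}M\to\R$ for $a_{g_1,g_2}$. Integrating the defining inequality $S(g_1,g_2)-a_{g_1,g_2}(v)+X_{g_1}u(v)\ge 0$ along an orbit segment (equivalently, noting that every strong supersolution is a weak supersolution in the sense of Definition \ref{def,weakSuper}), I obtain, for every $v\in S^{g_1}M$ and every $t\ge 0$, the key inequality
\begin{equation*}
\int_0^t a_{g_1,g_2}(\phi_s^{g_1}v)\,ds - t\,S(g_1,g_2) \;\le\; u(\phi_t^{g_1}v)-u(v).
\end{equation*}

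Next, I would fix $v_0\in S^{g_1}M$ and $\delta>0$, and use the continuity of $u$ (which is H\"older by Proposition \ref{prop:subsolution}) to find $\varepsilon_0>0$ such that $|u(w)-u(v_0)|<\delta/2$ whenever $d_{g_1^s}(w,v_0)<\varepsilon_0$. Then for any $\varepsilon<\varepsilon_0$ and any pair $(v,t)$ with $d_{g_1^s}(v,v_0)<\varepsilon$ and $d_{g_1^s}(\phi_t^{g_1}v,v_0)<\varepsilon$, the right-hand side of the key inequality is bounded by $\delta$. Taking the supremum in the Peierls barrier definition, the quantity inside $\lim_{t\to\infty}\sup\{\cdots\}$ is at most $\delta$ for each sufficiently large $t$, hence so is its limit. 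Letting $\varepsilon\to 0$ preserves this bound, giving $\mathcal{H}_{g_1,g_2}(v_0,v_0)\le \delta$, and then letting $\delta\to 0$ yields $\mathcal{H}_{g_1,g_2}(v_0,v_0)\le 0$.

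There is essentially no serious obstacle here: the entire argument reduces to combining the a priori control provided by a supersolution with the continuity of $u$, together with the observation that if no admissible $v$ exists for some $t$, the supremum is $-\infty$ and the bound is trivial. The only mild subtlety is to observe that working with a strong supersolution automatically gives the weak supersolution inequality used in the key step, so Proposition \ref{prop:subsolution} is sufficient for the argument.
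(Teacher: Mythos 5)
Your proposal is correct and follows essentially the same route as the paper: both arguments bound $\int_0^t a_{g_1,g_2}(\phi_s^{g_1}v)\,ds - tS(g_1,g_2)$ above by $u(\phi_t^{g_1}v)-u(v)$ for a (weak) supersolution $u$ and then use the uniform continuity of $u$ near $v_0$ to drive the right-hand side to zero. The paper phrases this via the nonpositive cocycle $c_u$ rather than discarding it outright, but this is a cosmetic difference.
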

\begin{proof}
    Let $u: S^{g_{1}} M \rightarrow \mathbb{R}$ be a weak supersolution for $a_{g_1,g_2}$ and $c_u$ be its associated cocycle so that for any $v \in S^{g_{1}} M$,
    \begin{align*}
\int_{0}^{t} a_{g_{1}, g_{2}}\left(\phi_{s}^{g_{1}} v\right) d s-t\cdot S\left(g_{1}, g_{2}\right) & =c_u(v,t) +u\left(\phi_{t}^{g_{1}} v\right)-u(v). 
\end{align*}

This implies
\begin{align*}
\mathcal{H}_{g_{1}, g_{2}}(v_{0}, v_{0}) &= 
 \lim _{\varepsilon \rightarrow 0} 
 \lim_{T\to \infty} \sup\bigg\{  \int_{0}^{t} a_{g_{1}, g_{2}}(\phi_{s}^{g_{1}} v) d s- t \cdot S(g_{1}, g_{2}) \mid  \\
 & \qquad \qquad  \qquad d_{g_{1}^{s}}\left(v, v_{0}\right)<\varepsilon, d_{g_{1}^{s}}\left(\phi_{t}^{g_{1}}v, v_{0}\right)<\varepsilon , t\geq T\bigg\}\\
&=\lim _{\varepsilon \rightarrow 0}\lim_{T\to \infty} \sup  \bigg\{c_u(v,t)+u\left(\phi_{t}^{g_{1}}v \right)-u(v) \mid \\
&\qquad \qquad  \qquad
 d_{g_{1}^{s}}\left(v, v_{0}\right)<\varepsilon, d_{g_{1}^{s}}\left(\phi_{t}^{g_{1}}v, v_{0}\right)<\varepsilon, t\geq T \bigg \} .
\end{align*}

Since $u$ is a uniform continuous function and $c_u$ is nonpositive, it follows

\begin{align*}
 \mathcal{H}_{g_{1}, g_{2}}\left(v_{0}, v_{0}\right) &=
 \lim _{\varepsilon \rightarrow 0} \lim_{T\to \infty} \sup\bigg\{c_u(v,t) \mid  d_{g_{1}^{s}}\left(v, v_{0}\right)<\varepsilon, d_{g_{1}^{s}}\left(\phi_{t}^{g_{1}}v, v_{0}\right)<\varepsilon, t\geq T \bigg\}\\ 
 &\leq 0.
\end{align*}

\end{proof}

\begin{lemma}
     Let $g_1, g_2$ be two Riemannian metrics in $R^-(M)$. The  Peierls barrier zero set

$$
\mathcal{H}_{0}(g_1, g_2):=\left\{v \in S^{g_{1}} M \mid \mathcal{H}_{g_{1}, g_{2}}(v, v)=0\right\}
$$
is a closed subset of $S^{g_{1}} M$.
\end{lemma}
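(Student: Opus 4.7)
The plan is to show the set is closed by showing its complement is open, or equivalently that pointwise convergence of elements preserves membership. By Lemma \ref{lemma, ManePotentialLessZero}, $\mathcal{H}_{g_1,g_2}(v,v) \leq 0$ always holds, so it suffices to prove that if $v_n \to v$ in $S^{g_1}M$ with $\mathcal{H}_{g_1,g_2}(v_n,v_n) = 0$ for all $n$, then $\mathcal{H}_{g_1,g_2}(v,v) \geq 0$.

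First I would unpack the definition. For $v_0 \in S^{g_1}M$ and $\varepsilon > 0$, set
$$A(v_0,\varepsilon,t) := \sup\left\{\int_0^t a_{g_1,g_2}(\phi_s^{g_1}w)\,ds - t\cdot S(g_1,g_2) \;\Big|\; d_{g_1^s}(w,v_0)<\varepsilon,\ d_{g_1^s}(\phi_t^{g_1}w,v_0)<\varepsilon\right\}.$$
As $\varepsilon$ decreases, the constraint set shrinks, so $\varepsilon \mapsto \limsup_{t\to\infty} A(v_0,\varepsilon,t)$ is monotone non-decreasing in $\varepsilon$. Hence
$$\mathcal{H}_{g_1,g_2}(v_0,v_0) = \inf_{\varepsilon > 0}\,\limsup_{t\to\infty} A(v_0,\varepsilon,t),$$
and the condition $\mathcal{H}_{g_1,g_2}(v_0,v_0) = 0$ combined with the upper bound says that $\limsup_{t\to\infty} A(v_0,\varepsilon,t) \geq 0$ for every $\varepsilon > 0$.

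The key step is a triangle-inequality inclusion of neighborhoods. Fix $\varepsilon > 0$. Since $v_n \to v$, choose $N$ so that $d_{g_1^s}(v_n,v) < \varepsilon/2$ for all $n \geq N$. For such $n$, any $w$ satisfying $d_{g_1^s}(w,v_n) < \varepsilon/2$ and $d_{g_1^s}(\phi_t^{g_1}w,v_n) < \varepsilon/2$ automatically satisfies $d_{g_1^s}(w,v) < \varepsilon$ and $d_{g_1^s}(\phi_t^{g_1}w,v) < \varepsilon$. Therefore, for every $t > 0$ and every $n \geq N$,
$$A(v_n,\varepsilon/2,t) \leq A(v,\varepsilon,t),$$
and so $\limsup_{t\to\infty} A(v,\varepsilon,t) \geq \limsup_{t\to\infty} A(v_n,\varepsilon/2,t) \geq 0$, where the last inequality uses $\mathcal{H}_{g_1,g_2}(v_n,v_n) = 0$ applied to the parameter $\varepsilon/2$.

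Since $\varepsilon > 0$ was arbitrary, taking the infimum gives $\mathcal{H}_{g_1,g_2}(v,v) \geq 0$, and combined with Lemma \ref{lemma, ManePotentialLessZero} we conclude $\mathcal{H}_{g_1,g_2}(v,v) = 0$, i.e.\ $v \in \mathcal{H}_0(g_1,g_2)$. There is no serious obstacle here; the only subtle point is being careful that the outer $\lim_{\varepsilon \to 0}$ in the definition of the Peierls barrier is actually an infimum (by monotonicity in $\varepsilon$), so that the value $0$ forces the $\limsup$ in $t$ to be non-negative at every scale $\varepsilon > 0$---which is precisely what allows the triangle-inequality comparison to pass from $v_n$ to the limit $v$.
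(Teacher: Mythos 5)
Your argument is correct and is essentially the same as the paper's: both proofs reduce to showing $\mathcal{H}_{g_1,g_2}(v,v)\geq 0$ at the limit point, using monotonicity in $\varepsilon$ to see that $\mathcal{H}(v_n,v_n)=0$ forces the $t$-limsup at every scale to be nonnegative, and then the triangle inequality to nest the $\varepsilon/2$-neighborhood of $v_n$ inside the $\varepsilon$-neighborhood of $v$. The only cosmetic difference is that the paper rewrites the integrand via the cocycle $c_u$ of a weak supersolution, whereas you work with the raw quantity $\int_0^t a_{g_1,g_2}-tS(g_1,g_2)$; this changes nothing in the argument.
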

\begin{proof}
    Suppose $v_n$ converges to $v_0$ and $\mathcal{H}_{g_{1}, g_{2}}(v_n, v_n)=0$ for all $n>0$. We want to show $\mathcal{H}_{g_{1}, g_{2}}(v_0, v_0)=0$. It suffices to show for all $\varepsilon_k=\frac{1}{k}$, we have 
    \begin{align*}
    &\mathcal{H}^{\varepsilon_k}_{g_{1}, g_{2}}\left(v_{0}, v_{0}\right) \\
 :=&
 \lim_{T\to \infty} \sup\bigg\{c_u(v,t) \mid  d_{g_{1}^{s}}\left(v, v_{0}\right)<\varepsilon_k, d_{g_{1}^{s}}\left(\phi_{t}^{g_{1}}v, v_{0}\right)<\varepsilon_k, t\geq T \bigg\} \geq 0 .
\end{align*}
Given a fixed $k\in \mathbb{N}$, take $n$ big enough so that  $d_{g_{1}^{s}}\left(v_n, v_0 \right)<\varepsilon'_k\leq \frac{\varepsilon_k}{2}$. Then
 \begin{align*}
0 =\mathcal{H}_{g_{1}, g_{2}}(v_n, v_n)& \le \mathcal{H}^{\varepsilon'_k}_{g_{1}, g_{2}}\left(v_{n}, v_{n}\right) \\
 =&
 \lim_{T\to \infty} \sup\bigg\{c_u(v,t) \mid  d_{g_{1}^{s}}\left(v, v_{n}\right)<\varepsilon'_k, d_{g_{1}^{s}}\left(\phi_{t}^{g_{1}}v, v_{n}\right)<\varepsilon'_k, t\geq T  \bigg\}.
\end{align*}
The choices of $\varepsilon_k$  and $\varepsilon'_k$ together with triangle inequalities for $d_{g_1^s}(\cdot,\cdot)$ imply $$\mathcal{H}^{\varepsilon_k}_{g_{1}, g_{2}}\left(v_{0}, v_{0}\right) \geq \mathcal{H}^{\varepsilon'_k}_{g_{1}, g_{2}}\left(v_{n}, v_{n}\right) \geq 0$$ 
and therefore we obtain $\mathcal{H}_{g_{1}, g_{2}}(v_0, v_0)=\lim\limits_{k\to\infty}\mathcal{H}^{\varepsilon_k}_{g_{1}, g_{2}}\left(v_{0}, v_{0}\right)\geq0$. By Lemma \ref{lemma, ManePotentialLessZero}, we conclude $\mathcal{H}_{g_{1}, g_{2}}(v_0, v_0)=0$.
\end{proof}

\begin{lemma}
     Let $g_1, g_2$ be two Riemannian metrics in $R^-(M)$. The Peierls barrier zero set $\mathcal{H}_{0}(g_1, g_2)$
is a $\phi^{g_{1}}$-invariant subset of $S^{g_{1}} M$.
\end{lemma}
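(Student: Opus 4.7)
The plan is to show $\phi^{g_1}_T v \in \mathcal{H}_0(g_1,g_2)$ whenever $v \in \mathcal{H}_0(g_1,g_2)$, for every $T \in \mathbb{R}$. Since the preceding lemma already gives $\mathcal{H}_{g_1,g_2}(v',v') \leq 0$ at $v' := \phi^{g_1}_T v$, the task reduces to proving the reverse inequality $\mathcal{H}_{g_1,g_2}(v',v') \geq 0$.

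My approach is to relate the two sup-sets appearing in the definitions at $v$ and at $v'$ via the change of variable $w = \phi^{g_1}_T w''$. Writing $F_t(w) := \int_0^t a_{g_1,g_2}(\phi^{g_1}_s w)\, ds - t\, S(g_1,g_2)$, a direct substitution combined with the additive cocycle for $F$ yields
\begin{equation*}
F_t(\phi^{g_1}_T w'') \;=\; F_t(w'') \;+\; F_T(\phi^{g_1}_t w'') \;-\; F_T(w'').
\end{equation*}
The key point will be that the correction $F_T(\phi^{g_1}_t w'') - F_T(w'')$ stays uniformly small in $t$ as long as both $w''$ and $\phi^{g_1}_t w''$ remain in a fixed small $g_1^s$-neighborhood of $v$, because $F_T$ is continuous on the compact unit tangent bundle $S^{g_1}M$.

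Once this identity is in place, the rest is bookkeeping. Using uniform continuity of $\phi^{g_1}_{\pm T}$, I would observe that $w'' \in B_{g_1^s}(v,\delta)$ and $\phi^{g_1}_t w'' \in B_{g_1^s}(v,\delta)$ imply $\phi^{g_1}_T w'' \in B_{g_1^s}(v',\varepsilon(\delta))$ and $\phi^{g_1}_t(\phi^{g_1}_T w'') \in B_{g_1^s}(v',\varepsilon(\delta))$, with $\varepsilon(\delta) \to 0$ as $\delta \to 0$. Combining this with $|F_t(\phi^{g_1}_T w'') - F_t(w'')| \leq \eta(\delta)$, where also $\eta(\delta) \to 0$ as $\delta \to 0$, I would deduce
\begin{equation*}
\sup\bigl\{F_t(w''): w''\in B_{g_1^s}(v,\delta),\; \phi^{g_1}_t w''\in B_{g_1^s}(v,\delta)\bigr\} \;\leq\; \sup_{\varepsilon(\delta)\text{-set at }v'} F_t \;+\; \eta(\delta).
\end{equation*}
Letting $t \to \infty$ and then $\delta \to 0$ converts the hypothesis $\mathcal{H}_{g_1,g_2}(v,v) = 0$ on the left into $\mathcal{H}_{g_1,g_2}(v',v') \geq 0$ on the right, finishing the argument.

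The main subtlety will be managing the interplay between the $\inf_\varepsilon \lim_{t\to\infty}\sup$ double structure appearing on the two sides. Fortunately the continuity error $\eta(\delta)$ depends only on the starting neighborhood size and not on the running time $t$, so performing the limits in the correct order---first $t \to \infty$ with $\delta$ fixed, and only afterwards $\delta \to 0$---avoids any genuine difficulty.
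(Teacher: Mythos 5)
Your argument is correct and is essentially the paper's proof in different notation: the cocycle identity $F_t(\phi^{g_1}_T w'') = F_t(w'') + F_T(\phi^{g_1}_t w'') - F_T(w'')$ is exactly the decomposition the paper writes out as $\int_{t_0-T}^{t_0} a\,ds - \int_{-T}^{0} a\,ds$, and both proofs control this correction by (uniform) continuity of $a_{g_1,g_2}$ and of the time-$T$ map on the compact unit tangent bundle, then transfer the neighborhood constraints via the Lipschitz/uniform continuity of $\phi^{g_1}_{\pm T}$. The order of limits you propose ($t\to\infty$ first, then $\delta\to 0$) is the same one the paper uses, so there is no gap.
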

\begin{proof}
     We claim if $v_{0}$ satisfies $\mathcal{H}_{g_{1}, g_{2}}\left(v_{0}, v_{0}\right)=0$, then for any real number $T$, the vector $\phi_{T}v_{0}$ also satisfies $\mathcal{H}_{g_{1}, g_{2}}\left(\phi_{T}v_{0}, \phi_{T}v_{0}\right)=0$. Equivalently, we want to show, for any $\varepsilon>0$, any $\kappa>0$ and any big positive $\tau_0$, we can find a vector $v$ and $t_{0}\geq \tau_0$ so that

\begin{equation} \label{eqtn, EquationVV_0}
d_{g_{1}^{s}}\left(v, \phi_{T}v_{0}\right)<\varepsilon, \; d_{g_{1}^{s}}\left(\phi_{t_{0}}v, \phi_{T}v_{0}\right)<\varepsilon 
\end{equation}

and

\begin{equation}\label{eqtn, EqtnKappa}
\int_{0}^{t_{0}} a_{g_{1}, g_{2}}\left(\phi_{s}^{g_{1}} v\right) d s-t_{0} \cdot S\left(g_{1}, g_{2}\right)>-\kappa. 
\end{equation}

For a fixed given $T \in \R$, define $ C_0 =  C_0(T) := \max_{v \in S^{g_{1}} M} \| D\phi^{g_{1}}_T(v) \|$ where the norm is taken
with respect to the Sasaki metric $g^s_1$. Then for all $v, w \in S^{g_{1}} M $ we have

$$
d_{g_{1}^{s}}\left(\phi_{T}^{g_{1}}v, \phi_{T}^{g_{1}}w \right) \leq C_{0} d_{g_{1}^{s}}(v, w).
$$

Since $\mathcal{H}_{g_{1}, g_{2}}(v_{0}, v_{0}) =0$, for every $\varepsilon >0$ and $\kappa >0$ and $\tau_0>0$, there exist  $t_0 \geq \tau_0$ and $w \in  S^{g_{1}} M$ such that 
$$
d_{g_{1}^{s}}\left(w, v_0\right)<\frac{\varepsilon}{2 C_0} \;\; \text{and} \; \; d_{g_{1}^{s}}\left(\phi_{t_0}^{g_{1}}w, v_0\right)<\frac{\varepsilon}{2 C_0}
$$
as well as

$$
\int_{0}^{t_{0}} a_{g_{1}, g_{2}}\left(\phi_{s}^{g_{1}} w\right) d s-t_{0} \cdot S\left(g_{1}, g_{2}\right)>-\frac{\kappa}{2} .
$$

Hence, for  $v=\phi_{T}^{g_{1}}(w)$ we obtain
$$
d_{g_{1}^{s}}\left(v, \phi_{T}^{g_1}v_{0}\right) = d_{g_{1}^{s}}\left( \phi_{T}^{g_1} w, \phi_{T}^{g_1}v_{0}\right) \le C_0 d_{g_{1}^{s}}\left(w, v_{0}\right) < \varepsilon
$$
and
$$
d_{g_{1}^{s}}\left(\phi_{t_{0}}^{g_1} v, \phi_{T}^{g_1}v_{0}\right) = d_{g_{1}^{s}}\left( \phi_{T}^{g_1} \phi_{t_{0}}^{g_1}w, \phi_{T}^{g_1}v_{0}\right) \le C_0 d_{g_{1}^{s}}\left(\phi_{t_{0}}^{g_1}w, v_{0}\right) < \varepsilon
$$
which yields inequalities \eqref{eqtn, EquationVV_0}. Furthermore
$$
\begin{aligned}
& \int_{0}^{t_{0}} a_{g_{1}, g_{2}}\left(\phi_{s}^{g_{1}} v\right) d s-t_{0} \cdot S\left(g_{1}, g_{2}\right) \\
= & \int_{-T}^{t_{0}-T} a_{g_{1}, g_{2}}\left(\phi_{s}^{g_{1}} v\right) d s-t_{0} \cdot S\left(g_{1}, g_{2}\right)+\left(\int_{t_{0}-T}^{t_{0}} a_{g_{1}, g_{2}}\left(\phi_{s}^{g_{1}} v\right) d s-\int_{-T}^{0} a_{g_{1}, g_{2}}\left(\phi_{s}^{g_{1}} v\right) d s\right) \\
= &\int_{0}^{t_{0}} a_{g_{1}, g_{2}}\left(\phi_{s}^{g_{1}} w\right) d s-t_{0} \cdot S\left(g_{1}, g_{2}\right)+\left(\int_{t_{0}-T}^{t_{0}} a_{g_{1}, g_{2}}\left(\phi_{s}^{g_{1}} v\right) d s-\int_{-T}^{0} a_{g_{1}, g_{2}}\left(\phi_{s}^{g_{1}} v\right) d s\right) \\
> & -\frac{\kappa}{2}+\int_{-T}^{0}\left(a_{g_{1}, g_{2}}\left(\phi_{t_{0}+s}^{g_{1}} v\right)-a_{g_{1}, g_{2}}\left(\phi_{s}^{g_{1}} v\right)\right) d s .
\end{aligned}
$$
Since 
\begin{align*}
d_{g_{1}^{s}}\left( v, \phi_{t_{0}}^{g_{1}} v\right) &= d_{g_{1}^{s}}\left(\phi_T^{g_{1}} w, \phi_T^{g_{1}}\phi_{t_{0}}^{g_{1}} w\right)\leq C_{0} d_{g_{1}^{s}}\left( w, \phi_{t_{0}}^{g_{1}} w\right) \\
&\leq C_{0}\big(d_{g_{1}^{s}}\left(w, v_{0}\right)+d_{g_{1}^{s}}\left(v_{0}, \phi_{t_{0}}^{g_{1}} w\right)\big) < \varepsilon.
\end{align*}
If $T>0$, choosing $\varepsilon$ sufficiently small,
the continuity of $a_{g_{1}, g_{2}}$ and the Lipschitz  continuity of $\phi_{s}^{g_{1}}$ for $s \in[-T, 0]$
$$
\int_{-T}^{0}\left|a_{g_{1}, g_{2}}\left( \phi_{s}^{g_{1}} \phi_{t_{0}}^{g_{1}} v\right)-a_{g_{1}, g_{2}}\left( \phi_{s}^{g_{1}}v\right)\right| d s<\frac{\kappa}{2} .
$$
 If $T<0$, consider $s\in [0,-T]$. The proof is the same. This yields Inequality \eqref{eqtn, EqtnKappa} and therefore $\mathcal{H}_{g_{1}, g_{2}}\left(\phi_{T}^{g_{1}}\left(v_{0}\right), \phi_{T}^{g_{1}}\left(v_{0}\right)\right)=0$.
\end{proof}

We next show that the Mather set is contained in the Peierls barrier zero set $\mathcal{H}_{0}(g_1, g_2)$.

 \begin{proposition} \label{prop,ManeAubrySet}
     
Let $g_{1}, g_{2} \in R^{-}(M)$. The Mather set satisfies

$$
\mathcal{M}(g_1,g_2) \subset \mathcal{H}_{0}(g_1, g_2)
$$
 \end{proposition}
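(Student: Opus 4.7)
The plan is to show pointwise that $\supp m \subset \mathcal{H}_0(g_1,g_2)$ for every $m \in MS(g_1,g_2)$ and then invoke the fact that $\mathcal{H}_0(g_1,g_2)$ is closed. Fix $m \in MS(g_1,g_2)$ and let $u$ be a strong supersolution provided by Proposition \ref{prop:subsolution}, with associated non-positive additive cocycle
$$c_u(v,t) = \int_0^t a_{g_1,g_2}(\phi_s^{g_1}v)\, ds - t\cdot S(g_1,g_2) - \bigl(u(\phi_t^{g_1}v) - u(v)\bigr).$$
The $\phi^{g_1}$-invariance of $m$, together with the defining identity $\int a_{g_1,g_2}\, dm = S(g_1,g_2)$ of a maximally stretched measure, gives $\int c_u(v,t)\, dm = 0$ for each $t \geq 0$. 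Since $c_u \leq 0$, this forces $c_u(\cdot,t) = 0$ $m$-almost everywhere for each fixed $t$; running $t$ through a countable dense subset of $\mathbb{R}_{\geq 0}$ and using continuity of $c_u$ in $t$, we obtain a set $E$ of full $m$-measure on which
$$\int_0^t a_{g_1,g_2}(\phi_s^{g_1}v)\, ds - t\cdot S(g_1,g_2) = u(\phi_t^{g_1}v) - u(v) \qquad \text{for all } t \geq 0.$$

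Now fix $v_0 \in \supp m$ and $\varepsilon > 0$. The ball $B_{g_1^s}(v_0, \varepsilon/2)$ has positive $m$-measure, hence so does $E \cap B_{g_1^s}(v_0, \varepsilon/2)$; by Poincar\'e recurrence applied to $\phi^{g_1}$, we may select $v$ in this intersection admitting return times $t_n \to \infty$ with $\phi_{t_n}^{g_1}v \in B_{g_1^s}(v_0, \varepsilon/2)$. Both $v$ and $\phi_{t_n}^{g_1}v$ then lie within distance $\varepsilon$ of $v_0$, and the identity above yields
$$\int_0^{t_n} a_{g_1,g_2}(\phi_s^{g_1}v)\, ds - t_n\cdot S(g_1,g_2) = u(\phi_{t_n}^{g_1}v) - u(v) \geq -\omega_u(\varepsilon),$$
where $\omega_u$ denotes the modulus of continuity of the uniformly continuous function $u$ on the compact space $S^{g_1}M$. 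Consequently the supremum appearing in Definition \ref{def, PeierlsBarrier} at $t = t_n$ is at least $-\omega_u(\varepsilon)$, so the outer $\lim_{t\to\infty}\sup\{\cdots\}$ is $\geq -\omega_u(\varepsilon)$, and letting $\varepsilon \to 0$ gives $\mathcal{H}_{g_1,g_2}(v_0,v_0) \geq 0$. Combined with Lemma \ref{lemma, ManePotentialLessZero} this forces $v_0 \in \mathcal{H}_0(g_1,g_2)$.

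Because $v_0$ was arbitrary, $\supp m \subset \mathcal{H}_0(g_1,g_2)$ for every $m \in MS(g_1,g_2)$; closedness of $\mathcal{H}_0(g_1,g_2)$ then upgrades this to $\mathcal{M}(g_1,g_2) \subset \mathcal{H}_0(g_1,g_2)$. The main conceptual step is the vanishing $\int c_u\, dm = 0$: this strengthens the asymptotic Birkhoff statement $\tfrac{1}{t}\int_0^t a_{g_1,g_2}(\phi_s^{g_1}v)\, ds \to S(g_1,g_2)$ into a genuine bounded-error identity on a set of full $m$-measure, which is precisely what is needed to cooperate with Poincar\'e recurrence inside the $\varepsilon$-and-$t$ structure of the Peierls barrier. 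Note that the argument uses only that $u$ is a weak supersolution, so the same proof actually shows $\mathcal{M}(g_1,g_2) \subset \mathcal{A}_u(g_1,g_2)$ for every weak supersolution $u$, reproving (and slightly strengthening) Proposition \ref{prop, AubryMather} along the way.
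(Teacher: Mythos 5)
Your proof is correct and follows essentially the same route as the paper's: take a (strong) supersolution, show the associated non-positive defect vanishes $m$-a.e.\ via $\int c_u\,dm=0$, and feed Poincar\'e recurrence into the definition of the Peierls barrier before closing up with Lemma \ref{lemma, ManePotentialLessZero}. The only cosmetic difference is that you absorb the passage from recurrent points to general points of $\supp m$ into the $\varepsilon$-structure of the barrier via the modulus of continuity of $u$, whereas the paper handles non-recurrent points by appealing to the closedness of $\mathcal{H}_0(g_1,g_2)$.
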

 \begin{proof}
     This proposition can be viewed as a corollary of Proposition \ref{prop, AubryMather}. Take a strong supersolution $u$. In Proposition \ref{prop, AubryMather}, we have shown that if $m \in MS(g_1,g_2)$ and if $f \colon S^{g_1}M\to \mathbb{R}_{\geq 0}$ is the function given by
     $$f(v):=S(g_1,g_2)-a_{g_1,g_2}+X_{g_1}u(v),$$
     then $f$ vanishes on $\supp m$. From Poincare's Recurrence Theorem, we know that $m$-almost every point is recurrent. Suppose $v_0\in \supp m $ is recurrent. Then there exists $t_n\to \infty$ so that $d_{g_{1}^{s}}\left( v_0, \phi_{t_n}^{g_{1}} v_0\right) \to 0$ when $n\to \infty$ and
\begin{align*}
\int_{0}^{t_n} a_{g_{1}, g_{2}}\left(\phi_{s}^{g_{1}} v_0\right) d s-t_n\cdot S\left(g_{1}, g_{2}\right) & =u\left(\phi_{t_n}^{g_{1}} v_0\right)-u(v_0). 
\end{align*}
 This implies $\mathcal{H}_{g_{1}, g_{2}}\left(v_{0}, v_{0}\right)=0$. 

Suppose $v_0\in\supp m$ is not recurrent. Then since non-recurrent points form a null set, every open neighborhood of $v_0$ must contain recurrent points. Therefore  $\mathcal{H}_{g_{1}, g_{2}}\left(v_{0}, v_{0}\right)=0$ by closeness of the subset  $\mathcal{H}_{0}(g_1, g_2)$. This shows that  $\supp  m \subset \mathcal{H}_{0}(g_1, g_2)$ for any $ m \in MS(g_1,g_2)$.  

Finally, since $\mathcal{H}_{0}(g_1, g_2)$ is closed, we conclude
$$
\mathcal M(g_1, g_2) = \overline{\bigcup_{m \in MS(g_1,g_2)}\supp m} \subset \mathcal{H}_{0}(g_1, g_2).
$$
 \end{proof}

In the end, we show that the set $\mathcal{H}_{0}(g_1, g_2)$ is contained in the Aubry set $\mathcal{A}(g_1,g_2)$.
 
 \begin{proposition} \label{prop,ManeAubrySet}
     
Let $g_{1}, g_{2} \in R^{-}(M)$ and let $u$ be any weak supersolution for $a_{g_1,g_2}$. Then the Aubry set $\mathcal{A}_u\left(g_{1}, g_{2}\right)$ of $u$ satisfies

$$
\mathcal{H}_{0}(g_1, g_2) \subset \mathcal{A}_u\left(g_{1}, g_{2}\right).
$$
In particular, we have 
$$
\mathcal{H}_{0}(g_1, g_2) \subset \mathcal{A}\left(g_{1}, g_{2}\right).
$$
 \end{proposition}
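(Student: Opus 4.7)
The plan is to unfold the statement $v_0 \in \mathcal{A}_u(g_1,g_2)$ in terms of the continuous additive cocycle
$$
c_u(v,t) = \int_0^t a_{g_1,g_2}(\phi_s^{g_1} v)\,ds - t\cdot S(g_1,g_2) - \bigl(u(\phi_t^{g_1}v) - u(v)\bigr),
$$
which was introduced just above this proposition and is non-positive. By the cocycle identity $c_u(v,\,t_1+t_2) = c_u(v,t_1) + c_u(\phi_{t_1}^{g_1}v,\,t_2)$ for $t_1,t_2 \geq 0$, membership $v_0 \in \mathcal{A}_u$ is equivalent to requiring $c_u(\phi_s^{g_1}v_0,\tau)=0$ for every $s\in\R$ and every $\tau \geq 0$.

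The heart of the argument is the following claim: if $w \in \mathcal{H}_0(g_1,g_2)$ then $c_u(w,\tau)=0$ for all $\tau \geq 0$. To prove it I unpack the equality $\mathcal{H}_{g_1,g_2}(w,w)=0$: choosing $\varepsilon_k \downarrow 0$ and $\delta_k \downarrow 0$, the definition of the Peierls barrier supplies times $t_k \to \infty$ and vectors $w_k$ with $d_{g_1^s}(w_k,w) < \varepsilon_k$, $d_{g_1^s}(\phi_{t_k}^{g_1}w_k,w) < \varepsilon_k$, and
$$
\int_0^{t_k} a_{g_1,g_2}(\phi_s^{g_1}w_k)\,ds - t_k\cdot S(g_1,g_2) > -\delta_k .
$$
Continuity of $u$ forces $u(\phi_{t_k}^{g_1}w_k) - u(w_k) \to 0$, so $c_u(w_k,t_k) \to 0$. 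Now fix $\tau \geq 0$; for $k$ large enough $\tau \leq t_k$, and the cocycle identity gives
$$
c_u(w_k, t_k) = c_u(w_k, \tau) + c_u(\phi_\tau^{g_1} w_k, \, t_k - \tau),
$$
where both summands are non-positive. Since their sum tends to $0$, each one must tend to $0$; in particular $c_u(w_k,\tau) \to 0$, and continuity of $v \mapsto c_u(v,\tau)$ yields $c_u(w,\tau) = 0$.

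To finish, I invoke the previously established $\phi^{g_1}$-invariance of $\mathcal{H}_0(g_1,g_2)$. For any $v_0 \in \mathcal{H}_0$ and any $s\in\R$, the vector $\phi_s^{g_1}v_0$ again lies in $\mathcal{H}_0$, so the claim gives $c_u(\phi_s^{g_1}v_0,\tau) = 0$ for all $\tau \geq 0$. This is exactly the condition $v_0 \in \mathcal{A}_u(g_1,g_2)$. Intersecting over all weak supersolutions $u$ immediately yields the ``In particular'' inclusion $\mathcal{H}_0(g_1,g_2) \subset \mathcal{A}(g_1,g_2)$.

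The main obstacle — and the genuinely useful structural input — is the sandwich step: I rely on the non-positivity of $c_u$ to pass from a long-time cocycle sum tending to $0$ down to vanishing on every finite subinterval. Without that sign constraint one could only conclude a time-averaged statement rather than the pointwise equality required by $\mathcal{A}_u$. Every other ingredient (continuity in $v$, the cocycle identity, flow-invariance of $\mathcal{H}_0$) has already been verified in the preceding lemmas.
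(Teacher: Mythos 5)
Your proof is correct and follows essentially the same route as the paper's: both arguments hinge on the non-positive additive cocycle $c_u$ and the observation that $\mathcal{H}_{g_1,g_2}(v_0,v_0)=0$ forces $c_u(v_0,\cdot)$ to vanish on every finite interval (the paper phrases this via monotonicity of $t\mapsto c_u(v,t)$, you via the two-term cocycle splitting and a sandwich — the same mechanism). Your explicit appeal to the flow-invariance of $\mathcal{H}_0(g_1,g_2)$ to cover negative times $t_1$ in the definition of $\mathcal{A}_u$ is a small point the paper leaves implicit, and is a welcome addition.
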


\begin{proof} 
    We first show that if $v_{0}$ satisfies $\mathcal{H}_{g_{1}, g_{2}}\left(v_{0}, v_{0}\right)=0$, then it is in $\mathcal{A}_u\left(g_{1}, g_{2}\right)$. Given a weak supersolution $u$, we find a nonpositive cocycle $c_u$. Let us fix a finite positive constant $\tau_0$ and consider $t\geq \tau_0$. We have
$$
\begin{aligned}
\int_{0}^{t} a_{g_{1}, g_{2}}\left(\phi_{s}^{g_{1}} v\right) d s-t\cdot S\left(g_{1}, g_{2}\right) & =c_u(v,t) +u\left(\phi_{t}^{g_{1}} v\right)-u(v) \\
& \leq c_u(v,\tau_0)+u\left(\phi_{t}^{g_{1}} v\right)-u(v).
\end{aligned}
$$

This implies, by finding positive $t$ and vectors $v$ so that $v \rightarrow v_{1}$ and $\phi_{t}^{g_{1}}(v) \rightarrow$ $v_{2}$, together with the continuity of $u$ and $c_u$,

$$
\mathcal{H}_{g_{1}, g_{2}}\left(v_{1}, v_{2}\right) \leq c_u(v_1,\tau_0)+u\left(v_{2}\right)-u\left(v_{1}\right) .
$$

When $v_{0}=v_{1}=v_{2}$ and $\mathcal{H}_{g_{1}, g_{2}}\left(v_{0}, v_{0}\right)=0$, we obtain $c_u(v_0,\tau_0)=0$. Since $c_u$ in monotonically nonincreasing in $t$, we obtain $c_u(v_0,t)=0$ for $t\leq \tau_0$. Because $\tau_0$ is arbitrarily chosen, we conclude for any $t\geq 0$,
$$c_u(v_0,t)=0,$$ 
and hence
$v_{0} \in \mathcal{A}_{u}\left(g_{1}, g_{2}\right)$.

\end{proof}

 \begin{remark}
     We do not know if $\mathcal{H}_{0}(g_1, g_2)$ coincides with our definition of the Aubry set (see Question \ref{question,AubryPeierl} in Appendix \ref{question: open question}). We note that, in an analogous setting in \cite{FS04} for Lagrangian dynamics, the Aubry set was originally defined using Peierls barrier.
 \end{remark}

\subsubsection{Reverse triangle inequality of the Peierls barrier}
We show that the Peierls barrier satisfies the reverse triangle inequality in this subsection. We denote for a fixed $\varepsilon >0$,

\begin{align*}
\mathcal{H}^{\varepsilon}_{g_{1}, g_{2}}(v_{1}, v_{2}):= 
\lim_{T\to \infty} \sup\bigg\{&\int_{0}^{t} a_{g_{1}, g_{2}}(\phi_{s}^{g_{1}} v) d s-t \cdot S(g_{1}, g_{2}) \mid  \\ 
 & d_{g_{1}^{s}}\left(v, v_{1}\right)<\varepsilon, d_{g_{1}^{s}}\left(\phi_{t}^{g_{1}}v, v_{2}\right)<\varepsilon, t\geq T \bigg\},
\end{align*}
and further for a fixed $T>0$, denote
\begin{align*}
\mathcal{H}^{\varepsilon,T}_{g_{1}, g_{2}}(v_{1}, v_{2}):= 
\ \sup\ \bigg\{ &\int_{0}^{t} a_{g_{1}, g_{2}}(\phi_{s}^{g_{1}} v) d s-t \cdot S(g_{1}, g_{2}) \mid t\geq T \\ 
  & \quad d_{g_{1}^{s}}\left(v, v_{1}\right)<\varepsilon, d_{g_{1}^{s}}\left(\phi_{t}^{g_{1}}v, v_{2}\right)<\varepsilon \bigg\}.
\end{align*} Note that $\mathcal{H}^{\varepsilon,T}_{g_{1}, g_{2}}(v_{1}, v_{2})$ monotonically decreases to $\mathcal{H}^{\varepsilon}_{g_{1}, g_{2}}(v_{1}, v_{2})$ when  $T\to \infty$ and $\mathcal{H}^{\varepsilon}_{g_{1}, g_{2}}(v_{1}, v_{2})$ monotonically decreases in $ \varepsilon$  as $ \varepsilon \to 0$ and converges to the Peierls barrier $\mathcal{H}_{g_{1}, g_{2}}\left(\cdot, \cdot\right) $.
Moreover, $\mathcal{H}^{\varepsilon,T}_{g_{1}, g_{2}}(v_{1}, v_{2})$ is monontone decreasing in $\varepsilon$ with limit $ \lim\limits_{\varepsilon \to 0}\mathcal{H}^{\varepsilon,T}_{g_{1}, g_{2}}(v_{1}, v_{2}) $. 

In the sequel, we need the following simple version of the shadowing Lemma. See \cite[Theorem 18.3.14]{KH95} for a general version.

\begin{lemma}[Shadowing Lemma]\label{lem, shadowing}
Let $(M, g)$ be a complete manifold of strictly negative curvature. Then for every 
$\varepsilon>0$, there exists $\delta>0$ such that for all $w_1, w_2\in S^{g}M$ and $t_1,t_2 >0$ with
        $d_{g^s}(\phi^g_{t_1}w_1,w_2) < \delta$,
    there exists some $w\in S^{g}M$ with
    $$
   d_{g^s} (\phi_t^{g}w, \phi_t^{g}w_1)< \varepsilon  \; \text{if} \; \; 0\leq t\leq t_1,
   $$
 and    
    $$
   d_{g^s} (\phi_{t}^{g}w, \phi_{t-t_1}^{g}w_2)< \varepsilon \; \text{if} \; \; t_1\leq t\leq t_1+t_2. 
   $$
\end{lemma}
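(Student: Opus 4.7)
My plan is to exploit the Anosov character of the geodesic flow $\phi^g$ on $S^g M$ under strictly negative curvature, together with the local product structure provided by the (weak) stable and unstable foliations. Since the statement is $\Gamma$-equivariant, I would first pass to the universal cover: lift $w_1, w_2$ to $\tilde{w}_1, \tilde{w}_2 \in S^g \widetilde{M}$ with $d_{g^s}(\phi^g_{t_1}\tilde{w}_1, \tilde{w}_2) < \delta$, which is possible because the covering projection is a local isometry; construct the shadowing orbit in $S^g\widetilde M$; and then push it down to $S^g M$.

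The candidate $\tilde{w}$ is defined via its endpoints at infinity: set $\tilde{w}^- = \tilde{w}_1^-$ and $\tilde{w}^+ = \tilde{w}_2^+$, with the Busemann coordinate chosen so that $\phi^g_{t_1}\tilde{w}$ realizes the local bracket of $\phi^g_{t_1}\tilde{w}_1$ and $\tilde{w}_2$ from hyperbolic dynamics. Geometrically, $\phi^g_{t_1}\tilde{w}$ then lies on the weak-unstable leaf of $\phi^g_{t_1}\tilde{w}_1$ and on the strong-stable leaf of $\tilde{w}_2$. For $\delta$ sufficiently small, this $\tilde{w}$ exists and is unique, with $d_{g^s}(\phi^g_{t_1}\tilde{w}, \phi^g_{t_1}\tilde{w}_1)$ and $d_{g^s}(\phi^g_{t_1}\tilde{w}, \tilde{w}_2)$ both bounded by $C\delta$ for a constant $C$ depending only on the curvature bounds of $(M,g)$.

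The two required estimates then come from the uniform exponential contraction of the Anosov foliations. For $0 \leq t \leq t_1$, the vectors $\phi^g_t\tilde{w}$ and $\phi^g_t\tilde{w}_1$ lie on a common weak-unstable leaf, and since they are $\leq C\delta$-close at $t = t_1$, backward contraction along the unstable direction keeps them within $\varepsilon$ throughout this interval. Symmetrically, for $t_1 \leq t \leq t_1+t_2$, the vectors $\phi^g_t\tilde{w}$ and $\phi^g_{t-t_1}\tilde{w}_2$ lie on a common weak-stable leaf, and forward contraction gives the analogous bound. Choosing $\delta = \delta(\varepsilon)$ small enough to absorb the constant $C$ and the small flow-direction shift introduced by the bracket completes the argument.

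The main obstacle is the bookkeeping: cleanly controlling the (small) flow-direction shift produced by the bracket construction, and verifying that the hyperbolicity constants are uniform on the non-compact cover $S^g\widetilde M$, which follows by $\Gamma$-equivariance from compactness of $M$. A slicker packaging is to view the concatenation of the two orbit segments $\{\phi^g_t\tilde{w}_1:0\leq t\leq t_1\}$ and $\{\phi^g_t\tilde{w}_2:0\leq t\leq t_2\}$ as a $\delta$-pseudo-orbit with a single jump at time $t_1$, and apply the Anosov shadowing theorem \cite[Theorem 18.3.14]{KH95} directly; this is legitimate since the geodesic flow of a closed negatively curved Riemannian manifold is Anosov.
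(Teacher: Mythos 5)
Your proposal is correct, and in fact it does more than the paper does: the paper offers no proof of this lemma at all, contenting itself with the remark ``See \cite[Theorem 18.3.14]{KH95} for a general version'' before the statement. Your closing suggestion --- view the concatenation of the two orbit segments as a $\delta$-pseudo-orbit with a single jump at time $t_1$ and invoke the Anosov shadowing theorem --- is therefore exactly the route the paper implicitly takes, while your primary argument (lift to $S^g\widetilde M$, take $\tilde w$ to be the geodesic from $\tilde w_1^-$ to $\tilde w_2^+$ normalized by the local bracket, and use backward contraction of the strong-unstable separation on $[0,t_1]$ and forward contraction of the strong-stable separation on $[t_1,t_1+t_2]$) is the standard geometric proof in negative curvature and is sound, including your accounting of the bounded flow-direction shift $\sigma$ with $|\sigma|\le C\delta$ produced by the bracket. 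One caveat worth flagging: both your hyperbolicity constants and the citation to \cite{KH95} require \emph{uniform} hyperbolicity, which you obtain from compactness of $M$ via $\Gamma$-equivariance; the lemma as stated in the paper only assumes $(M,g)$ complete with strictly negative curvature, for which uniformity can fail (e.g.\ curvature degenerating to $0$ at infinity). Since the lemma is only ever applied to the closed manifold $M$ of the paper (in Proposition \ref{prop,reverseTriangle}), this is a defect of the statement rather than of your proof, but you should state the compactness (or a pinching hypothesis) explicitly where you use it.
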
     
\begin{proposition}[Reverse triangle inequality] \label{prop,reverseTriangle}
The Peierls barrier satisfies for all $T>0$ and all
 $v_1,v_2,v_3\in S^{g_1}M$,

$$\mathcal{H}_{g_{1}, g_{2}}\left(v_1, v_3\right)\geq \mathcal{H}_{g_{1}, g_{2}}\left(v_1, v_2\right)+ 
\lim\limits_{\varepsilon \to 0}\mathcal{H}^{\varepsilon,T}_{g_{1}, g_{2}}(v_{2}, v_{3}).$$
In particular, it satisfies the reverse triangle inequality, that is,
$$\mathcal{H}_{g_{1}, g_{2}}\left(v_1, v_3\right)\geq \mathcal{H}_{g_{1}, g_{2}}\left(v_1, v_2\right)+ 
 \mathcal{H}_{g_{1}, g_{2}}\left(v_2, v_3\right).$$

\end{proposition}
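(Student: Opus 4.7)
The plan is to use the shadowing lemma (Lemma \ref{lem, shadowing}) to glue two orbit segments of $\phi^{g_1}$ into a single genuine orbit: the first realizing a value close to $\mathcal{H}_{g_1,g_2}(v_1,v_2)$ traveling from a small neighborhood of $v_1$ to one of $v_2$, and the second of duration $\geq T$ realizing a value close to $\mathcal{H}^{\varepsilon,T}_{g_1,g_2}(v_2,v_3)$ traveling from a neighborhood of $v_2$ to one of $v_3$. Summing the two contributions then lower-bounds the sup in the definition of $\mathcal{H}_{g_1,g_2}(v_1,v_3)$. The ``in particular'' reverse triangle inequality follows for free, since for every $T>0$ the term $\lim_{\varepsilon \to 0}\mathcal{H}^{\varepsilon,T}_{g_1,g_2}(v_2,v_3)$ dominates $\mathcal{H}_{g_1,g_2}(v_2,v_3)$.

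In detail, fix $\kappa,\varepsilon' > 0$. By the definition of $\mathcal{H}_{g_1,g_2}(v_1,v_2)$ as $\lim_{\varepsilon\to 0}\limsup_{t\to \infty}\sup$, there exist $w_1 \in S^{g_1}M$ and an arbitrarily large $s_1$ with $d_{g_1^s}(w_1,v_1),\, d_{g_1^s}(\phi_{s_1}^{g_1}w_1,v_2) < \varepsilon'$ and $\int_0^{s_1}a_{g_1,g_2}(\phi_s^{g_1}w_1)\,ds - s_1 S(g_1,g_2) \geq \mathcal{H}^{\varepsilon'}_{g_1,g_2}(v_1,v_2) - \kappa$. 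Similarly, unpacking $\mathcal{H}^{\varepsilon',T}_{g_1,g_2}(v_2,v_3)$ yields $w_2 \in S^{g_1}M$ and $s_2 \geq T$ with the analogous properties relative to $v_2,v_3$. By uniform continuity of the H\"older function $a_{g_1,g_2}$ on $S^{g_1}M$, for any $\eta > 0$ there is $\rho > 0$ such that $d_{g_1^s}(u,u') < \rho$ implies $|a_{g_1,g_2}(u)-a_{g_1,g_2}(u')| < \eta$; let $\delta = \delta(\rho)$ be as in Lemma \ref{lem, shadowing}. Choosing $\varepsilon' < \delta/2$, the triangle inequality gives $d_{g_1^s}(\phi_{s_1}^{g_1}w_1,w_2) < \delta$, so shadowing produces $w \in S^{g_1}M$ that is $\rho$-close to $\phi_t^{g_1}w_1$ on $[0,s_1]$ and $\rho$-close to $\phi_{t-s_1}^{g_1}w_2$ on $[s_1,s_1+s_2]$; in particular $d_{g_1^s}(w,v_1),\, d_{g_1^s}(\phi_{s_1+s_2}^{g_1}w,v_3) \leq \rho+\varepsilon'$.

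Splitting the integral $\int_0^{s_1+s_2}a_{g_1,g_2}(\phi_s^{g_1}w)\,ds$ at $s_1$ and applying the uniform continuity bound on each piece gives
\begin{align*}
\int_0^{s_1+s_2}a_{g_1,g_2}(\phi_s^{g_1}w)\,ds &- (s_1+s_2)S(g_1,g_2) \\
&\geq \mathcal{H}^{\varepsilon'}_{g_1,g_2}(v_1,v_2) + \mathcal{H}^{\varepsilon',T}_{g_1,g_2}(v_2,v_3) - 2\kappa - \eta(s_1+s_2),
\end{align*}
which lower bounds the sup in $\mathcal{H}^{\rho+\varepsilon'}_{g_1,g_2}(v_1,v_3)$ at the sample $(w,s_1+s_2)$. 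Passing the parameters $\kappa,\varepsilon',\rho,\eta \to 0$ in the correct order yields the first inequality of the proposition, and the reverse triangle inequality follows since $\lim_{\varepsilon \to 0}\mathcal{H}^{\varepsilon,T}_{g_1,g_2}(v_2,v_3) \geq \mathcal{H}_{g_1,g_2}(v_2,v_3)$ for every $T>0$. The main obstacle is controlling the error $\eta(s_1+s_2)$: because $s_1$ must be chosen large to approach the $\limsup$ defining $\mathcal{H}^{\varepsilon'}_{g_1,g_2}(v_1,v_2)$, the product $\eta s_1$ can a priori blow up. The resolution is a diagonal argument: pick sequences $\kappa_n,\eta_n,\varepsilon'_n \to 0$ (compatible with the shadowing constraints) and select $s_1^n$ from the unbounded set of times achieving the $\varepsilon'_n$-level $\limsup$ approximation so that $\eta_n s_1^n \to 0$, while keeping $s_2^n$ bounded (close to $T$) to likewise absorb $\eta_n s_2^n$.
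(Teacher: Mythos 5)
Your overall strategy (approximate the two barriers by orbit segments, concatenate them with the shadowing lemma, and feed the resulting orbit into the definition of $\mathcal{H}_{g_1,g_2}(v_1,v_3)$) is exactly the paper's, and the reduction of the ``in particular'' statement to $\lim_{\varepsilon\to 0}\mathcal{H}^{\varepsilon,T}_{g_1,g_2}(v_2,v_3)\geq \mathcal{H}_{g_1,g_2}(v_2,v_3)$ is also correct. The gap is in your error control. You estimate the discrepancy between $\int_0^{s_1}a_{g_1,g_2}(\phi_s^{g_1}w)\,ds$ and $\int_0^{s_1}a_{g_1,g_2}(\phi_s^{g_1}w_1)\,ds$ by pointwise uniform continuity of $a_{g_1,g_2}$, which yields an error of size $\eta s_1$, and you propose to kill it by a diagonal choice of $s_1^n$ with $\eta_n s_1^n\to 0$. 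This cannot work, because the quantifiers are against you: $\mathcal{H}^{\varepsilon}_{g_1,g_2}(v_1,v_3)$ is by definition $\inf_{T'}\mathcal{H}^{\varepsilon,T'}_{g_1,g_2}(v_1,v_3)$, so to bound it from below \emph{at a fixed scale} $\varepsilon$ you must exhibit, for \emph{every} $T'$, a sample of total duration $\geq T'$ meeting the target value. Since $s_2$ stays near the fixed $T$, this forces $s_1\gtrsim T'-T$ to be arbitrarily large while $\eta$ (determined by $\varepsilon$ through uniform continuity and shadowing) stays fixed; hence $\eta s_1\to\infty$ and your lower bound degenerates to $-\infty$. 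A single diagonal sequence $(\varepsilon_n, s_1^n)$ only controls one pair of parameters at a time and does not cover the pairs $(\varepsilon_1, T'_n)$ with $T'_n\to\infty$, which the double infimum requires.

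The paper's proof closes exactly this hole by never integrating the pointwise error. It uses the identity $\int_0^{t} a_{g_1,g_2}(\phi_s^{g_1}v)\,ds = b^{g_2}_{v^{g_1}_+}(\pi v,\pi\phi^{g_1}_t v)$ (Lemma \ref{lem:conj}) and then compares $b^{g_2}_{w^{g_1}_+}(\pi w,\pi\phi^{g_1}_{t_1}w)$ with $b^{g_2}_{(w_1)^{g_1}_+}(\pi w_1,\pi\phi^{g_1}_{t_1}w_1)$ directly via the continuity of the Busemann cocycle: because Busemann functions are $1$-Lipschitz in the base points and Hölder in the boundary point, the two cocycle values differ by at most a quantity $\eta$ that depends only on the shadowing constant $\varepsilon$, \emph{not} on $t_1$ or $t_2$. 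With a time-independent error of $2\eta$ replacing your $\eta(s_1+s_2)$, the limits $\kappa,\eta,\varepsilon\to 0$ can be taken in any order and the first inequality follows. To repair your argument you should replace the uniform-continuity step by this Busemann cocycle comparison (or otherwise exploit that the shadowing orbit converges to the shadowed ones exponentially fast, so that the pointwise errors are summable in time); as written, the estimate does not close.
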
 
\begin{proof}

  For $(M, g_1)$ and  $\varepsilon>0$, choose $\delta >0$ as in the shadowing Lemma \ref{lem, shadowing}.
  Consider the (not necessarily distinct) vectors $v_1,v_2,v_3 \in S^{g_1}M$ and let $T$ be any positive number. By the definition of $\mathcal{H}^{\frac{\delta}{2}}_{g_{1}, g_{2}}$, for any $\kappa>0$ and any $T_1>0$, there exist $w_1$ and $t_1>T_1$ such that 
    \begin{equation} \label{eqtn, v1v2}
        d_{g_1^s}(v_1,w_1) < \frac{\delta}{2}, \quad d_{g_1^s}(v_2,\phi^{g_1}_{t_1}w_1) < \frac{\delta}{2},
     \end{equation}
and recall from Definition \ref{def:timechange}  
    $$\mathcal{H}^{\frac{\delta}{2}}_{g_{1}, g_{2}}(v_{1}, v_{2})-\kappa \leq b^{g_2}_{(w_1)^{g_1}_{+}} (\pi w_1, \pi \phi^{g_1}_{t_1}w_1)-t_1 S(g_1,g_2).$$
By the definition of  $\mathcal{H}^{\frac{\delta}{2},T}_{g_{1}, g_{2}}$, for any $\kappa >0$,  
there exist $w_2$ and $t_2\geq T$ such that
  \begin{equation}\label{eqtn, v2v3}
  d_{g_1^s}(v_2,w_2) < \frac{\delta}{2}, \quad d_{g_1^s}(v_3,\phi^{g_1}_{t_2}w_2) < \frac{\delta}{2},
  \end{equation}
and 
  $$\mathcal{H}^{\frac{\delta}{2},T}_{g_{1}, g_{2}}(v_{2}, v_{3})-\kappa \leq b^{g_2}_{(w_2)^{g_1}_{+}} (\pi w_2, \pi \phi^{g_1}_{t_2}w_2)-t_2 S(g_1,g_2)$$
  holds.
This in particular implies
$$
d_{g_1^s}(\phi^{g_1}_{t_1}w_1,w_2) \le d_{g_1^s}(\phi^{g_1}_{t_1}w_1,v_2) + d_{g_1^s}(v_2, w_2) < \delta.
$$
The shadowing Lemma yields the existence of
 $w\in S^{g_1}M$ with
   \begin{equation}\label{eqtn, ww_1}
  d_{g_1^s} (\phi_t^{g_1}w, \phi_t^{g_1}w_1)< \varepsilon, \qquad  0\leq t\leq t_1,
     \end{equation}
and    
    \begin{equation}\label{eqtn, ww_2}
  d_{g_1^s} (\phi_{t}^{g_1}w, \phi_{t-t_1}^{g_1}w_2)< \varepsilon, \qquad  t_1\leq t\leq t_1+t_2. 
  \end{equation}
 
This  together with the inequalities \eqref{eqtn, v1v2} and \eqref{eqtn, ww_1}  implies
     $$ d_{g_1^s}(v_1,w) \le d_{g_1^s}(v_1,w_1 ) + d_{g_1^s}(w_1,w)< \varepsilon+ \frac{\delta}{2} $$
and  
$$
d_{g_1^s} (v_3,\phi_{t_1+t_2}^{g_1}w)< d_{g_1^s} (v_3,\phi^{g_1}_{t_2}w_2)+  d_{g_1^s} (\phi^{g_1}_{t_2}w_2, \phi
_{t_1+t_2}^{g_1}w)< \varepsilon+\frac{\delta}{2}.
$$
Furthermore, we can choose $T_1 > 0$ such that
      \begin{align*}
         \mathcal{H}^{\varepsilon+\frac{\delta}{2}}_{g_{1}, g_{2}}(v_{1}, v_{3}) 
         \geq \mathcal{H}^{\varepsilon+\frac{\delta}{2},T_1}_{g_{1}, g_{2}}(v_{1}, v_{3}) -\kappa.
     \end{align*}
     Since  $t_1+t_2\geq T_1+T> T_1$, it follows
     \begin{align*}
\mathcal{H}^{\varepsilon+\frac{\delta}{2}}_{g_{1}, g_{2}}(v_{1}, v_{3})&  
         \geq b^{g_2}_{w^{g_1}_+}(\pi w, \pi \phi^{g_1}_{t_1+t_2}w) -(t_1+t_2) S(g_1,g_2)-\kappa\\
         &\geq  b^{g_2}_{w^{g_1}_+}(\pi w, \pi \phi^{g_1}_{t_1}w) -t_1 S(g_1,g_2) \\
        &  \quad + b^{g_2}_{w^{g_1}_+}(\pi \phi^{g_1}_{t_1}w, \pi \phi^{g_1}_{t_1+t_2}w) -t_2 S(g_1,g_2)-\kappa.
\end{align*}
         By the continuity of the Busemann cocycle (see Subsection \ref{busemann cocycle}), for any $\eta>0$, their exists $\varepsilon_\eta >0$ such that 
         $$b^{g_2}_{{w}^{g_1}_+}(\pi w, \pi \phi^{g_1}_{t_1}w) \geq b^{g_2}_{({w_1})^{g_1}_+}(\pi w_1, \pi \phi^{g_1}_{t_1}w_1)-\eta,$$
         and 
          $$b^{g_2}_{w^{g_1}_+}(\pi \phi^{g_1}_{t_1}w, \pi \phi^{g_1}_{t_1+t_2}w) \geq b^{g_2}_{(w_2)^{g_1}_+}(\pi w_2, \pi \phi^{g_1}_{t_2}w_2)-\eta,$$
          provided $ \varepsilon \le \varepsilon_\eta $. 
          Hence for all  $ \varepsilon \le \varepsilon_\eta $ we have
         \begin{align*}
\mathcal{H}^{\varepsilon+\delta}_{g_{1}, g_{2}}(v_{1}, v_{3})
         &\geq b^{g_2}_{{w_1}^{g_1}_+}(\pi w_1, \pi \phi^{g_1}_{t_1}w_1) -t_1 S(g_1,g_2) \\
         & \quad +b^{g_2}_{({w_2})^{g_1}_+}(\pi w_2, \pi \phi^{g_1}_{t_2}w_2) -t_2 S(g_1,g_2)-\kappa-2\eta \\
         &\geq \mathcal{H}^{\frac{\delta}{2}}_{g_{1}, g_{2}}(v_{1}, v_{2})+\mathcal{H}^{\frac{\delta}{2},T}_{g_{1}, g_{2}}(v_{2}, v_{3}) -3\kappa-2\eta
     \end{align*}
     
     Taking the limits $\varepsilon \to 0$, $\delta=\delta(\varepsilon)\to 0$, we obtain
$$
\mathcal{H}_{g_{1}, g_{2}}(v_{1}, v_{3}) \ge \mathcal{H}_{g_{1}, g_{2}}(v_{1}, v_{2})+\lim\limits_{\varepsilon \to 0}\mathcal{H}^{\varepsilon,T}_{g_{1}, g_{2}}(v_{2}, v_{3}) -3\kappa-2\eta
$$
for all $\kappa, \eta >0$.  This yields the first inequality. The second inequality is a consequence of
$   \mathcal{H}_{g_{1}, g_{2}}(v_{2}, v_{3})  \le \lim\limits_{\varepsilon \to 0}\mathcal{H}^{\varepsilon,T}_{g_{1}, g_{2}}(v_{2}, v_{3}).$
     \end{proof}

\section{Maximal stretch and thermodynamic formalism} \label{section,thermodynamics}
\subsection{The Mather set and measures of maximal entropy } \label{subsection,MMEMatherSet}
\hfill\\
In this section, we use tools from the thermodynamic formalism to study the maximal stretch $S(g_1,g_2)$ and maximally stretched measures. We refer the reader to \cite{Bo08} for a good survey on the subject of thermodynamic formalism which has a close relation with statistic physics.

We will establish a connection between the theory of equilibrium states, measures of maximal entropy, and the study of the Mather set. A starting observation is that the maximal stretch can be expressed via the \emph{pressure function} in the thermodynamic formalism. We recall that the \emph{pressure (function)} of the potential $r a_{g_1,g_2}$  is given by the following \emph{variational principle},
 $$\mathbf{P}(r a_{g_1, g_2})=\sup\limits_{m\in \mathcal{M}^1(\phi^{g_1})} \big(h_{m}(\phi^{g_1})  +\int r a_{g_1,g_2} dm ).$$

 The above supremum is realized by a unique $\phi^{g_1}$-invariant probability measure $m_{ra_{g_1,g_2}}$, called the \emph{equilibrium state} of the potential $r a_{g_1,g_2}$.
In statistical mechanics, the scaling constant $r$ is interpreted as the inverse of the temperature $r=\frac{1}{T}$.  When the temperature $T=\frac{1}{r} \to 0$, the potential energy $E(m)=\int a_{g_1,g_2} dm =I_m(g_1,g_2)$
becomes the dominating term in the pressure function. This motivates the following.

\begin{lemma}
    Given $g_1,g_2\in R^-(M)$, the maximal stretch satisfies
    $$S(g_1,g_2)= \lim\limits_{r\to \infty} \frac{\mathbf{P}(r a_{g_1, g_2})}{r}.$$
\end{lemma}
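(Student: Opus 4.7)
The plan is to compute the limit directly from the variational principle. Writing $\mathbf{P}(ra_{g_1,g_2}) = \sup_{m\in\mathcal{M}^1(\phi^{g_1})}\bigl(h_m(\phi^{g_1}) + r\,I_m(g_1,g_2)\bigr)$, division by $r>0$ gives
\begin{equation*}
\frac{\mathbf{P}(ra_{g_1,g_2})}{r} \;=\; \sup_{m\in\mathcal{M}^1(\phi^{g_1})}\left(\frac{h_m(\phi^{g_1})}{r} + I_m(g_1,g_2)\right).
\end{equation*}
So the goal is to squeeze this supremum between two quantities that both tend to $S(g_1,g_2)$ as $r\to\infty$. I would establish the upper bound $\limsup_{r\to\infty}\tfrac{\mathbf{P}(ra_{g_1,g_2})}{r}\leq S(g_1,g_2)$ and the matching lower bound separately.

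For the upper bound, I would use two inputs: first, by the definition of the maximal stretch, $I_m(g_1,g_2)\leq S(g_1,g_2)$ for every $m\in\mathcal{M}^1(\phi^{g_1})$; second, the topological entropy $h_{\mathrm{top}}(\phi^{g_1})$ is finite (the geodesic flow on the compact unit tangent bundle is a continuous flow with finite topological entropy), so $h_m(\phi^{g_1})\leq h_{\mathrm{top}}(\phi^{g_1})<\infty$ uniformly in $m$. Combining these,
\begin{equation*}
\frac{\mathbf{P}(ra_{g_1,g_2})}{r} \;\leq\; \frac{h_{\mathrm{top}}(\phi^{g_1})}{r} + S(g_1,g_2),
\end{equation*}
and letting $r\to\infty$ yields $\limsup \leq S(g_1,g_2)$.

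For the lower bound, I would invoke the fact recorded in Remark \ref{remark, supremumGeodesicStretch} (continuity of $I_\mu$ on the compact projective space of currents) to pick a maximally stretched measure $m_0\in MS(g_1,g_2)$, i.e.\ $I_{m_0}(g_1,g_2)=S(g_1,g_2)$. Plugging this single measure into the variational formula and using $h_{m_0}(\phi^{g_1})\geq 0$ gives
\begin{equation*}
\frac{\mathbf{P}(ra_{g_1,g_2})}{r} \;\geq\; \frac{h_{m_0}(\phi^{g_1})}{r} + S(g_1,g_2) \;\geq\; S(g_1,g_2),
\end{equation*}
so $\liminf\geq S(g_1,g_2)$. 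The two bounds together force the limit to exist and equal $S(g_1,g_2)$. There is essentially no technical obstacle here: this is the standard zero-temperature interpretation of the pressure, and the only nontrivial inputs are the finiteness of $h_{\mathrm{top}}(\phi^{g_1})$ and the existence of a measure in $\mathcal{M}^1(\phi^{g_1})$ achieving $S(g_1,g_2)$, both of which are already available.
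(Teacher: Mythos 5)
Your proof is correct and follows essentially the same route as the paper: split the supremum for the upper bound using $I_m\leq S(g_1,g_2)$ and the finiteness of $h_{\mathrm{top}}(\phi^{g_1})$, and drop the nonnegative entropy term for the lower bound. The only cosmetic difference is that the paper obtains the lower bound by taking the supremum of $\int r a_{g_1,g_2}\,dm$ over all invariant measures (so it never needs a maximizer to exist), whereas you plug in a specific maximally stretched measure $m_0$; both are valid since the existence of such an $m_0$ is already established.
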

\begin{proof}
    By the variational principle, 
    \begin{align*}
        \mathbf{P}(r a_{g_1, g_2})&=\sup\limits_{m\in \mathcal{M}^1(\phi^{g_1})} \big(h_{m}(\phi^{g_1})  +\int r a_{g_1,g_2} dm \big)\\
        &\leq \sup\limits_{m\in \mathcal{M}^1(\phi^{g_1})} h_{m}(\phi^{g_1})  + \sup\limits_{m\in \mathcal{M}^1(\phi^{g_1})} \int r a_{g_1,g_2} dm   \\
         &= h_{top}(\phi^{g_1})+ r S(g_1,g_2).
          \end{align*}
    On the other hand, since $h_m(\phi^{g_1})\geq 0$, it also holds  
    $$\mathbf{P}(r a_{g_1, g_2}) \geq \sup\limits_{m\in \mathcal{M}^1(\phi^{g_1})} \int r a_{g_1,g_2} dm = rS(g_1,g_2).$$
    The result follows from a combination of the two inequalities.     
\end{proof}

To simplify our notation, we denote the equilibrium states $m_{ra_{g_1,g_2}}$ as $m_r$. Up to subsequences, the measures $m_{ra_{g_1,g_2}}$ converge in weak-* topology to some weak limits when $r\to \infty$. These weak limits are called the \emph{zero-temperature limits} of equilibrium states. 

In the following, we show that \emph{zero-temperature limits} are maximally stretched measures.

\begin{lemma}\label{lemma,weaklimitEquilibriumStates}
    Each weak limit $m_+=\lim\limits_{n\to\infty} m_{r_n}$ of the equilibrium states for some subsequence $r_n\to \infty$ is a maximally stretched measure.
\end{lemma}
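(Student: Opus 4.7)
The plan is to exploit the variational principle together with the previously established formula $S(g_1,g_2) = \lim_{r\to\infty} \mathbf{P}(r a_{g_1,g_2})/r$ to squeeze $\int a_{g_1,g_2}\, dm_{r_n}$ between two bounds both tending to $S(g_1,g_2)$, and then pass to the weak limit. First I would observe that $m_+$ is automatically a $\phi^{g_1}$-invariant probability measure, since weak-$\ast$ limits of invariant probability measures on the compact space $S^{g_1}M$ remain invariant probability measures.

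Next, for each $r>0$, the equilibrium state identity reads $\mathbf{P}(r a_{g_1,g_2}) = h_{m_r}(\phi^{g_1}) + r\int a_{g_1,g_2}\, dm_r$. Combining this with the trivial lower bound $\mathbf{P}(r a_{g_1,g_2}) \geq r\,S(g_1,g_2)$ (obtained by dropping the nonnegative entropy term and using any invariant measure witnessing $S(g_1,g_2)$), and with the uniform upper bound $h_{m_r}(\phi^{g_1}) \leq h_{\mathrm{top}}(\phi^{g_1}) < \infty$, I would derive the two-sided estimate
\begin{equation*}
S(g_1,g_2) - \frac{h_{\mathrm{top}}(\phi^{g_1})}{r} \;\leq\; \int a_{g_1,g_2}\, dm_r \;\leq\; S(g_1,g_2),
\end{equation*}
where the upper bound follows directly from the definition of the maximal stretch as the supremum of geodesic stretches over invariant probability measures.

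Finally, I would pass to the limit along the subsequence $r_n \to \infty$. The function $a_{g_1,g_2}$ is continuous (in fact Hölder, as recorded in Remark \ref{remark,regularityTimeChange}), so weak-$\ast$ convergence $m_{r_n} \to m_+$ yields $\int a_{g_1,g_2}\, dm_{r_n} \to \int a_{g_1,g_2}\, dm_+$. Both outer bounds in the displayed inequality converge to $S(g_1,g_2)$, so $I_{m_+}(g_1,g_2) = \int a_{g_1,g_2}\, dm_+ = S(g_1,g_2)$, which by Corollary \ref{corollary, GeodesicStretchBusemannFunction} means $m_+ \in MS(g_1,g_2)$.

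I do not expect any real obstacle here. The only subtle point is the uniform entropy bound $h_{m_r}(\phi^{g_1}) \leq h_{\mathrm{top}}(\phi^{g_1})$, which divided by $r \to \infty$ makes the entropy contribution vanish in the limit; this is precisely the ``zero-temperature'' mechanism by which the potential term dominates the pressure. The continuity of $a_{g_1,g_2}$ (rather than just measurability) is what legitimizes the interchange of limit and integration against the weakly converging measures.
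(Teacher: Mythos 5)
Your proposal is correct and uses essentially the same argument as the paper: the variational principle combined with the fact that the entropy term, being uniformly bounded by $h_{\mathrm{top}}(\phi^{g_1})$, vanishes after division by $r$. The paper phrases this as the inequality $\int a_{g_1,g_2}\,dm_r - \int a_{g_1,g_2}\,dm \geq \frac{1}{r}(h_m - h_{m_r})$ for an arbitrary invariant $m$ and then takes the supremum, whereas you sandwich $\int a_{g_1,g_2}\,dm_r$ directly between $S(g_1,g_2) - h_{\mathrm{top}}(\phi^{g_1})/r$ and $S(g_1,g_2)$; these are the same mechanism.
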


\begin{proof}
   For all $m \in \mathcal{M}^1(\phi^{g_1})$, we have
\begin{eqnarray*}
h_m(\phi^{g_1}) + r \int a_{g_1, g_2} dm &\le & \mathbf{P}(r a_{g_1, g_2})  \\
&=&  h_{m_{r}} (\phi^{g_1})+r \int a_{g_1, g_2}dm_{r}.
\end{eqnarray*}
This implies
$$
\int a_{g_1, g_2}dm_{r} - \int a_{g_1, g_2} dm  \ge \frac{1}{r}\big(  h_{m} (\phi^{g_1})- h_{m_r}(\phi^{g_1}) \big).
$$
Let $r \to  \infty$. The right hand side goes to zero. We obtain, for each weak limit $m_+ := \lim\limits_{n \to \infty} m_{r_n}$ of some subsequence $\{r_n\}$,
 $$ \int a_{g_1, g_2}dm_+ \ge  \int a_{g_1, g_2} dm.$$
 Therefore
$$ \int a_{g_1, g_2}dm_+ =\sup_{m\in\mathcal{M}^{1}(\phi^{g_1})} \int a_{g_1, g_2}dm=S(g_1,g_2),$$
and $m_+$ is a maximally stretched measure.
\end{proof}

We next show that a zero-temperature limit $m_+$ is a measure of maximal entropy of the geodesic flow $\phi^{g_1}$ when restricted to the Mather set $\M(g_1,g_2)$.

\begin{theorem}[Measures of maximal entropy for $\mathcal{M}(g_1,g_2)$]
\label{prop,limitIsTopEntropy}
   Any zero-temperature limit $m_+$ is a measure of maximal entropy of the geodesic flow $\phi^{g_1}$ restricted to the Mather set $\M(g_1,g_2)$, i.e.
$$h_{m_{+}} (\phi^{g_1}) = h_{top}(\phi^{g_1}, \mathcal{M}(g_1,g_2)),$$
    where $h_{top}(\phi^{g_1}, \mathcal{M}(g_1,g_2))$ is the topological entropy of $\phi^{g_1}$ on the closed invariant subset $\mathcal{M}(g_1,g_2)$ and $h_{m_{+}} (\phi^{g_1})$ denotes the metric entropy of $m_+$. 
\end{theorem}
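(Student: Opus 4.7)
The plan is to prove equality by sandwiching $h_{m_+}(\phi^{g_1})$ between $h_{top}(\phi^{g_1}, \mathcal{M}(g_1,g_2))$ from above and below, exploiting the variational principle in two directions together with the upper semi-continuity of the metric entropy map for Anosov (in particular, geodesic in negative curvature) flows.

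For the upper bound, I will use Lemma \ref{lemma,weaklimitEquilibriumStates}, which shows that the zero temperature limit $m_+$ is a maximally stretched measure, so $\supp m_+ \subset \mathcal{M}(g_1,g_2)$. Since $\mathcal{M}(g_1,g_2)$ is a closed $\phi^{g_1}$-invariant set, $m_+$ may be regarded as a $\phi^{g_1}\big|_{\mathcal{M}(g_1,g_2)}$-invariant probability measure, and the classical variational principle applied to the restricted flow gives
$$h_{m_+}(\phi^{g_1}) \;\leq\; h_{top}\!\bigl(\phi^{g_1},\mathcal{M}(g_1,g_2)\bigr).$$

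For the lower bound, fix an arbitrary $\nu \in \mathcal{M}^1(\phi^{g_1})$ with $\supp \nu \subset \mathcal{M}(g_1,g_2)$. Since every such $\nu$ is in $MS(g_1,g_2)$, we have $\int a_{g_1,g_2}\,d\nu = S(g_1,g_2)$. Plugging $\nu$ into the variational inequality defining $\mathbf{P}(r a_{g_1,g_2})$ and comparing with the equilibrium state $m_r$ gives
$$h_{m_r}(\phi^{g_1}) + r\int a_{g_1,g_2}\,dm_r \;\geq\; h_\nu(\phi^{g_1}) + rS(g_1,g_2),$$
hence
$$h_{m_r}(\phi^{g_1}) \;\geq\; h_\nu(\phi^{g_1}) + r\bigl(S(g_1,g_2) - \textstyle\int a_{g_1,g_2}\,dm_r\bigr) \;\geq\; h_\nu(\phi^{g_1}),$$
where the last inequality uses $\int a_{g_1,g_2}\,dm_r \leq S(g_1,g_2)$. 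Now I invoke upper semi-continuity of the metric entropy $m \mapsto h_m(\phi^{g_1})$ on $\mathcal{M}^1(\phi^{g_1})$, which holds since $\phi^{g_1}$ is an Anosov (hence expansive) flow on a compact manifold. Applied to the convergent subsequence $m_{r_n} \to m_+$, this yields
$$h_{m_+}(\phi^{g_1}) \;\geq\; \limsup_{n\to\infty} h_{m_{r_n}}(\phi^{g_1}) \;\geq\; h_\nu(\phi^{g_1}).$$
Taking the supremum over all $\nu$ with $\supp\nu \subset \mathcal{M}(g_1,g_2)$ and applying the variational principle for the restricted flow again produces $h_{m_+}(\phi^{g_1}) \geq h_{top}(\phi^{g_1},\mathcal{M}(g_1,g_2))$, completing the proof.

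The main subtlety is the use of upper semi-continuity of the entropy map: one must justify that $\limsup_n h_{m_{r_n}}(\phi^{g_1}) \leq h_{m_+}(\phi^{g_1})$. This is where the assumption of negative curvature enters crucially — the geodesic flow is uniformly hyperbolic and thus expansive, so the entropy map is upper semi-continuous by Newhouse's theorem (or by Bowen's results for expansive flows). The rest of the argument is a clean bookkeeping of the variational principle, and the two uses of the principle (for $\mathbf{P}(ra_{g_1,g_2})$ on the full space, and for $h_{top}$ on the restricted subsystem) fit together precisely because being a maximally stretched measure is exactly the condition that makes the potential term $r\int a_{g_1,g_2}\,d\nu$ equal to $rS(g_1,g_2)$, decoupling the entropy comparison from $r$.
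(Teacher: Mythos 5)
Your argument is essentially the paper's: the same two-sided sandwich via the variational principle on the full space and on the restricted subsystem, combined with upper semi-continuity of the entropy map applied to the subsequence $m_{r_n}\to m_+$. The bookkeeping with the pressure inequality and the conclusion are correct as written.

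There is, however, one step you assert without justification, and it is exactly the non-trivial half of the paper's proof: the claim that \emph{every} $\phi^{g_1}$-invariant probability measure $\nu$ with $\supp\nu\subset\mathcal{M}(g_1,g_2)$ lies in $MS(g_1,g_2)$, i.e. satisfies $\int a_{g_1,g_2}\,d\nu=S(g_1,g_2)$. This does not follow from the definition of the Mather set: $\mathcal{M}(g_1,g_2)$ is the \emph{closure} of a union of supports of maximally stretched measures, and an invariant measure carried by such a closure need not a priori be a convex combination (or limit) of the measures used to define it, so its Birkhoff average of $a_{g_1,g_2}$ is not automatically $S(g_1,g_2)$. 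The paper closes this gap by invoking Proposition \ref{prop, AubryMather}, which places $\mathcal{M}(g_1,g_2)$ inside the Aubry set $\mathcal{A}_u(g_1,g_2)$ of a (strong) supersolution $u$; on that set the identity $\int_{t_1}^{t_2}a_{g_1,g_2}(\phi^{g_1}_s v)\,ds-(t_2-t_1)S(g_1,g_2)=u(\phi^{g_1}_{t_2}v)-u(\phi^{g_1}_{t_1}v)$ holds pointwise, so the time average of $a_{g_1,g_2}$ equals $S(g_1,g_2)$ at every point of $\mathcal{M}(g_1,g_2)$, and integrating against $\nu$ via the Birkhoff ergodic theorem gives $\int a_{g_1,g_2}\,d\nu=S(g_1,g_2)$. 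Without this (or an equivalent) argument, your lower bound $h_{m_r}(\phi^{g_1})\geq h_\nu(\phi^{g_1})$ is not established, since it needs $\int a_{g_1,g_2}\,d\nu\geq\int a_{g_1,g_2}\,dm_r$. Supplying that lemma makes your proof complete and coincident with the paper's.
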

\begin{proof}
    We first give an argument that any $\phi^{g_1}$-invariant probability measure $m$ with support in $\mathcal{M}(g_1,g_2)$ is maximally stretched. By Birkhoff's ergodic theorem for invariant measures, we have
   $$\int_{\mathcal{M}(g_1,g_2)} a_{g_1,g_2}dm =\int_{\mathcal{M}(g_1,g_2)} \lim_{T\to\infty} \frac{1}{T}\int_0^T a_{g_1,g_2}(\phi^{g_1}_tx)dtdm(x).$$
   
   If $x\in \supp(m)\subset \mathcal{M}(g_1,g_2)$ and the above time average exists, then since $\mathcal{M}(g_1,g_2)$ is contained in the Aubry set $ \mathcal{A}(g_1,g_2)$, it follows from the definition of $ \mathcal{A}(g_1,g_2)$ that 
   $$\lim_{T\to\infty} \frac{1}{T}\int_0^T a_{g_1,g_2}(\phi^{g_1}_tx)dt = S(g_1,g_2).$$ 
    
    Combining the above estimates together yields that any $\phi^{g_1}$-invariant probability measure $m$ with support in $\mathcal{M}(g_1,g_2)$ is maximally stretched. Hence for any such measure and any  $r>0$, by the variational principle,
\begin{align*}
    h_{m}(\phi^{g_1}) +r\int a_{g_1,g_2} dm_r &\leq h_{m}(\phi^{g_1})+r\int a_{g_1,g_2} dm\\
    &\leq h_{m_r}(\phi^{g_1}) +r\int a_{g_1,g_2} dm_r.
    \end{align*} 
    Therefore, $ h_{m}(\phi^{g_1}) \leq \inf\limits_{r> 0} h_{m_r}(\phi^{g_1}) $. 
    
    Again, applying the variational principle to the geodesic flow $\phi^{g_1}$ restricted to $\mathcal{M}(g_1,g_2)$ and the potential function $0$ yields
    $$ \max_{m \in MS(g_1,g_2)} h_{m}(\phi^{g_1})
   = h_{top}(\phi^{g_1}, \mathcal{M}(g_1,g_2)) \leq \inf\limits_{r> 0}h_{m_r}(\phi^{g_1}). $$
    
    Since the metric entropy $m \to h_m (\phi^{g_1})$ is upper semicontinuous (see, for example, \cite[Theorem 8.2]{Wa82}), we obtain the following.

    $$\limsup_{r \to \infty} h_{m_r}(\phi^{g_1}) \leq h_{top}(\phi^{g_1}, \mathcal{M}(g_1,g_2)).$$
    This implies for any maximally stretched measure $m_+$ arising as a limit of equilibrium states, we have
    $$h_{m_{+}} ( \phi^{g_1})= h_{top}(\phi^{g_1}, \mathcal{M}(g_1,g_2))=\inf\limits_{r>0} h_{m_r}(\phi^{g_1}),$$
    and therefore, each such measure has maximal
 entropy on the Mather set.

\end{proof}

 Lemma \ref{lemma,weaklimitEquilibriumStates} guarantees the existence of maximally stretched measures arising as limits of equilibrium states. Nevertheless, these  maximally stretched measures are in general not equilibrium states because of the following.
\begin{remark}
We make the following remarks regarding whether maximally stretched measures can be equilibrium states of some potential functions.
\begin{itemize}
    \item If  $\frac{h_{top} (\phi^{g_2})}{h_{top} (\phi^{g_1}) }S(g_1,g_2) > 1$ for $g_1, g_2 \in R^-(M)$,  maximally stretched measures for $g_1,g_2$ cannot be equilibrium states. The reason is that according to Theorem \ref{thm, EmtpyInterior}, the support of a maximally stretched measure does not contain any open set. On the other hand, by Theorem 3.3 of \cite{BR75}, the equilibrium states of the Anosov flows have full support. 
    \item If $\frac{h_{top} (\phi^{g_2})}{h_{top} (\phi^{g_1}) }S(g_1,g_2)=1$, Theorem \ref{thm, EmtpyInterior} implies
$\mathcal{M}(g_1,g_2)=S^{g_1}M$. Since
by Theorem \ref{prop,limitIsTopEntropy}, a maximally stretched measure arising as a limit of equilibrium states is a measure of maximal entropy (i.e. the Bowen-Margulis measure for $\phi^{g_1}$) and is the equilibrium state for the potential function $0$.
\end{itemize}
\end{remark}

\subsection{Further discussion of metric entropy}
We continue to characterize the behavior of metric entropy of equilibrium states $m_r$ of potentials $r a_{g_1,g_2}$ and their zero-temperature limits in this subsection.
We will see in Proposition \ref{prop,EquilibriumStateLimit} that if a maximally stretched measure $m_+$ is a zero-temperature limit, i.e. it arises as a weak limit of equilibrium states $m_{r_n}$, then the metric entropy of $m_{r_n}$ converges to the metric entropy of $m_{+}$ in a monotone manner.

We start from the following proposition.

\begin{proposition} \label{prop, MonotoneMetricEntropy}
    The function $r\mapsto h_{m_r}(\phi^{g_1})$ is analytic. Moreover, 
    $$ \frac{ \partial h_{m_r}(\phi^{g_1})} {\partial r} =-r  \cdot \textnormal{Var} (P_{m_r}(a_{g_1,g_2}), m_r),$$
    where the \emph{variance} of a mean zero Hölder function $f \colon  S^{g_1}M \to \R$ with respect to $m_r$ is given by
    \begin{equation}
    \mathrm{Var}(f,m_r)= \lim_{T\to\infty} \frac{1}{T} \int_{ S^{g_1}M}  \left( \int_{0}^{T} f(\phi_{s}(v))\mathrm{d}s \right) ^2 \mathrm{d}m_r(v),
    \label{eq:Var}
\end{equation}
and the projection operator $P_{m_r}$ is defined as $P_{m_r}(f)(v):= f(v)- \int f \mathrm{d}m_r$. 
\end{proposition}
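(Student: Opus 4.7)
The plan is to deduce everything from the classical real analyticity of the pressure function for transitive Anosov flows with Hölder potentials, applied to the Anosov geodesic flow $\phi^{g_1}$ on $S^{g_1}M$ and the Hölder cocycle $a_{g_1,g_2}$ (whose Hölder regularity was recorded in Remark \ref{remark,regularityTimeChange}). Set $\Phi(r) := \mathbf{P}(r a_{g_1,g_2})$. By Ruelle's theorem (see \cite{Bo08} for an exposition, or the original works of Ruelle and Parry--Pollicott), the map $r \mapsto \Phi(r)$ is real analytic on $\R$, and its first two derivatives are
\begin{align*}
\Phi'(r) &= \int_{S^{g_1}M} a_{g_1,g_2}\, dm_r, \\
\Phi''(r) &= \mathrm{Var}\bigl(P_{m_r}(a_{g_1,g_2}),\, m_r\bigr),
\end{align*}
where $m_r$ is the unique equilibrium state of $r a_{g_1,g_2}$ and the variance is defined as in the statement of the proposition.

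From the variational principle for $m_r$ we have the identity
$$
h_{m_r}(\phi^{g_1}) = \Phi(r) - r \int_{S^{g_1}M} a_{g_1,g_2}\, dm_r = \Phi(r) - r\,\Phi'(r).
$$
Since $\Phi$ is analytic, so is the right-hand side, which proves that $r \mapsto h_{m_r}(\phi^{g_1})$ is analytic. Differentiating the identity gives
$$
\frac{\partial h_{m_r}(\phi^{g_1})}{\partial r} = \Phi'(r) - \Phi'(r) - r\,\Phi''(r) = -r \cdot \mathrm{Var}\bigl(P_{m_r}(a_{g_1,g_2}),\, m_r\bigr),
$$
which is exactly the claimed formula.

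The only non-routine step is justifying the two derivative formulas for $\Phi$, i.e., that $\Phi'(r)$ is the integral of the potential and $\Phi''(r)$ is the variance along the flow. For Anosov flows with Hölder potentials, this is a well-established consequence of the spectral theory of the transfer operator acting on anisotropic Banach spaces (or, via symbolic coding, Ruelle's Perron--Frobenius operator on the associated suspension of a subshift of finite type): the leading eigenvalue depends analytically on the parameter, its derivatives produce precisely the integral and the variance of the potential, and the variational principle identifies these with derivatives of pressure. Once this black box from thermodynamic formalism is invoked, the proposition follows from the two-line calculation above. The main potential obstacle is therefore bookkeeping: checking that $a_{g_1,g_2}$ has the Hölder regularity needed for the standard theorems, which is exactly the content of Remark \ref{remark,regularityTimeChange}.
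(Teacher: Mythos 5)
Your proposal is correct and follows essentially the same route as the paper: both write $h_{m_r}(\phi^{g_1}) = \mathbf{P}(r a_{g_1,g_2}) - r\,\frac{d}{dr}\mathbf{P}(r a_{g_1,g_2})$ via the variational principle and the first-derivative formula for pressure, deduce analyticity from analyticity of the pressure, and differentiate using the second-derivative (variance) formula, citing the standard thermodynamic-formalism results (the paper uses \cite{PP90} and \cite{Da23} for exactly the facts you invoke as a black box). No gaps.
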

\begin{proof}
We recall from Remark \ref{remark,regularityTimeChange} that the infinitesimal time change $a_{g_1,g_2}$ is Hölder. By the variational principle,
\begin{equation} \label{eqtn,PressureInVariableT}
 \mathbf{P}(r a_{g_1, g_2}) = h_{m_r}(\phi^{g_1}) + r\int a_{g_1,g_2}\mathrm{d}m_r. 
\end{equation}
We also know from thermodynamic formalism (see \cite[Proposition 4.10]{PP90}),
$$\int a_{g_1,g_2}\mathrm{d}m_r = \frac{d}{dr}  \mathbf{P}(r a_{g_1, g_2}).$$
This yields
\begin{equation} \label{eqtn,entropy}
h_{m_r}(\phi^{g_1}) = \mathbf{P}(r a_{g_1, g_2}) -r \frac{d}{dr}  \mathbf{P}(r a_{g_1, g_2}).
\end{equation}
Since the pressure $\mathbf{P}(ra_{g_1,g_2})$ is analytic in $r$, it follows that  $r\mapsto h_{m_r}(\phi^{g_1})$ is analytic as well. Furthermore, $h_{m_0}(\phi^{g_1}) = h_{top} (\phi^{g_1})$ and therefore $m_0 =  m^{g_1}_{BM}$.

Now taking a derivative on both sides of the Equation  \eqref{eqtn,entropy} yields
 \begin{equation}  \label{eqtn,derivativeOfT}  
\frac{d}{dr} h_{m_r}(\phi^{g_1})=-r  \frac{d^2}{dr^2} \mathbf{P}(r a_{g_1, g_2}) 
 =  -r\cdot \mathrm{Var}(P_{m_r}(a_{g_1,g_2}),m_r),
\end{equation}
where the deduction of second derivative formulas of pressure functions can be found in \cite[Proposition 4.11]{PP90} and \cite[Corollary 2.2, Remark 2.25]{Da23}.

\end{proof}

Let us denote by $m^{g_1}_{BM}$ the Bowen--Magulis measure of the geodesic flow $\phi^{g_1}$ on $S^{g_1}M$.
\begin{corollary}\label{cor,dichotomyEntropy}
The following dichotomy holds.
\begin{itemize}
    \item  If  $\frac{h_{top} (\phi^{g_2})}{h_{top} (\phi^{g_1}) }S(g_1,g_2) =1$, then the metric entropy satisifies, for $r >0$,
    $$  h_{m_r}(\phi^{g_1}) \equiv h_{top}(\phi^{g_1}),$$ 
    and so $$m_r\equiv m^{g_1}_{BM}.$$
   \item 
   If  $\frac{h_{top} (\phi^{g_2})}{h_{top} (\phi^{g_1}) }S(g_1,g_2) >1,$ then $h_{m_r}(\phi^{g_1})$ strictly decreases when $r$ increases. 
   
\end{itemize}
   \begin{proof}
       From the equation,
       $$ \frac{d}{dr} h_{m_r}(\phi^{g_1}) 
=-r  \cdot \textnormal{Var} (P_{m_r}(a_{g_1,g_2}), m_r),$$ 
 it follows that  $h_{m_r}(\phi^{g_1}) $ is a constant independent of $r$ if and only if the variance is zero. From the properties of the variance (\cite[Proposition 4.12]{PP90}), $\textnormal{Var} (P_{m_r}(a_{g_1,g_2}), m_r)=0$ if and only if $a_{g_1,g_2}$ is cohomologous to the constant $\int a_{g_1,g_2} \mathrm{d}m_r$, or equivalently, the marked length spectra of $g_1$ and $g_2$ are proportional (Corollary \ref{cor:equiv}). This yields the first item by Theorem \ref{thm, EmtpyInterior}. If  $a_{g_1,g_2}$ is not cohomologous to a constant, then $\textnormal{Var} (P_{m_r}(a_{g_1,g_2}), m_r)>0$ and therefore the second item follows.
   \end{proof} 
    
\end{corollary}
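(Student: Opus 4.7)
The plan is to build directly on Proposition \ref{prop, MonotoneMetricEntropy}, which already supplies the identity
$$\frac{d}{dr} h_{m_r}(\phi^{g_1}) = -r \cdot \textnormal{Var}(P_{m_r}(a_{g_1,g_2}), m_r).$$
Since the factor $-r$ is strictly negative for $r>0$, the entire dichotomy reduces to determining whether the variance of $P_{m_r}(a_{g_1,g_2})$ vanishes identically in $r$ or is strictly positive for every $r$.

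First I would invoke the standard characterization of zero variance in thermodynamic formalism (Proposition 4.12 of \cite{PP90}): for a Hölder mean-zero function $f$ and the equilibrium state $\nu$ of an Anosov flow with a Hölder potential, $\textnormal{Var}(f,\nu)=0$ if and only if $f$ is a Hölder coboundary along the flow. Applied to $f=P_{m_r}(a_{g_1,g_2})$, this translates to the statement that $a_{g_1,g_2}$ is Livsic-cohomologous to the constant $c_r := \int a_{g_1,g_2}\,dm_r$.

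Next I would translate this cohomological condition back to the hypothesis of the dichotomy. By Corollary \ref{cor:equiv} (with the normalization adjusted so that the constant is absorbed into the choice of potential scaling), $a_{g_1,g_2}$ is cohomologous to a constant precisely when the marked length spectra of $g_1$ and $g_2$ are proportional, which by the theorem of \cite{Kn95} recalled at the start of Section \ref{section,thermodynamics} is equivalent to the entropy ratio condition $\frac{h_{top}(\phi^{g_2})}{h_{top}(\phi^{g_1})}S(g_1,g_2)=1$. In the equality case, the derivative vanishes identically, so $r\mapsto h_{m_r}(\phi^{g_1})$ is constant; at $r=0$ the equilibrium state is the measure of maximal entropy $m^{g_1}_{BM}$ with value $h_{top}(\phi^{g_1})$, and since shifting a Hölder potential by a coboundary plus a constant does not change its equilibrium state, $m_r\equiv m^{g_1}_{BM}$ for all $r$. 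In the strict inequality case, the variance is strictly positive for every $r$, so the derivative is strictly negative for $r>0$, yielding the strict monotone decrease.

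The main (mild) subtlety will be ensuring that the cohomological conclusion drawn with respect to one particular equilibrium state $m_r$ is in fact intrinsic to the function $a_{g_1,g_2}$ itself, rather than measure-dependent. This is handled by the Livsic theorem: a Hölder function is cohomologous to a constant for the transitive Anosov flow $\phi^{g_1}$ iff its averages over all periodic orbits are proportional to the periods, a condition that does not refer to any invariant measure. Thus once the variance vanishes for a single $m_r$ (which has full support), it must vanish for every $m_r$, and the two branches of the dichotomy are mutually exclusive and exhaustive, as required.
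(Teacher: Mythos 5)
Your proposal is correct and follows essentially the same route as the paper: the derivative formula from Proposition \ref{prop, MonotoneMetricEntropy}, the zero-variance/coboundary characterization from \cite{PP90}, the translation to proportionality of marked length spectra via Corollary \ref{cor:equiv}, and the entropy-ratio equivalence from \cite{Kn95}. Your added remarks — that a coboundary-plus-constant shift of the potential leaves the equilibrium state unchanged (hence $m_r\equiv m^{g_1}_{BM}$), and that the cohomological condition is intrinsic rather than dependent on the particular $m_r$ — are correct and in fact make explicit two small points the paper's proof leaves implicit.
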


\begin{proposition} \label{prop,EquilibriumStateLimit}
    Suppose a maximally stretched measure $m_+$ is a zero-temperature limit, that is, it arises as a weak limit of equilibrium states $m_{r_n}$ for some sequence $r_n \to \infty$. Then 
    \begin{equation} \label{eqtn,MetricEntropyLimit}
        h_{m_+}(\phi^{g_1})=\lim\limits_{n\to \infty} h_{m_{r_n}}(\phi^{g_1}).
    \end{equation}
    Moreover, $h_{m_{r_n}}(\phi^{g_1})$ converges monotonically to $h_{m_+}(\phi^{g_1})$ as $n\to \infty$.
\end{proposition}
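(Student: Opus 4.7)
The plan is to derive both the limit \eqref{eqtn,MetricEntropyLimit} and its monotonicity as an immediate consequence of the two inputs already established just above: the derivative identity in Proposition \ref{prop, MonotoneMetricEntropy} and the entropy identification in Theorem \ref{prop,limitIsTopEntropy}.

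First I would invoke Proposition \ref{prop, MonotoneMetricEntropy}, which yields
$$
\frac{\partial h_{m_r}(\phi^{g_1})}{\partial r} = -r \cdot \mathrm{Var}(P_{m_r}(a_{g_1,g_2}), m_r) \leq 0
$$
for all $r>0$, since the variance of a real-valued function with respect to an invariant probability measure is non-negative. Hence $r \mapsto h_{m_r}(\phi^{g_1})$ is monotonically non-increasing on $(0,\infty)$, and in particular the limit $\lim_{r\to\infty} h_{m_r}(\phi^{g_1})$ exists and equals $\inf_{r>0} h_{m_r}(\phi^{g_1})$.

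Next, by Theorem \ref{prop,limitIsTopEntropy}, every zero-temperature limit $m_+$ satisfies $h_{m_+}(\phi^{g_1}) = \inf_{r>0} h_{m_r}(\phi^{g_1})$. Combining the two observations, for the given sequence $r_n \to \infty$ one obtains
$$
\lim_{n\to\infty} h_{m_{r_n}}(\phi^{g_1}) = \lim_{r\to\infty} h_{m_r}(\phi^{g_1}) = h_{m_+}(\phi^{g_1}),
$$
which is \eqref{eqtn,MetricEntropyLimit}. The monotonicity assertion then follows by taking the sequence $r_n$ to be increasing (up to a harmless relabeling that does not affect the weak limit $m_+$): the non-increasing behaviour of $r\mapsto h_{m_r}(\phi^{g_1})$ transfers directly, so $h_{m_{r_n}}(\phi^{g_1})$ decreases monotonically to $h_{m_+}(\phi^{g_1})$. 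The main---and essentially only---point worth emphasizing is that all of the analytic content has been packaged into Proposition \ref{prop, MonotoneMetricEntropy} and Theorem \ref{prop,limitIsTopEntropy}; there is no genuine obstacle in this last step beyond combining the non-negativity of the variance with the already-established infimum characterization of $h_{m_+}(\phi^{g_1})$.
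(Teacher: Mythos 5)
Your proof is correct and rests on the same two ingredients as the paper's: the derivative formula of Proposition \ref{prop, MonotoneMetricEntropy} (hence monotonicity of $r\mapsto h_{m_r}(\phi^{g_1})$) and the identification $h_{m_+}(\phi^{g_1})=\inf_{r>0}h_{m_r}(\phi^{g_1})$ from the proof of Theorem \ref{prop,limitIsTopEntropy}. The only difference is cosmetic: by using non-negativity of the variance directly you avoid the case distinction of Corollary \ref{cor,dichotomyEntropy} and the second appeal to upper semicontinuity of entropy that appear in the paper's version.
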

\begin{proof}
    If   $\frac{h_{top} (\phi^{g_2})}{h_{top} (\phi^{g_1}) }S(g_1,g_2) =1$,  then by Theorem \ref{thm, EmtpyInterior}, the Mather set $\mathcal{M}(g_1,g_2)=S^{g_1}M$ and by Theorem \ref{prop,limitIsTopEntropy},
   $$h_{m_{+}}(\phi^{g_1})= h_{top}(\phi^{g_1}, \mathcal{M}(g_1,g_2))
   = h_{top}(\phi^{g_1}).$$
  On the other hand, by Corollary \ref{cor,dichotomyEntropy}, we have $ h_{m_r}(\phi^{g_1}) \equiv h_{top}(\phi^{g_1})$. Therefore we obtain Equation \eqref{eqtn,MetricEntropyLimit} for this case.
  
   If $\frac{h_{top} (\phi^{g_2})}{h_{top} (\phi^{g_1}) }S(g_1,g_2) >1$, by Corollary \ref{cor,dichotomyEntropy},  $r \mapsto h_{m_r}(\phi^{g_1}) $ strictly decreases. So by the proof of Theorem \ref{prop,limitIsTopEntropy},
$$h_{m_+}(\phi^{g_1})= \inf\limits_{r>0}h_{m_r}(\phi^{g_1})\leq \lim\limits_{n\to \infty} h_{m_{r_n}}(\phi^{g_1}) \leq h_{top}(\phi^{g_1}, \mathcal{M}(g_1,g_2))),$$
where the last inequality follows from the upper semicontinuity of $m\to h_m (\phi^{g_1})$. Since $h_{m_{+}} (\phi^{g_1})= h_{top}(\phi^{g_1}, \mathcal{M}(g_1,g_2))$, Equality \eqref{eqtn,MetricEntropyLimit} now follows.
\end{proof}

Now we can relate the topological entropy of $\phi^{g_1}$ on $S^{g_1}M$ and the topological entropy of $\phi^{g_1}$ on $\mathcal{M}(g_1,g_2)$ by a precise formula.

\begin{corollary} \label{cor formulaTwoEntropy}
   We have the following formula relating $ h_{top}(\phi^{g_1}, \mathcal{M}(g_1,g_2))$ and $h_{top}(\phi^{g_1})$.
      \begin{equation*}
        h_{top}(\phi^{g_1}, \mathcal{M}(g_1,g_2))=h_{top}(\phi^{g_1}) - \int_0^\infty r  \textnormal{Var} (P_{m_r}(a_{g_1,g_2}), m_r) dr.
    \end{equation*}
\end{corollary}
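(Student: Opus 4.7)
The plan is to view the identity as the fundamental theorem of calculus applied to the analytic function $r\mapsto h_{m_r}(\phi^{g_1})$ on $[0,\infty)$, together with identification of the boundary values.

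First, by Proposition \ref{prop, MonotoneMetricEntropy}, the map $r\mapsto h_{m_r}(\phi^{g_1})$ is analytic on $[0,\infty)$ with derivative
$$\frac{\partial}{\partial r} h_{m_r}(\phi^{g_1}) = -r\cdot\textnormal{Var}(P_{m_r}(a_{g_1,g_2}), m_r).$$
Integrating from $0$ to $R>0$ yields
$$h_{m_R}(\phi^{g_1}) - h_{m_0}(\phi^{g_1}) = -\int_0^R r\cdot\textnormal{Var}(P_{m_r}(a_{g_1,g_2}), m_r)\, dr.$$
At $r=0$, variational principle shows $m_0$ is the measure of maximal entropy of $\phi^{g_1}$, so $h_{m_0}(\phi^{g_1}) = h_{top}(\phi^{g_1})$ (this is already observed inside the proof of Proposition \ref{prop, MonotoneMetricEntropy}).

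Next I would let $R\to\infty$. By Corollary \ref{cor,dichotomyEntropy}, $r\mapsto h_{m_r}(\phi^{g_1})$ is monotone nonincreasing and bounded below by $0$, so the limit $\lim_{R\to\infty} h_{m_R}(\phi^{g_1})$ exists. To identify this limit, pick any subsequence $r_n\to\infty$ along which $m_{r_n}$ converges weakly to some measure $m_+$; such subsequences exist by compactness of $\mathcal{M}^1(\phi^{g_1})$. By Lemma \ref{lemma,weaklimitEquilibriumStates} $m_+\in MS(g_1,g_2)$, and by Proposition \ref{prop,EquilibriumStateLimit} combined with Theorem \ref{prop,limitIsTopEntropy},
$$\lim_{n\to\infty} h_{m_{r_n}}(\phi^{g_1}) = h_{m_+}(\phi^{g_1}) = h_{top}(\phi^{g_1}, \mathcal{M}(g_1,g_2)).$$
Since the full limit exists by monotonicity, the whole family $h_{m_R}(\phi^{g_1})$ converges to $h_{top}(\phi^{g_1}, \mathcal{M}(g_1,g_2))$ as $R\to\infty$.

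Finally, passing to the limit $R\to\infty$ in the integral identity, which is legitimate by monotone convergence applied to the nonnegative integrand $r\cdot\textnormal{Var}(P_{m_r}(a_{g_1,g_2}), m_r)$, gives
$$h_{top}(\phi^{g_1}, \mathcal{M}(g_1,g_2)) - h_{top}(\phi^{g_1}) = -\int_0^\infty r\cdot\textnormal{Var}(P_{m_r}(a_{g_1,g_2}), m_r)\, dr,$$
which is the desired formula. The only mildly delicate point is convergence of the improper integral, but this is automatic: the left-hand side of the finite-$R$ identity is bounded (between $-h_{top}(\phi^{g_1})$ and $0$), so the nonnegative integral has bounded partial integrals and therefore converges.
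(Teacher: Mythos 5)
Your proof is correct and follows exactly the route the paper intends: its own proof is the one-line statement that the corollary is "a combination of Proposition \ref{prop,EquilibriumStateLimit} and Proposition \ref{prop, MonotoneMetricEntropy} together with Theorem \ref{prop,limitIsTopEntropy}", and you have simply written out that combination (integrate the derivative formula, identify $h_{m_0}=h_{top}(\phi^{g_1})$, and pass to the limit $R\to\infty$ using the monotonicity and the identification of the zero-temperature limit). The justification of the improper integral via monotone convergence is a detail the paper leaves implicit, and you handle it correctly.
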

\begin{proof}
    This is a combination of Proposition \ref{prop,EquilibriumStateLimit} and  Proposition \ref{prop, MonotoneMetricEntropy} together with Theorem \ref{prop,limitIsTopEntropy}.
\end{proof}

Combining Corollary \ref{cor formulaTwoEntropy} with the proof of Corollary \ref{cor,dichotomyEntropy}, we obtain the following main result of this subsection.

\begin{corollary}\label{cor,RigidityTopEntropy}
 Given $g_1,g_2\in R^-(M)$, the topological entropy on the Mather set $\M(g_1,g_2)$ satisfies
 $$h_{top}(\phi^{g_1}, \mathcal{M}(g_1,g_2))=h_{top}(\phi^{g_1})$$
if and only if the marked length spectra of $g_1$ and $g_2$ proportional, i.e. there exists a constant $C>0$ such that $\ell_{g_2}([\gamma])= C \ell_{g_1}([\gamma])$ for all $[\gamma]\in[\Gamma]$. 
\end{corollary}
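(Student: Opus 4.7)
The plan is to derive this corollary as a direct consequence of the formula in Corollary \ref{cor formulaTwoEntropy}, combined with the variance-vanishing characterization already invoked in the proof of Corollary \ref{cor,dichotomyEntropy}.

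First, I would recall the formula
\begin{equation*}
h_{top}(\phi^{g_1}, \mathcal{M}(g_1,g_2))=h_{top}(\phi^{g_1}) - \int_0^\infty r\,\textnormal{Var}(P_{m_r}(a_{g_1,g_2}), m_r)\,dr.
\end{equation*}
Since $r \geq 0$ and the variance is always non-negative, the integrand is non-negative. Hence the equality $h_{top}(\phi^{g_1}, \mathcal{M}(g_1,g_2))=h_{top}(\phi^{g_1})$ holds if and only if the integral vanishes, which by continuity of $r \mapsto \textnormal{Var}(P_{m_r}(a_{g_1,g_2}), m_r)$ (this map is in fact analytic by the analyticity of $r \mapsto m_r$) is equivalent to $\textnormal{Var}(P_{m_r}(a_{g_1,g_2}), m_r) = 0$ for all $r > 0$.

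Next, I would invoke the classical fact from thermodynamic formalism (as used in the proof of Corollary \ref{cor,dichotomyEntropy}, cf. \cite[Proposition 4.12]{PP90}) that the variance $\textnormal{Var}(P_{m_r}(a_{g_1,g_2}), m_r)$ vanishes if and only if $a_{g_1,g_2}$ is Liv\v{s}ic cohomologous to a constant, necessarily equal to $\int a_{g_1,g_2}\,dm_r$. Call this constant $C$. By Liv\v{s}ic's periodic-orbit theorem, the condition that $a_{g_1,g_2}$ is cohomologous to $C$ is equivalent to
\begin{equation*}
\frac{1}{\ell_{g_1}([\gamma])}\int_0^{\ell_{g_1}([\gamma])} a_{g_1,g_2}(\phi_t^{g_1}v_\gamma)\,dt = C \quad \text{for every } [\gamma] \in [\Gamma],
\end{equation*}
where $v_\gamma$ is tangent to the $g_1$-closed geodesic in the class $[\gamma]$. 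Using Definition \ref{def:timechange} and the cocycle description of the time change, the left hand integral equals $\ell_{g_2}([\gamma])/\ell_{g_1}([\gamma])$. Hence cohomology to a constant is equivalent to the proportionality of the marked length spectra with ratio $C$.

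Combining these two equivalences yields the corollary. There is no real obstacle here: every ingredient (the formula for $h_{top}(\phi^{g_1},\mathcal{M}(g_1,g_2))$, the variance-vanishing criterion, and the Liv\v{s}ic translation between coboundary conditions and length ratios) has already been set up in Section \ref{section,thermodynamics} and in Corollary \ref{cor:equiv}, so the proof is essentially a two-line assembly.
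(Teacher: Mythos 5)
Your proposal is correct and follows essentially the same route as the paper, which proves this corollary by combining the formula of Corollary \ref{cor formulaTwoEntropy} with the variance-vanishing argument from the proof of Corollary \ref{cor,dichotomyEntropy} (where vanishing variance is equivalent, via \cite[Proposition 4.12]{PP90} and Corollary \ref{cor:equiv}, to $a_{g_1,g_2}$ being cohomologous to a constant, i.e.\ to proportional marked length spectra). You have merely spelled out the assembly in more detail.
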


    At the end of this subsection, we come to a question that originally motivated the study in this section. When $\frac{h_{top} (\phi^{g_2})}{h_{top} (\phi^{g_1}) }S(g_1,g_2) >1$, we have that
    $$ h_{top}(\phi^{g_1}, \mathcal{M}(g_1,g_2))<  h_{top}(\phi^{g_1}).$$  A natural question is:  does the topological entropy $h_{top}(\phi^{g_1}, \mathcal{M}(g_1,g_2))$ vanish in this case? This is indeed true for Teichmüller spaces: when $g_1,g_2$ are hyperbolic metrics on a closed surface representing distinct points in the Teichmüller space, a maximally stretched measure after symmetrization always gives a measured geodesic lamination and measured geodesic laminations have zero metric entropy (see Lemma 2.3.5 of \cite{Na03}). Therefore, by Theorem \ref{prop,limitIsTopEntropy}, when $g_1,g_2$ are hyperbolic metrics, a zero-temperature limit satisifies
    $$h_{m_+}(\phi^{g_1})=h_{top}(\phi^{g_1}, \mathcal{M}(g_1,g_2))=0.$$
    However, for general negatively curved metrics $g_1,g_2$, we will see in Example \ref{MatherNotLamination} that the topological entropy $h_{top}(\phi^{g_1}, \mathcal{M}(g_1,g_2))$ can be positive.

\section{Geodesic stretch and Lipschitz maps} \label{section, StretchLip}

Starting from this section, we turn our attention to Lipschitz maps.
Our final goal of this section is to relate the geodesic stretch of an invariant measure to the average ``stretch" of a Lipschitz map with respect to that measure  (see Section \ref{section, FunctionalGfirst}, Lemma \ref{lem, GeodesicStretchLipStretch} and Theorem \ref{prop,GeodesicToGeodesic}). This connection offers a new perspective of understanding geodesic stretches and maximal stretches. We begin by reviewing some basics of Lipschitz maps in Section \ref{section, PreliminaryLipMaps}.

\subsection{Some preliminaries on Lipschitz maps} \label{section, PreliminaryLipMaps}
\hfill\\
The material in this section is classical. We discuss some properties of Lipschitz maps between Riemannian manifolds.
We denote by $C^{0}(M,M)$ the space of all
continuous maps from $M$  to $M$. The space $C^{0}(M,M)$ is a connected infinite-dimensional Banach manifold (\cite[Chapter II. 11]{Ab63}).  
 We are interested in the subspace $C_{\id}^{0}(M,M)$ of $C^{0}(M,M)$ consisting of maps homotopic to the identity.

 \begin{lemma}\label{lem, uniquelift}
     $f \in  C^{0}(M,M)$ is homotopic to the identity map if and only if there exists a unique lift $ \widetilde{f} : \widetilde{M} \to \widetilde{M}$ of $f$ such that $\widetilde{f}  \gamma = \gamma   \widetilde{f}$
for all $\gamma \in \Gamma$.
 \end{lemma}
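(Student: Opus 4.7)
My plan is to prove the two implications separately, using the homotopy lifting property for the forward direction and the contractibility of $\widetilde{M}$ for the converse. Throughout, the key structural fact I will invoke is that $\Gamma = \pi_1(M)$, being the fundamental group of a closed negatively curved manifold, has trivial center; this is what buys uniqueness.

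For the forward direction, assume $H: M \times [0,1] \to M$ is a homotopy with $H(\cdot,0) = \id_M$ and $H(\cdot,1) = f$. By the homotopy lifting property applied to the covering $pr: \widetilde{M} \to M$, I would lift $H$ to a continuous map $\widetilde{H}: \widetilde{M} \times [0,1] \to \widetilde{M}$ subject to the initial condition $\widetilde{H}(\cdot,0) = \id_{\widetilde{M}}$. Setting $\widetilde{f} := \widetilde{H}(\cdot,1)$ produces a lift of $f$. To verify $\Gamma$-equivariance, fix $\gamma \in \Gamma$ and $x \in \widetilde{M}$; then both paths $t \mapsto \widetilde{H}(\gamma x, t)$ and $t \mapsto \gamma \widetilde{H}(x,t)$ are lifts of $t \mapsto H(pr(x), t)$ starting at $\gamma x$, so unique path lifting gives equality at $t = 1$, i.e.\ $\widetilde{f}(\gamma x) = \gamma \widetilde{f}(x)$.

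For uniqueness, suppose $\widetilde{f}_1$ and $\widetilde{f}_2$ are two $\Gamma$-equivariant lifts of $f$. Since they both lift $f$, they differ at each point by a deck transformation, and by connectedness of $\widetilde{M}$ this deck transformation is constant: there exists $\gamma_0 \in \Gamma$ with $\widetilde{f}_2 = \gamma_0 \widetilde{f}_1$. Applying equivariance of both lifts to any $\gamma \in \Gamma$ gives $\gamma_0 \gamma \widetilde{f}_1 = \widetilde{f}_2 \gamma = \gamma \widetilde{f}_2 = \gamma \gamma_0 \widetilde{f}_1$, hence $\gamma_0 \gamma = \gamma \gamma_0$. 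Since $\gamma_0$ commutes with every element of $\Gamma$ and $\Gamma$ has trivial center (a standard consequence of negative curvature, which makes $\Gamma$ torsion-free and word-hyperbolic, or alternatively follows from the fact that a nontrivial central element would preserve every axis of every hyperbolic isometry), we conclude $\gamma_0 = e$ and $\widetilde{f}_1 = \widetilde{f}_2$.

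For the converse, suppose $\widetilde{f}$ is a $\Gamma$-equivariant lift of $f$. Since $\widetilde{M}$ carries a complete simply connected Riemannian metric of negative curvature, it is a Hadamard manifold, so any two points are joined by a unique geodesic. I would define $\widetilde{H}: \widetilde{M} \times [0,1] \to \widetilde{M}$ by letting $\widetilde{H}(x,t)$ be the point at parameter $t$ along the unique $g$-geodesic from $x$ to $\widetilde{f}(x)$. This map is continuous, interpolates between $\id_{\widetilde{M}}$ at $t=0$ and $\widetilde{f}$ at $t=1$, and is $\Gamma$-equivariant because $\Gamma$ acts by isometries and geodesics are preserved by isometries. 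Consequently $\widetilde{H}$ descends to a homotopy $H: M \times [0,1] \to M$ from $\id_M$ to $f$, completing the proof. The only mild subtlety I anticipate is keeping the equivariance checks unambiguous; the invocation of the trivial center for uniqueness is the single nonroutine input.
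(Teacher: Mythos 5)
Your proposal is correct and follows essentially the same route as the paper: homotopy lifting with unique path lifting for existence and equivariance, triviality of the center of $\Gamma$ for uniqueness (the paper derives this from Preissman's theorem, you from the axis/word-hyperbolicity argument — the same underlying fact), and the geodesic homotopy on the Hadamard manifold $\widetilde{M}$ for the converse. Your uniqueness step is in fact stated slightly more cleanly, since two lifts of the same map differ by a single deck transformation rather than the pre- and post-composition the paper writes.
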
  
 \begin{proof}
     Assume that $f \in  C^{0}(M,M)$ is homotopic to the identity map, i.e. there exists a homotopy  $H(t,x)=H_t(x):[0,1] \times M \to M$ with $H_0 = \id$ and  $H_1= f$.
Let  $\widetilde H:[0,1] \times \widetilde{M} \to \widetilde{M}$ be the unique lift of $H$ such that $\widetilde H_0 = \id$.
This implies  $\widetilde H_t \gamma = \gamma \widetilde H_t $ for all $\gamma \in \Gamma$. Hence, $\widetilde {H_1} = \widetilde{f}$ is a lift of $f$ such that $\widetilde{f} \gamma = \gamma   \widetilde{f}$
for all $\gamma \in \Gamma$. \\
If $\widetilde{f'}$ is another lift of $f$ with $\widetilde{f'} \gamma = \gamma \widetilde{f'}$ for all $\gamma \in \Gamma
$. Then there exists $\gamma_0, \eta_0 \in \Gamma$ such that
 $\widetilde{f'} = \gamma_0 \widetilde{f} \eta_0$. Then 
 $$
 \gamma  \gamma_0 \eta_0\widetilde{f}   =\gamma  \gamma_0 \widetilde{f} \eta_0 = \gamma  \widetilde{f'} = \widetilde{f'}\gamma = \gamma_0 \widetilde{f} \eta_0 \gamma =\gamma_0 \eta_0 \gamma \widetilde{f},  
 $$
 which implies $\gamma  \gamma_0 \eta_0 = \gamma_0 \eta_0\gamma $ for all $\gamma \in \Gamma$, i.e.
 $\gamma_0 \eta_0 $ commutes with all element in $\Gamma$. Since $M = \widetilde{M}/\Gamma$ is a smooth closed manifold admitting Riemannian metrics of negative sectional curvature, any abelian subgroup of $\Gamma$ is infinite cyclic by Preissman's theorem (see \cite[Chapter 12.3]{DoC92}). This implies $\gamma_0\eta_0 = e$
 and therefore  $\widetilde{f'} =\gamma_0  \eta_0 \widetilde{f} =\widetilde{f}$. \\
Conversely, assume that $f \in  C^{0}(M,M)$ has a lift $\widetilde{f}$ such that
$\widetilde{f}\gamma = \gamma  \widetilde{f}$
for all $\gamma \in \Gamma$. We want to show that $f$ is homotopic to the identity. To see this, let us equip $M$ with a negatively curved metric $g$ and consider for each $x \in \widetilde{M} $ the unique constant speed geodesic $ \alpha_x: t \to \widetilde{f_t}(x) $ with $\widetilde{f_0}(x) =x$  and $\widetilde{f_1}(x) = \widetilde{f}(x)$ with respect to $g$. Since $\gamma \widetilde{f}(x) =\widetilde{f}\gamma(x)$ and $\gamma$ is an isometry with respect to $g$, we obtain that $\gamma \alpha_x: t\to \gamma \widetilde{f_t}(x)$ is the unique geodesic connecting $\gamma (x)$ and $\widetilde{f}\gamma(x)$ which is $\alpha_{\gamma x}$. Therefore $\gamma\widetilde{f_t}(x) = \widetilde{f_t}\gamma (x)$. The map $\widetilde{f_t}(x) : [0,1] \times \widetilde{M} \to \widetilde{M}$ is continuous and $\Gamma$-equivariant. It descends to a homotopy $H: [0,1] \times M \to M$ with $H(t,p) = pr \circ \widetilde{f_t}(x)$ where $pr (x) =p$.   
 \end{proof}

 Using the above lemma, we can therefore identify the space  $C_{\id}^{0}(M,M)$ with
$$
\mathcal{C}=\big\{\varphi \in C^{0}(\widetilde{M},\widetilde{M})  \;|  \; \varphi(\gamma x)=\gamma \varphi(x) \;, \forall x\in  \widetilde{M}, \gamma \in\Gamma \big\}.
$$
Moreover, a negatively curved Riemannian metric $g$ on $\widetilde{M}$ induces a metric $d_{\mathcal{C},g} $ on $\mathcal{C}$ defined by
$$
d_{\mathcal{C},g}(\varphi, \psi) := \max_{x\in\widetilde{M}}
 d_g(\varphi(x), \psi(x)).
$$
 Since $d_g(\varphi(x), \psi(x))= d_g(\varphi(\gamma x), \psi(\gamma x))$ for all $\gamma \in \Gamma$ and $\varphi, \psi \in \mathcal{C}$, the maximum can be attained. The metric $ d_{\mathcal{C},g}$ is  $\Gamma$-invariant in the sense that,
 $$d_ {\mathcal{C},g}(\varphi \circ \gamma, \psi \circ \gamma)=d_ {\mathcal{C},g}(\varphi, \psi)$$ for all $\gamma \in \Gamma$ and $\varphi, \psi \in \mathcal{C}$. 
It induces a metric on $C_{\id}^{0}(M,M)$ and we will still denote it as $d_{\mathcal{C},g}$.

\begin{proposition}
    The metric space 
    $(\mathcal{C},d_{\mathcal{C},g})$ is a geodesic length space.  
\end{proposition}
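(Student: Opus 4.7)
The plan is to construct an explicit geodesic between any two elements $\varphi, \psi \in \mathcal{C}$ by taking pointwise geodesics in $(\widetilde M, g)$, which is a Hadamard manifold since $g$ has negative curvature and $\widetilde M$ is the simply connected universal cover. For each $x \in \widetilde M$, let $\alpha_x:[0,1]\to\widetilde M$ denote the unique constant-speed minimizing $g$-geodesic from $\varphi(x)$ to $\psi(x)$, and set $H_t(x):=\alpha_x(t)$. I will check that $t\mapsto H_t$ yields a constant-speed geodesic in $(\mathcal{C},d_{\mathcal{C},g})$ joining $\varphi$ to $\psi$.

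The first step is to verify that $H_t$ lies in $\mathcal{C}$, i.e.\ that $H_t$ is continuous and $\Gamma$-equivariant. Continuity follows from the Hadamard property: using the exponential map one can write
\[
H_t(x) = \exp_{\varphi(x)}\bigl(t\,\exp_{\varphi(x)}^{-1}(\psi(x))\bigr),
\]
and on a Hadamard manifold $\exp^{-1}$ is a diffeomorphism jointly continuous in base point and target, so $x\mapsto H_t(x)$ is continuous. Equivariance is the crucial geometric point: since $\gamma\in\Gamma$ acts by isometries on $(\widetilde M,g)$, the curve $t\mapsto \gamma\alpha_x(t)$ is a minimizing geodesic from $\gamma\varphi(x)=\varphi(\gamma x)$ to $\gamma\psi(x)=\psi(\gamma x)$, and by uniqueness of minimizing geodesics on a Hadamard manifold it must coincide with $\alpha_{\gamma x}$. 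Hence $H_t(\gamma x)=\gamma H_t(x)$, so $H_t\in\mathcal C$.

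Next I will verify that $t\mapsto H_t$ is a geodesic. By construction, for every $x\in\widetilde M$ and every $s,t\in[0,1]$ one has $d_g(H_s(x),H_t(x)) = |t-s|\,d_g(\varphi(x),\psi(x))$. Taking the maximum over $x$ and using the $\Gamma$-invariance that ensures the maximum is realized (since the function $x\mapsto d_g(\varphi(x),\psi(x))$ descends to the compact quotient $M$), I obtain
\[
d_{\mathcal C,g}(H_s,H_t) \;=\; |t-s|\,\max_{x\in\widetilde M}d_g(\varphi(x),\psi(x)) \;=\; |t-s|\,d_{\mathcal C,g}(\varphi,\psi),
\]
with $H_0=\varphi$ and $H_1=\psi$. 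This exhibits $t\mapsto H_t$ as a constant-speed minimizing geodesic between $\varphi$ and $\psi$, so $(\mathcal C,d_{\mathcal C,g})$ is a geodesic length space.

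I expect the only mildly delicate points to be making sure the pointwise-defined $H_t$ really gives a continuous map of $\widetilde M$, which reduces to smoothness/continuity of the exponential chart on a Hadamard manifold, and invoking uniqueness of geodesics to get $\Gamma$-equivariance; both are standard in negative curvature. No further hypothesis on $g$ beyond $g\in R^-(M)$ is needed.
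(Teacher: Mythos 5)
Your construction is exactly the paper's: the pointwise geodesic homotopy $H_t(x)=\alpha_x(t)$, with $\Gamma$-equivariance obtained from uniqueness of geodesics in the Hadamard manifold $(\widetilde M,g)$ and the distance identity $d_{\mathcal C,g}(H_s,H_t)=|s-t|\,d_{\mathcal C,g}(\varphi,\psi)$. The argument is correct; you even supply the continuity justification via the exponential map that the paper leaves implicit.
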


\begin{proof}
    Suppose $\varphi_0,\varphi_1 \in \mathcal{C}$. Given $x\in \widetilde{M}$, let $\alpha_{x}: [0,1] \longrightarrow \widetilde{M}$ be the unique geodesic with respect to $g$ connecting $\varphi_0(x)$ and $\varphi_1(x)$, i.e. 
$\alpha_{x}(0)=\varphi_0(x)$ and $\alpha_{x}(1)=\varphi_1(x)$ and with constant speed $\norm{\alpha'_x}_{g}=d_g(\varphi_0(x),\varphi_1(x))$.

 Define for $s\in[0,1]$ the map
$\psi_s: \widetilde{M}\longrightarrow\widetilde{M}$ as 
$\psi_s(x):= \alpha_{x}(s)$.
Then $\psi :[0,1] \times \widetilde{M} \to \widetilde{M} $ with $\psi(s,x) =\psi_s(x)$ is a homotopy with $\psi_0=\varphi_0$ and $\psi_1=\varphi_1$. 
Moreover, since $\varphi_0$, $\varphi_1$ are $\Gamma$-equivariant,
$\psi_s \in \mathcal{C}$. Therefore,
$$\psi_s(\gamma x)= \alpha_{\gamma x}(s) = \gamma \alpha_x(s) = \gamma  \psi_s(x).$$
The distance between $\psi_t$ and $\psi_s$ with respect to $d_{\mathcal{C},g}(\cdot, \cdot)$ is given by
$$d_{\mathcal{C},g}(\psi_t, \psi_s)= \max_{x\in\widetilde{M}} d_{g}( \alpha_x(t), \alpha_x(s)) = 
|s-t|\max_{x\in \widetilde{M}}  \norm{\alpha'_x}_{g}=|s-t| d_{\mathcal{C},g}(\varphi_0,\varphi_1)$$

Therefore $\{\psi_s\}$ is a geodesic connecting $\varphi_0$ and $\varphi_1$.

\end{proof}

Denote by $\mathcal{C}_{\Lip}(g_1,g_2)$ the space of Lipschitz maps in $\mathcal{C}$ with respect to metrics $g_1,g_2$ on $\widetilde{M}$.

\begin{proposition}  \label{prop,convexityLip}
   Let $g_1,g_2$ be the lifts of a pair of negatively curved metrics to $\widetilde M$
   and for $R >0$, denote by $\mathcal{C}_{R}(g_1,g_2)$ the subset of $\mathcal{C}_{\Lip}(g_1,g_2)$ with Lipschitz constants bounded by $R$. If $\mathcal{C}_{R_0}(g_1,g_2)$ is non-empty, it is a convex subset of $(\mathcal{C}_{R}(g_1,g_2),d_{\mathcal{C},g_2})$ for all $R \ge R_0$. In particular, $\mathcal{C}_{\Lip}(g_1,g_2)$ is a convex subset of  $(\mathcal{C},d_{\mathcal{C},g_2})$.
\end{proposition}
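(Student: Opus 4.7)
The plan is to reuse the geodesic $\{\psi_s\}$ constructed in the proof of the preceding proposition, namely $\psi_s(x)=\alpha_x(s)$, where $\alpha_x:[0,1]\to\widetilde M$ is the unique constant-speed $g_2$-geodesic from $\varphi_0(x)$ to $\varphi_1(x)$. Given $\varphi_0,\varphi_1\in\mathcal{C}_{R_0}(g_1,g_2)$, I will show that every intermediate map $\psi_s$ is again $R_0$-Lipschitz from $(\widetilde M,g_1)$ to $(\widetilde M,g_2)$. Since $\mathcal{C}_{\Lip}(g_1,g_2)=\bigcup_{R\ge 0}\mathcal{C}_R(g_1,g_2)$, convexity of $\mathcal{C}_{\Lip}(g_1,g_2)$ will follow by taking $R_0=\max(\Lip(\varphi_0),\Lip(\varphi_1))$.

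The key ingredient is the CAT(0) geometry of the universal cover: since $g_2$ has negative sectional curvature and $\widetilde M$ is the simply connected complete cover, $(\widetilde M,g_2)$ is a Hadamard manifold, hence CAT(0). In any CAT(0) space, if $\alpha,\beta:[0,1]\to\widetilde M$ are two constant-speed geodesics, then the distance function $s\mapsto d_{g_2}(\alpha(s),\beta(s))$ is convex. Applying this to $\alpha_x$ and $\alpha_y$ for arbitrary $x,y\in\widetilde M$ yields, for every $s\in[0,1]$,
\begin{equation*}
d_{g_2}\bigl(\psi_s(x),\psi_s(y)\bigr)=d_{g_2}\bigl(\alpha_x(s),\alpha_y(s)\bigr)\le (1-s)\,d_{g_2}\bigl(\varphi_0(x),\varphi_0(y)\bigr)+s\,d_{g_2}\bigl(\varphi_1(x),\varphi_1(y)\bigr).
\end{equation*}
Using the $R_0$-Lipschitz bounds on $\varphi_0$ and $\varphi_1$, the right-hand side is at most $R_0\,d_{g_1}(x,y)$, so $\psi_s$ is $R_0$-Lipschitz as required.

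What remains to check is that $\psi_s$ actually lies in $\mathcal{C}$, i.e. that it is continuous and $\Gamma$-equivariant; both points were already verified in the previous proposition, so I only need to quote them. The Lipschitz estimate above in particular gives continuity, and the equivariance $\psi_s\circ\gamma=\gamma\circ\psi_s$ was shown by the uniqueness of the connecting $g_2$-geodesic together with the isometric action of $\Gamma$. Consequently $\psi_s\in\mathcal{C}_{R_0}(g_1,g_2)\subset\mathcal{C}_R(g_1,g_2)$ for every $s\in[0,1]$ and every $R\ge R_0$, which is exactly geodesic convexity of $\mathcal{C}_{R_0}(g_1,g_2)$ inside $(\mathcal{C}_R(g_1,g_2),d_{\mathcal{C},g_2})$.

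I do not anticipate a genuine obstacle: the content of the statement is essentially the standard fact that in CAT(0) targets the Lipschitz constant of a convex combination of maps is bounded by the maximum of the Lipschitz constants of the endpoints. The only mild subtlety is to make sure that the intermediate maps $\psi_s$ are well defined as elements of $\mathcal{C}$ (which was already handled) and that the relevant convexity estimate is applied to geodesics parametrized on the same interval $[0,1]$—automatic from the construction. The assumption of pinched negative curvature on $M$ is used only through the Hadamard/CAT(0) property of $(\widetilde M,g_2)$.
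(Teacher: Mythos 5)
Your proposal is correct and follows essentially the same route as the paper: both use the geodesic homotopy $\psi_s(x)=\alpha_x(s)$ from the preceding proposition together with the convexity of $s\mapsto d_{g_2}(\alpha_x(s),\alpha_y(s))$ in nonpositive curvature to bound $\Lip(\psi_s)$ by $\max\{\Lip(\varphi_0),\Lip(\varphi_1)\}$. No gaps.
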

\begin{proof}
    Suppose $\varphi_0, \varphi_1 \in \mathcal{C}_{R_0}(g_1,g_2) $. Consider $\psi:[0,1] \times \widetilde{M} \to \widetilde{M}$ the geodesic homotopy with $\psi_0 = \varphi_0$ and $\psi_1 = \varphi_1$. We claim $\psi_s \in \mathcal{C}_{R_0}(g_1,g_2) $.
From the definition of the geodesic homotopy, it follows $\psi_s(x)=\alpha_x(s)$ for all $x\in \widetilde{M}$, where $\alpha_x:[0,1] \longrightarrow \mathbb{R}_{\geq 0} $ is the geodesic connecting $\varphi_0(x)$ and $\varphi_1(x)$. Since
$g_2$ is a metric of negative curvature, the map $s \to d_{g_2}(\alpha_x(s),\alpha_y(s))$ is a convex function. It is strictly convex when the geodesics $\alpha_x$ and $\alpha_y$ have no segment in common \cite[Theorem 2.2.1]{Jo97}. Therefore for any $x , y \in \widetilde{M}$,
\begin{align*}
d_{g_2}(\psi_s(x), \psi_s(y))&=d_{g_2}(\alpha_x(s),\alpha_y(s)) \\
&\leq s d_{g_2}(\alpha_x(0),\alpha_y(0)) + (1-s)d_{g_2}(\alpha_x(1),\alpha_y(1))\\
&=s d_{g_2}(\psi_0(x),\psi_0(y)) +(1-s) d_{g_2}(\psi_1(x),\psi_1(y)) )\\
&\leq (s\text{Lip}(\psi_0) +(1-s) \text{Lip}(\psi_1)) d_{g_1}(x,y)\\
&\leq \max \{\text{Lip}(\varphi_0),\text{Lip}(\varphi_1)\} d_{g_1}(x,y)
\end{align*}
 which implies $\psi_s \in \mathcal{C}_{R_0}(g_1,g_2) $.  
\end{proof}

Now we prove the convexity of the length function under geodesic homotopies.
\begin{proposition} \label{prop,convexityLength}   
 Let $g_1,g_2$ be the lifts of a pair of negatively curved metrics to $\widetilde M$. Let $\psi:[0,1] \times \widetilde{M} \to \widetilde{M}$   geodesic homotopy
in $\mathcal{C}_{R_0}(g_1,g_2)$ with $\psi_0 = \varphi_0$ and $\psi_1 = \varphi_1$.
Then for any Lipschitz curve $\alpha: [0,1] \to \widetilde{M}$ we have 
$$
L_{g_2}(\psi_s(\alpha)) \le (1-s) L_{g_2}(\varphi_0(\alpha)) +sL_{g_2}(\varphi_1(\alpha)).
$$
 \end{proposition}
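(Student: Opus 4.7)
The plan is to reduce the length convexity to the pointwise convexity of distances under the geodesic homotopy, which is already the key ingredient behind Proposition \ref{prop,convexityLip}. First I would recall that for any two points $x,y \in \widetilde M$, the function
$$s \;\mapsto\; d_{g_2}(\psi_s(x), \psi_s(y))$$
is convex on $[0,1]$. Indeed, $\psi_s(x) = \alpha_x(s)$ and $\psi_s(y) = \alpha_y(s)$ are the values at time $s$ of the constant-speed $g_2$-geodesics $\alpha_x$ (from $\varphi_0(x)$ to $\varphi_1(x)$) and $\alpha_y$ (from $\varphi_0(y)$ to $\varphi_1(y)$), so convexity of $s \mapsto d_{g_2}(\alpha_x(s),\alpha_y(s))$ follows from negative curvature of $g_2$ (Jost, Theorem 2.2.1), just as used in the previous proposition. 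Consequently
$$d_{g_2}(\psi_s(x), \psi_s(y)) \;\le\; (1-s)\, d_{g_2}(\varphi_0(x), \varphi_0(y)) \;+\; s\, d_{g_2}(\varphi_1(x), \varphi_1(y))$$
for all $x,y \in \widetilde M$ and all $s \in [0,1]$.

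Next I would use the standard characterization of the length of a Lipschitz curve $\beta : [0,1] \to \widetilde M$ as the supremum over partitions
$$L_{g_2}(\beta) \;=\; \sup_{P} \sum_{i=0}^{N-1} d_{g_2}\bigl(\beta(t_i), \beta(t_{i+1})\bigr),$$
where $P$ ranges over partitions $0 = t_0 < t_1 < \cdots < t_N = 1$. Applying the pointwise convexity above to $x = \alpha(t_i)$ and $y = \alpha(t_{i+1})$ and summing over $i$, I obtain, for every partition $P$,
$$\sum_i d_{g_2}\bigl(\psi_s(\alpha(t_i)), \psi_s(\alpha(t_{i+1}))\bigr) \;\le\; (1-s)\!\sum_i d_{g_2}\bigl(\varphi_0(\alpha(t_i)), \varphi_0(\alpha(t_{i+1}))\bigr) \;+\; s\!\sum_i d_{g_2}\bigl(\varphi_1(\alpha(t_i)), \varphi_1(\alpha(t_{i+1}))\bigr).$$
Each of the two sums on the right is bounded by the corresponding length $L_{g_2}(\varphi_j \circ \alpha)$, so taking the supremum over $P$ on the left yields
$$L_{g_2}(\psi_s \circ \alpha) \;\le\; (1-s)\, L_{g_2}(\varphi_0 \circ \alpha) + s\, L_{g_2}(\varphi_1 \circ \alpha),$$
as required.

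There is no serious obstacle here, since the work was essentially done in Proposition \ref{prop,convexityLip}. The only point to verify is that $\psi_s \circ \alpha$ is a well-defined Lipschitz curve (so the length makes sense and is finite), but this follows immediately from $\psi_s \in \mathcal{C}_{R_0}(g_1,g_2)$ together with Lipschitzness of $\alpha$. An alternative route would be to differentiate: for a.e.\ $t$ where $\alpha$ is differentiable, apply the pointwise convexity to $\alpha(t)$ and $\alpha(t+h)$, divide by $|h|$, let $h \to 0$ to get convexity of speeds, and integrate; but the partition argument above is cleaner and avoids any regularity subtleties concerning $\psi_s \circ \alpha$.
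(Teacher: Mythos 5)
Your proof is correct and follows essentially the same route as the paper: the pointwise convexity of $s \mapsto d_{g_2}(\psi_s(x),\psi_s(y))$ established via negative curvature in Proposition \ref{prop,convexityLip}, applied term-by-term over an arbitrary partition, followed by taking the supremum in the definition of length. The paper's argument is just a more compressed version of exactly this.
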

\begin{proof}
Let $0 =t_0 <t_1 < \ldots <t_n =1 $ be any partion of the interval $[0,1]$.
We obtain 
\begin{align*}
    \sum_{i =0}^{n-1}d_{g_2}(\psi_s\circ \alpha(t_i), \psi_s\circ \alpha(t_{i+1})) \le  &
(1-s)\sum_{i =0}^{n-1}d_{g_2}(\varphi_0\circ\alpha(t_i), \varphi_0\circ\alpha(t_{i+1}))\\
&+ s\sum_{i =0}^{n-1}d_{g_2}(\varphi_1\circ\alpha(t_i), \varphi_1\circ\alpha(t_{i+1})).
\end{align*}
Therefore, the proposition follows from the definition of the length of a curve.

\end{proof}

 Denote the space of all Lipschitz maps from $(M,g_1)$ and $(M,g_2)$ that are homotopic to the identity by $\Lip_{\id}(M, g_1, g_2)$. This is a subspace of $C_{\id}^{0}(M,M)$. The \emph{Lipschitz constant} of $f\in \Lip_{\id}(M, g_1, g_2)$ is defined by 
$$\Lip(f, g_1,g_2)= \sup_{x,x' \in M, x\neq x'} \frac{d_{g_2}(f(x),f(x'))}{d_{g_1}(x,x')}.$$  

\begin{remark}
 By Lemma \ref{lem, uniquelift}, the space
        $\Lip_{\id}(M, g_1, g_2)$ is in one to one correspondence with,
$$
\mathcal{C}_{\Lip}(g_1,g_2)=\big\{\psi \in \mathcal{C} \; | \;\psi :(\widetilde{M},g_1) \to (\widetilde{M},g_2) \text{ is a Lipschitz map} \}.
$$
\end{remark}

We define for $R\geq L(g_1,g_2)$, a subset in $ \Lip_{\id}(M, g_1, g_2)$,
$$ \mathcal{A}_R(g_1,g_2):=\{f\in  \Lip_{\id}(M, g_1, g_2) \text{ }|\text{ } \Lip(f, g_1,g_2) \leq R\}.$$

 When the background metrics $g_1,g_2$ are clear in the context, we simply write the above space as $ \mathcal{A}_R$.
\begin{lemma} \label{lemma, HolderCompactEmbedding}
    For any $\alpha<1$, the subset $\mathcal{A}_R$ is compact in $C^{\alpha}(M,M)$ with respect to the $C^{\alpha}$ (weak) topology. In particular, it is a compact subset in  $(C_{\id}^{0}(M,M),d_{\mathcal{C},g})$.
\end{lemma}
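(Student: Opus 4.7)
The plan is to combine an Arzelà–Ascoli argument giving $C^0$-compactness with the standard interpolation inequality that upgrades uniform convergence of a Lipschitz-bounded family to convergence in any H\"older space $C^\alpha$ with $\alpha < 1$. Throughout, I would fix an isometric embedding $(M,g_2) \hookrightarrow \mathbb{R}^N$ (Nash) so that the $C^\alpha$-topology on maps $M \to M$ is induced from $C^\alpha(M,\mathbb{R}^N)$, and so that the Lipschitz bound in the Riemannian metric translates (up to a bi-Lipschitz constant coming from the compactness of $M$) into a bounded Lipschitz constant of the map into $\mathbb{R}^N$.

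First I would establish $C^0$-compactness of $\mathcal{A}_R$. Equicontinuity follows directly from the uniform bound $\Lip(f,g_1,g_2)\leq R$, and uniform boundedness follows from the compactness of the target $M$; Arzel\`a--Ascoli then yields precompactness in $C^0(M,M)$. To check closedness, suppose $f_n \to f$ uniformly with $f_n \in \mathcal{A}_R$. Then $d_{g_2}(f(x),f(y)) \le R\, d_{g_1}(x,y)$ follows by passing to the limit in the inequality for $f_n$, so $f$ is $R$-Lipschitz. Moreover, for $n$ large, $f$ and $f_n$ are uniformly close, and the geodesic homotopy from Proposition \ref{prop,convexityLip} (which uses only that $(M,g_2)$ is negatively curved) connects $f$ to $f_n$; concatenating with the existing homotopy of $f_n$ to the identity shows $f \in \mathcal{C}_{\id}$. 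Hence $\mathcal{A}_R$ is compact in $C^0$, and the ``in particular'' statement follows because $d_{\mathcal{C},g}$ is precisely the uniform ($C^0$) distance on $C^0_{\id}(M,M)$.

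Next I would upgrade from $C^0$ to $C^\alpha$ compactness via interpolation. Given any sequence $\{f_n\} \subset \mathcal{A}_R$, extract a subsequence (still denoted $f_n$) converging uniformly to some $f \in \mathcal{A}_R$. Viewing the difference $f_n - f : M \to \mathbb{R}^N$ in the embedded picture, one has simultaneously a uniform bound $\|f_n-f\|_\infty \to 0$ and a uniform Lipschitz bound by some constant $C_R$ (depending on $R$ and the embedding). For any $x \neq y$ and any $\alpha \in (0,1)$,
\[
 \bigl\| (f_n - f)(x) - (f_n - f)(y) \bigr\| \leq \min\bigl( 2\|f_n - f\|_\infty,\; C_R\, d_{g_1}(x,y) \bigr),
\]
and interpolating between the two bounds with exponents $1-\alpha$ and $\alpha$ gives
\[
\frac{ \bigl\| (f_n - f)(x) - (f_n - f)(y) \bigr\|}{d_{g_1}(x,y)^\alpha} \leq \bigl(2\|f_n - f\|_\infty\bigr)^{1-\alpha} C_R^{\alpha},
\]
uniformly in $x,y$. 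Since the right hand side tends to $0$, the H\"older seminorm of $f_n - f$ tends to $0$, and combined with uniform convergence this gives $f_n \to f$ in $C^\alpha(M,M)$.

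The main technical point that requires some care is simply the interpretation of the $C^\alpha$ topology on manifold-valued maps, which is what forces the use of an embedding into Euclidean space; once that is fixed, the proof reduces to the classical compact embedding $\mathrm{Lip} \hookrightarrow C^\alpha$ for $\alpha<1$. The closedness step — checking that the limit remains homotopic to the identity — uses the geodesic homotopy construction already developed in the paper and is the only place where the negative curvature of $g_2$ is used.
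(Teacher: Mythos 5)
Your proof is correct and follows essentially the same route as the paper: Arzelà--Ascoli gives $C^0$-compactness, and the $C^\alpha$ statement is the standard compact embedding of Lipschitz maps into $C^\alpha$ for $\alpha<1$, which the paper simply cites to Gilbarg--Trudinger while you carry out the interpolation explicitly. You also supply the closedness details (the limit stays $R$-Lipschitz and remains homotopic to the identity via the geodesic homotopy) that the paper's one-line proof leaves implicit; these are correct and worth having.
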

\begin{proof}
   Any sequence of maps $\{f_n\}$ in $\mathcal{A}_R$ is an equicontinuous 
   family in $C_{\id}^{0}(M,M)$. By Ascoli-Arzelà Theorem, it has convergent subsequence in $C_{\id}^{0}(M,M)$ with respect to $C^0$ topology. This shows that $\mathcal{A}_R$ is a compact subset of $C_{\id}^{0}(M,M)$. For the more general statement, see \cite[Lemma 6.33]{GT01}. 
  
\end{proof}

\subsection{Geodesic stretches and weighted Lipschitz constants} \label{section, FunctionalGfirst}
\hfill\\
In this subsection, we want to discuss the relation between geodesic stretches and  the average ``stretch" of Lipschitz maps with respect to a $\phi_t^{g_1}$-invariant measure $m$.

The space of Lipschitz map from $(M, g_1)$ to $(M, g_2)$ can be identified with the Sobolev space $W^{1,\infty}(M,g_1,g_2)$ 
 (see\cite[Section 4.2.3.]{EG15}). Consider the following \emph{weighted Lipschitz constant} of a Lipschitz map $f$ with respect to a $\phi_t^{g_1}$-invariant measure $m$  given by a functional
 \begin{align*}
 &G: \Lip_{id}(M,g_1,g_2) \times \mathcal{M}(\phi^{g_1})\to \mathbb{R}_{\geq 0}\\
&G(f,m)=\int_{S^{g_1}M} \norm{Df(v)}_{g_2} dm(v),
 \end{align*}
where $Df$ is the weak derivative of $f\in W^{1,\infty}(M,g_1,g_2)$.
However, since the weak derivative $Df$ is only defined almost everywhere with respect to Liouville measure, it is not immediately obvious that the above integral makes sense. The following proposition interprets the integral for a $\phi^{g_1}_t$-invariant measure with the help of geodesic currents.

 Recal from Proposition \ref{prop:m-current}, any geodesic current $\mu$ defines a $\Gamma$-invariant and $\phi^g$-invariant measure $m^g_{\mu}$. 

\begin{proposition}\label{prop,whyIntegralWellDefined}
Let $g_1, g_2$ be two Riemannian metrics in $R^-(M)$. Let $ f \in \Lip_{\id}(M, g_1, g_2)$ be a Lipschitz map and $ m_{\mu}^{g_1}$ be the $\phi^{g_1}_t$ invariant measure induced by a current $\mu \in \mathcal C (\Gamma)$.
Choose a fundamental domain $\mathcal F \subset \widetilde M$ that satisfies Remark \ref{remark:CurrentMeasure} and for each pair $ (\xi_-, \xi_+ ) \in \partial^{(2)} \widetilde M$, denote by 
$c_{\xi_-, \xi_+}^{g_1} $ the oriented $g_1$-geodesic with $c_{\xi_-, \xi_+}^{g_1}(-\infty) = \xi_- $ and $c_{\xi_-, \xi_+}^{g_1}(\infty) = \xi_+$. Then
$$
\int_{S^{g_1}M } \| Df (v) \|_{g_2} d m_{\mu}^{g_1}(v) = \int_{\partial^{(2)} \widetilde M} L_{g_2} ( f(c_{\xi_-, \xi_+}^{g_1} \cap \mathcal F)) d\mu(\xi_-, \xi_+). 
$$
\end{proposition}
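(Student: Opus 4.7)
The plan is to disintegrate both sides using the Hopf parametrization, reducing the proposition to a statement about the length of $f$ composed with individual geodesics. First I would lift $f$ to a $\Gamma$-equivariant Lipschitz map $\widetilde f:(\widetilde M,g_1)\to(\widetilde M,g_2)$ via Lemma \ref{lem, uniquelift}. Using Remark \ref{remark:CurrentMeasure} and Proposition \ref{prop:m-current}, the left-hand side unfolds as
\begin{align*}
\int_{S^{g_1}M} \|Df(v)\|_{g_2}\, dm_\mu^{g_1}(v) = \int_{S^{g_1}\widetilde M} \|D\widetilde f(\widetilde v)\|_{g_2}\, \chi_{S^{g_1}\mathcal F}(\widetilde v)\, dm_\mu^{g_1}(\widetilde v),
\end{align*}
and the product structure $dm_\mu^{g_1}=d\mu\times dt$ in the Hopf coordinates of Equation \eqref{Hopfmap} converts this into
\begin{align*}
\int_{\partial^{(2)}\widetilde M}\Big(\int_{I_{\xi_-,\xi_+}} \|D\widetilde f(\dot c^{g_1}_{\xi_-,\xi_+}(t))\|_{g_2}\, dt\Big)\, d\mu(\xi_-,\xi_+),
\end{align*}
where $I_{\xi_-,\xi_+}=\{t\in\R : c^{g_1}_{\xi_-,\xi_+}(t)\in\mathcal F\}$ (after a base-point change in the Hopf parametrization that reparametrizes each geodesic so that the condition $\phi_t^{g_1}v_{(\xi_-,\xi_+)}\in S^{g_1}\mathcal F$ becomes $c^{g_1}_{\xi_-,\xi_+}(t)\in\mathcal F$).

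Next I would identify the inner integral with a length. For fixed $(\xi_-,\xi_+)$, write $c=c^{g_1}_{\xi_-,\xi_+}$; the composition $\widetilde f\circ c:\R\to\widetilde M$ is Lipschitz with respect to $g_2$. By Rademacher's theorem applied to $\widetilde f$ together with the chain rule for Lipschitz compositions with smooth curves, $(\widetilde f\circ c)'(t)=D\widetilde f(\dot c(t))$ for almost every $t$, and therefore
\begin{align*}
L_{g_2}(\widetilde f\circ c|_I)=\int_I \|D\widetilde f(\dot c(t))\|_{g_2}\, dt
\end{align*}
for every measurable $I\subset\R$. Taking $I=I_{\xi_-,\xi_+}$ gives the length of the image $\widetilde f(c\cap\mathcal F)$, and $\Gamma$-equivariance of $\widetilde f$ together with the fact that the covering projection is a local isometry then yields $L_{g_2}(\widetilde f(c\cap\mathcal F))=L_{g_2}(f(c^{g_1}_{\xi_-,\xi_+}\cap\mathcal F))$, completing the identification.

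The main obstacle will be ensuring that the chain-rule identity holds for $\mu$-almost every geodesic rather than merely for Lebesgue-a.e.\ $t$ after fixing one $c$: the weak derivative $D\widetilde f$ is only defined Lebesgue-a.e.\ on $\widetilde M$, whereas the current $\mu$ can be concentrated on a Lebesgue-null family of geodesics (indeed this is precisely the case for dirac currents on closed geodesics). I would overcome this by interpreting $\|D\widetilde f(\widetilde v)\|_{g_2}$ throughout the proof as the \emph{metric derivative} of $\widetilde f$ along the geodesic flow, namely
\begin{align*}
\|D\widetilde f(\widetilde v)\|_{g_2}:=\limsup_{s\to 0^+}\frac{d_{g_2}(\widetilde f(\pi\widetilde v),\widetilde f(\pi\phi_s^{g_1}\widetilde v))}{|s|},
\end{align*}
which is Borel on $S^{g_1}\widetilde M$, agrees Lebesgue-a.e.\ with the operator norm of the weak derivative, and which by the classical theory of metric-space valued Lipschitz curves integrates curve-by-curve to the $g_2$-length of $\widetilde f\circ c$ for \emph{every} geodesic $c$. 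With this convention the Fubini step is legitimate for arbitrary $\mu\in\mathcal C(\Gamma)$, and both sides of the stated equality become simultaneously well-defined and equal.
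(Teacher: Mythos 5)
Your proposal is correct and follows essentially the same route as the paper: unfold the integral over a fundamental domain, disintegrate via the Hopf product structure $dm_{\mu}^{g_1}=d\mu\times dt$, and identify the inner $dt$-integral with $L_{g_2}(f(c^{g_1}_{\xi_-,\xi_+}\cap\mathcal F))$ by applying Rademacher's theorem to the one-dimensional Lipschitz curve $t\mapsto \widetilde f(c^{g_1}(t))$ for each fixed geodesic. Your extra care in replacing $\|Df(v)\|_{g_2}$ by the Borel metric derivative along the flow is exactly the interpretation the paper intends (it flags before the proposition that the weak derivative is only Lebesgue-a.e.\ defined and rewrites the integrand as $\|\tfrac{d}{dt}f(c^{g_1}_{v}(t))\|_{g_2}$ for precisely this reason), so the two arguments coincide.
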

\begin{proof}
For $p_0 \in \widetilde M $ and the Hopf map $H^{g_1}_{p_0}: S^{g_1} \widetilde M \to  \partial^{(2)} \widetilde M \times \R$ given by
$$
H^{g_1}_{p_0}(v) = (v^{g_1}_-, v^{g_1}_+,  b_{ v^{g_1}_+}^{\scriptscriptstyle g_1}(p_0, \pi (v))) 
\quad \text{(see Equation (\ref{Hopfmap}) for details)},
$$
define 
$v_{(\xi_-, \xi_+)} :=  (H^{g_1}_{p_0})^{-1}(\xi_-, \xi_+, 0)$ . According to Remark \ref{remark:CurrentMeasure} and Proposition \ref{prop:m-current}  , we have,
\begin{align*}
&\int_{S^{g_1}M } \| Df (v) \|_{g_2} d m_{\mu}^{g_1}(v) =
\int_{S^{g_1} \widetilde {M}}  \| Df(v) \|_{g_2} \chi_{_{S^{g_1}\mathcal{F}}}(v)     \ dm_{\mu}^{g_1}(v)\\
= &\int_{\partial^{(2)} \widetilde M}
\int_{\R}  \| Df(\phi_t^{g_1} (v_{(\xi_-, \xi_+)}) \|_{g_2}  \chi_{_{S^{g_1}\mathcal{F}}}(\phi_t^g (v_{(\xi_-, \xi_+)}) dt \ d\mu (\xi_-, \xi_+) \\
= &\int_{\partial^{(2)} \widetilde M}
\int_{\R}  \| \frac{d}{dt}f(c^{g_1}_{v_{\scriptscriptstyle (\xi_-, \xi_+)}}(t))  \|_{g_2} \chi_{_{\mathcal{F}}}(c^{g_1}_{v_{(\xi_-, \xi_+)}} (t))dt \ d\mu (\xi_-, \xi_+) ,
\end{align*}
where $c^{g_1}_v$ is the $g_1$-unit speed geodesic with initial velocity $v$ and with abuse of notation, the unique lift of $f$ in the class $\mathcal{C}$ is still denoted as $f$.
This integral is well defined since $t \to f(c^{g_1}_{v_{(\xi_-, \xi_+)}}(t)) $
is Lipschitz and therefore the derivative exists a.e. for $t\in \mathbb{R}$ by Rademacher’s Theorem (see \cite[Theorem 3.2]{EG15}).
Now the proposition follows since
$$
\int_{\R}  \| \frac{d}{dt}f(c^{g_1}_{v_{(\xi_-, \xi_+)}}(t))  \|_{g_2} \chi_{_{\mathcal{F}}}(c^{g_1}_{v_{(\xi_-, \xi_+)}} (t))dt = L_{g_2} ( f(c_{\xi_-, \xi_+}^{g_1} \cap \mathcal F)).
$$

\end{proof}

We can also understand the weighted Lipschitz constant of  $f$ as follows.
\begin{lemma} \label{lem: Reinterpret}
Given $v\in S^{g_1}M$, denote $c^{g_1}_v$ as the $g_1$-unit speed geodesic with $c^{g_1}_v(0)= \pi(v)$ and $\dot{c}^{g_1}_v(0)= v$. If $m\in\mathcal{M}^1(\phi^{g_1})$, then
$$
\int_{S^{g_1}M } \| Df (v) \|_{g_2} d m(v) = \int_{S^{g_1}M } L_{g_2} (f(c^{g_1}_v[0,1]))d m(v).
$$
\end{lemma}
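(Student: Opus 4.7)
The strategy is to rewrite $L_{g_2}(f(c^{g_1}_v[0,1]))$ as a time integral of the $g_2$-speed of the Lipschitz curve $t \mapsto f(c^{g_1}_v(t))$, then apply Fubini and exploit $\phi^{g_1}$-invariance of $m$. Since $f \in \Lip_{\id}(M,g_1,g_2)$ and $c^{g_1}_v$ has $g_1$-unit speed, the composition $\alpha_v(t) := f(c^{g_1}_v(t))$ is a Lipschitz curve in $(M,g_2)$ with Lipschitz constant at most $\Lip(f,g_1,g_2)$. By Rademacher's theorem, for every $v$ the $g_2$-speed $\|\dot\alpha_v(t)\|_{g_2}$ exists for a.e.\ $t \in [0,1]$ and
$$L_{g_2}(f(c^{g_1}_v[0,1])) = \int_0^1 \|\dot{\alpha}_v(t)\|_{g_2}\, dt.$$
Where the chain rule applies, $\|\dot{\alpha}_v(t)\|_{g_2} = \|Df(\phi_t^{g_1}v)\|_{g_2}$, exactly as used in Proposition~\ref{prop,whyIntegralWellDefined}.

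Next, since the integrand is nonnegative and measurable, Fubini applies and gives
\begin{align*}
\int_{S^{g_1}M} L_{g_2}(f(c^{g_1}_v[0,1]))\, dm(v)
&= \int_{S^{g_1}M}\int_0^1 \|Df(\phi_t^{g_1}v)\|_{g_2}\, dt\, dm(v) \\
&= \int_0^1 \int_{S^{g_1}M} \|Df(\phi_t^{g_1}v)\|_{g_2}\, dm(v)\, dt.
\end{align*}
Because $m$ is $\phi^{g_1}$-invariant, for every $t$ the inner integral equals $\int_{S^{g_1}M}\|Df(v)\|_{g_2}\,dm(v)$, and integrating over $t \in [0,1]$ yields the claimed identity.

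The one subtle point is that $Df$, as a weak derivative in $W^{1,\infty}(M,g_1,g_2)$, is only defined Liouville-a.e., whereas $m$ may well be singular with respect to Liouville measure (indeed, the interesting cases are maximally stretched measures, which are typically singular). This is resolved exactly as in Proposition~\ref{prop,whyIntegralWellDefined}: one interprets $\|Df(v)\|_{g_2}$ through the Hopf parametrization as the $g_2$-speed of $t \mapsto f(c^{g_1}_v(t))$ at $t=0$, which is well-defined for $m$-a.e.\ $v$ by Fubini along the flow together with Rademacher's theorem. With this interpretation, the manipulations above are rigorous, and the main ``obstacle'' is really just keeping track of this book-keeping; the proof itself is then a one-line Fubini computation.
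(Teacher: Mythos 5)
Your proof is correct and follows essentially the same route as the paper's: the paper likewise combines the $\phi^{g_1}$-invariance of $m$, Fubini, and the identification $\|Df(\phi_s^{g_1}v)\|_{g_2}=\|\tfrac{d}{ds}f(c^{g_1}_v(s))\|_{g_2}$ to convert the inner time integral into $L_{g_2}(f(c^{g_1}_v[0,1]))$, merely writing the chain of equalities in the opposite direction. Your remark on the $m$-a.e.\ well-definedness of $\|Df\|_{g_2}$ via Proposition~\ref{prop,whyIntegralWellDefined} matches how the paper handles that point as well.
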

\begin{proof}
Since $m$ is $\phi^{g_1}_t$-invariant, we have
$$
\int_{S^{g_1}M } \| Df (v) \|_{g_2} d m(v) =\int_{S^{g_1}M } \| Df (\phi^{g_1}_sv) \|_{g_2} d m(v), 
$$
for all $s \in \R$. This implies that
\begin{align*}
\int_{S^{g_1}M } \| Df (v) \|_{g_2} d m(v) &= \int_0^1\int_{S^{g_1}M } \| Df (\phi^{g_1}_sv) \|_{g_2} d m(v) ds \\
&=\int_{S^{g_1}M }\int_0^1 \| Df (\phi^{g_1}_sv) \|_{g_2}  ds \; d m(v) \\
&=\int_{S^{g_1}M }\int_0^1 \| \frac{d}{ds} f(c^{g_1}_v(s)) \|_{g_2} \ ds \; d m(v) \\
&=\int_{S^{g_1}M }  L_{g_2} (f(c^{g_1}_v[0,1]) \ d m(v).
\end{align*}
\end{proof}

Let $ f\in \Lip_{\id}(M,g_1,g_2)$. According
 to Lemma \ref{lem, uniquelift}, there exists a unique lift $ \widetilde{f}: \widetilde M \to \widetilde M$  such that $\widetilde{f}\gamma = \gamma  \widetilde{f}$ for all $\gamma \in \Gamma$. Then given a fundamental domain $\mathcal F$, there exists a constant $c_f(g_2)$ such that 
 $d_{g_2}(x, \widetilde f (x)) \le c_f(g_2)$ for all $x \in \mathcal F $
  and by the $\Gamma$- equivariance of $\widetilde{f}$, we obtain 
 $$d_{g_2}(x, \widetilde f (x)) \le c_f(g_2)$$
for all $x \in \widetilde M$.
 In particular,  $\widetilde{f}$ has a continuous extension $\widetilde{f} :\partial \widetilde M \to  \partial \widetilde M $ given by the identity.
 Furthermore, the following extension of Lemma \ref{lem,Busem-distance}
 holds for any fixed Lipschitz map $f\in \Lip_{\id}(M,g_1,g_2)$.
 \begin{lemma}\label{lem,Busem-distance-Lip}
 There exists a constant $k_f(g_1, g_2)$ such that for all $v \in S^{g_1} \widetilde M$ and $\xi = v^{g_1}_+$ we have,
$$
| d_{g_2}(\widetilde{f}\pi (v) , \widetilde{f}\pi(\phi_t^{g_1} v)) -b_\xi^{g_2}(\widetilde{f}\pi(v),  \widetilde{f}\pi(\phi_t^{g_1} v))| \le k_f(g_1, g_2).
$$
\end{lemma}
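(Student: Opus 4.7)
The strategy is to reduce this to Lemma \ref{lem,Busem-distance} using the fact that the $\Gamma$-equivariant lift $\widetilde{f}$ has uniformly bounded $g_2$-displacement from the identity. Specifically, since $M$ is compact, $\widetilde{f}$ commutes with $\Gamma$, and the fundamental domain $\mathcal{F}$ has bounded $g_2$-diameter, there exists $c_f(g_2) < \infty$ with $d_{g_2}(x, \widetilde{f}(x)) \le c_f(g_2)$ for all $x \in \widetilde M$ (as noted just before the lemma statement). This bound is the only input from $\widetilde{f}$ we will need.

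First, I would compare distances: by the triangle inequality applied twice,
\[
\bigl|d_{g_2}(\widetilde{f}\pi(v), \widetilde{f}\pi(\phi_t^{g_1}v)) - d_{g_2}(\pi(v), \pi(\phi_t^{g_1}v))\bigr| \le 2c_f(g_2).
\]
Next, I would compare Busemann values. Using the cocycle property \eqref{prop1bus},
\[
b_\xi^{g_2}(\widetilde{f}\pi(v), \widetilde{f}\pi(\phi_t^{g_1}v)) = b_\xi^{g_2}(\widetilde{f}\pi(v),\pi(v)) + b_\xi^{g_2}(\pi(v),\pi(\phi_t^{g_1}v)) + b_\xi^{g_2}(\pi(\phi_t^{g_1}v),\widetilde{f}\pi(\phi_t^{g_1}v)),
\]
and since Busemann functions are $1$-Lipschitz in each variable, the first and third terms on the right are each bounded in absolute value by $c_f(g_2)$. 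Therefore
\[
\bigl|b_\xi^{g_2}(\widetilde{f}\pi(v), \widetilde{f}\pi(\phi_t^{g_1}v)) - b_\xi^{g_2}(\pi(v), \pi(\phi_t^{g_1}v))\bigr| \le 2c_f(g_2).
\]

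Finally, combining these two estimates with Lemma \ref{lem,Busem-distance} and applying the triangle inequality yields the claim with $k_f(g_1,g_2) := k(g_1,g_2) + 4c_f(g_2)$. There is essentially no obstacle here: the lemma is just a bookkeeping extension that absorbs the uniformly bounded error introduced by $\widetilde{f}$ into the additive constant. The one point worth being careful about is that the constant $c_f(g_2)$ depends on $f$ (hence the subscript $f$ in $k_f$), reflecting the fact that different Lipschitz maps may displace points by different amounts, but the bound is uniform in $v$ and $t$ once $f$ is fixed.
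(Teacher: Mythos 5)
Your proof is correct and follows essentially the same route as the paper: bound the change in $d_{g_2}$ and in $b_\xi^{g_2}$ under $\widetilde f$ by $2c_f(g_2)$ each (the latter via the cocycle property and the $1$-Lipschitz bound on Busemann functions), then absorb the error from Lemma \ref{lem,Busem-distance} by the triangle inequality. The resulting constant $k(g_1,g_2)+4c_f(g_2)$ matches the paper's up to its internal bookkeeping of $R_1$ versus $k=2R_1$.
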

\begin{proof}
By the discussion before this lemma,
$$
|  d_{g_2}(\widetilde{f}\pi (v) , \widetilde{f}\pi(\phi_t^{g_1} v)) - d_{g_2}(\pi (v) , \pi(\phi_t^{g_1} v)) | \le 2 c_f(g_2),
$$
and by cocycle property (\ref{prop1bus}) of the Busemann function,
\begin{align*}
    &|b_\xi^{g_2}(\widetilde{f}\pi(v),  \widetilde{f}\pi(\phi_t^{g_1} v)) 
- b_\xi^{g_2}(\pi(v),  \pi(\phi_t^{g_1} v))| \\ 
=& |b_\xi^{g_2}(\widetilde{f}\pi(v),   \pi (v)) 
- b_\xi^{g_2}(\widetilde{f} \pi(\phi_t^{g_1} v), \pi(\phi_t^{g_1} v)) |\\
\le &  d_{g_2}(\widetilde{f}\pi (v) , \pi (v)) + d_{g_2}(\widetilde{f}\pi(\phi_t^{g_1} v), \pi(\phi_t^{g_1} v)) \le 2 c_f(g_2).
\end{align*}
Furthermore, Lemma \ref{lem,Busem-distance} implies that there exists a constant $R_1=R_1(g_1,g_2)>0$ so that
$$  
| d_{g_2}(\pi (v) , \pi (\phi_t^{g_1} v)) -b_\xi^{g_2}(\pi(v),  \pi(\phi_t^{g_1} v))| \le R_1(g_1, g_2).
$$
Using this inequalities, we obtain
\begin{align*}
&|  d_{g_2}(\widetilde{f}\pi (v) , \widetilde{f}\pi(\phi_t^{g_1} v)) -b_\xi^{g_2}(\widetilde{f}\pi(v),  \widetilde{f}\pi(\phi_t^{g_1} v))| \\
= &| d_{g_2}(\widetilde{f}\pi (v) , \widetilde{f}\pi(\phi_t^{g_1} v)) -
  d_{g_2}(\pi (v) , \pi (\phi_t^{g_1} v)) 
+ d_{g_2}(\pi (v) , \pi (\phi_t^{g_1} v)) \\
-&b_\xi^{g_2}(\pi(v),  \pi(\phi_t^{g_1} v))
  \ + b_\xi^{g_2}(\pi(v),  \pi(\phi_t^{g_1} v))-b_\xi^{g_2}((\widetilde{f}\pi(v)),  \widetilde{f}\pi(\phi_t^{g_1} v))| \\
   \le &4 c_f(g_2) +R_1(g_1, g_2) =: k_f(g_1, g_2).
 \end{align*}
\end{proof}

Define a function $l_f \colon  S^{g_1}M \times \mathbb{R} \to \mathbb{R}_{\geq 0}$ by
$$ l_f (v, t): =  d_{g_2}(\widetilde{f}\pi (\widetilde{v}) , \widetilde{f}\pi(\phi_t^{g_1} \widetilde{v})), $$
where $\widetilde{v}$ is a lift of $v$ on $S^{g_1}\widetilde M $. This is well defined because the right-hand side is $\Gamma$-invariant.
Consider a $\phi^{g_1}$-invaraint probability measure $m \in \mathcal{M}^1(\phi^{g_1})$. By the subadditive ergodic theorem and the subadditivity of the function $l_f (v, t)$, 
the limit 
$\lim\limits_{t \to \infty} \frac{ 1}{t}l_f (v, t)$
exists for $m$-almost every $v \in S^{g_1} M $. On the other hand, from the proof of Lemma \ref{lem,Busem-distance-Lip}, when the limit exists for $v \in S^{g_1} M $, recalling the Definition of geodesic stretch (see Section \ref{subsection, BasicGeodesicStretch}), we obtain
$$
\lim_{t \to \infty} \frac{1}{t}l_f (v, t) =\lim_{t \to \infty} \frac{1}{t}
 d_{g_2}(\pi (\widetilde{v}) , \pi (\phi_t^{g_1} \widetilde{v}))= I_m(g_1,g_2,v). 
$$
In particular, this limit is independent of the choice of $f$ in $\Lip_{\id}(M,g_1,g_2)$.

We next show that the geodesic stretch with respect to a $\phi^{g_1}$-invariant probability measure is always bounded above by the average `` stretch " of a Lipschitz map with respect to that measure.

\begin{lemma} \label{lem, GeodesicStretchLipStretch}
   Let $g_1, g_2$ be two Riemannian metrics in $R^-(M)$ and let $m\in\mathcal{M}^1(\phi^{g_1})$. Given any Lipschitz map $f\in \Lip_{\id}(M,g_1,g_2)$, the geodesic stretch $I_{m}(g_1, g_2)$ satisfies
    \begin{align*}
I_m(g_1, g_2) \leq \int_{S^{g_1} M}  \| Df(v) \|_{g_2} dm.
\end{align*}
\end{lemma}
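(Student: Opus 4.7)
The plan is to reduce the inequality to a discrete comparison between the $g_2$-displacement $a(v,t)$ appearing in the definition of $I_m(g_1,g_2)$ and the $g_2$-length of $f$ applied to the $g_1$-geodesic segment $\widetilde{c}^{g_1}_v[0,t]$, and then to pass to the limit using the $\phi^{g_1}$-invariance of $m$. The first step is to pick the unique $\Gamma$-equivariant lift $\widetilde{f}$ of $f$ provided by Lemma \ref{lem, uniquelift}; by $\Gamma$-equivariance and compactness of a fundamental domain, $d_{g_2}(x,\widetilde{f}(x))\le c_f(g_2)$ uniformly on $\widetilde{M}$. The triangle inequality then yields $|l_f(v,t)-a(v,t)|\le 2c_f(g_2)$ for all $v$ and $t$, so
$$
\frac{1}{t}\int_{S^{g_1}M} l_f(v,t)\,dm(v) \;\longrightarrow\; I_m(g_1,g_2) \quad \text{as } t\to\infty,
$$
using the identity $\tfrac{1}{t}\int a(v,t)\,dm\to I_m(g_1,g_2)$ that follows from the subadditive ergodic theorem recalled at the start of Section \ref{section, GeodesicStretch}.

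The core estimate is the pointwise bound
$$
l_f(v,t) \;\le\; L_{g_2}\!\bigl(\widetilde{f}(\widetilde{c}^{g_1}_v[0,t])\bigr),
$$
which holds because $d_{g_2}$ is dominated by the $g_2$-length of any connecting curve. The right-hand side is an additive cocycle over the $g_1$-geodesic flow: by additivity of length, for each positive integer $n$ it equals $\sum_{j=0}^{n-1}h(\phi_j^{g_1} v)$, where
$$
h(v):=L_{g_2}(f(c^{g_1}_v[0,1]))
$$
descends to a bounded measurable function on $S^{g_1}M$ by $\Gamma$-equivariance of $\widetilde{f}$, with $h\le \Lip(f,g_1,g_2)$.

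To finish, integrate this decomposition against $m$ and use $\phi_j^{g_1}$-invariance of $m$ to obtain $\tfrac{1}{n}\int l_f(v,n)\,dm \le \int h\,dm$. Letting $n\to\infty$ and combining with the first step gives $I_m(g_1,g_2)\le \int h\,dm$, and Lemma \ref{lem: Reinterpret} identifies $\int h\,dm$ with $\int\|Df(v)\|_{g_2}\,dm(v)$, which completes the proof.

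The main subtlety is interpretive rather than analytic: the weak derivative $Df$ is only defined almost everywhere with respect to Lebesgue measure on $M$, whereas the $\phi^{g_1}$-invariant measure $m$ may be singular with respect to Lebesgue, so the quantity $\int\|Df\|_{g_2}\,dm$ has to be read through Lemma \ref{lem: Reinterpret} (equivalently Proposition \ref{prop,whyIntegralWellDefined}). Once that identification is accepted, nothing more is needed beyond the Lipschitz length comparison, additivity of length along geodesic segments, and $\phi^{g_1}$-invariance of $m$.
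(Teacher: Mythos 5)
Your proof is correct, and it takes a genuinely different route from the paper's. The paper controls $l_f(v,t)$ through the Busemann cocycle of $g_2$: by Lemma \ref{lem,Busem-distance-Lip} it replaces $d_{g_2}(\widetilde{f}\pi(\widetilde v),\widetilde{f}\pi(\phi_t^{g_1}\widetilde v))$ by $b_\xi^{g_2}(\widetilde{f}\pi(\widetilde v),\widetilde{f}\pi(\phi_t^{g_1}\widetilde v))$ up to a bounded error, writes the latter as $\int_0^t g_2\bigl(B^{g_2}(\widetilde{f}\pi(\phi_s^{g_1}\widetilde v),\widetilde v^{g_1}_+),D\widetilde{f}(\phi_s^{g_1}\widetilde v)\bigr)\,ds$, applies the Birkhoff ergodic theorem, and then uses Cauchy--Schwarz with the fact that $B^{g_2}$ is a $g_2$-unit field to bound the integrand by $\|Df(v)\|_{g_2}$. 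That route buys an equality criterion (equality holds iff $D\widetilde{f}(\widetilde v)=\|Df(v)\|_{g_2}\,B^{g_2}(\widetilde{f}\pi(\widetilde v),\widetilde v^{g_1}_+)$ for $m$-a.e.\ $v$), which the paper records at the end of its proof and which is in the spirit of the later analysis in Theorem \ref{prop,GeodesicToGeodesic}. You instead bypass Busemann functions entirely via the elementary bound $l_f(v,t)\le L_{g_2}(\widetilde{f}(\widetilde c^{g_1}_v[0,t]))$, exploit additivity of length to write the right-hand side as the Birkhoff sum of $h(v)=L_{g_2}(f(c^{g_1}_v[0,1]))$, and close the argument with flow-invariance of $m$, the integrated form of the subadditive ergodic theorem (which the paper does state in Section \ref{subsection, BasicGeodesicStretch}), and Lemma \ref{lem: Reinterpret}. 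Your version is more elementary — no pointwise ergodic theorem and no differentiation of $s\mapsto b_\xi^{g_2}(\cdot,\widetilde{f}(c(s)))$ along a merely Lipschitz curve — at the cost of yielding only the inequality and not the paper's characterization of when it is an equality.
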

\begin{proof}
    
By Lemma \ref{lem,Busem-distance-Lip}, we obtain for $m$ almost every $v\in S^{g_1} M$,

\begin{align*}
\lim_{t \to \infty} \frac{1}{t} l_f (v, t) & = \lim_{t \to \infty} \frac{1}{t}
b_\xi^{g_2}(\widetilde{f}\pi(\widetilde{v}),  \widetilde{f}\pi(\phi_t^{g_1} \widetilde{v}))\\
&=  \lim_{t \to \infty} \frac{1}{t} \int_0^t \frac{d}{ds} b_\xi^{g_2}(\widetilde{f}\pi(\widetilde{v}),  \widetilde{f}\pi(\phi_s^{g_1} \widetilde{v}))  ds\\
&=  \lim_{t \to \infty} \frac{1}{t} \int_0^t g_2( B^{g_2}(\widetilde{f}\pi(\phi_s^{g_1} \widetilde{v})), \widetilde{v}^{g_1}_+), D\widetilde{f}
(\phi_s^{g_1} \widetilde{v} )) ds,
\end{align*}
where $\widetilde{v}$ denotes a lift of $v$ on $S^{g_1}\widetilde M $ and $\xi=\widetilde{v}^{g_1}_{+}$. Since the functions  $g_2( B^{g_2}(\widetilde f\pi( \widetilde{v}), \widetilde{v}^{g_1}_+), D\widetilde{f}(\widetilde{v}))$ and $ \| D\widetilde{f}(\widetilde{v}) \|$ are  $\Gamma$-invariant, we obtain from the Birkhoff ergodic theorem for invariant measures (\cite[Theorem 1.14]{Wa82}) and the definition of the geodesic stretch, 
\begin{align*}
I_{m}(g_1, g_2) &= \int_{S^{g_1} M}\lim_{t \to \infty} \frac{1}{t}l_f (v, t) dm(v)\\
 &= \int_{S^{g_1} M}g_2( B^{g_2}(\widetilde f\pi( \widetilde{v}), \widetilde{v}^{g_1}_+), D\widetilde f(\widetilde{v})) dm(v)\\
 & \le \int_{S^{g_1} M}  \| Df(v) \|_{g_2} dm(v),
\end{align*}
where $\|B^{g_2}(\widetilde f\pi( \widetilde{v}),\widetilde{v}^{g_1}_+)\|_{g_2} =1$ follows from the fact that the norm of the gradient of the Busemann function is identically equal to $1$. Furthermore, 
$$
I_{m}(g_1, g_2) = \int_{S^{g_1} M}  \| Df(v) \|_{g_2} dm(v)
$$
if and only if for $m$ almost every $v\in S^{g_1} M$ we have
$$
 D\widetilde{f}(\widetilde{v}) = \| Df(v) \|_{g_2}  B^{g_2}( \widetilde f \pi(\widetilde{v}), \widetilde{v}^{g_1}_+).
$$

\end{proof}

The above Lemma holds for any Lipschitz map homotopic to identity. This motivates us to define
\begin{definition}
Given $g_1,g_2\in R^-(M)$ and given $m\in \mathcal{M}^1(\phi^{g_1})$, we define the \emph{$m$-weighted least Lipschitz constant} $L_{m}(g_1,g_2)$ as   
 $$L_m(g_1,g_2):=\inf\limits_{f\in \Lip_{\id}(M, g_1, g_2)}\int_{S^{g_1}M} \norm{Df(v)}_{g_2} dm.$$
\end{definition}
\noindent Similarly, for a geodesic current $\mu\in \mathcal{C}(\Gamma)$, we define the  \emph{$\mu$-weighted least Lipschitz constant} as $L_{\mu}(g_1,g_2):= L_{\hat{m}^{g_1}_{\mu}}(g_1,g_2)$, which is the weighted least Lipschitz constant of its associated invariant probability measure $\hat{m}^{g_1}_{\mu}$.

 As a simple application of  Lemma \ref{lem, GeodesicStretchLipStretch}, we obtain

\begin{corollary} \label{cor, MinGeqGeodesicStretch}
Let $g_1, g_2$ be two Riemannian metrics in $R^-(M)$. Given $m\in \mathcal{M}^1(\phi^{g_1})$, we have
$$ I_{m}(g_1,g_2) \leq  L_{m}(g_1,g_2).$$
\end{corollary}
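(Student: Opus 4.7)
The plan is to deduce Corollary \ref{cor, MinGeqGeodesicStretch} as an immediate consequence of Lemma \ref{lem, GeodesicStretchLipStretch}. The key point is that the inequality
$$
I_m(g_1,g_2) \le \int_{S^{g_1}M} \|Df(v)\|_{g_2}\, dm
$$
established in Lemma \ref{lem, GeodesicStretchLipStretch} holds for \emph{every} map $f \in \Lip_{\id}(M, g_1, g_2)$, while the left-hand side depends only on $m$, $g_1$, $g_2$ (and not on $f$). Thus one is free to pass to the infimum on the right-hand side over all admissible Lipschitz maps without affecting the left-hand side.

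Concretely, I would first recall that Lemma \ref{lem, GeodesicStretchLipStretch} was proved by comparing the Busemann cocycle realization of $I_m(g_1,g_2)$ (via Corollary \ref{corollary, GeodesicStretchBusemannFunction}) with the pointwise bound $g_2(B^{g_2}(\widetilde{f}\pi(\widetilde{v}), \widetilde{v}_+^{g_1}), D\widetilde{f}(\widetilde{v})) \le \|Df(v)\|_{g_2}$, an inequality that holds for $m$-a.e.\ $v$ regardless of the specific $f$ chosen. Then I would invoke the definition
$$
L_m(g_1,g_2) = \inf_{f \in \Lip_{\id}(M, g_1, g_2)} \int_{S^{g_1}M} \|Df(v)\|_{g_2}\, dm,
$$
and take the infimum over $f$ on the right-hand side of the Lemma's inequality to obtain $I_m(g_1,g_2) \le L_m(g_1,g_2)$.

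There is essentially no obstacle here; the statement is a formal consequence of the preceding Lemma together with the definition of $L_m(g_1,g_2)$. The only point one might want to remark on is that $\Lip_{\id}(M,g_1,g_2)$ is nonempty (for example, the identity map lies in this space, with $\|D\,\id(v)\|_{g_2}$ bounded by a constant depending on $g_1,g_2$), so the infimum defining $L_m(g_1,g_2)$ is taken over a nonempty set and is finite.
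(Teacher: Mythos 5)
Your proposal is correct and matches the paper's argument exactly: the corollary is stated there as an immediate consequence of Lemma \ref{lem, GeodesicStretchLipStretch}, obtained by taking the infimum over $f \in \Lip_{\id}(M,g_1,g_2)$ on the right-hand side of that lemma's inequality. Your additional remark that the infimum is over a nonempty set is a harmless (and reasonable) extra observation.
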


The following theorem discusses when the equality holds in Lemma \ref{lem, GeodesicStretchLipStretch}. We state this theorem in the language of geodesic currents.

\begin{theorem} \label{prop,GeodesicToGeodesic}
Given $g_1, g_2 \in R^{-}(M)$ and $\mu \in \mathcal{C}(\Gamma)$ a geodesic current and $f \in \Lip_{\id}(M,g_1,g_2)$ such that

$$
I_{\mu}(g_1, g_2) = \int_{S^{g_1} M}  \| Df(v) \|_{g_2} d\hat{m}^{g_1}_{\mu}(v).
$$ 
Then for  all $(\xi_-, \xi_+) \in \partial^{(2)} \widetilde M$ contained in 
$\supp \mu$, its lift $\widetilde f \in \mathcal{C}$ maps $g_1$-geodesics to corresponding $g_2$-geodesics up to parametrization, i.e.
$$
\widetilde f(c_{(\xi_-, \xi_+)}^{g_1}) = c_{(\xi_-, \xi_+)}^{g_2},
$$
where $c_{(\xi_-, \xi_+)}^{g_1}$ denotes the $g_1$-geodesic with backward endpoint $\xi_-$ and forward endpoint $\xi_+$. In particular, if $\mu$ has full support on $ \partial^{(2)} \widetilde M$, then $\widetilde f$ maps every $g_1$-geodesic to the corresponding $g_2$-geodesic up to parametrization.

\end{theorem}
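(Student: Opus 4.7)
The proof builds directly on the computation carried out at the very end of the proof of Lemma \ref{lem, GeodesicStretchLipStretch}. There it was shown that the equality hypothesis
$$I_\mu(g_1,g_2)=\int_{S^{g_1}M}\|Df(v)\|_{g_2}\,d\hat m^{g_1}_\mu(v)$$
forces the pointwise identity
$$D\widetilde f(\widetilde v)=\|Df(v)\|_{g_2}\,B^{g_2}\bigl(\widetilde f\pi(\widetilde v),\,\widetilde v^{g_1}_{+}\bigr)$$
for $\hat m^{g_1}_\mu$-almost every $v\in S^{g_1}M$, where $\widetilde v$ is any lift of $v$. In other words, at a.e.\ point the push-forward of the $g_1$-geodesic direction by $D\widetilde f$ is a non-negative multiple of the $g_2$-Busemann gradient field aimed at the forward endpoint $\widetilde v^{g_1}_{+}$. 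The plan is to promote this infinitesimal identity to a geodesic-by-geodesic statement using the product structure of $m^{g_1}_\mu$ in Hopf coordinates, then to pin down the endpoints at infinity using the fact that $\widetilde f$ lies at bounded distance from the identity.

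First I would invoke Proposition \ref{prop:m-current}: the measure $m^{g_1}_\mu$ decomposes in Hopf coordinates as $d\mu\times dt$, so by Fubini the pointwise identity above holds, for $\mu$-almost every $(\xi_-,\xi_+)\in\partial^{(2)}\widetilde M$, at Lebesgue-almost every $t\in\mathbb{R}$ along $c^{g_1}_{(\xi_-,\xi_+)}$. Since $\widetilde f$ is Lipschitz, the curve
$$\eta(t):=\widetilde f\bigl(c^{g_1}_{(\xi_-,\xi_+)}(t)\bigr)$$
is absolutely continuous with almost-everywhere derivative $D\widetilde f\bigl(\dot c^{g_1}_{(\xi_-,\xi_+)}(t)\bigr)$; by the pointwise identity this derivative is a non-negative scalar multiple of $B^{g_2}(\eta(t),\xi_+)$ for a.e.\ $t$. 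Because $B^{g_2}(\cdot,\xi_+)$ is a continuous unit $g_2$-vector field whose integral curves are precisely the $g_2$-geodesics with forward endpoint $\xi_+$, an application of the fundamental theorem of calculus for Lipschitz maps together with uniqueness of integral curves shows that $\eta$ is a monotone reparametrisation of such a geodesic.

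Next I would identify the backward endpoint. By Lemma \ref{lem, uniquelift} and the $\Gamma$-equivariance of $\widetilde f$ there is a constant $c_f(g_2)$ with $d_{g_2}(x,\widetilde f(x))\le c_f(g_2)$ for all $x\in\widetilde M$, so $\widetilde f$ extends continuously to $\partial\widetilde M$ by the identity. Hence the two endpoints of $\eta$ at $g_2$-infinity are exactly $\xi_+$ and $\xi_-$, and combined with the previous step this gives
$$\widetilde f\bigl(c^{g_1}_{(\xi_-,\xi_+)}\bigr)=c^{g_2}_{(\xi_-,\xi_+)}\qquad\text{(up to parametrisation)}$$
for $\mu$-almost every $(\xi_-,\xi_+)$. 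Finally I would upgrade this from a $\mu$-full-measure set to all of $\supp\mu$ by a closure argument: if $(\xi^n_-,\xi^n_+)\to(\xi_-,\xi_+)$ and the conclusion holds along the sequence, then $c^{g_i}_{(\xi^n_-,\xi^n_+)}$ converges to $c^{g_i}_{(\xi_-,\xi_+)}$ uniformly on compact subsets of $\widetilde M$ for $i=1,2$, so by continuity of $\widetilde f$ the limiting $g_1$-geodesic is mapped onto the limiting $g_2$-geodesic. The main obstacle I expect is the ODE-regularity step: one must argue carefully that a Lipschitz curve whose a.e.\ velocity is a non-negative scalar multiple of a prescribed continuous unit vector field is in fact a monotone reparametrisation of an integral curve of that field. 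This is standard but it is the one point where the ``almost everywhere in $t$'' has to be converted into a genuinely pointwise statement; once settled, everything else is soft.
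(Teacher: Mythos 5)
Your proposal is correct in substance but runs along a different track from the paper's. You start from the infinitesimal identity $D\widetilde f(\widetilde v)=\|Df(v)\|_{g_2}\,B^{g_2}(\widetilde f\pi(\widetilde v),\widetilde v^{g_1}_+)$, disintegrate via the Hopf product structure, and then treat $\widetilde f\circ c^{g_1}_{(\xi_-,\xi_+)}$ as an a.e.\ solution of the ODE generated by the Busemann field $B^{g_2}(\cdot,\xi_+)$. The paper instead never leaves the integrated level: it compares the Busemann increment $b^{g_2}_{v_+}(\widetilde f(c(0)),\widetilde f(c(1)))$ with the length $L_{g_2}(\widetilde f(c[0,1]))$, uses $b\le d\le L$ to turn equality of the integrals into the identity $b=L$ for a.e.\ $v$, extends this to all of $\supp\hat m^{g_1}_\mu$ by combining continuity of the Busemann cocycle with lower semicontinuity of the length functional, and then reads off from $L_{g_2}(\widetilde f(c[t,t+1]))=d_{g_2}(\widetilde f(c(t)),\widetilde f(c(t+1)))$ for all $t$ that the image curve is locally minimizing, hence a geodesic. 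Your route buys a more geometric picture (the image velocity literally follows the Busemann foliation toward $\xi_+$) and a cleaner identification of both endpoints at infinity; the paper's route buys regularity for free, which is exactly where your one flagged obstacle sits.

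On that obstacle: be aware that $B^{g_2}(\cdot,\xi_+)$ is only H\"older in general, so "uniqueness of integral curves" is not available from Picard--Lindel\"of, and an a.e.\ ODE for a Lipschitz curve does not by itself force the curve onto the flow. The fix is either the non-expansiveness of the forward Busemann flow (two geodesics asymptotic to $\xi_+$ have non-increasing distance, giving a Gronwall-free comparison between $\eta$ and $c^{g_2}(\sigma(t))$ with $\sigma(t)=\int_0^t\|\dot\eta\|$), or — more economically — integrating your pointwise identity to get $b^{g_2}_{\xi_+}(\eta(0),\eta(T))=L_{g_2}(\eta|_{[0,T]})$ and invoking $b\le d\le L$, which is precisely the paper's computation. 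Your closure step for $\supp\mu$ also needs one extra line: geometric convergence of $c^{g_i}_{(\xi^n_-,\xi^n_+)}$ shows the limit image lies \emph{on} $c^{g_2}_{(\xi_-,\xi_+)}$, and surjectivity onto the whole geodesic then comes from the boundary extension of $\widetilde f$ by the identity. With those two points settled, the argument is complete.
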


\begin{proof}
The geodesic stretch satisfies from the deduction of Lemma \ref{lem, GeodesicStretchLipStretch},
\begin{align*}
I_{\mu}(g_1, g_2)& =\int_{S^{g_1} M} \int_{0}^{1} 
g_2( B^{g_2}(\widetilde f\pi( \phi^{g_1}_s \widetilde v), \widetilde v^{g_1}_+), D \widetilde f(\phi^{g_1}_s \widetilde v)) ds  d\hat{m}^{g_1}_{\mu}(v) \\
&= \int_{S^{g_1} M} \int_{0}^{1} \frac{d}{ds} b^{g_2}_{v_+}(x_0, \widetilde f( c^{g_1}_{\widetilde v} (s))) ds \ d\hat{m}^{g_1}_{\mu}(v)\\
&= \int_{S^{g_1} M} b^{g_2}_{v_+}(\widetilde f( c^{g_1}_{\widetilde v}(0) ), \widetilde f( c^{g_1}_{\widetilde v}(1) )) d\hat{m}^{g_1}_{\mu}(v),
\end{align*}
where $x_0$ is a base point on $\widetilde M$ and $\widetilde v$ is a lift of $v$; $c^{g_1}_{\widetilde v}(t)$ is the unit speed $g_1$-geodesic on $\widetilde M$ starting from $\widetilde v$. To simplify notation, we write $v_+$ to mean $ \widetilde v^{g_1}_+$. 

On the other hand, the average ``stretch" of the Lipschitz map $f$ is 
$$
\int_{S^{g_1} M}  \| Df(v) \|_{g_2} d\hat{m}^{g_1}_{\mu}(v) =
 \int_{S^{g_1} M} L_{g_2} ( \widetilde f(  c^{g_1}_{\widetilde v}([0, 1]))) d\hat{m}^{g_1}_{\mu}(v),
$$
Since the Busemann function is 1-Lipschitz, we have

\begin{equation}\label{eqtn,bLd}
b^{g_2}_{v_+} (\widetilde f( c^{g_1}_{\widetilde v}(0) ), \widetilde f( c^{g_1}_{\widetilde v}(1) ) )\le d_{g_2}(\widetilde f( c^{g_1}_{\widetilde v}(0) ),\widetilde f(c^{g_1}_{\widetilde v} (1) ))
\le  L_{g_2} ( \widetilde f(   c^{g_1}_{\widetilde v}([0,  1]))).
\end{equation}

Therefore
$$
I_{\mu}(g_1, g_2) =\int_{S^{g_1} M}  \| Df(v) \|_{g_2} d\hat{m}^{g_1}_{\mu}(v)
$$

implies
\begin{equation}\label{eqtn,b=Lfullmeasure}
b^{g_2}_{v+} (\widetilde f( c^{g_1}_{\widetilde v} (0) ), \widetilde f( c^{g_1}_{\widetilde v}(1)) ) = L_{g_2} ( \widetilde f(  c^{g_1}_{\widetilde v}([0, 1]))),
\end{equation}
for $\hat{m}^{g_1}_{\mu}$ almost every $v \in S^{g_1} M$. Here we use the fact that the above functions on $\widetilde M$ are $\Gamma$-invariant.

We want to show that the Equation \eqref{eqtn,b=Lfullmeasure} in fact holds for every $v\in \supp \hat{m}^{g_1}_{\mu}$. It is clear that the Busemann function $b^{g_2}_{v_+} (\widetilde f( c^{g_1}_{\widetilde v}(0) ), \widetilde f( c^{g_1}_{\widetilde v}(1) )$ is continuous in $\widetilde v$. However we need to be careful that $L_{g_2} ( \widetilde f(  c^{g_1}_{\widetilde v}([0, 1]))$ is only lower-semicontinuous in $\widetilde v$: if $\widetilde v_n \to \widetilde v_0$, then the curves $\widetilde f\circ   c^{g_1}_{\widetilde v_n}$ approximates $\widetilde f\circ \widetilde c^{g_1}_{v_0}$ uniformly in the $C^0$ topology. Then the lengths of these curves satisfy (see, for example, \cite[Chapter I.1, Proposition 1.20 (7)]{BH99}),
\begin{equation}  \label{equation liminfLength}
 L_{g_2} (\widetilde f\circ   c^{g_1}_{\widetilde v_0}([0,1]))\leq \liminf\limits_{n\to\infty} L_{g_2} (\widetilde f\circ   c^{g_1}_{\widetilde v_n}([0,1]) ).
\end{equation}
Now suppose $v_0\in \supp \hat{m}^{g_1}_{\mu}$. For a lift $\widetilde v_0$ of $v_0$, let us consider an open ball $B_{\frac{1}{n}}(\widetilde v_0)$ of radius $\frac{1}{n}$ with respect to the Sasaki metric centered at $\widetilde v_0$. It has positive measure with respect to (the lift of) $\hat{m}^{g_1}_{\mu}$. We can therefore find $\widetilde v_n \in B_{\frac{1}{n}}(\widetilde  v_0)$ that satisfies Equation \eqref{eqtn,b=Lfullmeasure} and that $\widetilde  v_n\to \widetilde  v_0$.
Therefore, by inequality \eqref{eqtn,bLd} and \eqref{equation liminfLength}, we have,
\begin{align*}
\liminf_{n\to\infty}L_{g_2} ( \widetilde f(  c_{\widetilde v_n}^{g_1}([0, 1])))&\geq L_{g_2} ( \widetilde f(   c^{g_1}_{\widetilde v_0}([0, 1])))
\geq  b^{g_2}_{v_{0+}} (\widetilde f(  c^{g_1}_{\widetilde v_0} (0) ), \widetilde f(  c^{g_1}_{\widetilde v_0}(1) ))\\ 
&=\lim\limits_{n\to\infty} b^{g_2}_{v_{n+}} (\widetilde f(  c^{g_1}_{\widetilde v_n} (0) ), \widetilde f(  c^{g_1}_{\widetilde v_n}(1) )).
\end{align*}

By assumption, Equation \eqref{eqtn,b=Lfullmeasure} holds for all $\widetilde  v_n$. The above inequalities imply 

\begin{equation*}
L_{g_2} ( \widetilde f(   c^{g_1}_{\widetilde v_0}([0, 1])))
=  b^{g_2}_{v_{0+}} (\widetilde f(  c^{g_1}_{\widetilde v_0} (0) ), \widetilde f(  c^{g_1}_{\widetilde v_0}(1) )).
\end{equation*}

Hence  Equation \eqref{eqtn,b=Lfullmeasure} in fact holds for (lifts of) every $v\in \supp \hat{m}^{g_1}_{\mu}$.

To conclude the statement, we notice since  $\supp \hat{m}^{g_1}_{\mu}$ is
$\phi^{g_1}_t$-invariant, for all $t \in \R$,
\begin{align} \label{equation Length-distance}
L_{g_2} (\widetilde f( c^{g_1}_{\widetilde v}([t, t+1])))&= L_{g_2} (\widetilde f( c_{\phi^{g_1}_t \widetilde v}^{g_1}([0, 1]))) \\
&=  d_{g_2}(\widetilde f( c^{g_1}_{\phi^{g_1}_t \widetilde v}(0) ),\widetilde f(c^{g_1}_{\phi^{g_1}_t \widetilde v} (1) )) \nonumber \\
&= d_{g_2}(\widetilde f( c^{g_1}_{\widetilde v}(t) ),\widetilde f( c^{g_1}_{\widetilde v}(t+1) )) \nonumber.
\end{align}

This implies that for all $t \in \R$, the curve $ [0, 1] \to \widetilde M$ given by
$$s \mapsto  \widetilde f( c^{g_1}_{\widetilde v}(t+s))$$ agrees up to parametrization with the  $g_2$-geodesic $ c^{g_2}$ connecting 
 $\widetilde f ( c^{g_1}_{\widetilde v}(t))$ to  $\widetilde f( c^{g_1}_{\widetilde v}(t+1))$. Hence we conclude,
for all $v \in  \supp \hat{m}^{g_1}_{\mu}$ and $(v_-, v_+) \in \partial^{(2)} \widetilde M $, we have that
$$
\widetilde f( c_{(v_-, v_+)}^{g_1}) =  c_{(v_-, v_+)}^{g_2}
$$
where $ c^{g_j}_{(v_-, v_+)}$ is the unparametrized geodesic with $c^{g_j}_{(v_-, v_+)}(-\infty) =v_-$
and  $ c^{g_j}_{(v_-, v_+)}(+\infty) =v_+$, for $j=1,2$.

\end{proof}

Combining all we have discussed leads to the key statement in this subsection.

\begin{corollary} \label{cor,MainThm3}
 Suppose $g_1,g_2 \in R^-(M)$. For any maximally stretched measure, we have
\begin{align*} 
S(g_1, g_2) \leq L_m(g_1,g_2).
\end{align*}
Furthermore, if for some maximally stretched measure $m_0$, there exists  $f_0 \in \Lip_{\id}(M, g_1, g_2)$  such that $$S(g_1,g_2) =\int_{S^{g_1}M} \norm{Df_0(v)}_{g_2} dm_0(v)=L_{m_0}(g_1,g_2) ,$$
then  $f_0$ maps any $g_1$-geodesic in the support of $m_0$ to a corresponding $g_2$-geodesic (up to parametrization).
\end{corollary}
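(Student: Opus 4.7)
The proof is essentially a synthesis of results already established earlier in the section, so I would organize it as follows.

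For the first inequality, I would simply chain together the definition with Corollary~\ref{cor, MinGeqGeodesicStretch}. By the definition of a maximally stretched measure, $I_m(g_1,g_2) = S(g_1,g_2)$. Applying Corollary~\ref{cor, MinGeqGeodesicStretch} then gives
$$S(g_1,g_2) = I_m(g_1,g_2) \leq L_m(g_1,g_2),$$
which is exactly the first claim. No new work is required here beyond quoting the earlier lemma.

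For the second (geometric) claim, the plan is to translate the hypothesis into the language of geodesic currents so that Theorem~\ref{prop,GeodesicToGeodesic} can be invoked. By Proposition~\ref{prop:m-current}, the map $\mu \mapsto m^{g_1}_\mu$ is a linear homeomorphism between $\mathcal{C}(\Gamma)$ and $\mathcal{M}(\phi^{g_1})$, so we may pick a current $\mu_0 \in \mathcal{C}(\Gamma)$ with $\hat{m}^{g_1}_{\mu_0} = m_0$. Since $m_0 \in MS(g_1,g_2)$, Corollary~\ref{cor:s-current} shows that $\mu_0 \in MC(g_1,g_2)$, so $I_{\mu_0}(g_1,g_2) = S(g_1,g_2)$. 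The assumption then reads
$$I_{\mu_0}(g_1, g_2) = \int_{S^{g_1}M} \norm{Df_0(v)}_{g_2}\, d\hat{m}^{g_1}_{\mu_0}(v),$$
which is exactly the equality hypothesis of Theorem~\ref{prop,GeodesicToGeodesic}. That theorem then yields $\widetilde{f_0}(c^{g_1}_{(\xi_-,\xi_+)}) = c^{g_2}_{(\xi_-,\xi_+)}$ (up to parametrization) for every $(\xi_-,\xi_+)$ in $\supp \mu_0$.

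The only step requiring care is the identification between $\supp m_0 \subset S^{g_1}M$ and $\supp \mu_0 \subset \partial^{(2)}\widetilde{M}$: a $g_1$-geodesic in the support of $m_0$ corresponds, via the Hopf parametrization $H^{g_1}_{p_0}$, to a pair of endpoints $(\xi_-,\xi_+)$ in $\supp \mu_0$, because the local product structure $dm^{g_1}_{\mu_0} = d\mu_0 \times dt$ means the projection to $\partial^{(2)}\widetilde M$ maps $\supp m^{g_1}_{\mu_0}$ onto $\supp \mu_0$. Hence the conclusion of Theorem~\ref{prop,GeodesicToGeodesic} translates directly into the statement that $f_0$ sends every $g_1$-geodesic in $\supp m_0$ to the corresponding $g_2$-geodesic up to parametrization.

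The main (and essentially only) obstacle is bookkeeping: being careful about normalization, since the bijection between currents and invariant measures uses unnormalized $m^{g_1}_\mu$ while $L_m$ is defined for probability measures $m = \hat{m}^{g_1}_\mu$. Once this is handled consistently, as in the proof of Corollary~\ref{cor:s-current}, the rest of the argument is immediate from the earlier results and no substantive new estimates are needed.
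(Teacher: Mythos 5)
Your proof is correct and follows exactly the route the paper intends: the paper simply states ``Combining all we have discussed leads to the key statement,'' meaning the first inequality is Corollary \ref{cor, MinGeqGeodesicStretch} applied to a maximally stretched measure, and the second claim is Theorem \ref{prop,GeodesicToGeodesic} after translating $m_0$ into its current $\mu_0$. Your additional care about the support identification via the Hopf product structure and the normalization bookkeeping is exactly the right (and only) thing to check.
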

\subsection{Some further properties of the functional $G$} 
We discuss in this subsection lower semicontinuity annd convexity of the functional $G$ given by 
 \begin{align*}
 &G: \Lip_{id}(M,g_1,g_2) \times \mathcal{M}(\phi^{g_1})\to \mathbb{R}_{\geq 0}\\
&G(f,m)=\int_{S^{g_1}M} \norm{Df(v)}_{g_2} dm(v).
 \end{align*}

We have the following lemma for lower semicontinuity.
\begin{lemma} 
\label{lem,correctCompactness}
Let $f_0 \in \Lip_{\id}(M,g_1,g_2)$ and  $m_0 \in\mathcal{M}^{1}(\phi^{g_1})$, for any $\varepsilon>0$ there exists a neighborhood $U_1$ of $f_0$  in  $(\Lip_{\id}(M,g_1,g_2), d_{\mathcal{C}, g_2})$ and a neighborhood $U_2$ of $m_0$ in  $\mathcal{M}^{1}(\phi^{g_1})$ such that
    $$
     G(f_0,m_0)  \leq   G(f,m) + \varepsilon
     $$
     for all $f \in U_1$ and $m \in U_2$, where $U_1=U_1(f_0,m_0,\varepsilon)$ and $U_2=U_2(f_0,m_0,\varepsilon)$ both depend on $f_0$, $m_0$ and $\varepsilon$.
   
\end{lemma}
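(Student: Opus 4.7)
The plan is to reformulate the functional via Lemma~\ref{lem: Reinterpret} as
$$G(f,m) = \int_{S^{g_1}M} F(f,v)\,dm(v), \qquad F(f,v) := L_{g_2}\big(f(c^{g_1}_v[0,1])\big),$$
and then establish the lemma as a joint lower semicontinuity statement for this integral at the point $(f_0,m_0)$. The key structural fact on which everything rests is the joint lower semicontinuity of the integrand $F$ itself.

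First I would verify that $F$ is jointly lower semicontinuous on $\Lip_{\id}(M,g_1,g_2) \times S^{g_1}M$. Given $(f_n,v_n) \to (f_0,v_0)$, continuous dependence of the geodesic flow on initial conditions yields $c^{g_1}_{v_n}|_{[0,1]} \to c^{g_1}_{v_0}|_{[0,1]}$ uniformly in $g_1$; combined with the uniform continuity of the Lipschitz map $f_0$ and the uniform convergence $f_n \to f_0$ in $d_{\mathcal{C},g_2}$, a standard splitting argument gives $f_n \circ c^{g_1}_{v_n} \to f_0 \circ c^{g_1}_{v_0}$ uniformly in $g_2$. The lower semicontinuity of $g_2$-length under uniform convergence (the same fact from \cite[Chapter I.1, Proposition 1.20 (7)]{BH99} cited in the proof of Theorem~\ref{prop,GeodesicToGeodesic}) then yields $F(f_0,v_0) \le \liminf_n F(f_n,v_n)$.

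Next I would approximate and glue. Since $v \mapsto F(f_0,v)$ is lower semicontinuous and bounded above by $\Lip(f_0,g_1,g_2)$ on the compact space $S^{g_1}M$, standard measure-theoretic approximation produces a bounded continuous function $\psi: S^{g_1}M \to \R$ with $\psi \le F(f_0,\cdot)$ and $\int \psi\,dm_0 \ge G(f_0,m_0) - \varepsilon/3$. Weak-$*$ continuity of $m \mapsto \int \psi\,dm$ gives a neighborhood $U_2$ of $m_0$ with $\int \psi\,dm \ge \int \psi\,dm_0 - \varepsilon/3$ for $m \in U_2$. For the $f$-direction I would use the joint lower semicontinuity of $F$: at each $v_0 \in S^{g_1}M$, pick a neighborhood $W_{v_0}$ of $f_0$ and $V_{v_0}$ of $v_0$ so that $F(f,v) > \psi(v) - \varepsilon/3$ on $W_{v_0} \times V_{v_0}$ (using continuity of $\psi$ to replace $\psi(v_0)$ by $\psi(v)$). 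Extracting a finite subcover $\{V_{v_i}\}$ of $S^{g_1}M$ and setting $U_1 := \bigcap_i W_{v_i}$, for $f \in U_1$ and $m \in U_2$ the chain
$$G(f,m) \ge \int_{S^{g_1}M} (\psi(v) - \varepsilon/3)\,dm(v) \ge \int \psi\,dm_0 - 2\varepsilon/3 \ge G(f_0,m_0) - \varepsilon$$
closes the argument.

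The main obstacle I anticipate is the joint lower semicontinuity of $F$ in the absence of a uniform Lipschitz bound on sequences $f_n \to f_0$: convergence in $d_{\mathcal{C},g_2}$ does not control $\Lip(f_n,g_1,g_2)$, so one cannot exploit uniform Lipschitz estimates on the approximants. This is circumvented by splitting through the intermediate point $f_0(c^{g_1}_{v_n}(t))$ and using only the uniform continuity of the \emph{limit} $f_0$, reducing the question to the well-known semicontinuity of length under uniform convergence of curves.
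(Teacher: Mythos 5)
Your proof is correct, and it reaches the conclusion by a genuinely different route than the paper, so let me compare. The paper works with the explicit dyadic approximants $\psi_\ell(v)=\sum_{i=0}^{2^\ell-1} d_{g_2}\bigl(f_0\circ c^{g_1}_v(t_i^\ell), f_0\circ c^{g_1}_v(t_{i+1}^\ell)\bigr)$, which increase monotonically to $L_{g_2}(f_0(c^{g_1}_v[0,1]))$; a fixed large $\ell$ provides the continuous minorant whose $m_0$-integral is within $\varepsilon/4$ of $G(f_0,m_0)$, the weak$^*$ topology handles the $m$-direction exactly as in your argument, and the $f$-direction is handled by the elementary observation that the dyadic sum depends on $f$ only through $2^\ell+1$ point evaluations, so $d_{\mathcal{C},g_2}(f,f_0)\le \varepsilon/2^{\ell+2}$ forces the sums for $f$ and $f_0$ to agree within $\varepsilon/2$ \emph{uniformly in $v$}. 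You instead prove joint lower semicontinuity of the integrand $F(f,v)=L_{g_2}(f(c^{g_1}_v[0,1]))$ (your splitting through $f_0(c^{g_1}_{v_n}(t))$ to avoid needing a uniform Lipschitz bound on the $f_n$ is exactly the right move), invoke the abstract fact that a bounded lower semicontinuous function on a compact metric space is an increasing limit of continuous functions to produce $\psi$, and then need a finite-subcover argument to upgrade the pointwise semicontinuity in $f$ to a statement valid for all $v$ simultaneously. What the paper's version buys is self-containedness and the avoidance of the compactness step, since its minorant is automatically "uniformly close in $v$" under $C^0$-perturbation of $f$; what your version buys is a cleaner conceptual decomposition (joint lower semicontinuity of the integrand plus a general gluing principle) and, as a byproduct, the pointwise joint lower semicontinuity of $(f,v)\mapsto F(f,v)$, which the paper never isolates. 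Both arguments are sound.
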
 
 \begin{proof}
Consider for each $\ell \in \N$ the partition $0 \le t_0^\ell < \ldots  <t_{2^\ell}^\ell =1 $ of $[0, 1]$ with 
$t_i^\ell = \frac{i}{2^\ell}$.
Using the triangle inequality we obtain that for each $v \in S^{g_1}M$ and all $f \in \Lip_{\id}(M,g_1,g_2)$, the sequence
$$
\ell \mapsto \sum_{i=0}^{2^\ell-1} d_{g_2}(f\circ c^{g_1}_v(t_i^\ell), f\circ c^{g_1}_v(t_{i+1}^\ell))
$$
is monotonically increasing and from the definition of the length functional we obtain
$$
L_{g_2} ( f(c^{g_1}_v[0,1])) = \lim_{\ell \to \infty} \sum_{i=0}^{2^\ell-1} d_{g_2}(f\circ c^{g_1}_v(t_i^\ell), f\circ c^{g_1}_v(t_{i+1}^\ell)).
$$
Then Lebesgue's dominated convergence theorem yields
\begin{align*}
 G(f,m) &=\int_{S^{g_1}M} L_{g_2} ( f(c_{v}^{g_1}[0,1])) dm \\
 &= \lim_{\ell \to \infty} \int_{S^{g_1}M}
  \sum_{i=0}^{2^\ell-1} d_{g_2}(f\circ c^{g_1}_v(t_i^\ell), f\circ c^{g_1}_v(t_{i+1}^\ell)) dm.
 \end{align*}

Let $\varepsilon >0$. For a fixed $f_0\in \Lip_{\id}(M,g_1,g_2)$ and a fixed $m_0\in\mathcal{M}^{1}(\phi^{g_1})$, there exists $\ell=\ell(f_0,m_0,\varepsilon) \in \N$
 such that
 \begin{equation} \label{eqtn, partition}
  G(f_0,m_0)  \leq \int_{S^{g_1}M}
  \sum_{i=0}^{2^\ell-1} d_{g_2}(f_0 \circ c^{g_1}_v(t_i^\ell), f_0 \circ c^{g_1}_v(t_{i+1}^\ell)) dm_0 + \frac{\varepsilon}{4}.
\end{equation}

Choose a neighborhood $U_1=U_1(f_0,m_0,\varepsilon)$ of $f_0$  such that for all $f \in U_1$,
 $$d_{\mathcal{C},g_2}(f ,f_0)\leq \frac{\varepsilon}{ 2^{\ell+2}}.$$
Hence for all $v \in S^{g_1}M$ and $t \in [0,1]$, we have
$$ d_{g_2} (f\circ c^{g_1}_v(t),f_0 \circ c^{g_1}_v(t))\leq d_{\mathcal{C},g_2}(f,f_0)\leq \frac{\varepsilon}{ 2^{\ell+2}}.$$
and 
 by triangle inequality, 
 we obtain
 \begin{align*} 
 &|d_{g_2}(f_0 \circ c^{g_1}_v(t_i^\ell), f_0 \circ c^{g_1}_v (t_{i+1}^\ell))- d_{g_2}(f\circ c^{g_1}_v(t_{i}^{\ell}), f   \circ c^{g_1}_v(t_{i+1}^{\ell}))| \\
\leq &  d_{g_2}(f_0 \circ c^{g_1}_v(t_{i}^{\ell}), f \circ c^{g_1}_v (t_{i}^{\ell})) +  d_{g_2}(f_0 \circ c^{g_1}_v(t_{i+1}^{\ell}), f \circ c^{g_1}_v (t_{i+1}^{\ell})) \\
 \leq & \frac{\varepsilon}{  2^{\ell+1}}.
  \end{align*} 
Therefore 
\begin{equation}\label{eqtn, DistSum}
 \sum_{i=0}^{2^\ell-1} d_{g_2}(f_0\circ c^{g_1}_v(t_i^\ell), f_0 \circ c^{g_1}_v(t_{i+1}^\ell)) \le  \sum_{i=0}^{2^\ell-1} d_{g_2}(f\circ c^{g_1}_v(t_i^\ell), f \circ c^{g_1}_v(t_{i+1}^\ell))  + \frac{\varepsilon}{2}.  
\end{equation}

Since $v\to \sum_{i=0}^{2^\ell-1} d_{g_2}(f_0\circ c^{g_1}_v(t_i^\ell), f_0\circ c^{g_1}_v(t_{i+1}^\ell))$ is a continuous function, by  weak-* topology, we can choose a neighborhood $U_2=U_2(f_0,m_0,\varepsilon)$ of $m_0$, so that for $m \in U_2$,

\begin{align} \label{eqtn, TwoMeasures}
    \bigg| \int_{S^{g_1}M} &
  \sum_{i=0}^{2^\ell-1} d_{g_2}(f_0\circ c^{g_1}_v(t_i^\ell), f_0\circ c^{g_1}_v(t_{i+1}^\ell)) dm \\
  &- \int_{S^{g_1}M}
  \sum_{i=0}^{2^\ell-1} d_{g_2}(f_0\circ c^{g_1}_v(t_i^\ell), f_0\circ c^{g_1}_v(t_{i+1}^\ell)) dm_0\bigg| \leq \frac{\varepsilon}{4}.
  \end{align} 

Combining Equation \ref{eqtn, partition}, \ref{eqtn, DistSum} and \ref{eqtn, TwoMeasures} with an application of Lebesgue's dominated convergence theorem leads to,
  $$ G(f_0,m_0) \leq \int_{S^{g_1}M}
  \sum_{i=0}^{2^\ell-1} d_{g_2}(f \circ c^{g_1}_v(t_i^\ell), f\circ c^{g_1}_v(t_{i+1}^\ell)) dm +\varepsilon \leq G(f, m)+\varepsilon.$$

\end{proof}

\begin{proposition}\label{prop,PropertiesG3}
    The map $G: \Lip_{id}(M,g_1,g_2) \times \mathcal{M}^{1}(\phi^{g_1}) \longrightarrow \mathbb{R}_{\geq 0}$ with $(f,m) \mapsto G(f, m) $ is lower semicontinuous.
\end{proposition}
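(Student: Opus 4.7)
The proposition is essentially a direct consequence of Lemma \ref{lem,correctCompactness}, which was set up precisely for this purpose. The strategy is to translate the quantitative statement of that lemma into the sequential characterization of joint lower semi-continuity.

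The plan is as follows. Fix $(f_0, m_0) \in \Lip_{\id}(M, g_1, g_2) \times \mathcal{M}^1(\phi^{g_1})$ and consider an arbitrary sequence $(f_n, m_n) \to (f_0, m_0)$ in the product topology, where $\Lip_{\id}(M, g_1, g_2)$ is equipped with the topology induced by $d_{\mathcal{C}, g_2}$ and $\mathcal{M}^1(\phi^{g_1})$ carries the weak-$*$ topology. I want to show
\begin{equation*}
G(f_0, m_0) \leq \liminf_{n \to \infty} G(f_n, m_n).
\end{equation*}

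First I would fix $\varepsilon > 0$ and apply Lemma \ref{lem,correctCompactness} to obtain neighborhoods $U_1 = U_1(f_0, m_0, \varepsilon)$ of $f_0$ and $U_2 = U_2(f_0, m_0, \varepsilon)$ of $m_0$ such that
\begin{equation*}
G(f_0, m_0) \leq G(f, m) + \varepsilon \quad \text{for all } (f, m) \in U_1 \times U_2.
\end{equation*}
Since $(f_n, m_n) \to (f_0, m_0)$, there exists $N = N(\varepsilon)$ such that $(f_n, m_n) \in U_1 \times U_2$ for all $n \geq N$. Hence $G(f_0, m_0) \leq G(f_n, m_n) + \varepsilon$ for all $n \geq N$, which upon taking the liminf yields
\begin{equation*}
G(f_0, m_0) \leq \liminf_{n \to \infty} G(f_n, m_n) + \varepsilon.
\end{equation*}
Letting $\varepsilon \to 0$ gives the desired inequality.

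Since all the analytic work (uniform approximation of the length functional by the Riemann-type sum $\sum d_{g_2}(f \circ c_v^{g_1}(t_i^\ell), f \circ c_v^{g_1}(t_{i+1}^\ell))$, control of the difference when $f$ is perturbed in $d_{\mathcal{C}, g_2}$, and transfer of continuity to the weak-$*$ topology on measures) has already been carried out in Lemma \ref{lem,correctCompactness}, there is no genuine obstacle here; the remaining task is merely to unwrap the neighborhood formulation into a sequential liminf statement, which is the standard equivalence for lower semi-continuity in metrizable spaces (both $\Lip_{\id}(M, g_1, g_2)$ with $d_{\mathcal{C}, g_2}$ and $\mathcal{M}^1(\phi^{g_1})$ with weak-$*$ topology are metrizable, the latter because $S^{g_1}M$ is compact metric).
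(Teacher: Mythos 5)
Your proof is correct and takes essentially the same route as the paper, which simply cites Lemma \ref{lem,correctCompactness}; you have just spelled out the routine translation of the neighborhood statement into the sequential liminf characterization of lower semi-continuity.
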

\begin{proof}
    This follows from Lemma \ref{lem,correctCompactness}.
\end{proof}
For a fixed $m_0\in \mathcal{M}^{1}(\phi^{g_1})$, let us denote  $L_{m_0}: \Lip_{id}(M,g_1,g_2)  \longrightarrow \mathbb{R}_{\geq 0}$ as $L_{m_0}(f):=G(f,m_0)$. Then

 \begin{proposition} \label{prop,PropertiesG}
     For each $m_0 \in\mathcal{M}^{1}(\phi^{g_1})$, the map $ f\to L_{m_0}(f)$ is convex. 
\end{proposition}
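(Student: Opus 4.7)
The notion of convexity for $L_{m_0}$ is taken with respect to the geodesic homotopy structure on $\Lip_{\id}(M, g_1, g_2)$ introduced in Proposition \ref{prop,convexityLip}: given $f_0, f_1 \in \Lip_{\id}(M,g_1,g_2)$, we consider the geodesic homotopy $\psi_s$ (for $s \in [0,1]$) connecting $f_0$ to $f_1$ through $g_2$-geodesics pointwise, and the target inequality is
\begin{equation*}
L_{m_0}(\psi_s) \le (1-s)\, L_{m_0}(f_0) + s\, L_{m_0}(f_1), \qquad s \in [0,1].
\end{equation*}
By Proposition \ref{prop,convexityLip}, each $\psi_s$ indeed lies in $\Lip_{\id}(M,g_1,g_2)$, so both sides make sense.

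The plan is to reduce the inequality, pointwise in $v \in S^{g_1}M$, to the convexity of the length functional already established in Proposition \ref{prop,convexityLength}. More precisely, I would first apply Lemma \ref{lem: Reinterpret} (in its $m_0$-version) to rewrite
\begin{equation*}
L_{m_0}(f) = \int_{S^{g_1}M} \norm{Df(v)}_{g_2}\, dm_0(v) = \int_{S^{g_1}M} L_{g_2}\bigl( f(c^{g_1}_v[0,1]) \bigr)\, dm_0(v),
\end{equation*}
valid for any Lipschitz map $f$ homotopic to the identity. This recasts $L_{m_0}$ as the $m_0$-average of a length functional along unit-speed $g_1$-geodesic segments.

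Next, for each fixed $v \in S^{g_1}M$, the curve $\alpha_v := c^{g_1}_v|_{[0,1]}$ is a Lipschitz curve in $\widetilde M$ (after lifting), so Proposition \ref{prop,convexityLength} applies and yields
\begin{equation*}
L_{g_2}\bigl(\psi_s(\alpha_v)\bigr) \le (1-s)\, L_{g_2}\bigl(f_0(\alpha_v)\bigr) + s\, L_{g_2}\bigl(f_1(\alpha_v)\bigr).
\end{equation*}
The right-hand side is a measurable (in fact lower-semicontinuous) function of $v$, and the inequality is pointwise in $v$; integrating both sides against $dm_0(v)$ and invoking the rewriting above then gives the desired convexity inequality.

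The only subtle point worth checking is that the geodesic homotopy $\psi_s$ descends consistently to $M$ so that $L_{g_2}(\psi_s(c^{g_1}_v[0,1]))$ matches the integrand computed from $\|D\psi_s\|_{g_2}$; this is precisely what is guaranteed by the $\Gamma$-equivariance of the geodesic homotopy constructed in Proposition \ref{prop,convexityLip} together with Lemma \ref{lem: Reinterpret}. All other steps are routine consequences of monotone/dominated convergence, so the proof is essentially a direct combination of Lemma \ref{lem: Reinterpret} with Proposition \ref{prop,convexityLength}.
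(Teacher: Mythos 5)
Your proposal is correct and follows essentially the same route as the paper: rewrite $L_{m_0}(f)$ via Lemma \ref{lem: Reinterpret} as the $m_0$-average of $L_{g_2}(f(c^{g_1}_v[0,1]))$, apply the convexity of the length functional along the geodesic homotopy (Proposition \ref{prop,convexityLength}) pointwise in $v$, and integrate. No gaps.
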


 \begin{proof}
    For $R\geq \L(g_1,g_2)$, recall from Section \ref{section, PreliminaryLipMaps} that the sets $ \Lip_{id}(M,g_1,g_2) $ and $\mathcal{A}_R$ are convex.
 Take two maps $\varphi_0$ and $\varphi_1$ in $\mathcal{A}_R$ and connect them by a geodesic $\psi_s$  as in Proposition \ref{prop,convexityLip}. By Proposition \ref{prop,convexityLength},
$$ L_{g_2} ( \psi_s(c_{v}^{g_1}[0,1])) \leq s \cdot L_{g_2} ( \varphi_0(c_{v}^{g_1} [0,1])  +(1-s)L_{g_2} ( \varphi_1(c_{v}^{g_1} [0,1]) $$
for all $v \in S^{g_1}M$. Choose $m  \in\mathcal{M}^{1}(\phi^{g_1})$. Integrating with respect to the probability measure $m$ yields

$$G(\psi_s,m) \leq s G(\varphi_0,m) +(1-s) G(\varphi_1,m).$$
   
 \end{proof}

\begin{proposition} \label{prop,infAchieved}
    Given $m\in \mathcal{M}^1(\phi^{g_1})$, for $R\geq \L(g_1,g_2)$, when restricted to $\mathcal{A}_R$ we have
    $$\inf_{f\in \mathcal{A}_R}L_{m}(f)=\min_{f\in \mathcal{A}_R}L_{m}(f).$$
\end{proposition}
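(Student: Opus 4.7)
The plan is to deduce this as a direct consequence of the compactness of $\mathcal{A}_R$ (Lemma \ref{lemma, HolderCompactEmbedding}) together with the lower semi-continuity of the functional $G$ (Proposition \ref{prop,PropertiesG3}). This is the standard ``lower semi-continuous on a compact set attains its infimum'' argument.

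Concretely, I would first pick a minimizing sequence $f_n \in \mathcal{A}_R$ with $L_m(f_n) \to \inf_{f\in\mathcal{A}_R} L_m(f)$. By Lemma \ref{lemma, HolderCompactEmbedding}, the set $\mathcal{A}_R$ is compact in $(C^0_{\id}(M,M), d_{\mathcal{C},g_2})$, so after passing to a subsequence I may assume $f_n \to f_0$ in $d_{\mathcal{C},g_2}$ for some $f_0 \in C^0_{\id}(M,M)$. A short verification shows $f_0 \in \mathcal{A}_R$: uniform convergence preserves the Lipschitz bound, since for any $x,y \in M$ with $x\neq y$,
$$d_{g_2}(f_0(x), f_0(y)) = \lim_{n\to\infty} d_{g_2}(f_n(x), f_n(y)) \leq R \, d_{g_1}(x,y),$$
so $\Lip(f_0, g_1,g_2) \leq R$.

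Now I would invoke the lower semi-continuity from Proposition \ref{prop,PropertiesG3} (equivalently, Lemma \ref{lem,correctCompactness} applied with the fixed measure $m$) to conclude
$$L_m(f_0) = G(f_0, m) \leq \liminf_{n\to\infty} G(f_n,m) = \liminf_{n\to\infty} L_m(f_n) = \inf_{f\in\mathcal{A}_R} L_m(f).$$
Since the reverse inequality is trivial from $f_0 \in \mathcal{A}_R$, this gives $L_m(f_0) = \inf_{f\in\mathcal{A}_R} L_m(f) = \min_{f\in\mathcal{A}_R} L_m(f)$, as desired.

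The argument is essentially routine; no single step is a genuine obstacle. The one point that required care earlier in the paper, and which is already handled by Lemma \ref{lem,correctCompactness}, is that $G(f,m)$ depends on the \emph{weak} derivative $Df$ which is only defined almost everywhere, so one cannot pass to the limit naively under the integral sign. The approximation of $\int \|Df\|_{g_2}\,dm$ by finite sums $\int \sum_i d_{g_2}(f\circ c_v^{g_1}(t_i^\ell), f\circ c_v^{g_1}(t_{i+1}^\ell))\,dm$ of quantities depending continuously (in fact $R$-Lipschitz) on the pointwise values of $f$ is precisely what bypasses this difficulty and yields the required lower semi-continuity in the $d_{\mathcal{C},g_2}$ topology.
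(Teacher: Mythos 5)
Your argument is correct and is essentially identical to the paper's proof, which likewise combines the compactness of $\mathcal{A}_R$ (Lemma \ref{lemma, HolderCompactEmbedding}) with the lower semi-continuity of $f \mapsto G(f,m)$ (Proposition \ref{prop,PropertiesG3}) to conclude that the infimum is attained. Your extra verification that the limit of a minimizing sequence remains in $\mathcal{A}_R$ is a reasonable elaboration of what the paper leaves implicit in the word ``compact.''
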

\begin{proof}
 Because
 $\mathcal{A}_R$ is compact and $L_{m}(f)=G(f,m)$ is lower semicontinuous with respect to $f$, the infimum is achieved by some $f_0\in \mathcal{A}_R$. 
\end{proof}

 \section{Best Lipschitz maps} \label{section,BLM}

 In this section, we discuss extremal Lipschitz maps and their relation to maximal stretches in certain settings of negatively curved manifolds. We will emphasize the study of stretch loci, which were originally introduced in \cite{GK17} for hyperbolic spaces.
\subsection{Best Lipschitz maps and the maximal stretch }
\hfill\\
We always assume that the Riemannian metrics $g_1$ and $g_2$ are of negative sectional curvature. The Lipschitz constant of a Lipschitz map $f$ with respect to $g_1$ and $g_2$ is

$$\Lip(f, g_1,g_2)= \sup_{\substack{x,x' \in M \mathstrut \\ x\neq x'}} \frac{d_{g_2}(f(x),f(x'))}{d_{g_1}(x,x')}.$$  

Recall that $ \Lip_{\id}(M, g_1, g_2)$ is the set of Lipschitz maps from $(M,g_1)$ to $(M, g_2)$ that are homotopic to the identity. We would like to study extremal Lipschitz maps in this set.
\begin{definition}
We define the \emph{least Lipschitz constant} from $g_1$ to $g_2$ as
$$
L(g_1,g_2) := \inf \{ \Lip(f, g_1,g_2) \mid f\in \Lip_{\id}(M,g_1,g_2) \}.
$$
A Lipschitz map in $ \Lip_{\id}(M,g_1,g_2)$ that realizes the least Lipschitz constant $L(g_1,g_2)$ is called a \emph{best Lipschitz map}.
\end{definition}

\begin{remark}
    The existence of best Lipschitz maps follows from the compactness of $M$ and Arzelà-Ascoli Theorem.
\end{remark}

We also recall for $g_1,g_2 \in R^-(M)$,
the maximal stretch of $g_1$ to $g_2$ is defined by
$$
S(g_1,g_2) = \max_{\mu \in \mathcal{C}(\Gamma)} I_\mu(g_1,g_2)= \max_{\mu \in \mathcal{C}(\Gamma)} \int a_{g_1,g_2}(v) d\hat{m}^{g_1}_{\mu}.
$$

The following relation between $L(g_1,g_2)$ and $S(g_1,g_2)$ is obvious.
\begin{proposition}[\cite{Th98}] \label{prop,LgeqS}
Given $g_1,g_2\in R^-(M)$, the following inequality always holds:
$$
S(g_1, g_2) \le L(g_1,g_2).
$$
\end{proposition}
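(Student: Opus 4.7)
The plan is to use the characterization of the maximal stretch in terms of closed geodesics (Remark \ref{remark, supremumGeodesicStretch}(2)) together with the fact that any Lipschitz map homotopic to the identity sends a closed geodesic to a loop in the same free homotopy class.

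First, I would fix an arbitrary $f \in \Lip_{\id}(M, g_1, g_2)$ and an arbitrary free homotopy class $[\gamma] \in [\Gamma]$. Let $\gamma^{g_1}$ denote the unique $g_1$-closed geodesic in $[\gamma]$ (up to parametrization), so $L_{g_1}(\gamma^{g_1}) = \ell_{g_1}([\gamma])$. Since $f$ is homotopic to the identity, the loop $f \circ \gamma^{g_1}$ lies in the same free homotopy class $[\gamma]$, and therefore
\[
\ell_{g_2}([\gamma]) \;\leq\; L_{g_2}\bigl(f \circ \gamma^{g_1}\bigr),
\]
because $\ell_{g_2}([\gamma])$ is by definition the minimal $g_2$-length in the free homotopy class. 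On the other hand, the Lipschitz property of $f$ gives the elementary length comparison
\[
L_{g_2}\bigl(f \circ \gamma^{g_1}\bigr) \;\leq\; \Lip(f, g_1, g_2) \cdot L_{g_1}(\gamma^{g_1}) \;=\; \Lip(f, g_1, g_2) \cdot \ell_{g_1}([\gamma]).
\]

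Combining these two inequalities and dividing by $\ell_{g_1}([\gamma]) > 0$ yields
\[
\frac{\ell_{g_2}([\gamma])}{\ell_{g_1}([\gamma])} \;\leq\; \Lip(f, g_1, g_2).
\]
Taking the supremum over all $[\gamma] \in [\Gamma]$ on the left-hand side and invoking the identity $S(g_1, g_2) = \sup_{[\gamma] \in [\Gamma]} \ell_{g_2}([\gamma])/\ell_{g_1}([\gamma])$ from Remark \ref{remark, supremumGeodesicStretch}(2), we obtain $S(g_1, g_2) \leq \Lip(f, g_1, g_2)$. Finally, taking the infimum over $f \in \Lip_{\id}(M, g_1, g_2)$ on the right-hand side gives $S(g_1, g_2) \leq L(g_1, g_2)$.

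There is no real obstacle here: the argument is entirely elementary and only uses that Lipschitz maps do not increase length up to their Lipschitz constant and that homotopy preserves free homotopy classes of loops. The only minor point worth mentioning is that one should verify the length comparison $L_{g_2}(f \circ \alpha) \leq \Lip(f, g_1, g_2) \cdot L_{g_1}(\alpha)$ for any Lipschitz curve $\alpha$, which follows directly from the definition of length as a supremum over partitions together with the pointwise Lipschitz bound $d_{g_2}(f(x), f(y)) \leq \Lip(f, g_1, g_2)\, d_{g_1}(x,y)$.
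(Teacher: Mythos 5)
Your proof is correct and is essentially the paper's argument: both reduce to the chain $\ell_{g_2}([\gamma]) \le L_{g_2}(f\circ\gamma^{g_1}) \le \Lip(f,g_1,g_2)\,\ell_{g_1}([\gamma])$ and then use the closed-geodesic characterization of $S(g_1,g_2)$. The only cosmetic difference is that the paper plugs in a best Lipschitz map directly, while you work with an arbitrary $f$ and take the infimum at the end, which slightly cleanly sidesteps the (true, but unneeded) existence of a minimizer.
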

\begin{proof}
Let $ f \colon  (M, g_1)  \to (M, g_2)$ be a best Lipschitz map. Let $\gamma=\gamma^{g_1}$ be any closed geodesic in $(M, g_1)$. Since $f$ is homotopic to the identity, we have
$ [f \circ \gamma ] =[\gamma]$ and 
$$
 \int\limits_0^{ \ell_{g_1}([\gamma])} a_{g_1,g_2} (\dot \gamma(t)) dt=  \ell_{g_2}([\gamma]) \le   L_{g_2}(f \circ \gamma) \le  \ell_{g_1}([\gamma]) \cdot L(g_1,g_2),
 $$
 where $L_{g_2}(f \circ \gamma)$ is the $g_2$-length of the Lipschitz curve $f\circ \gamma$.

 By Remark \ref{remark, supremumGeodesicStretch},
 $$
 S(g_1, g_2) = \sup_{[\gamma]\in [\Gamma]} \frac{1}{  \ell_{g_1}([\gamma])} \int\limits_0^{\ell_{g_1}([\gamma])} a_{g_1,g_2} (\dot \gamma(t)) dt  \le L(g_1,g_2)
 $$ 
 which yields the claim.
\end{proof}

It is an interesting but not easy question to address when the equality holds. A well known theorem of \cite{Th98} states that in the Teichm\"uller space, the maximal stretch alway equals to the least Lipschitz constant. We will have further discussion and provide some partial pictures of this question in negatively curved metrics setting in later subsections. In Section \ref{subsection,NotLamination}, we provide a large class of negatively curved metrics with $S(g_1,g_2)=L(g_1,g_2)$. In Appendix \ref{ExampleS<L}, we will also discuss an example of $g_1,g_2\in R^-(M)$ with $L(g_1,g_2)> S(g_1,g_2)$.

  \subsection{Best Lipschitz maps and the stretch locus}
\hfill\\
In \cite{GK17}, Gu\'{e}ritaud and Kassel introduce the stretch locus for best Lipschitz maps in the setting of hyperbolic spaces $\mathbb{H}^n$ for $n\geq 2$. We consider the same object in variable negatively curved manifolds in this subsection. In the sequel, we will discuss relations of stretch loci with other geometric objects.

\begin{definition} \label{defn, stretchLocus}
   Suppose $g_1,g_2 \in R^-(M)$. For $f\in \Lip_{\id}(M,g_1, g_2)$, we define the \emph{stretch locus} of $f$ from $g_1$ to $g_2$ as,
    $$E_f(g_1, g_2):= \{x\in M| \Lip(f, g_1,g_2)=\Lip_x(f, g_1,g_2) \},$$
    where $\Lip_x(f, g_1,g_2)$ is the \emph{local Lipschitz constant} at $x$ defined by 
    $$\Lip_x(f, g_1,g_2)= \inf_{r>0} \Lip(f|_{\overline{B}^{g_1}_x(r)}, g_1,g_2)$$
    and $\Lip(f|_{\overline{B}^{g_1}_x(r)}, g_1,g_2)$ denotes the Lipschitz constant of $f$ restricted to the closed ball $\overline{B}^{g_1}_x(r)$ of radius $r$ centered at $x\in M$ with respect to the metric $g_1$.
\end{definition}

   We list some useful properties of local Lipschitz constants and of the stretch locus of $f\in \Lip_{\id}(M,g_1, g_2)$.  We refer to \cite[Lemma 2.9]{GK17} for details and proofs.

   Recall that  $\mathcal{C}$ is the space of $\Gamma$-equivariant continuous maps from $\widetilde{M}$ to $\widetilde{M}$.
\begin{remark} \label{remark, LocalLip}
 The following statements hold. 
    \begin{enumerate}
        \item The local Lipschitz constant $ x\to \Lip_x(f, g_1,g_2) $ is upper semicontinuous.
        \item  Consider the unique lift $\widetilde{f}\in \mathcal{C}$ of $f\in \Lip_{\id}(M,g_1, g_2)$ on the universal cover $\widetilde M$. Then for any convex subset $X$ of $(\widetilde M,g_1)$,
        $$\Lip(\widetilde{f}|_{X}, g_1,g_2)=\sup_{x\in X} \Lip_x(\widetilde{f}, g_1,g_2),$$
        where $\widetilde{f}|_{X}$ is the restriction of $\widetilde{f}$ to $X$.
        \item As a consequence of item (2), the local Lipschitz constant of $f$ satisfies
        $$\Lip(f, g_1,g_2)=\sup_{x\in M} \Lip_x(f, g_1,g_2),$$
        and so by item (1),
         $$\Lip(f, g_1,g_2)=\max_{x\in M} \Lip_x(f, g_1,g_2),$$
        \item As a consequence of items (1) and (3), the stretch locus $E_f(g_1, g_2)$ of $f\in \Lip_{\id}(M,g_1, g_2)$ is nonempty and closed.
\end{enumerate}
\end{remark}

Moreover, one defines the stretch locus between two metrics.

\begin{definition}
   Suppose $g_1,g_2 \in R^-(M)$. The stretch locus from $g_1$ to $g_2$ is given by
    \begin{align*}
    E(g_1,g_2):= \bigcap_{\substack{f\in \Lip_{\id}(M,g_1,g_2) \\ \Lip(f,g_1,g_2)=L(g_1,g_2)}} E_f(g_1, g_2).
   \end{align*}
   
    Furthermore, a Lipschitz map $f_0\in \Lip_{\id}(M,g_1,g_2)$ is called an \emph{optimal Lipschitz map} from $g_1$ to $g_2$ if its stretch locus $E_{f_0}(g_1, g_2)$ satisfies 
    $$E_{f_0}(g_1, g_2)=E(g_1,g_2).$$
\end{definition}

\begin{remark} \label{remark, StretchClosed}
    It follows from item (4) of Remark \ref{remark, LocalLip} that the stretch locus $E(g_1,g_2)$ is closed.
\end{remark}

In fact, an optimal Lipschitz map from $g_1$ to $g_2$ always exists when $g_1,g_2 \in R^-(M)$. The following proof is based on Lemma 4.13 of \cite{GK17} about existence of optimal Lipschitz maps in hyperbolic $n$ spaces $\mathbb{H}^n$. The crucial method used here is the barycenter method for probability measures. A good introduction of barycenter theory can be found in Section 4 of \cite{KT03}. 

\begin{proposition}
    Suppose $g_1,g_2 \in R^-(M)$. Then there exists an optimal Lipschitz map $f_0$ from $g_1$ to $g_2$. In particular, the stretch locus $E(g_1,g_2)$ is a nonempty subset of $M$.
\end{proposition}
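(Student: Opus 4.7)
The plan is to construct an optimal Lipschitz map $f_0$ via a barycenter construction on the universal cover, applied to a countable family of best Lipschitz maps extracted via the Lindel\"of property of $M$. I would first observe that the set of best Lipschitz maps $\mathcal{B}:=\{f\in\Lip_{\id}(M,g_1,g_2):\Lip(f,g_1,g_2)=L(g_1,g_2)\}$ is nonempty (by Arzel\`a--Ascoli, as recalled in the paragraph after the definition of $L(g_1,g_2)$) and compact in the uniform topology, hence separable and bounded in $d_{\mathcal{C},g_2}$. Since each $E_f$ is closed (Remark \ref{remark, LocalLip}) and $M$ is a second countable manifold, the open cover $\{M\setminus E_f\}_{f\in\mathcal{B}}$ of $M\setminus E(g_1,g_2)$ admits a countable subcover, producing a sequence $(f_n)_{n\geq 1}\subset\mathcal{B}$ with $\bigcap_{n\geq 1}E_{f_n}=E(g_1,g_2)$.

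Next, I would define $f_0$ as a weighted barycenter of the $f_n$ on the universal cover. By the Cartan--Hadamard theorem, $(\widetilde M,g_2)$ is a CAT$(0)$ metric space, so every probability measure on $\widetilde M$ with finite second moment has a unique barycenter. Let $\widetilde f_n\in\mathcal{C}$ be the unique $\Gamma$-equivariant lift of $f_n$ (Lemma \ref{lem, uniquelift}), and define $\widetilde f_0(x)$ to be the barycenter of $\sum_{n\geq 1}2^{-n}\delta_{\widetilde f_n(x)}$ in $(\widetilde M,g_2)$; the finite second moment follows from the $d_{\mathcal{C},g_2}$-boundedness of $\mathcal{B}$. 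Because $\Gamma$ acts by $g_2$-isometries and isometries commute with the barycenter operation, $\widetilde f_0$ is $\Gamma$-equivariant and descends to a continuous map $f_0:M\to M$ homotopic to the identity (again by Lemma \ref{lem, uniquelift}).

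To conclude, I would invoke the $W_2$-contraction of the barycenter in CAT$(0)$ spaces which, applied to the diagonal transport plan, gives
\begin{equation*}
d_{g_2}^2(\widetilde f_0(x),\widetilde f_0(y))\;\leq\;\sum_{n\geq 1}2^{-n}\,d_{g_2}^2(\widetilde f_n(x),\widetilde f_n(y))
\end{equation*}
for all $x,y\in\widetilde M$. This yields $\Lip(f_0|_{B^{g_1}_x(r)})^2\leq\sum_{n}2^{-n}\Lip(f_n|_{B^{g_1}_x(r)})^2$, and monotone convergence as $r\to 0^+$ gives $\Lip_x(f_0)^2\leq\sum_{n\geq 1}2^{-n}\Lip_x(f_n)^2\leq L(g_1,g_2)^2$, so $f_0\in\mathcal{B}$ by minimality. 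Moreover, if $x\notin E(g_1,g_2)$ then $x\notin E_{f_n}$ for some $n$, so $\Lip_x(f_n)<L(g_1,g_2)$ and the above sum is strictly below $L(g_1,g_2)^2$, giving $x\notin E_{f_0}$; combined with the trivial $E(g_1,g_2)\subseteq E_{f_0}$ this yields $E_{f_0}=E(g_1,g_2)$, and the non-emptiness claim follows from Remark \ref{remark, LocalLip}. The main technical difficulty will be the careful handling of the barycenter estimates in variable curvature---verifying $\Gamma$-equivariance of the lift, finite second moment, and the monotone passage $r\to 0$ in the local Lipschitz estimate---all of which are standard in the CAT$(0)$ toolbox but need adaptation from the constant-curvature setup of \cite{GK17}.
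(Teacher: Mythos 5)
Your proof is correct and follows essentially the same route as the paper: extract a countable family of best Lipschitz maps whose stretch loci intersect exactly to $E(g_1,g_2)$, form their weighted barycenter on the Hadamard manifold $(\widetilde M, g_2)$, and use the barycenter contraction property to conclude that the resulting $\Gamma$-equivariant map descends to a best Lipschitz map with $E_{f_0}=E(g_1,g_2)$. The only (harmless) variations are that you use the Lindel\"of property where the paper uses $\sigma$-compactness of $M\setminus E(g_1,g_2)$, fixed weights $2^{-n}$ justified by uniform boundedness of the set of best Lipschitz maps where the paper chooses rapidly decaying weights, and the $W_2$-contraction where the paper uses the linear contraction $d_{g_2}(b(p_x),b(p_{x'}))\le\sum_k\alpha_k\, d_{g_2}(\widetilde f_{x_k}(x),\widetilde f_{x_k}(x'))$ from \cite[Theorem 6.3]{KT03}.
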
 
\begin{proof}
    Since $E(g_1,g_2)$ is closed, let us consider the open subset $M\setminus E(g_1,g_2)$.  For any $x\in M\setminus E(g_1,g_2)$, by the definition of the stretch locus $E(g_1,g_2)$, there exists a best Lipschitz map $f_x$ and some constant $\varepsilon_x>0$ such that 
    $$\Lip_x(f_x, g_1,g_2) < L(g_1,g_2)-\varepsilon_x<L(g_1,g_2). $$
    
  Since the local Lipschitz constant is upper semicontinuous (Remark \ref{remark, LocalLip}), there exists a small neighborhood $U_x$  so that for any $x'\in U_x$,
 $$\Lip_{x'}(f_x, g_1,g_2) < \Lip_{x}(f_x, g_1,g_2)+\frac{\varepsilon_x}{2}< L(g_1,g_2)-\frac{\varepsilon_x}{2}.$$
    
We take a small geodesic ball $B^{g_1}_r(x)$ inside $U_x$ that is convex with respect to $g_1$(see, for example, \cite[Chapter 3, Proposition 4.2]{DoC92}). By abuse of notation, we denote $U_x$ to be $B^{g_1}_r(x)$ . The family $\{U_x\}_{x\in M}$ forms a covering of $M \setminus E(g_1,g_2)$. Since $M\setminus E(g_1,g_2)$ as an open subset of $M$ is $\sigma$-compact, it is a union of countably many compact subsets. Therefore, there exist countably many open convex subsets $U_i=U_{x_i}$ (corresponding to best Lipschitz maps $f_{x_i}$) that covers $M\setminus E(g_1,g_2)$. 

Since the Lipschitz maps $f_{x_i}$ are homotopic to the identity Lemma \ref{lem, uniquelift} implies that they have unique Lifts with $\widetilde f_{x_i}  \gamma = \gamma\widetilde f_{x_i} $ for all $\gamma \in \Gamma$.
Fix $z_0 \in \widetilde M$. For any $x$ in $\mathcal{F}$, we define  the probability measure on $(\widetilde M,g_2)$ as 
$$p_x:= \sum_{k=1}^{\infty} \alpha_k \delta_{\widetilde f_{\scaleto{x_k}{4pt}}(x)},$$
where $\{ \alpha_k \}_{k\in\mathbb{N}}$ is a sequence of positive real numbers satisfying the following conditions:
\begin{itemize}
    \item $\sum\limits_{k\in\mathbb{N}} \alpha_k =1 $.
    \item $\{ \alpha_k \}_{k\in\mathbb{N}}$ decays fast enough so that
$$\int_{\widetilde M} d_{g_2}^2(y,\widetilde f_{x_1}( z_0)) dp_{z_0}(y)= \sum_{k=1}^{\infty} \alpha_k d^2_{g_2}(\widetilde f_{x_k}(z_0) ,\widetilde f_{x_1}(z_0)) < \infty.$$
\end{itemize}
Using the inequality $(a+b+c)^2\leq 3(a^2+b^2+c^2)$ and the fact that the maps $\widetilde f_{x_k}$ are $L:=L(g_1,g_2)$-Lipschitz we obtain
\begin{align*}
\int_{\widetilde M} & d_{g_2}^2(y,\widetilde f_{x_1}(x)) dp_{x}(y) = \sum_{k=1}^{\infty} \alpha_k d^2_{g_2}(\widetilde f_{x_k}(x) ,\widetilde f_{x_1}(x)) \leq 3\sum_{k=1}^{\infty} \alpha_k \big(d^2_{g_2}(\widetilde f_{x_k}(x) ,\widetilde f_{x_k}(z_0))\\
&+3\sum_{k=1}^{\infty} \alpha_k d^2_{g_2}(\widetilde f_{x_k}(z_0) ,\widetilde f_{x_1}(z_0))+3\sum_{k=1}^{\infty} \alpha_k d^2_{g_2}(\widetilde f_{x_1}(z_0) ,\widetilde f_{x_1}(x))\big) \\
&\le 6 L^2 d_{g_2}^2(x,z_0) + 3\sum_{k=1}^{\infty} \alpha_k d^2_{g_2}(\widetilde f_{x_k}(z_0) ,\widetilde f_{x_1}(z_0)) < \infty.
\end{align*}
Therefore, for all $z, x \in \widetilde M$ we have
\begin{align*}
\int_{\widetilde M} d_{g_2}^2(z,y) dp_x(y) &\le 2 \int_{\widetilde M} d_{g_2}^2(z, \widetilde f_{x_1}( z_0))) dp_x(y) + 2 \int_{\widetilde M} d_{g_2}^2( \widetilde f_{x_1}( z_0)),y) dp_x(y)\\
&\le 2d_{g_2}^2(z, \widetilde f_{x_1}( z_0))) +2 \int_{\widetilde M} d_{g_2}^2( \widetilde f_{x_1}( z_0)),y) dp_x(y) < \infty\\
\end{align*}
Hence
$
F_x: \widetilde{M} \rightarrow \mathbb{R}
$
with 
$$F_x(z) =\int_{\widetilde M} d_{g_2}^2(z,y) dp_x(y)$$ defines a strictly convex function which has a unique minimizer
$b(p_x)$, i.e. $F_x(b(p_x))=\min_{z} F_x(z)$. The element $b(p_x)$ is called the $d^2$-barycenter map of the probability measure $p_x$ (see e.g. Proposition 4.3 of \cite{KT03} for further details).

Since the Lipschitz maps $\widetilde f_{\scaleto{x_k}{4pt}}$ are $\Gamma$-equivariant, one obtains $\gamma_* p_x=p_{\gamma x}$ for any $\gamma\in\Gamma$.  Since the barycenter $b$ is $\Gamma$-equivariant  as well (\cite[Lemma 5.1]{KT03}) we obtain for any $\gamma \in \Gamma$
$$\gamma\widetilde f_0(x) =\gamma(b(p_x))=b(\gamma_* p_x)= b(p_{\gamma x})=\widetilde f_0(\gamma x).$$
Therefore, Lemma \ref{lem, uniquelift} implies that $\widetilde f_0 $ descends to a map $f_0 :M \to M$
homotopic to the identity.
Now we show that $\widetilde f_0 :\widetilde M \to \R$
with 
$$\widetilde f_0(x):= b(p_x)$$ 
defines an optimal map. 
For that we note that nonpositive curvature yields the barycenter contracting property (see \cite[Theorem 6.3]{KT03}),  i.e. for any pair $x, x'$ on $\widetilde M$ we have,
$$d_{g_2}(\widetilde f_0(x),\widetilde f_0(x')) \leq \sum_{k=1}^{\infty} \alpha_k d_{g_2}(\widetilde f_{\scaleto{x_k}{4pt}}(x), \widetilde f_{\scaleto{x_k}{4pt}}(x')) \le L d_{g_1}(x, x')$$
Hence $\widetilde f_0$ is the lift of the best Lipschitz map $f_0$. In particular, the above estimate holds for any lift $ \widetilde U_i$ of the convex ball $U_i \subset M\setminus E(g_1,g_2)$  and therefore implies that for any $ z \in \widetilde U_i$ by Remark \ref{remark, LocalLip},
\begin{align*}
\Lip_z(\widetilde f_0,g_1,g_2)& \leq \sup_{x'\neq x \in \widetilde U_i} \frac{d_{g_2}(\widetilde f_0 (x), \widetilde f_0 (x'))}{d_{g_1}(x,x')}   \\
&\leq \sup_{x'\neq x \in \widetilde U_i}  \sum_{k=1}^{\infty}  \alpha_k \frac{d_{g_2}(\widetilde f_{\scaleto{x_k}{4pt}}(x), \widetilde f_{\scaleto{x_k}{4pt}}(x'))}{d_{g_1}(x,x')}\\
&\leq \sum_{k\neq i}  \alpha_k L(g_1,g_2) + \alpha_i \Lip(f_{x_i}|_{\widetilde{U_i}}, g_1,g_2)\\
&< L(g_1,g_2)- \alpha_i \frac{\varepsilon_{x_i}}{2} < L(g_1,g_2).
\end{align*}
 Hence the stretch locus $E_{f_0}(g_1, g_2)$ of $f_0$ does not intersect $M \setminus E(g_1, g_2) $ which implies $E_{f_0}(g_1, g_2) \subset E(g_1, g_2) $ and $E(g_1, g_2)$. By
 the definition of $E(g_1, g_2) $ we have  $E(g_1, g_2) \subset E_{f_0}(g_1, g_2)$
we conclude $E(g_1,g_2) = E_{f_0}(g_1, g_2)$. In particular
item (4) of Remark \ref{remark, LocalLip} yields that $E(g_1,g_2)$
is closed and nonempty.

\end{proof}

\subsection{The Mather set and the stretch locus}
\hfill\\
When the least Lipschitz constant equals to the maximal stretch, we have the following characterization of the relation between the Mather set and the stretch locus. 
\begin{proposition} \label{proposition,MatherStretchLocus}

   Suppose for $g_1,g_2\in R^{-} (M)$, we have  $$S(g_1,g_2)= L(g_1,g_2).$$
    
    Then the projection of the Mather set $\mathcal{M}(g_1,g_2)$ on $M$ is contained in the stretch locus from $g_1$ to $g_2$, that is
     $$\pi(\mathcal{M}(g_1,g_2)) \subset E(g_1,g_2).$$
\end{proposition}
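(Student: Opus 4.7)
The plan is to prove that for every maximally stretched measure $m$ and every vector $v_0 \in \supp m$, the projection $\pi(v_0)$ lies in the stretch locus $E_f(g_1, g_2)$ of every best Lipschitz map $f \in \Lip_{\id}(M,g_1,g_2)$; the conclusion then follows by taking unions and closures, using that $E(g_1,g_2)$ is closed (Remark~\ref{remark, StretchClosed}). First, fix such an $f$ and $m$. Since $f$ is $L(g_1,g_2)$-Lipschitz, every $g_1$-unit-length geodesic arc has image of $g_2$-length at most $L(g_1,g_2)$, so Lemma~\ref{lem: Reinterpret} gives $\int \|Df\|_{g_2}\,dm \le L(g_1,g_2)$. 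Combined with Corollary~\ref{cor, MinGeqGeodesicStretch}, the definition of $L_m(g_1,g_2)$, and the hypothesis $S(g_1,g_2)=L(g_1,g_2)$, the chain
\[
S(g_1,g_2)=I_m(g_1,g_2)\le L_m(g_1,g_2)\le \int_{S^{g_1}M}\|Df\|_{g_2}\,dm\le L(g_1,g_2)=S(g_1,g_2)
\]
collapses to equalities. Hence Theorem~\ref{prop,GeodesicToGeodesic} applies: the unique $\Gamma$-equivariant lift $\widetilde f$ sends each $g_1$-geodesic $c^{g_1}_{\widetilde v}$ with $v\in \supp m$ onto the $g_2$-geodesic $c^{g_2}_{\widetilde v}$, via a monotone reparametrization $\sigma_v:\R\to\R$ with Lipschitz constant at most $L(g_1,g_2)$.

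The crux is upgrading the $m$-almost everywhere equality to a pointwise statement on all of $\supp m$. By Equation~\eqref{equation Length-distance} in the proof of Theorem~\ref{prop,GeodesicToGeodesic}, for $v\in \supp m$ and $t\in\R$,
\[
\sigma_v(t+1)-\sigma_v(t) = L_{g_2}\bigl(\widetilde f(c^{g_1}_{\widetilde v}[t,t+1])\bigr) = d_{g_2}\bigl(\widetilde f(c^{g_1}_{\widetilde v}(t)),\ \widetilde f(c^{g_1}_{\widetilde v}(t+1))\bigr).
\]
The rightmost expression is a continuous function of $v$ on $S^{g_1}M$, whereas the integrand $v\mapsto L_{g_2}(\widetilde f(c^{g_1}_{\widetilde v}[0,1]))$ is bounded above by $L(g_1,g_2)$ and has integral $L(g_1,g_2)$, hence equals $L(g_1,g_2)$ for $m$-a.e.\ $v$. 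Because every neighborhood of a point of $\supp m$ has positive $m$-measure, the continuous function agrees with $L(g_1,g_2)$ on a dense subset of $\supp m$, and therefore on all of $\supp m$. Since $\supp m$ is $\phi^{g_1}$-invariant, applying this at $\phi^{g_1}_t v_0$ for each $t$ yields $\sigma_{v_0}(t+1)-\sigma_{v_0}(t)=L(g_1,g_2)$ for all $t\in\R$. Combining this periodicity of unit-length increments with the global Lipschitz bound $\sigma'_{v_0}\le L(g_1,g_2)$ a.e.\ forces $\sigma_{v_0}(t)=\sigma_{v_0}(0)+L(g_1,g_2)\,t$ on all of $\R$.

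Consequently, for $v_0\in \supp m$ and sufficiently small $t>0$, the points $p=\pi(\widetilde v_0)$ and $p_t=\pi(c^{g_1}_{\widetilde v_0}(t))$ on $M$ satisfy $d_{g_1}(p,p_t)=t$ (by choosing $t$ below the $g_1$-injectivity radius at $p$), and
\[
d_{g_2}(f(p),f(p_t)) \;=\; d_{g_2}\bigl(\widetilde f(\pi\widetilde v_0),\,\widetilde f(c^{g_1}_{\widetilde v_0}(t))\bigr) \;=\; L(g_1,g_2)\,t
\]
(by choosing $t$ below the $g_2$-injectivity radius at $f(p)$, so that lifting preserves $g_2$-distances). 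Hence $\Lip(f|_{\overline{B}^{g_1}_{p}(r)}, g_1, g_2) \ge L(g_1,g_2)$ for every $r>0$, so $\Lip_{\pi(v_0)}(f,g_1,g_2)=L(g_1,g_2)$ and $\pi(v_0)\in E_f(g_1,g_2)$. Since $f$ was arbitrary, $\pi(v_0)\in E(g_1,g_2)$; since $v_0\in\supp m$ and $m\in MS(g_1,g_2)$ were arbitrary, continuity of $\pi$ and closedness of $E(g_1,g_2)$ yield $\pi(\mathcal M(g_1,g_2))\subset E(g_1,g_2)$.

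The main obstacle is the pointwise-on-support upgrade in the second paragraph. Naively, $m$ is typically singular, so $m$-a.e.\ equality carries no direct topological content; the trick is that Theorem~\ref{prop,GeodesicToGeodesic} rewrites the length $L_{g_2}(\widetilde f(c^{g_1}_{\widetilde v}[0,1]))$ as a manifestly continuous function of $v$ on $\supp m$, after which the flow-invariance of $\supp m$ and the Lipschitz bound turn an integral identity into a rigid linear reparametrization.
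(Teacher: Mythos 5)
Your proposal is correct and follows essentially the same route as the paper's proof: collapse the chain $S=I_m\le L_m\le\int\|Df\|_{g_2}\,dm\le L=S$ to equalities, invoke Theorem~\ref{prop,GeodesicToGeodesic} so that geodesics in $\supp m$ are mapped to geodesics, upgrade the $m$-a.e.\ equality $\|Df\|_{g_2}=L(g_1,g_2)$ to all of $\supp m$ via Equation~\eqref{equation Length-distance} and continuity, and conclude that the local Lipschitz constant at each projected point equals $L(g_1,g_2)$. The only difference is cosmetic: you work directly with maximally stretched measures and spell out the a.e.-to-everywhere upgrade in more detail, whereas the paper phrases it through maximal currents and delegates that upgrade to the proof of Theorem~\ref{prop,GeodesicToGeodesic}.
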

\begin{proof}
Suppose $S(g_1,g_2)= L(g_1,g_2)$. By Lemma \ref{lem, GeodesicStretchLipStretch}, for a maximal current $\mu_0 \in \mathcal{C}^{g_1} (\Gamma)$ and any $f\in \Lip_{\id}(M, g_1, g_2)$,
$$S(g_1,g_2)= I_{\mu_0}(g_1,g_2) \leq  \int_{S^{g_1} M}  \| Df(v) \|_{g_2} d\hat{m}^{g_1}_{\mu_0}.$$
On the other hand, there exists $f_0\in \Lip_{\id}(M, g_1, g_2)$ so that $ L(g_1,g_2) = \Lip(f_0,g_1,g_2)$. Therefore,  
$\norm{Df_0(v)}_{g_2} \le \Lip_{\pi v} (f_0,g_1,g_2) \le L(g_1,g_2)$ for all $v \in S^{g_1} M$ for which $Df_0(v)$ exists. Hence,
$$ S(g_1,g_2)=L(g_1,g_2) = \Lip(f_0,g_1,g_2)\geq \int_{S^{g_1}M} \norm{Df_0(v)}_{g_2} d\hat{m}^{g_1}_{\mu_{0}}.$$
Therefore,
    \begin{equation} \label{equation, maximallyStretchedGeodesics}
        S(g_1,g_2)=\int_{S^{g_1}M} \norm{Df_0(v)}_{g_2} d\hat{m}^{g_1}_{\mu_{0}}= \Lip(f_0,g_1,g_2) .
    \end{equation}
 Furthermore, 
for all  $(\xi,\eta)$ in the support of ${\mu_{0}}$, Theorem \ref{prop,GeodesicToGeodesic} implies that the lift $\widetilde f_0 \in \mathcal{C}$ of $f_0$ takes the $g_1$-geodesic $c_{\xi, \eta}^{g_1}$ to the corresponding $g_2$-geodesic $c_{\xi, \eta}^{g_2}$. Moreover, for $\hat{m}^{g_1}_{\mu_{0}}$ almost every $v$, $\norm{Df_0(v)}_{g_2}= \Lip (f_0,g_1,g_2) = L(g_1,g_2)$. Then Equation \eqref{equation Length-distance} implies, that every geodesic in the support of $\hat{m}^{g_1}_{\mu_{0}} $ of speed one is mapped to a geodesic of speed $L(g_1, g_2) = S(g_1,g_2)$. Therefore, 
$$\pi (\supp \hat{m}^{g_1}_{\mu_{0}}) \subset E_{f_0}.$$
Since this holds for any Lipschitz map that realizes $L(g_1,g_2)$, we obtain,
$$\pi (\supp \hat{m}^{g_1}_{\mu_{0}}) \subset E(g_1,g_2).$$
Since this also holds for any maximal current $\mu_0$ in $\mathcal{C}^{g_1} (\Gamma)$ and since $E(g_1,g_2)$ is a closed subset, 
we obtain $ \pi(\mathcal{M}(g_1,g_2)) \subset E(g_1,g_2)$.
\end{proof}

 \subsection{The stretch locus and geodesic laminations} 
 \label{subsection,NotLamination}
 \hfill\\
This subsection discusses more examples in $R^-(M)$ where the equality $S(g_1,g_2)= L(g_1,g_2)$ holds. It is based on the work of \cite{GK17} which
investigates the relation between stretch loci and \emph{geodesic laminations} in hyperbolic n-space $\mathbb{H}^n$. \emph{(One-dimensional) geodesic laminations} on hyperbolic surfaces were introduced by Thurston and play a central role in Thurston's study of Teichm\"uller spaces and hyperbolic 3-manifolds. We give here its definition for $n$-dimensional closed manifold $M$ with $n\geq 2$.   
\begin{definition}
    A \emph{(one-dimensional)\footnote{In a compact manifold $M$ of dimension $n\geq 3$, it seems that higher dimensional geodesic laminations, namely laminations whose leaves are totally geodesic and of dimension not less than 2, are more rigid and less interesting. For example, in \cite{Zeg91}, it is proved that all leaves of a codimension one lamination of $M$ are compact.} geodesic lamination} $\mathcal{L}$ of $(M,g)$ is a nonempty closed subset of $M$ that consists of a disjoint union of simple complete (closed or bi-infinite) geodesics with respect to $g$.
\end{definition}

The result in \cite{GK17} provides a sufficient condition for the stretch locus $E(g_1,g_2)$ to be a geodesic lamination in the setting of constant negatively--curved metrics, regardless of the dimension of the manifold $M$. The key idea in their proof (\cite[Lemma 5.2]{GK17}) is to show, for an optimal Lipschitz map $f_0: (M,g_1)\to (M,g_2)$, every point of the stretch locus $E_{f_0}(g_1,g_2)=E(g_1,g_2)$ lies on a complete simple geodesic that remains entirely in $E_{f_0}(g_1,g_2)$ and does not intersect other parts of the stretch locus. To establish this, they performed a local analysis around a point $x$ in the strech locus $E_{f_0}(g_1,g_2)$. By considering a small ball $B_x$ centered at $x$, they restrict $f_0$ to the boundary $\partial B_x$, and apply the Kirszbraum--Valentine theorem (\cite[Section 3.1]{GK17}) to obtain a Lipschitz extension $\overline{f_0}: \overline{B_x} \to M$ that minimizes the Lipschitz constant. Optimality then ensures $\overline{f_0}$ still has Lipschitz constant equal to $L(g_1,g_2)$. Next, by a clever argument based on triangle comparison (i.e. Toponogov's theorem), they show that there exist only two diametrically opposite points on $\partial B_x$ so that the geodesic segment connecting them is maximally stretched by $\overline{f_0}$, with stretch factor equal to the Lipschitz constant $L(g_1,g_2)$. By iterating this process and extending this geodesic segment, they construct a complete simple geodesic through $x$ that lies entirely in the stretch locus.

Their argument does not rely on hyperbolic geometry involving high symmetry, in contrast to the original work of Thurston \cite{Th98}. Their proofs are based on classical tools in geometry of negatively curved manifolds, specifically the Kirszbraum--Valentine theorem and the Toponogov's theorem. Consequently, it is not hard to convince oneself that their argument can carry over to variable negative curvature under certain curvature assumptions, as will be described in the following theorem.

For a smooth Riemannian metric $g$ on $M$, we denote
$$K^-_g(p) := \min\limits_{\substack{\Pi \subset T_{p} M}} K_{g}(\Pi), \qquad  K^+_g(p) := \max\limits_{ \Pi \subset T_{p} M} K_{g}(\Pi),$$
and
$$K^-_g := \min\limits_{p\in M} K^-_{g}(p), \qquad  K^+_g := \max\limits_{p\in M} K^+_{g}(p),$$
where $\Pi$ are $2$-dimensional planes in $T_{p}M$ and $K_{g}(\Pi)$ are sectional curvatures for $\Pi$ with respect to $g$.

\begin{theorem}[see \cite{GK17} Theorem 5.1, Lemma 5.2] \label{GK, geodesicLamination}
Suppose $g_1,g_2 \in R^-(M)$ satisfy 
   \begin{equation}\label{eqtn,LaminationConditions}
        0 < \frac{K^-_{g_1}}{K^+_{g_2}}<L(g_1,g_2)^2, 
        \end{equation}  
    Then 
    $$S(g_1,g_2) = L(g_1,g_2).$$
    Moreover, $E(g_1,g_2)$ is a geodesic lamination and each leaf of $E(g_1,g_2)$ is maximally stretched by an optimal Lipschitz map $f_0$, in the sense that the lift $\widetilde{f_0}\in\mathcal{C}$ of $f_0$ multiplies all distances of lifts of leaves of $E(g_1,g_2)$ on $(\widetilde{M},g_1)$ by $L(g_1,g_2)$. 
\end{theorem}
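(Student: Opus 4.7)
The plan is to follow the strategy of Guéritaud–Kassel in \cite{GK17}, adapting each step from constant curvature to variable negative curvature, where the curvature hypothesis \eqref{eqtn,LaminationConditions} is exactly what is needed for their comparison arguments to survive. Throughout, fix an optimal Lipschitz map $f_0$ realizing $L:=L(g_1,g_2)$, so that $E_{f_0}(g_1,g_2)=E(g_1,g_2)$. The heart of the proof is to show that each $x\in E(g_1,g_2)$ lies on a complete simple $g_1$-geodesic that is contained in $E(g_1,g_2)$ and whose lift is dilated by the factor $L$ under $\widetilde{f_0}$. Once this is established, $S(g_1,g_2)=L(g_1,g_2)$ will follow by integrating $a_{g_1,g_2}$ against any $\phi^{g_1}$-invariant probability measure supported on such a leaf, since on such leaves $a_{g_1,g_2}\equiv L$ and therefore the geodesic stretch equals $L$, forcing $S(g_1,g_2)\ge L$, and the reverse inequality is Proposition~\ref{prop,LgeqS}.

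The main analytic tool is a Kirszbraum–Valentine type extension result: given a small $g_1$-convex geodesic ball $B=B^{g_1}_r(x)$, the restriction $f_0|_{\partial B}$ admits an $L$-Lipschitz extension $\bar f:\overline{B}\to (\widetilde M,g_2)$; replacing $f_0$ on $B$ by $\bar f$ produces a competitor in $\Lip_{\id}(M,g_1,g_2)$ with Lipschitz constant $\le L$, hence equal to $L$. Because $f_0$ is \emph{optimal}, the stretch locus of this modified map still contains $x$. Thus there exist sequences $x_n,y_n\in\overline B$ with $x_n,y_n\to x$ realizing the Lipschitz ratio $L$ in the limit. The central geometric lemma (the variable curvature analogue of \cite[Lemma~5.2]{GK17}) then says that under the curvature pinching \eqref{eqtn,LaminationConditions}, such maximally stretched configurations are forced to occur only along a \emph{unique} diameter of $B$: this is proved by applying Toponogov triangle comparison in $(\widetilde M,g_1)$ (using the lower bound $K^-_{g_1}$) and in $(\widetilde M,g_2)$ (using the upper bound $K^+_{g_2}$) to relate the image triangle $\triangle(\bar f(x_n),\bar f(y_n),\bar f(z))$ to its model triangle, and then showing that the only way the Lipschitz ratio can be saturated is if $x$ lies strictly between $x_n$ and $y_n$ on a $g_1$-geodesic segment which $\widetilde{f_0}$ maps isometrically-up-to-scale by $L$ onto a $g_2$-geodesic segment. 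The pinching constant $L^2$ appearing in \eqref{eqtn,LaminationConditions} is precisely the threshold that prevents the image triangle from "collapsing" in a way that would allow non-collinear stretch directions.

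The hard step, and the one requiring the variable-curvature adaptation, is the above local uniqueness of the stretch direction: in constant curvature $\mathbb{H}^n$ one has explicit hyperbolic trigonometry, whereas here one must combine Toponogov/Rauch comparison carefully on both source and target, using that the ratio of extremal curvatures is strictly smaller than $L^2$ to get a strict inequality in the comparison estimate. Granted this, a standard iteration argument extends the maximally stretched diameter of $B$ to a complete $g_1$-geodesic $\gamma_x\subset E(g_1,g_2)$ through $x$: one enlarges the ball, reapplies the local lemma at the endpoints, and shows that the new maximally stretched diameter must agree with the old one on their overlap (by the same uniqueness lemma), producing a well-defined bi-infinite geodesic, whose $\widetilde{f_0}$-image is a $g_2$-geodesic dilated by $L$.

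It remains to see that the family $\{\gamma_x\}_{x\in E(g_1,g_2)}$ forms a geodesic lamination, i.e.\ distinct $\gamma_x$'s are disjoint and the union is closed. Disjointness follows again from local uniqueness: if two maximally stretched leaves crossed transversally at some $p$, one would obtain two distinct diameters of a small ball around $p$ both realizing the ratio $L$, contradicting the local lemma. Closedness of $\bigcup_x\gamma_x$ follows from closedness of $E(g_1,g_2)$ (Remark~\ref{remark, StretchClosed}) together with lower semicontinuity of the local Lipschitz constant and continuity of $\widetilde{f_0}$: a limit of maximally stretched geodesics is maximally stretched. Finally, having exhibited the geodesic lamination, pick any periodic or recurrent leaf and push forward the arc-length measure along it to produce $m\in\mathcal{M}^1(\phi^{g_1})$ with $\supp m\subset E(g_1,g_2)$ and $a_{g_1,g_2}\equiv L$ on $\supp m$; then $I_m(g_1,g_2)=L$, giving $S(g_1,g_2)\ge L$, and with Proposition~\ref{prop,LgeqS} this yields $S(g_1,g_2)=L(g_1,g_2)$, completing the proof.
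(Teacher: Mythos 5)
Your outline is correct in substance, but it takes the long route: you re-derive the Gu\'eritaud--Kassel machinery directly under the pinching hypothesis \eqref{eqtn,LaminationConditions}, whereas the paper disposes of the theorem with a one-line scaling reduction. Concretely, the paper sets $g_1'=\lambda g_1$, notes that $K^-_{g_1'}=K^-_{g_1}/\lambda$ and $L(g_1',g_2)=L(g_1,g_2)/\sqrt{\lambda}$, and chooses $\lambda$ with $K^-_{g_1}/K^+_{g_2}\le\lambda<L(g_1,g_2)^2$ --- possible exactly under \eqref{eqtn,LaminationConditions} --- so that the pair $(g_1',g_2)$ lands in the normalized situation $L(g_1',g_2)>1$ and $K^-_{g_1'}\ge K^+_{g_2}$, where Theorem 5.1 and Lemma 4.6 of \cite{GK17} apply; since a constant rescaling changes neither geodesics nor stretch loci and multiplies $S$ and $L$ by the same factor, the conclusions descend to $(g_1,g_2)$. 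The advantage of the scaling trick is that the curvature hypothesis is consumed in a single place (it is precisely what makes the Kirszbraun--Valentine/Lang--Schroeder extension of a $1$-Lipschitz map from $(\widetilde M,g_1')$ to $(\widetilde M,g_2)$ available), rather than being threaded through every Toponogov comparison as in your version. To be fair to you, the paper's citation still relies on the assertion that \cite{GK17}'s arguments carry over from constant to variable negative curvature, so your sketch is essentially the content the paper leaves implicit; but if one is willing to grant that assertion, the scaling reduction is the efficient proof.

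Two smaller points in your last step deserve correction. First, you claim $a_{g_1,g_2}\equiv L$ pointwise on the leaves; what the dilation property actually gives is $d_{g_2}\bigl(\widetilde f_0\pi(v),\widetilde f_0\pi(\phi^{g_1}_t v)\bigr)=Lt$, and to conclude $I_m(g_1,g_2)=L$ one should pass through the bounded-displacement estimate of Lemma \ref{lem,Busem-distance-Lip} together with Corollary \ref{corollary, GeodesicStretchBusemannFunction}; the pointwise identity for the infinitesimal time change is neither immediate nor needed. Second, ``pick any periodic or recurrent leaf'' is not justified --- a geodesic lamination need not contain a closed leaf --- and should be replaced by: the unit tangents to the leaves form a nonempty compact $\phi^{g_1}$-invariant subset of $S^{g_1}M$, so Krylov--Bogolyubov supplies an invariant probability measure supported there, and any such measure has geodesic stretch $L$, which combined with Proposition \ref{prop,LgeqS} gives $S(g_1,g_2)=L(g_1,g_2)$.
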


\begin{proof}
    Consider $g'_1=\lambda g_1$ for some positive constant $\lambda$. The following conditions are sufficient to apply Theorem 5.1 and then Lemma 4.6 of \cite{GK17} to obtain the desired results in the above statement for Riemannian metrics $g'_1$ and $g_2$:
    $$L(g'_1, g_2) > 1,$$
    $$0> K^-_{g_1} \geq  K^+_{g_2}.$$

    These are equivalent to 
    $$L(g_1, g_2) > \lambda^{\frac{1}{2}} \geq \bigg(\frac{K^-_{g_1}}{K^+_{g_2}}\bigg)^{\frac{1}{2}} >0. $$
    This yields the conditions in the statement.
\end{proof}
Their theorem can be stated in a stronger form as follows.
\begin{corollary}[\cite{GK17}, Theorem 5.1, Lemma 5.2]  \label{Cor, geodesicLamination}
Suppose $g_1,g_2 \in R^-(M)$ and $f_0$ is an optimal Lipschitz map from $(M,g_1)$ to $(M,g_2) $. Suppose for all points $p\in M$,

    $$0> K^-_{g_1}(p)> K^+_{g_2}(f_0(p)) \cdot L(g_1,g_2)^2. $$
    Then 
    $$S(g_1,g_2) = L(g_1,g_2).$$
    Moreover, $E(g_1,g_2)$ is a geodesic lamination that is maximally stretched by some optimal Lipschitz map as described in Theorem \ref{GK, geodesicLamination}.
\end{corollary}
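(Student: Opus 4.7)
The plan is to deduce this corollary from the same local argument underpinning Theorem \ref{GK, geodesicLamination}, observing that the geometric input of \cite[Lemma 5.2]{GK17}, namely the Kirszbraum--Valentine extension combined with a Toponogov triangle comparison, is purely local around each point $x \in E_{f_0}(g_1,g_2)$. In their proof only the values of the sectional curvatures of $g_1$ in a small neighbourhood of $x$ and of $g_2$ in a small neighbourhood of $f_0(x)$ enter the comparison. Hence a pointwise inequality relating these two quantities at corresponding pairs $(p, f_0(p))$ is exactly the right hypothesis, and the global bound in Theorem \ref{GK, geodesicLamination} was only used as a uniform substitute.

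The first step is the same rescaling trick used in the proof of Theorem \ref{GK, geodesicLamination}: set $g_1' = \lambda g_1$ and note that sectional curvatures rescale as $K_{\lambda g_1} = K_{g_1}/\lambda$ while the least Lipschitz constant rescales as $L(\lambda g_1, g_2) = L(g_1,g_2)/\sqrt{\lambda}$. The hypothesis rewrites as $K^-_{g_1}(p)/K^+_{g_2}(f_0(p)) < L(g_1,g_2)^2$ for every $p$ (dividing by the negative quantity $K^+_{g_2}(f_0(p))$ flips the inequality). Because $M$ is compact and $p \mapsto K^-_{g_1}(p)/K^+_{g_2}(f_0(p))$ is continuous and bounded away from $L(g_1,g_2)^2$, there exists $\lambda$ with
\[
\sup_{p \in M} \frac{K^-_{g_1}(p)}{K^+_{g_2}(f_0(p))} \le \lambda < L(g_1,g_2)^2,
\]
and for such $\lambda$ the rescaled data satisfy $L(g_1', g_2) > 1$ together with the pointwise comparison $K^-_{g_1'}(p) \ge K^+_{g_2}(f_0(p))$ for all $p$. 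Since a best Lipschitz map for $(g_1, g_2)$ is also a best Lipschitz map for $(g_1', g_2)$ up to the time change of the metric, $f_0$ remains optimal after rescaling and it suffices to work under these conditions.

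Next I would replay the argument of \cite[Lemma 5.2]{GK17} with the pointwise bound. Fix $x \in E_{f_0}(g_1', g_2)$ and choose a small geodesic ball $B = B^{g_1'}_r(x)$ that is $g_1'$-convex, small enough that $f_0(\overline B)$ lies in a geodesic ball around $f_0(x)$ on which the $g_2$-sectional curvatures are still bounded above by the value of $K^-_{g_1'}$ on $B$; this is possible by continuity of $f_0$ and of the curvature functions. Applying Kirszbraum--Valentine on $\overline B$ as in \cite{GK17} produces an extension that keeps the Lipschitz constant, and Toponogov's theorem, applied with the two local curvature bounds (upper bound for $g_2$ at $f_0(x)$, lower bound for $g_1'$ at $x$), yields as in \cite{GK17} a unique pair of antipodal boundary points whose connecting geodesic segment through $x$ is stretched by the factor $L(g_1', g_2)$. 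Iterating forward and backward produces a complete simple $g_1'$-geodesic through $x$ contained in $E_{f_0}(g_1',g_2)$ and maximally stretched by $\widetilde{f_0}$. This gives the lamination structure of $E(g_1,g_2) = E_{f_0}(g_1,g_2)$ and the maximal stretching statement; the equality $S(g_1,g_2) = L(g_1,g_2)$ then follows by integrating the infinitesimal time change against any invariant probability measure supported on a lift of such a leaf, as in Theorem \ref{GK, geodesicLamination}.

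The main obstacle I expect is verifying that the pointwise curvature bound persists uniformly on the relevant neighbourhoods used in the local comparison argument. Concretely, the Toponogov step in \cite{GK17} requires a genuine inequality between the $g_1'$-curvatures on $B$ and the $g_2$-curvatures on $f_0(B)$, not merely at $(x, f_0(x))$. Resolving this requires a careful use of continuity of $f_0$ together with the strict pointwise inequality in the hypothesis to shrink $B$ while preserving the comparison; this is straightforward on compact $M$ but is the only place where the argument genuinely departs from the global version in \cite{GK17}.
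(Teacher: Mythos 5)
Your route is essentially the one the paper takes: the paper offers no independent proof of this corollary (it is stated as a direct citation of \cite{GK17}), and its proof of Theorem \ref{GK, geodesicLamination} is exactly your rescaling reduction $g_1' = \lambda g_1$ followed by an appeal to the locality of the Kirszbraum--Valentine/Toponogov argument of \cite[Lemma 5.2]{GK17}, which the paper only sketches in prose. One small correction: since the comparison step needs $\inf_B K_{g_1'} \ge \sup_{f_0(B)} K_{g_2}$ on genuine neighbourhoods (the obstacle you yourself flag), you should choose $\lambda$ \emph{strictly} larger than $\sup_{p} K^-_{g_1}(p)/K^+_{g_2}(f_0(p))$ (possible because this supremum is attained and is $< L(g_1,g_2)^2$ by the strict hypothesis and compactness); taking $\lambda$ equal to the supremum, as your ``$\le \lambda$'' allows, produces equality $K^-_{g_1'}(p) = K^+_{g_2}(f_0(p))$ at some point and the neighbourhood version of the inequality can then fail.
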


Theorem \ref{GK, geodesicLamination} permits some deformation in $R^-(M)$ because of the following proposition. For this porposition, we can equip $R^-(M)$ with the $C^{\infty}$ topology (in fact, any $C^k$ topology for $k\geq 2$ suffices). 

\begin{proposition} \label{prop,LaminationConditions}
    The set given by
 \begin{align*}
       \mathcal{R}:=\bigg\{ (g_1,g_2)\in R^-(M) \times R^-(M) \text{ }|\text{ } 0 < \frac{K^-_{g_1}}{K^+_{g_2}}<L(g_1,g_2)^2 \bigg\}
 \end{align*}
        is open in $R^-(M) \times R^-(M)$. Therefore, there exists an open set $U$ in $R^-(M) \times R^-(M)$ such that for $(g_1,g_2)\in U$, the stretch locus $E(g_1,g_2)$ is a geodesic lamination that is maximally stretched by some optimal Lipschitz map as described in Theorem \ref{GK, geodesicLamination} and $S(g_1,g_2)=L(g_1,g_2)$.  
\end{proposition}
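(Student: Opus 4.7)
The plan is to deduce openness of $\mathcal{R}$ from a three-part continuity argument that combines continuity of sectional curvature bounds with the previously established continuity of the maximal stretch. Fix on $R^-(M)$ the $C^k$-topology for $k$ large enough that both sectional curvatures and the Busemann cocycle depend continuously on the metric (so that Proposition~\ref{propositionGK, GeodesicStretchContinuous} applies). First, I would verify that the maps $g \mapsto K^-_g$ and $g \mapsto K^+_g$ are continuous on $R^-(M)$: the sectional curvature assembles into a continuous function $K: R^-(M) \times \mathrm{Gr}_2(TM) \to \mathbb{R}$, $(g, \Pi) \mapsto K_g(\Pi)$, on the compact Grassmann bundle $\mathrm{Gr}_2(TM)$ of $2$-planes in $TM$, and taking pointwise extrema over $\mathrm{Gr}_2(TM)$ and then over the compact base $M$ preserves continuity in $g$. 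I would also note that $R^-(M)$ is open in the space of all smooth Riemannian metrics, since $K^+_g < 0$ is itself an open condition.

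Next, fix $(g_1^0, g_2^0) \in \mathcal{R}$. By Theorem~\ref{GK, geodesicLamination}, the defining inequality of $\mathcal{R}$ combined with $S = L$ on $\mathcal{R}$ yields
$$0 < \frac{K^-_{g_1^0}}{K^+_{g_2^0}} < L(g_1^0, g_2^0)^2 = S(g_1^0, g_2^0)^2.$$
By Proposition~\ref{propositionGK, GeodesicStretchContinuous}, the map $(g_1, g_2) \mapsto S(g_1, g_2)^2$ is continuous; by Proposition~\ref{prop,LgeqS}, $L(g_1, g_2) \geq S(g_1, g_2)$ always. Combining these with the continuity of $g \mapsto K^-_g$ and $g \mapsto K^+_g$ from the first step, I can choose a $C^k$-neighborhood $V$ of $(g_1^0, g_2^0)$ contained in $R^-(M) \times R^-(M)$ on which
$$0 < \frac{K^-_{g_1}}{K^+_{g_2}} < S(g_1, g_2)^2 \leq L(g_1, g_2)^2 \qquad \text{for all } (g_1, g_2) \in V.$$
This shows $V \subset \mathcal{R}$, so $\mathcal{R}$ is open.

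For the second assertion, I would simply take $U := \mathcal{R}$ and apply Theorem~\ref{GK, geodesicLamination} pointwise on $U$: every $(g_1, g_2) \in U$ satisfies \eqref{eqtn,LaminationConditions}, hence $S(g_1, g_2) = L(g_1, g_2)$ and the stretch locus $E(g_1, g_2)$ is a geodesic lamination maximally stretched by some optimal Lipschitz map.

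The argument is essentially soft and has no serious obstacle; the only technical care required is to fix a $C^k$-topology on $R^-(M)$ strong enough to make both curvature bounds and the Busemann-cocycle-based geodesic stretch simultaneously continuous, and to observe that the strict inequality in the definition of $\mathcal{R}$ is manifestly robust under such small perturbations.
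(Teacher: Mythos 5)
Your proposal is correct and follows essentially the same route as the paper: both proofs use Theorem \ref{GK, geodesicLamination} to replace $L$ by $S$ at the base point, then combine continuity of the maximal stretch (Proposition \ref{propositionGK, GeodesicStretchContinuous}), the inequality $S \leq L$ (Proposition \ref{prop,LgeqS}), and continuity of the curvature extrema to propagate the strict inequality to a neighborhood. The only difference is cosmetic: you spell out the continuity of $g \mapsto K^{\pm}_g$ via the compact Grassmann bundle, where the paper simply asserts it from the fact that curvature depends continuously on the $2$-jet of the metric.
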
 
\begin{proof}
    Suppose $(g_1,g_2)\in \mathcal{R}$. We want to show if $(g_1',g_2')$ is sufficiently close to $(g_1,g_2)$ in $R^-(M) \times R^-(M)$, then $(g_1',g_2')$ is also in $\mathcal{R}$. The fact that $(g_1,g_2)\in \mathcal{R}$ implies we can find small $\varepsilon>0$ so that, 
    
   $$L(g_1,g_2)^2- \frac{K^-_{g_1}}{K^+_{g_2}} > \varepsilon>0  . $$

     Sectional curvatures are second variation of metric tensors. For $i=1,2$, when a smooth Riemannian metric  $g_i'$ in $R^-(M)$ is in a sufficiently small open neighborhood of $g_i$ (with respect to the topology from $C^k$-norms for $k\geq 2$), we have
     $$\bigg|\frac{K^-_{g_1}}{K^+_{g_2}}  - \frac{K^-_{g'_1}}{K^+_{g'_2}} \bigg|\leq \frac{\varepsilon}{2}.$$
    By Theorem \ref{GK, geodesicLamination}, we know $S(g_1,g_2)=L(g_1,g_2)$ for $(g_1,g_2)\in \mathcal{R}$. And by Proposition \ref{propositionGK, GeodesicStretchContinuous}, the maximal stretch $S(\cdot,\cdot)$ is continuous. So after adjusting $g_i'$ to be even closer to $g_i$, we can make sure
    $$\bigg|S(g'_1,g'_2)^2-S(g_1,g_2)^2\bigg| \leq \frac{\varepsilon}{2}.$$
    Therefore, we obtain by Proposition \ref{prop,LgeqS}
\begin{align*}
L(g'_1,g'_2)^2&\geq S(g'_1,g'_2)^2 \geq S(g_1,g_2)^2- \frac{\varepsilon}{2}\\
&=L(g_1,g_2)^2-\frac{\varepsilon}{2}\\
&> \frac{K^-_{g_1}}{K^+_{g_2}}+ \frac{\varepsilon}{2} \geq \frac{K^-_{g'_1}}{K^+_{g'_2}}.
\end{align*}

So $(g'_1,g'_2)$ also satisfies  inequality \eqref{eqtn,LaminationConditions} and $(g'_1,g'_2)\in \mathcal{R}$. As a consequence, $E(g'_1,g'_2)$ is a geodesic lamination and $L(g'_1,g'_2)=S(g'_1,g'_2)$. We obtain the desired result.
\end{proof}

 When the dimension of $M$ is $n=2$, the above discussion provides abundant examples of stretch loci being geodesic laminations.

\begin{remark} [Laminations may survive in $R^-(S)$] \label{remark,LaminationSurvive} 
    Let $M=S$ be a closed surface of genus $\mathcal{G}\geq 2$. Take $g_1$ and $g_2$ to be hyperbolic metrics representing different points $[g_1]$ and $[g_2]$ in the Teichm\"uller space $\mathcal{T}(S)$. From \cite{Th98}, we know  
    $$S(g_1,g_2)^2=L(g_1,g_2)^2 >1 = \frac{K^-_{g_1}}{K^+_{g_2}}.$$
    Lift the set  
    \begin{align*}
    \mathcal{T}^{(2)}(S)&:=\mathcal{T}(S) \times \mathcal{T}(S)\setminus \diag 
    \end{align*}
    to a subset of $R^-(S)\times R^-(S)$ and denote it as  $\widetilde{ \mathcal{T}^{(2)}(S)}$. The proposition \ref{prop,LaminationConditions} then implies that there exists a nonempty open neighborhood $U$ of $\widetilde{ \mathcal{T}^{(2)}(S)}$ in $R^-(S)\times R^-(S)$ so that for $(g_1,g_2) \in U$, the stretch locus $E(g_1,g_2)$ is still a geodesic lamination and $L(g_1,g_2)=S(g_1,g_2)$. 
\end{remark}

\subsection{Example of a stretch locus which is not a geodesic lamination}
\hfill\\
Let $M=S$ be a closed surface of genus $\mathcal{G} \geq 2$. In this subsection, we provide an example of metrics $g_1,g_2 \in R^-(S)$ for which the stretch locus $E(g_1,g_2)$ is not a geodesic lamination. 

 In the Teichm\"uller space $\mathcal{T}(S)$, the equality $L(g_1,g_2)=S(g_1,g_2)$ always holds and the geodesic flow $\phi^{g_1}$ on the Mather set $\mathcal{M}(g_1,g_2)$ always has zero topological entropy. In contrast, as a further observation from this example, we show that the topological entropy of the geodesic flow $\phi^{g_1}$ on the Mather set for general $g_1,g_2\in R^-(S)$ can be positive.

\begin{example}[positive entropy example] 
\label{MatherNotLamination} Suppose $M=S$ is a genus $\mathcal{G} =2$ closed surface and $g_1$ is a hyperbolic metric on $M$. Let $M_1\subset (M,g_1)$ be a subsurface bounded by a separating simple closed geodesic $\gamma^{g_1}_0$ (see Figure \ref{fig: figure2}).  Let $a>1$ be a constant. Consider a metric $g_2=\varphi g_1$ which is conformal to $g_1$, where $\varphi: M \to (0,\infty)$ is a smooth function that satisfies the following conditions:
\begin{enumerate}
    \item $\varphi(x)=a \hspace{4cm} \text{  for } x\in M_1$.
     \item 
     $0< \varphi(x)< a \hspace{3.25cm} \text{ for } x\in M \backslash M_1$.
     \item 
     $K_{g_2}(x)= \frac{1}{\varphi(x)}(-1-\Delta_{g_1} \log \varphi )<0 \hspace{0.6cm} \text{ for } x\in M$,
\end{enumerate}
  where $\Delta_{g_1} f = \textnormal{div} \circ \grad f$ is the negative Laplacian Beltrami operator with respect to $g_1$.
Then the stretch locus $E(g_1,g_2)$ (and the projection of the Mather set $\pi (\mathcal{M}(g_1,g_2))$) are not geodesic laminations. Moreover,
\begin{itemize}
    \item the topological entropy of $\phi^{g_1}$ on the Mather set $\mathcal{M}(g_1,g_2)$ satisfies
$$h_{top}(\phi^{g_1},\mathcal{M}(g_1,g_2))>0.$$ 
    \item  The Hausdorff dimension of $\mathcal{M}(g_1,g_2)$ is
    $$
     Hd(\mathcal{M}(g_1,g_2))  =2h_{top}(\phi^{g_1},\mathcal{M}(g_1,g_2))+1>1.$$
\end{itemize} 
 \end{example}

\begin{figure}[!htb]
   \centering    \includegraphics[width=\textwidth]{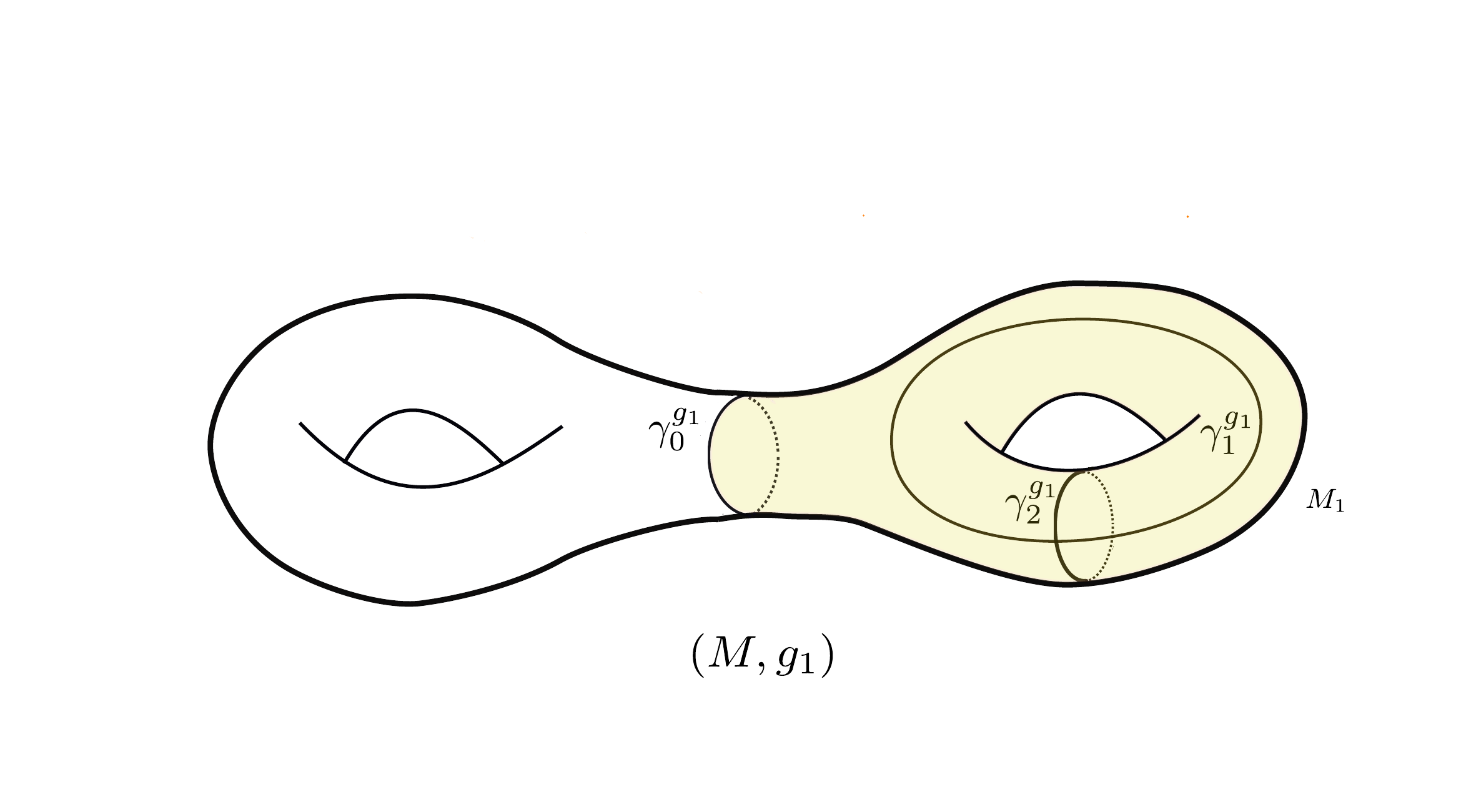}
    \caption{$(M,g_1)$ is a closed hyperbolic surface of genus 2. The yellow part $M_1$ is a submanifold of $(M,g_1)$ bounded by a separating simple closed $g_1$-geodesic $\gamma^{g_1}_0$. The generators of the fundamental group $\pi_1(M_1)$ is represented by two simple closed $g_1$-geodesics $\gamma^{g_1}_1$ and $\gamma^{g_1}_2$ in the figure.} 
    \label{fig: figure2}
\end{figure}

\begin{proof}

Suppose $\gamma:[0,1]\to M_1$ is a closed geodesic with respect to $g_1$. Then it is also a closed geodesic with respect to $g_2$ and 
$$\ell_{g_2}([\gamma])=\int_0^1 g_2(\dot{\gamma}(t),\dot{\gamma}(t))^{\frac{1}{2}}dt= \sqrt{a}\int_0^1 g_1(\dot{\gamma}(t),\dot{\gamma}(t))^{\frac{1}{2}}dt=\sqrt{a}\ell_{g_1}([\gamma]). $$

On the other hand, if $\gamma:[0,1]\to M$ is a closed geodesic with respect to $g_1$ not entirely lying in $M_1$, then we know
\begin{align*} 
\ell_{g_2}([\gamma])&\leq L_{g_2}(\gamma)=\int_0^1 \varphi^{\frac{1}{2}}(\gamma(t))g_1(\dot{\gamma}(t),\dot{\gamma}(t))^{\frac{1}{2}}dt\\
&<\sqrt{a}\int_0^1 g_1(\dot{\gamma}(t),\dot{\gamma}(t))^{\frac{1}{2}}dt=\sqrt{a}L_{g_1}(\gamma)= \sqrt{a}\ell_{g_1}([\gamma]). 
\end{align*}

Together, we obtain 
$$S(g_1,g_2)=\sup_{[\gamma]\in[\Gamma]} \frac{\ell_{g_2}([\gamma])}{\ell_{g_1}([\gamma])}=\sqrt{a}.$$

We also know that the identity map satisfies
$$\text{Lip}(\id, g_1, g_2)=\max_{v\in S^{g_1}M}\frac{||v||_{g_2}}{||v||_{g_1}}=\max_{v\in S^{g_1}M} \varphi(\pi(v) )=\sqrt{a}.$$
As a consequence,  $E_{\id}(g_1,g_2)= M_1$ and also
$$S(g_1,g_2)=\text{Lip}(\id,g_1,g_2)= L(g_1,g_2)$$ and the identity map is a best Lipschitz map.  Moreover, from Proposition \ref{proposition,MatherStretchLocus}, we conclude
$$\pi(\mathcal{M}(g_1,g_2)) \subset E(g_1,g_2)\subset E_{\id}(g_1,g_2)= M_1.$$

From previous arguments, we know that $\pi(\mathcal{M}(g_1,g_2))$ contains all closed $g_1$-geodesics on $M_1$. We conclude that the sets $\pi(\mathcal{M}(g_1,g_2))$ and $E(g_1,g_2)$ are not geodesic laminations. \footnote{This example does not satisfy the condition in Theorem \ref{GK, geodesicLamination} since

$$\frac{K^-_{g_1}}{K^+_{g_2}}\geq \max_{x\in M} \frac{-\varphi(x)}{-1- \Delta_{g_1} \log \varphi } \geq a= L(g_1,g_2)^2. $$}

To obtain a better understanding of the Mather set $\mathcal{M}(g_1,g_2)$, let us further denote $\Gamma_1$ as the subgroup in $\mathrm{PSL}(2,\mathbb{R})$ freely generated by two elements represented by simple closed geodesics $\gamma^{g_1}_1$ and $\gamma^{g_1}_2$ on $M_1$ (See Figure \ref{fig: figure2}).  Consider the complete convex cocompact hyperbolic surface $\mathbb{H}^2/\Gamma_1$ given by a one-holed torus glued with an expanding funnel of infinite volume (see, for example, \cite[page 500]{Eb72II}). This noncompact surface contains $M_1$ as its convex core. Since the Mather set $\mathcal{M}(g_1,g_2) \subset S^{g_1}M_1,$ it follows easily from the definition of the Mather set that $\mathcal{M}(g_1,g_2) \subset \Omega$, where $\Omega$ is the nonwandering set of the geodesic flow on the unit tangent bundle of $\mathbb{H}^2/\Gamma_1$ which is also contained in $S^{g_1}M$. A simple hyperbolic geometry computation shows that if a geodesic leaves $M_1$ in forward time direction in $\mathbb{H}^2/\Gamma_1$, then it will never go back to $M_1$.  On the other hand, because the Mather set $\mathcal{M}(g_1,g_2)$ contains all periodic vectors on $S^{g_1}M$ and because periodic vectors of $S^{g_1}(\mathbb{H}^2/\Gamma_1)$ are dense in $\Omega $ (\cite[Theorem 3.10]{Eb72II}). We conclude from closedness condition of $\mathcal{M}(g_1,g_2)$ that $\Omega \subset \mathcal{M}(g_1,g_2)$. Therefore 
$\mathcal{M}(g_1,g_2)=\Omega$
and the topological entropy $h_{top}(\phi^{g_1},\mathcal{M}(g_1,g_2))$ is equal to the topological entropy of the geodesic flow on $\mathbb{H}^2/\Gamma_1$ which is strictly positive. It also equals the critical exponent $\delta(\Gamma_1)$ of $\Gamma_1$(\cite{Su79}).

Since from Corollary 3.8 of \cite{Eb72II}, the nonwandering set $\Omega$ is identified as 
$$\Omega \simeq \Lambda^{(2)}(\Gamma_1)\times \mathbb{R},$$
where $\Lambda(\Gamma_1)$ denotes the limit set of $\Gamma_1$ and $$\Lambda^{(2)}(\Gamma_1)=\Lambda(\Gamma_1) \times \Lambda(\Gamma_1)\setminus \diag.$$ From Patterson Sullivan theory, the Hausdorff dimension of $\Lambda(\Gamma_1)$ equals to the critical exponent $\delta(\Gamma_1)$ of $\Gamma_1$. As a consequence of the above identification, the Hausdorff dimension of $\mathcal{M}(g_1,g_2)=\Omega$ is 
$$Hd(\mathcal{M}(g_1,g_2))=2\delta(\Gamma_1)+1.$$
\end{proof}

We thank Islam Mitul for discussing properties of convex cocompact manifolds with us. We also thank Sami Douba for pointing out to us the following fact.
\begin{remark}
     Compared  with Corollary \ref{cor,RigidityTopEntropy} and Theorem \ref{thm, EmtpyInterior}, the above example shows that even when the marked length spectra of $g_1$ and $g_2$ are not proportional, the topological entropy of the Mather set $h_{top}(\phi^{g_1},\mathcal{M}(g_1,g_2))$ can be arbitrarily close to $h_{top}(\phi^{g_1})=1$ and the Hausdorff dimension \newline $ Hd(\mathcal{M}(g_1,g_2))$ can be arbitrarily close to the Hausdorff dimension of the full unit tangent bundle by taking hyperbolic metrics $g_1$.This is achieved by choosing hyperbolic metrics $g_1$
 for which the lengths of the separating simple closed $g_1$-geodesics $\gamma_0^{g_1}$
 tend to zero; equivalently, by pinching the hyperbolic surface toward a cusped case.
\end{remark}

We notice that the conditions in Theorem  \ref{GK, geodesicLamination} and arguments in Lemma 5.2 of \cite{GK17} based on triangle comparison theorems are not necessary conditions for $E(g_1,g_2)$ to be a maximally stretched geodesic lamination. We have further discussion of related phenomena in Appendix \ref{appendix, C}. 

 \section{Maximal stretches, Lipschitz maps, and volumes}\label{section,StretchLipVol}

 All results in this section about length spectra, Lipschitz maps and volume are not new. We do not claim any novelty for them. Since these results from previous works of different authors were not presented from the perspective of maximal stretches and least Lipschitz constants,  we include this section to give a nice picture of how these objects, maximal stretches, least Lipschitz constants and volumes, are related to each other. Important related works include but not restricted to \cite{BCG94}, \cite{Kn95}, \cite{CD04} and \cite{GL19}.
\subsection{Lipschitz maps and volumes}
\hfill\\
We discuss relation between least Lipschitz constants and volumes of Riemannian manifolds.
\begin{proposition} 
    Suppose $\dim M=n \geq 2$ and suppose $g_1, g_2\in R^-(M)$. Then 
    $$\frac{\textnormal{vol}(M,g_2)}{\textnormal{vol}(M,g_1)} \leq L(g_1,g_2)^n.$$
   Moreover, equality holds if and only if $g_1$ is isometric to $g_2$ up to a multiplicative constant.
\end{proposition}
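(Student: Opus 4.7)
The plan is to use a best Lipschitz map $f:(M,g_1)\to (M,g_2)$ as a vehicle to compare volumes via the area/degree formula, and then to analyze the equality case.

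Since $f$ is homotopic to the identity (and $M$ is closed and oriented), $f$ has degree $1$. By Rademacher's theorem, $f$ is differentiable almost everywhere with respect to the Lebesgue class of $g_1$, and the pointwise operator norm $\|Df(x)\|_{g_1,g_2}$ is bounded above by $\Lip(f,g_1,g_2) = L(g_1,g_2)$ for a.e.\ $x \in M$. Consequently, the Jacobian $J_f(x) := |\det_{g_1,g_2} Df(x)|$ satisfies $J_f(x) \le L(g_1,g_2)^n$ a.e. Applying the area formula for Lipschitz maps between oriented Riemannian manifolds together with $\deg f = 1$, I would obtain
\begin{equation*}
    \vol(M,g_2) \;=\; \int_M \!\operatorname{sgn}(\det_{g_1,g_2}\! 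Df)\, J_f \, dv_{g_1} \;\le\; \int_M J_f\, dv_{g_1} \;\le\; L(g_1,g_2)^n\, \vol(M,g_1),
\end{equation*}
which gives the inequality.

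For the equality case, suppose $\vol(M,g_2) = L(g_1,g_2)^n\, \vol(M,g_1)$. Both inequalities above must then be equalities a.e. The first forces $\det_{g_1,g_2} Df > 0$ a.e., and the second forces $J_f(x) = L(g_1,g_2)^n$ a.e. Combined with the upper bound $\|Df(x)\|_{g_1,g_2}\le L(g_1,g_2)$, linear algebra forces all singular values of $Df(x)$ to equal $L(g_1,g_2)$ at a.e.\ $x$; equivalently, $f^\ast g_2 = L(g_1,g_2)^2 g_1$ almost everywhere. Thus $f$ is a $1$-Lipschitz, degree $1$, volume-preserving map from $(M, L(g_1,g_2)^2 g_1)$ to $(M,g_2)$.

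The main obstacle is to upgrade this a.e.\ information into a genuine isometry $(M,L(g_1,g_2)^2 g_1)\to (M,g_2)$. I would argue as follows: any $1$-Lipschitz map of degree one between closed oriented Riemannian manifolds of the same volume must be a metric-preserving bijection, because if $f$ failed to be a local isometry on some open set it would either collapse volume (contradicting volume preservation) or fail to cover a set of positive measure (contradicting degree $1$). Then since $f$ is a metric-preserving bijection of closed manifolds and its pulled-back metric agrees with $L(g_1,g_2)^2 g_1$ a.e., elliptic regularity for the harmonic-map/isometry equation (or a direct Nash--Moser/bootstrap argument applied to $f^\ast g_2 = L(g_1,g_2)^2 g_1$) shows that $f$ is smooth, hence a smooth isometry. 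This identifies $g_2$ isometrically with $L(g_1,g_2)^2 g_1$, as required.
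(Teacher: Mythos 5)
Your inequality argument is correct and is in substance the same route the paper takes: the paper rescales $g_2' = L(g_1,g_2)^{-2} g_2$ so that a best Lipschitz map becomes $1$-Lipschitz of degree one and then cites \cite[Appendix C, Proposition C.1, Lemme C.2]{BCG94}; your Jacobian/area-formula computation is exactly the content of the cited Proposition C.1, so that half is fine.

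The gap is in the equality case. The assertion you need — that a $1$-Lipschitz, degree-one map between closed oriented Riemannian manifolds of equal volume is a metric-preserving bijection — is precisely the cited Lemme C.2 of \cite{BCG94}, and your one-sentence justification ("if $f$ failed to be a local isometry on some open set it would either collapse volume \dots or fail to cover a set of positive measure") states the difficulty rather than resolving it. A failure of local isometry at a point does not by itself collapse the volume of an open set, and surjectivity (from nonzero degree) does not by itself preclude positive-measure overlaps elsewhere compensating a local volume loss. The actual argument requires the global measure-theoretic step: from $\vol(f(A^c)) \le \vol(A^c)$ and surjectivity one deduces $\vol(f(A)) = \vol(A)$ for \emph{every} measurable $A$, from which one gets injectivity up to measure zero, then that $f$ is a homeomorphism, and only then that $f$ preserves distances. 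Without this chain the "upgrade from a.e.\ information" you flag as the main obstacle is not actually carried out. A secondary (stylistic) point: once $f$ is known to be a distance-preserving bijection, smoothness follows from the Myers--Steenrod theorem; the appeal to elliptic regularity or a Nash--Moser bootstrap for the a.e.\ identity $f^\ast g_2 = L(g_1,g_2)^2 g_1$ is both unnecessary and not obviously applicable, since at that stage $f$ is only Lipschitz and the equation holds only almost everywhere.
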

\begin{proof}
 
 Suppose $L=L(g_1,g_2)$.  Consider the scaling of the metric $g_2$ given by $g_2'= \frac{1}{L^2}g_2$. Then $\textnormal{vol}(M,g'_2)= \frac{1}{L^n}\textnormal{vol}(M,g_2)$ and $L(g_1,g'_2)=1$. A best Lipschitz map $f \in \Lip_{\id}(M,g_1,g_2)$ is a $1$-Lipschitz map between $(M,g_1)$ and $(M,g_2')$. To prove the statement, it is equivalent to show
 $$\textnormal{vol}(M,g_2')\leq\textnormal{vol}(M,g_1),$$
 with equality if and only if $g_2'$ is isometric to $g_1$. Since $f$ is homotopic to identity, it is of degree one. The result then follows from \cite[Appendix C,  Proposition C.1, Lemme C.2]{BCG94}.

\end{proof}

  \subsection{Maximal stretch and volume}
\hfill\\
We observe the following relation between maximal stretches, minimal stretches, and volumes when $M$ is a closed surface. The proof can be derived from \cite[Theorem 1.1]{CD04} and \cite[Theorem 5.1]{BCLS18}.

 We denote by $s(g_1,g_2):=\min\limits_{\mu\in \mathcal{C}(\Gamma)} I_{\mu}(g_1,g_2)$ the \emph{minimal stretch}. 
\begin{proposition} \label{prop, bonahonIntersection}
    Suppose $\dim M=2$ and suppose $g_1, g_2\in R^-(M)$. Then 
    $$s(g_1,g_2)^2 \leq \frac{\textnormal{vol}(M,g_2)}{\textnormal{vol}(M,g_1)} \leq S(g_1,g_2)^2.$$
   Moreover, either equality holds if and only if $g_1$ is isometric to $g_2$  up to a multiplicative constant.

\end{proposition}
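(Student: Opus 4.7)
The plan is to exploit Bonahon's symmetric intersection number $i:\mathcal{C}(\Gamma)\times \mathcal{C}(\Gamma)\to \mathbb{R}_{\geq 0}$ (available only in dimension two, which is why the hypothesis $\dim M =2$ is imposed) together with the Liouville current $\lambda_g$. Recall from the preceding material that for any $g\in R^-(M)$ and any $\mu\in\mathcal{C}(\Gamma)$ one has $i(\lambda_g,\mu)=i(g,\mu)=m^g_\mu(S^gM)$; in particular
$$i(\lambda_g,\lambda_g)=m^g_{\lambda_g}(S^gM)=2\pi\,\mathrm{vol}(M,g).$$

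I would first establish the right-hand inequality. By the definition of the maximal stretch and Corollary \ref{cor:s-current}, every current $\mu\in\mathcal{C}(\Gamma)$ satisfies
$$i(\lambda_{g_2},\mu)=i(g_2,\mu)\leq S(g_1,g_2)\, i(g_1,\mu)=S(g_1,g_2)\, i(\lambda_{g_1},\mu).$$
Evaluating this inequality at $\mu=\lambda_{g_1}$ and at $\mu=\lambda_{g_2}$, and invoking the symmetry $i(\lambda_{g_1},\lambda_{g_2})=i(\lambda_{g_2},\lambda_{g_1})$, yields the chain
$$2\pi\,\mathrm{vol}(M,g_2)=i(\lambda_{g_2},\lambda_{g_2})\leq S(g_1,g_2)\, i(\lambda_{g_1},\lambda_{g_2})\leq S(g_1,g_2)^2\cdot 2\pi\,\mathrm{vol}(M,g_1),$$
which is the right-hand bound. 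The left-hand bound is then purely formal: Lemma \ref{lemma,cocycleRelationStretch} gives $I_\mu(g_1,g_2)=1/I_\mu(g_2,g_1)$, so $s(g_1,g_2)=1/S(g_2,g_1)$, and applying the right-hand inequality with the roles of $g_1,g_2$ swapped yields $s(g_1,g_2)^2\leq \mathrm{vol}(M,g_2)/\mathrm{vol}(M,g_1)$.

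For rigidity, suppose equality holds on the right. Then both inequalities in the above chain are equalities, which forces $\lambda_{g_1},\lambda_{g_2}\in MC(g_1,g_2)$. Because the Liouville current $\lambda_{g_1}$ has full support on $\partial^{(2)}\widetilde{M}$ (its associated $\phi^{g_1}$-invariant measure is the full-support Liouville measure on $S^{g_1}M$), the Mather set $\mathcal{M}(g_1,g_2)$ must contain $\supp \hat{m}^{g_1}_{\lambda_{g_1}}=S^{g_1}M$ and therefore equal $S^{g_1}M$. By Theorem \ref{thm, EmtpyInterior} the marked length spectra of $g_1$ and $g_2$ are proportional, with constant $C=S(g_1,g_2)$. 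The marked length spectrum rigidity theorem for closed negatively curved surfaces (Croke, Otal) applied to $g_1$ and $C^{-2}g_2$ then concludes that $g_2$ is isometric to $C^2g_1$. The rigidity case for the left-hand inequality follows by the same swapping argument.

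The main obstacle will be arranging the rigidity step cleanly: one needs to invoke the full-support property of the Liouville current to convert equality in the chain into a statement about the entire Mather set, and then bootstrap through Theorem \ref{thm, EmtpyInterior} and a black-box use of surface marked length spectrum rigidity. The volume inequalities themselves are essentially automatic once the Bonahon intersection number and the key identity $i(\lambda_g,\lambda_g)=2\pi\,\mathrm{vol}(M,g)$ are in place; this is where the dimension-two restriction genuinely enters, since no analogue of the symmetric intersection pairing is available in higher dimensions (as noted after Corollary \ref{cor:s-current}).
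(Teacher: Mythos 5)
Your proof is correct, and the two volume inequalities are obtained exactly as in the paper: Bonahon's symmetric intersection number, the identity $i(\lambda_g,\lambda_g)=2\pi\,\mathrm{vol}(M,g)$, and the chain of two inequalities closed up by symmetry (the paper runs the chain for $s(g_1,g_2)$ using dirac-current approximation and declares the $S(g_1,g_2)$ case analogous; you run it for $S(g_1,g_2)$ and get the $s$ case from $s(g_1,g_2)=1/S(g_2,g_1)$ --- a cosmetic difference). Where you genuinely diverge is the rigidity step. The paper extracts from equality the integral identity $\int_{S^{g_1}M}(a_{g_1,g_2}-s(g_1,g_2))\,dm^{g_1}_{\lambda_{g_1}}=0$, invokes the positive Liv\v{s}ic theorem of Lopes--Thieullen to produce a subsolution $u$ with $a_{g_1,g_2}-s(g_1,g_2)\ge X_{g_1}u$, and concludes that the integrand is a coboundary because the Liouville measure has full support; proportionality of the marked length spectra then follows from Corollary \ref{cor:equiv}. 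You instead observe that equality forces $\lambda_{g_1}$ to be a maximal current, so the Mather set contains $\supp\hat m^{g_1}_{\lambda_{g_1}}=S^{g_1}M$, and the dichotomy of Theorem \ref{thm, EmtpyInterior} yields proportional marked length spectra directly; both arguments then finish with Croke--Otal. Your route is self-contained within results already proved in the paper and avoids the external Liv\v{s}ic-type input, at the cost of being special to the \emph{maximal} stretch (you must pass through $S(g_2,g_1)$ to handle the minimal-stretch equality, whereas the paper's positive-Liv\v{s}ic argument treats the $s$-equality head-on). Both exploit the same essential fact, namely that the Liouville current has full support. The only point worth tightening is the deduction $i(\lambda_{g_2},\mu)\le S(g_1,g_2)\,i(\lambda_{g_1},\mu)$ for all currents: this is Corollary \ref{cor:s-current} plus the definition of $S$, and requires knowing $m^{g_1}_\mu(S^{g_1}M)<\infty$, which holds here because $M$ is closed; a sentence acknowledging this (or the paper's density-plus-continuity argument) would make the step airtight.
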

\begin{proof} 
Recall from Subsection \ref{subsection GeodesicStretchIntersection}, we mentioned that Bonahon's intersection number $i:  \mathcal{C}^{sym} (\Gamma) \times  \mathcal{C}^{sym} (\Gamma) \to \mathbb{R}$ is continuous and symmetric, where $\mathcal{C} (\Gamma)$ is the space of $\Gamma$-invariant unoriented geodesic currents. We denote by $\lambda_g$ the associated Liouville current for a metric $g$.  For any $[\gamma] \in [\Gamma]$ and the associated Dirac current $\delta_{[\gamma]}$ and any constant $c>0$, we obtain the corresponding flip invariant current $\frac{1}{2}(\delta_{[\gamma]}+\delta_{[\gamma^{-1}]})$. Since $s(g_1,g_2)$ is the minimal stretch, it follows from Remark \ref{remark,intersection}, 
 \begin{align} \label{equation, geqMinimalStretch}
     i(\frac{c}{2}(\delta_{[\gamma]}+\delta_{[\gamma^{-1}]}), \lambda_{g_2})&=c\ell_{g_2}([\gamma])\\
   &\geq  s(g_1,g_2) c \ell_{g_1}([\gamma])\\
   &= s(g_1,g_2) i(\frac{c}{2}(\delta_{[\gamma]}+\delta_{[\gamma^{-1}]}), \lambda_{g_1}).
    \end{align}
  We can take properly scaled sequences of Dirac currents $\delta_{[\gamma_n]}$ approximating $\lambda_{g_2}$ (resp. $\lambda_{g_1}$). Then $\frac{1}{2}(\delta_{[\gamma_n]}+\delta_{[\gamma_n^{-1}]})$ also approximating $\lambda_{g_2}$ (resp. $\lambda_{g_1}$).Since the intersection number is continuous, from Equation \eqref{equation, geqMinimalStretch} yields,
 \begin{equation} \label{equation, geqMinimalStretchb}
 i( \lambda_{g_2}, \lambda_{g_2})\geq  s(g_1,g_2) i( \lambda_{g_2}, \lambda_{g_1}) \; \;
  \end{equation} 
  and 
   \begin{equation} \label{equation, geqMaxStretchb}
 \; \; i( \lambda_{g_1}, \lambda_{g_2})\geq  s(g_1,g_2) i( \lambda_{g_1}, \lambda_{g_1}).
 \end{equation} 
Since the intersection number is symmetric, we know $i( \lambda_{g_1}, \lambda_{g_2})= i( \lambda_{g_2}, \lambda_{g_1})$. 
Together, we obtain
$$i( \lambda_{g_2}, \lambda_{g_2})\geq  s(g_1,g_2)^2 i( \lambda_{g_1}, \lambda_{g_1}).$$
The intersection number of Liouville currents satisfies from Definition \ref{def:intersection},

$$i( \lambda_g, \lambda_g) = m_{\lambda_g}^g(S^gM)=2\pi \cdot \textnormal{vol}(M,g).
$$ 

We obtain

$$s(g_1,g_2)^2 \leq \frac{\textnormal{vol}(M,g_2)}{\textnormal{vol}(M,g_1)}.
$$
Using the inequality $\ell_{g_2}([\gamma])\leq S(g_1,g_2) \ell_{g_1}([\gamma])$
together with the arguments above yields the other inequality in the proposition.

Now assume that $$s(g_1,g_2)^2 = \frac{\textnormal{vol}(M,g_2)}{\textnormal{vol}(M,g_1)}.$$

This implies $i( \lambda_{g_2}, \lambda_{g_2})= s(g_1,g_2)^2 i( \lambda_{g_1}, \lambda_{g_1})$ and by Equations
\eqref{equation, geqMinimalStretchb} and \eqref{equation, geqMaxStretchb}, we obtain for $j \in \{1,2\}$
  $$
 i( \lambda_{g_j}, \lambda_{g_2})=  s(g_1,g_2) i( \lambda_{g_j}, \lambda_{g_1}).
 $$

From the transformation formula in Proposition \ref{prop: transformation-formula}, we obtain 
$$
i( \lambda_{g_2}, \lambda_{g_1})=  m_{\lambda_{g_1}}^{g_2}(S^{g_2}M) = \int_{S^{g_1}M}   a_{g_1,g_2} dm_{\lambda_{g_1}}^{g_1} =s(g_1,g_2) i( \lambda_{g_1},\lambda_{g_1})
$$
Therefore, 
\begin{equation} \label{equation, IntegralMinimalStretch}
\int_{S^{g_1}M}  ( a_{g_1,g_2}-s(g_1,g_2)) dm_{\lambda_{g_1}}^{g_1} = 0.
\end{equation}
On the other hand, for any $\delta_{\gamma}$, we have
$$\int_{S^{g_1}M}  ( a_{g_1,g_2}-s(g_1,g_2)) dm_{\delta_{[\gamma]}}^{g_1}= \ell_2([\gamma])-s(g_1,g_2)\ell_2([\gamma]) \geq 0.$$
Therefore, \cite[Theorem 1]{LT05} implies  the existence of a H\"older continuous function $u:  S^{g_1}M \to \R$ differentiable along flow lines of the geodesic flow of $g_1$ such that
 $ a_{g_1,g_2}- s(g_1,g_2) \ge X_{g_1} u $.  Combining with Equation \eqref{equation, IntegralMinimalStretch}, we obtain that $ a_{g_1,g_2}- s(g_1,g_2)$ is cohomologous to zero. Therefore $\ell_2([\gamma])= s(g_1,g_2) \ell_2([\gamma])$ for all $[\gamma]$. By the marked length rigidity theorem for surfaces (\cite{Cr90}, \cite{Ot90}), one concludes that $g_2$ is isometric to $g_1$ up to a multiplicative constant.\\
An analogous reasoning yields the other equality in the above proposition. 
\end{proof}
There is a version of the above proposition above for arbitrary dimensions provided the metrics are conformally equivalent. More precisely,
\begin{proposition}
Suppose  $\dim M=n \geq 2$ and  $g_1, g_2\in R^-(M)$ are conformally equivalent. Then
 \begin{equation}\label{eqtn,VolStretch}
     s(g_1,g_2)^n \leq \frac{\textnormal{vol}(M,g_2)}{\textnormal{vol}(M,g_1)} \leq S(g_1,g_2)^n.
     \end{equation}
Moreover, either equality holds if and only if $g_1$ is isometric to $g_2$ up to a multiplicative constant.
\end{proposition}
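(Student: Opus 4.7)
Write $g_2 = \varphi^2 g_1$ for some smooth positive function $\varphi : M \to (0, \infty)$. Since the volume element transforms as $d\mathrm{vol}_{g_2} = \varphi^n \, d\mathrm{vol}_{g_1}$, the quantity of interest is
\[
    \frac{\mathrm{vol}(M, g_2)}{\mathrm{vol}(M, g_1)} = \frac{1}{\mathrm{vol}(M, g_1)} \int_M \varphi^n \, d\mathrm{vol}_{g_1}.
\]
So the task is to sandwich this $L^n$-average of $\varphi$ between $s(g_1,g_2)^n$ and $S(g_1,g_2)^n$.

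For the upper bound $\mathrm{vol}(M,g_2)/\mathrm{vol}(M,g_1) \le S(g_1,g_2)^n$, I would test $S(g_1,g_2)$ against currents coming from closed $g_2$-geodesics. For any closed $g_2$-geodesic $\eta$, the curve $\eta$ is a legal competitor in the $g_1$-free-homotopy class, so $\ell_{g_1}([\eta]) \le L_{g_1}(\eta) = \int_0^{\ell_{g_2}([\eta])} \varphi^{-1}(\eta(s))\, ds$, giving
\[
    I_{\delta_{[\eta]}}(g_1,g_2)=\frac{\ell_{g_2}([\eta])}{\ell_{g_1}([\eta])} \ge \Big(\tfrac{1}{\ell_{g_2}([\eta])}\int \varphi^{-1} \, ds_{g_2}\Big)^{-1}=\Big(\int\varphi^{-1}d\hat\delta_{[\eta]}^{g_2}\Big)^{-1}.
\]
By Sigmund's theorem one can choose $\eta_k$ with $\hat\delta_{[\eta_k]}^{g_2}\to \hat m_L^{g_2}$ weakly, where $\hat m_L^{g_2}$ is the normalized $g_2$-Liouville measure; continuity of $\varphi^{-1}$ then yields
\[
    S(g_1,g_2) \ge \Big(\int_{S^{g_2}M} \varphi^{-1}(\pi v)\, d\hat m_L^{g_2}(v)\Big)^{-1} = \frac{\mathrm{vol}(M,g_2)}{\int_M \varphi^{n-1} d\mathrm{vol}_{g_1}}.
\]
A straight application of Hölder's inequality with exponents $\tfrac{n}{n-1}$ and $n$ to $\int \varphi^{n-1} \cdot 1 \, d\mathrm{vol}_{g_1}$ gives $(\int\varphi^{n-1}d\mathrm{vol}_{g_1})^n \le \mathrm{vol}(M,g_2)^{n-1}\mathrm{vol}(M,g_1)$, and raising the previous display to the $n$-th power produces the desired upper bound.

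The lower bound $s(g_1,g_2)^n \le \mathrm{vol}(M,g_2)/\mathrm{vol}(M,g_1)$ is completely symmetric: for a closed $g_1$-geodesic $\gamma$ the inequality $\ell_{g_2}([\gamma])\le \int\varphi\, ds_{g_1}$ gives $I_{\delta_{[\gamma]}}(g_1,g_2)\le \int\varphi\, d\hat\delta_{[\gamma]}^{g_1}$, Sigmund applied on $S^{g_1}M$ lets $\gamma$ approach $\hat m_L^{g_1}$, so $s(g_1,g_2)\le \mathrm{vol}(M,g_1)^{-1}\int\varphi\, d\mathrm{vol}_{g_1}$, and the $n$-th power combined with Jensen's inequality $(\int\varphi\, d\mathrm{vol}_{g_1})^n \le \mathrm{vol}(M,g_1)^{n-1}\int\varphi^n d\mathrm{vol}_{g_1}$ closes the estimate.

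For the equality case, both Hölder and Jensen above become equalities if and only if $\varphi$ is constant, which is exactly the condition that $g_2$ be a constant scalar multiple of $g_1$; the converse is immediate since then $s(g_1,g_2)=S(g_1,g_2)=\varphi$. I expect the only delicate point — and the main obstacle worth isolating — to be the weak-$*$ passage: one needs that Sigmund's density applies to both geodesic flows (which it does, as each is Anosov and hence topologically transitive) and that in the equality situation one can rule out a second source of equality coming from the step $S(g_1,g_2)=(\int\varphi^{-1}d\hat m_L^{g_2})^{-1}$ being strict. This is handled by tracing backwards through the Hölder step: if that inequality is strict, $\varphi$ is non-constant, and strict Jensen/Hölder produces the strict inequality in the volume bound, so equality in the volume inequality must force $\varphi$ to be constant.
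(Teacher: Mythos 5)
Your argument is correct and follows essentially the same route as the paper: both bounds come from testing the stretch against the normalized Liouville measure of $g_2$ (upper bound) and of $g_1$ (lower bound) and then applying a power-mean inequality to the conformal factor, with the equality case read off from the equality case of Jensen/Hölder. The only cosmetic differences are that you reach the Liouville measures via Sigmund's theorem and closed geodesics rather than evaluating $I_{\lambda_{g_i}}(g_1,g_2)$ directly, and that you use Hölder with exponents $\left(\tfrac{n}{n-1},n\right)$ for the upper bound where the paper instead invokes the reciprocity $I_\mu(g_1,g_2)=1/I_\mu(g_2,g_1)$ and reduces to the same Jensen estimate with the roles of $g_1$ and $g_2$ exchanged.
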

\begin{proof}
Assume that $g_1, g_2\in R^-(M)$ are conformally equivalent, i.e. there exists a smooth positive function $\varphi: M \to \R$ such that $g_2= \varphi g_1$. Let $\lambda_{g_1}$ be the Liouville current of $g_1$ and let $\hat m_{\lambda_{g_1}}^{g_1} $ be the corresponding probability measure. We follow the arguments in the proof of Theorem 4.1 in \cite{Kn95} using Jensen's inequality.

\begin{eqnarray*}
s(g_1,g_2)^2 \le I_{\lambda_{g_1}} (g_1, g_2)^2  &\le\int_{S^{g_1}M} g_2(v, v) d\hat m_{\lambda_{g_1}}^{g_1}  =\frac{1}{\vol( M,g_1)} \int_{M} \varphi d\vol_{g_1} \\
 & \le\left(\frac{\int_{M} \varphi^{\frac{n}{2}} d\vol_{g_1}}
{\vol( M, g_1)}\right)^{\frac{2}{n}} 
 =  \left(\frac{\vol(M,g_2)}{\vol(M,g_1)}\right)^{\frac{2}{n}}.
\end{eqnarray*}
This yields the first inequality. To obtain the second inequality in the proposition, we take the Liouville current $ \lambda_{g_2}$ of $g_2$.
Since
$$
S(g_1,g_2) \ge I_{\lambda_{g_2}}(g_1,g_2) = \frac{1}{I_{\lambda_{g_2}}(g_2,g_1)}
$$
and
$$
I_{\lambda_{g_2}}(g_2,g_1) \le \left(\frac{\vol(M,g_1)}{\vol(M,g_2)}\right)^{\frac{1}{n}}.
$$
we obtain 
$$
S(g_1,g_2)^n \ge \frac{\vol(M,g_2)}{\vol(M,g_1)}.
$$
If an equality holds in one of the two estimates in Equation \eqref{eqtn,VolStretch} of the proposition,  Jensen's inequality 
must be an equality as well and therefore the conformal factor $\varphi$ is constant.
\end{proof}
Based on the work of Guillarmou and Lefeuvre \cite{GL19},
the estimates in Equation \eqref{eqtn,VolStretch} hold for an arbitrary pair of negatively curved metrics which are sufficiently close in a suitable $C^N$ norm in $R^-(M)$.
More precisely:

\begin{proposition}
Let $g_1$ be a Riemannian metric in $R^-(M)$
let and $N > \frac{n}{2} +2$.
Then there exists an $\varepsilon >0$ such that for all $g_2 \in R^-(M)$
with $\| g_2 -g_1 \|_{C^N} <\varepsilon $  we have 
 \begin{equation*}\label{eqtn,VolStretch-nonconformal}
     s(g_1,g_2)^n \leq \frac{\textnormal{vol}(M,g_2)}{\textnormal{vol}(M,g_1)} \leq S(g_1,g_2)^n.
     \end{equation*}
     Furthermore, either equality holds if and only if $g_1$  and $g_2$ are isometric up to a multiplicative constant.
\end{proposition}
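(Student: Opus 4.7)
The proof combines the Jensen--Cauchy--Schwarz chain from the conformal case with Guillarmou--Lefeuvre's local marked length spectrum rigidity and a solenoidal gauge-fixing. For any $g_2 \in R^-(M)$, repeating the argument of the previous proposition yields
\begin{equation*}
s(g_1,g_2)^2 \;\le\; I_{\lambda_{g_1}}(g_1,g_2)^2 \;\le\; \int_{S^{g_1}M} g_2(v,v)\,d\widehat m_{\lambda_{g_1}}^{g_1}(v) \;=\; \frac{1}{n\,\vol(M,g_1)} \int_M \mathrm{tr}_{g_1}(g_2)\,d\vol_{g_1},
\end{equation*}
the last equality coming from fiberwise integration of the quadratic form $g_2$ against the round measure on the $g_1$-unit sphere. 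Raising this to the $n/2$-power is not enough on its own: for non-conformal $g_2$, the pointwise AM--GM comparison between $\mathrm{tr}_{g_1}(g_2)/n$ and $\det_{g_1}(g_2)^{1/n}$ goes in the \emph{wrong} direction, so plain Jensen fails as a substitute for the conformal-case argument.

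The resolution exploits that all three quantities $s(g_1,g_2)$, $S(g_1,g_2)$, $\vol(M,g_2)$ are invariant under pullback of $g_2$ by any diffeomorphism of $M$. The local rigidity result of \cite{GL19} gives a gauge-fixing: for $\|g_2-g_1\|_{C^N}$ small with $N>n/2+2$, there exists a diffeomorphism $\psi$ close to the identity such that $\psi^{\ast}g_2 - g_1$ is solenoidal relative to $g_1$ and has small $C^N$ norm. Replacing $g_2$ by $\psi^{\ast}g_2$, I would decompose $g_2 - g_1 = \varphi\,g_1 + h_0$ with $\varphi$ a scalar function and $h_0$ trace-free and divergence-free. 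A second-order Taylor expansion identifies the contribution of $h_0$ to $\mathrm{tr}_{g_1}(g_2)/n$ (vanishing to first order and of controlled size at second) against its contribution to $\det_{g_1}(g_2)^{1/2}$ (strictly negative, of order $|h_0|^2$). Combined with the conformal-case Jensen inequality in the $\varphi$ variable, this would give
\begin{equation*}
\left(\frac{1}{n\,\vol(g_1)} \int_M \mathrm{tr}_{g_1}(g_2)\,d\vol_{g_1}\right)^{\!n/2} \;\le\; \frac{\vol(M,g_2)}{\vol(M,g_1)},
\end{equation*}
hence $s(g_1,g_2)^n \le \vol(M,g_2)/\vol(M,g_1)$. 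The right-hand inequality then follows by swapping the roles of $g_1$ and $g_2$ and invoking $I_\mu(g_2,g_1) = I_\mu(g_1,g_2)^{-1}$ from Lemma \ref{lemma,cocycleRelationStretch}, which turns a lower bound on $s(g_2,g_1)$ into an upper bound on $S(g_1,g_2)^{-1}$.

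For the equality case, suppose $s(g_1,g_2)^n = \vol(M,g_2)/\vol(M,g_1)$. Propagating equality back through the chain forces equality in Cauchy--Schwarz, so $a_{g_1,g_2}$ is a.e.\ equal to the constant $s(g_1,g_2)$; by Livsic's theorem (cf.\ Corollary \ref{cor:equiv}), this means $g_2$ has marked length spectrum proportional to that of $g_1$. The local rigidity of \cite{GL19} applied to $g_2$ and $s(g_1,g_2)^2\,g_1$ then yields that $g_2$ is isometric to a multiplicative constant times $g_1$; the equality case for the other inequality is symmetric. The main obstacle is the perturbative Taylor step in the solenoidal gauge: the sign of the AM--GM discrepancy must flip once averaged against the $g_1$-Liouville measure, and this is exactly where the $C^N$-smallness threshold $N>n/2+2$ of \cite{GL19} enters essentially, through the injectivity of the X-ray transform on divergence-free symmetric $2$-tensors.
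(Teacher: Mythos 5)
There is a genuine gap, and your route is also quite different from the paper's. The paper's proof is a short reduction: it quotes Theorem~2 of \cite{GL19} (for $\|g_2-g_1\|_{C^N}<\varepsilon_1$, domination of marked length spectra implies domination of volumes, with equality only for isometric metrics) and applies it to the rescaled metrics $g_2^S=S(g_1,g_2)^{-2}g_2$ and $g_2^s=s(g_1,g_2)^{-2}g_2$, whose length spectra are by definition dominated by, respectively dominate, that of $g_1$; choosing $\varepsilon$ so that both rescalings stay within $\varepsilon_1$ of $g_1$ finishes the argument. You instead use \cite{GL19} only for a solenoidal gauge-fixing and then try to re-derive the volume comparison by a Jensen-plus-Taylor argument, which amounts to reproving (a weak form of) the very theorem the paper cites as a black box.

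The gap is in your central intermediate inequality
\begin{equation*}
\Bigl(\tfrac{1}{n\,\vol(M,g_1)}\int_M \mathrm{tr}_{g_1}(g_2)\,d\vol_{g_1}\Bigr)^{n/2}\;\le\;\frac{\vol(M,g_2)}{\vol(M,g_1)}.
\end{equation*}
Writing $g_2=(1+\varphi)g_1+h_0$ with $h_0$ trace-free, one has $\mathrm{tr}_{g_1}(g_2)/n=1+\varphi$ exactly, while $\det_{g_1}(g_2)^{1/2}=(1+\varphi)^{n/2}\bigl(1-\tfrac14|h_0|_{g_1}^2+O(|h_0|^3)\bigr)$. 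Hence the right-hand side loses a strictly negative term of order $\int|h_0|^2$ relative to $\int(1+\varphi)^{n/2}$, whereas Jensen only gives $\int(1+\varphi)^{n/2}\ge(\int(1+\varphi))^{n/2}$; the inequality therefore fails at second order whenever $h_0\neq 0$, i.e.\ precisely in the non-conformal case you are trying to treat. The only possible rescue is the slack in the first step $a_{g_1,g_2}(v)\le\|v\|_{g_2}$ (the $g_2$-Busemann direction differs from $v$), but quantifying that slack against $\int|h_0|^2$ is a global dynamical estimate that your sketch does not supply, and the invocation of X-ray transform injectivity is not connected to it. You acknowledge this as ``the main obstacle'' but do not close it, so the proposal as written is not a proof. (Your equality-case discussion and the swap $I_\mu(g_2,g_1)=I_\mu(g_1,g_2)^{-1}$ are fine in spirit and close to the paper's, but they sit downstream of the broken inequality.) The fix is simply to apply the volume-comparison statement of \cite{GL19} to the rescaled metrics as the paper does.
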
  
 \begin{proof}
     
      The proof is a consequence of Theorem 2 of \cite{GL19} which says: there exists an $\varepsilon_1 >0$ such that for all metrics $g_2 \in R^-(M)$ with $\| g_2 -g_1 \|_{C^N} <\varepsilon_1$, the inequalities $\ell_{g_1}([\gamma]) \le \ell_{g_2}([\gamma])$ for all $[\gamma] \in [\Gamma]$ 
    implies $\vol(M,g_1) \le \vol(M,g_2) $. Furthermore, $\vol(M,g_1) =\vol(M,g_2) $ holds if and only if $g_1$ and $g_2$ are isometric. \\
Now choose $\varepsilon >0 $ such that for $g_2 \in R^-(M)$
satisfying $\| g_2 -g_1 \|_{C^N} <\varepsilon$, we have $\| r g_2 -g_1 \|_{C^N} <\varepsilon_1$ for both $r= r_S = S(g_1,g_2)^{-2} $ and  $r=r_s=s(g_1,g_2)^{-2} $. 
Then we obtain by Theorem 2 of \cite{GL19} , for $g^S_2 = r_S g_2$,
$$
   \ell_{g^S_2}([\gamma])= \frac{1}{ S(g_1,g_2)} \ell_{g_2}([\gamma])\leq \ell_{g_1}([\gamma]).
   $$ 
This implies $\vol(M,g^S_2)\leq \vol(M,g_1)$ which is equivalent to
$\frac{\textnormal{vol}(M,g_2)}{\textnormal{vol}(M,g_1)} \leq S(g_1,g_2)^n$. 

Moreover, $\frac{\textnormal{vol}(M,g_2)}{\textnormal{vol}(M,g_1)} = S(g_1,g_2)^n$ implies  $\vol(M,g^S_2) = \vol(M,g_1)$ and therefore  $g^S_2$ is isometric to $g_1$. The other part follows from a similar argument.
 \end{proof}

\appendix

\section{An example of $S(g_1,g_2)< L(g_1,g_2)$} \label{ExampleS<L}
\hfill\\
In this appendix, we discuss an example of $g_1,g_2\in R^-(M)$ with $S(g_1,g_2)< L(g_1,g_2)$ based on the work of \cite{GR24} adapted to our setting.

We start with some preparatory lemmas. 
\begin{lemma} \label{lemma,LengthUniformConverge}
Let $g$ be a Riemannian metric in $R^-(M)$ and let $\gamma: \R \to M$  be a geodesic on $M$ that is 1-periodic, i.e. $\gamma(t) = \gamma(t+1)$ for all $t \in \R$. We denote the restriction of $\gamma$ to the interval $[0,1]$ by $\gamma_1:[0,1]\to M$ and denote $L_g(\gamma):=L_g(\gamma_1)$, the $g$-length of the closed geodesic $\gamma_1$. Consider the space of closed Lipschitz curves on $(M,g)$,
$$\mathcal{L}_c= \{\alpha: [0,1] \to (M,g) \mid \alpha \text{ is Lipschitz and } \alpha(0)=\alpha(1)  \}.$$ Define for some $c> L_g(\gamma)$ the subset 
  $$  \Lip_c([\gamma_1]) :=   \{\alpha \in [\gamma_1] \mid  \alpha\in \mathcal{L}_c, \;  \; \Lip(\alpha) \le c \}
  $$  
  and consider the length functional $L_g$ restricting to $\Lip_c([\gamma_1])$.
  Then for all $\varepsilon > 0$, there exists $\delta >0$ such that for all $\alpha \in \Lip_c([\gamma_1])$
  with 
  $$
  L_g(\alpha) \le  L_g(\gamma) + \delta,
  $$
there exists  $t_0 \in [0,1)$ and a monotone continuous and surjective map $\tau: [0,1] \to [t_0, t_0 +1]$ 
  such that
  $$
  d_g( \alpha(t), \gamma (\tau(t)) \le \varepsilon
  $$
 for all $t \in [0,1]$.  
\end{lemma}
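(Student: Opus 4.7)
The plan is to argue by contradiction via Arzelà--Ascoli combined with the fact that on a negatively curved closed manifold the closed geodesic in each free homotopy class is the unique length-minimizer. Suppose the conclusion fails, so that there exist $\varepsilon_0 > 0$ and a sequence $\{\alpha_n\} \subset \Lip_c([\gamma_1])$ with $L_g(\alpha_n) \to L_g(\gamma)$ such that no monotone continuous surjection $\tau_n : [0,1] \to [t_n, t_n+1]$ yields $\sup_t d_g(\alpha_n(t), \gamma(\tau_n(t))) \le \varepsilon_0$. Since every $\alpha_n$ is $c$-Lipschitz, the family is equicontinuous, and $M$ is compact, so Arzelà--Ascoli produces a subsequence (still written $\alpha_n$) converging uniformly to a Lipschitz loop $\alpha_\infty$ of Lipschitz constant at most $c$ with $\alpha_\infty(0)=\alpha_\infty(1)$. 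For $n$ large the loops $\alpha_n$ and $\alpha_\infty$ stay within the injectivity radius of $(M,g)$ at every time $t$, so a fibrewise short-geodesic homotopy shows $\alpha_\infty$ is freely homotopic to $\gamma_1$.

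Next I will identify $\alpha_\infty$ with a monotone reparametrization of a full period of $\gamma$. Length is lower semicontinuous under uniform convergence of curves, which gives $L_g(\alpha_\infty) \le \liminf_n L_g(\alpha_n) = L_g(\gamma)$; on the other hand, in a negatively curved closed manifold $\gamma_1$ attains the infimum of length in $[\gamma_1]$, so $L_g(\alpha_\infty) \ge L_g(\gamma)$, whence $\alpha_\infty$ is an absolutely length-minimizing closed Lipschitz loop in $[\gamma_1]$. Reparametrizing $\alpha_\infty$ by arc-length gives a loop of length $L_g(\gamma)$ each of whose sub-arcs is a local length minimizer and hence a geodesic segment, so the reparametrized loop is a closed geodesic in $[\gamma_1]$; uniqueness of closed geodesics in free homotopy classes on negatively curved manifolds identifies it, up to a shift of the starting point, with $\gamma$. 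Consequently $\alpha_\infty = \gamma \circ \tau_\infty$ for a monotone continuous surjection $\tau_\infty : [0,1] \to [t_\infty, t_\infty + 1]$ with $t_\infty \in [0,1)$. Setting $\tau_n := \tau_\infty$ for all large $n$, the uniform convergence $\alpha_n \to \alpha_\infty$ forces $\sup_t d_g(\alpha_n(t), \gamma(\tau_n(t))) \to 0$, contradicting the choice of $\alpha_n$ and completing the argument.

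The main obstacle will be the step verifying that a length-minimizing closed Lipschitz loop in a free homotopy class on $(M,g)$ is a monotone reparametrization of the unique closed geodesic. For smooth competitors this is classical via the first variation formula and uniqueness of closed geodesics in negatively curved manifolds (itself a consequence of strict convexity of the distance to an axis on $\widetilde M$ together with the Flat Strip Lemma). For merely Lipschitz competitors, the cleanest route is to parametrize by arc-length and exploit the fact that any sub-arc of length less than the injectivity radius is itself a length-minimizing path between its endpoints in $(M,g)$, hence a geodesic segment; concatenating such segments yields a $C^1$ closed geodesic, while any ``back-tracking'' of $\tau_\infty$ would permit a strict shortening via the triangle inequality in a normal neighborhood, contradicting minimality and therefore forcing $\tau_\infty$ to be monotone.
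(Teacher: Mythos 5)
Your proof is correct and follows essentially the same route as the paper: contradiction, Arzelà--Ascoli, lower semicontinuity of length, and uniqueness (up to parametrization) of the length-minimizing closed curve in the free homotopy class, followed by the observation that uniform convergence to $\gamma\circ\tau_\infty$ contradicts the assumed $\varepsilon_0$-separation. The only difference is that you spell out in detail why a length-minimizing Lipschitz loop must be a monotone reparametrization of the closed geodesic, a step the paper simply asserts.
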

\begin{proof}
Suppose the lemma does not hold.  Then
there exists $ \varepsilon >0$ and a sequence of closed curves $\alpha_{n}:[0,1]\to M$  in $\Lip_c([\gamma_1])$ such that
$$
 L_g(\alpha_n) \le  L_g(\gamma) + \frac{1}{n} \; \text{and} \; d_g( \alpha_n(t), \gamma (\tau(t))) \ge \varepsilon
$$
for some $t \in [0,1]$ and  all monotone continuous and surjective maps $\tau: [0,1] \to [t_0, t_0 +1]$, where $t_0 \in [0,1)$.

By Arzelà-Ascoli Theorem, there exists a subsequence $\alpha_{n_j}:[0,1]\to (M,g)$ that uniformly converges to a closed Lipschitz curve $\alpha$.  Hence
    $$
    \lim\limits_{j\to \infty} \max_{t\in[0,1]} d_g(\alpha_{n_j}(t),\alpha(t))=0.
    $$
    Therefore $\alpha\in [\gamma_1]$. Recall that the length $L_g(\cdot)$ is a lower semicontinuous function of continuous curves (\cite[Chapter I.1, Proposition 1.20 (7)]{BH99}),
    $$L_g(\gamma)\geq \liminf_{j\to \infty} L_g(\alpha_{n_j})\geq L_g(\alpha).$$
Since $\gamma_1$ is up to parametrization the unique shortest closed curve in the homotopy class $[\gamma_1]$, it holds that $L_g(\gamma)=L_g(\alpha)$. Therefore,
there exists for some $t_0 \in [0,1) $ a monotone continuous and surjective map $\tau: [0,1] \to [t_0, t_0 +1]$ such that $ \alpha (t) = \gamma (\tau(t))$.
But then
$$
0= \lim\limits_{j\to \infty} \max_{t\in[0,1]} d_g(\alpha_{n_j}(t),\alpha(t))= \lim\limits_{j\to \infty} \max_{t\in[0,1]} d_g(\alpha_{n_j}(t),\gamma (\tau(t)))
$$
yields a contradiction.
\end{proof}
\begin{lemma} \label{lemma,self-intersection-surfaces}
Let $g$ be a Riemannian metric in $R^-(M)$ and let $\gamma: \R \to M$  be a geodesic that is 1-periodic on $M$ with $\gamma(t) = \gamma(t+1)$ for all $t \in \R$ and has one self-intersection at $p = \gamma(0) = \gamma(s_0)$ for some $s_0 \in (0, 1)$. 
Denote by $\theta_p \in (0 , \frac{\pi}{2}]$ the angle of self-intersection of $\gamma $ at $p$ and therefore $a_p =\cos(\theta_p) \in [0, 1) $. For $\varepsilon >0$ smaller than $1/2$ of the injectivity radius $R_M$ of $M$, let 
$ \alpha: \R \to M$ be a closed continuous curve with $ \alpha(t) = \alpha(t+1)$
freely homotopic to $\gamma$ and let $\tau:\R \to \R$ be a surjective momotone and continuous map with  $\tau(t+1) = \tau(t) +1$  such that 
$d_g( \alpha(t) , \gamma(\tau(t))) < \varepsilon$. Then $\alpha$ has a self-intersection at $q = \alpha (t_0) =  \alpha (t_1)$ such that $0 < t_1 -t_0 <1$ and
\begin{equation} \label{inequality,self-intersection-surfaces}
d_g(p, q) < \varepsilon \bigg(\sqrt{\frac{2}{1-a_p}} +1\bigg)
\end{equation}
Furthermore, the loops $\gamma: [0, s_0] \to M$ and  $\alpha: [t_0, t_1] \to M$ are free homotopic.
Moreover, if $q' = \alpha (t_0') =  \alpha (t_1')$ is any  self-intersection such that the loop 
$\alpha:[t_0', t_1'] \to M$ is free homotopic to  $\gamma: [0, s_0] \to M$ then the inequality \ref{inequality,self-intersection-surfaces} holds for $q'$.
\end{lemma}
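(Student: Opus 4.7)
The plan is to lift everything to the universal cover $\widetilde M$, where a self-intersection of $\alpha$ corresponds to a pair $(t_0,t_1)$ with $\widetilde\alpha(t_1)=\eta\,\widetilde\alpha(t_0)$ for some deck element $\eta\in\Gamma$, and then to apply a local Jordan-curve/winding-number argument at a suitable lift of $p$ to produce the desired crossing. I fix a lift $\widetilde\gamma$ of $\gamma$ and set $\widetilde p_0=\widetilde\gamma(0)$, $\widetilde p_1=\widetilde\gamma(s_0)$; since both project to $p$, there exists $\eta\in\Gamma\setminus\{e\}$ with $\widetilde p_1=\eta\,\widetilde p_0$, and the two geodesics $\widetilde\gamma$ (near parameter $s_0$) and $\eta\,\widetilde\gamma$ (near parameter $0$) both pass through $\widetilde p_1$ at transverse angle $\theta_p$. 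Using $\varepsilon<R_M/2$ I choose the unique lift $\widetilde\alpha$ of $\alpha$ with $d_g(\widetilde\alpha(t),\widetilde\gamma(\tau(t)))<\varepsilon$ for all $t$, and pick $t_0^*,t_1^*$ with $\tau(t_0^*)=0$ and $\tau(t_1^*)=s_0$; then $\eta\,\widetilde\alpha(t_0^*)$ and $\widetilde\alpha(t_1^*)$ both lie in $B_\varepsilon(\widetilde p_1)$.

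The heart of the argument is the topological crossing step. Fix $r\in(\varepsilon,R_M/2)$ moderately larger than $\varepsilon$, and work in the exponential chart $\exp_{\widetilde p_1}$, which is a global diffeomorphism on the Hadamard manifold $\widetilde M$. In this chart the two geodesic branches through $\widetilde p_1$ appear as straight Euclidean lines $L_1,L_2$ crossing at the origin at angle $\theta_p$, giving four intersection points with $\partial B_r(\widetilde p_1)$ in alternating cyclic order. Using the monotonicity of $\tau$, I isolate the connected components $I_0\ni t_0^*$ and $I_1\ni t_1^*$ on which the $\varepsilon$-tracked curves $\eta\,\widetilde\alpha$ and $\widetilde\alpha$ respectively remain inside $B_r(\widetilde p_1)$; the endpoints of the two arcs on $\partial B_r(\widetilde p_1)$ are $\varepsilon$-close to the corresponding pairs of points on $L_1$ and $L_2$, and for $r/\varepsilon$ large enough this cyclic alternation is preserved. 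A Jordan-curve/winding-number argument then forces the two arcs to intersect at a point $\widetilde q\in B_r(\widetilde p_1)$, yielding parameters $t_0\in I_0$, $t_1\in I_1$ with $\widetilde\alpha(t_1)=\eta\,\widetilde\alpha(t_0)$ and hence the self-intersection $q:=\alpha(t_0)=\alpha(t_1)$. The ordering $0<t_1-t_0<1$ follows because $\tau(t_0)\approx 0$ and $\tau(t_1)\approx s_0\in(0,1)$; and since the same $\eta$ witnesses both self-intersections, the loops $\gamma|_{[0,s_0]}$ and $\alpha|_{[t_0,t_1]}$ represent the same conjugacy class in $\Gamma$ and therefore are freely homotopic.

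For the distance bound I work in the normal chart at $\widetilde p_1$ aligned so that $L_1=\{y=0\}$ and $L_2=\{x\sin\theta_p=y\cos\theta_p\}$. Non-positive curvature gives $d_g(\widetilde q,L_i)\ge d_E(\widetilde q,L_i)$ in the chart (Rauch/CAT(0) comparison), so the two hypotheses $d_g(\widetilde q,L_i)<\varepsilon$ force $|y|<\varepsilon$ and $|x\sin\theta_p-y\cos\theta_p|<\varepsilon$, whence elementary planar algebra yields $|x|\le\cot(\theta_p/2)\,\varepsilon$ and consequently $|\widetilde q|_E\le\varepsilon\,\csc(\theta_p/2)=\varepsilon\sqrt{2/(1-a_p)}$. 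The Gauss lemma identifies $|\widetilde q|_E$ with $d_g(\widetilde p_1,\widetilde q)$, and since the covering projection is $1$-Lipschitz we obtain $d_g(p,q)\le d_g(\widetilde p_1,\widetilde q)\le\varepsilon\sqrt{2/(1-a_p)}<\varepsilon(\sqrt{2/(1-a_p)}+1)$. For the final \emph{moreover} claim, any self-intersection $q'=\alpha(t_0')=\alpha(t_1')$ whose loop is freely homotopic to $\gamma|_{[0,s_0]}$ is witnessed by a deck element conjugate to $\eta$; after relabeling lifts it is $\eta$ itself, and the same local picture at $\widetilde p_1$ applies verbatim to yield the same bound.

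The main obstacle I anticipate is the topological crossing step: one must quantitatively choose $r$ (depending on $\theta_p$) so that $\varepsilon$-perturbations of the four endpoints on $\partial B_r(\widetilde p_1)$ still alternate cyclically, and one must use the monotonicity of $\tau$ carefully to rule out stray re-entries of $\widetilde\alpha$ into $B_r(\widetilde p_1)$ that could confuse the identification of the correct entry–exit arcs. The remaining geometric and algebraic steps are elementary planar computations combined with the Gauss lemma and CAT(0) comparison.
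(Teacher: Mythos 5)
Your proposal is correct and follows the same overall strategy as the paper: lift to the universal cover, identify the deck transformation $\eta$ with $\eta\widetilde\gamma(0)=\widetilde\gamma(s_0)$, force a crossing of $\widetilde\alpha$ and $\eta\widetilde\alpha$ near $\widetilde\gamma(s_0)$, and bound the distance by flat comparison. The two places where you diverge are worth noting. For the distance bound, the paper drops orthogonal projections $x_1,x_2$ of the crossing point onto the two geodesics and applies the law of cosines to the comparison triangle with vertex angle $\theta_p$, getting $d_g(p,q)\le a_1+\varepsilon$ with $a_1\le\varepsilon\sqrt{2/(1-a_p)}$; your exponential-chart computation is an equivalent use of Rauch/CAT(0) comparison and in fact yields the marginally sharper bound $\varepsilon\csc(\theta_p/2)$ without the extra $+\varepsilon$. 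For the crossing step, the paper works globally with the full $\varepsilon$-tubular neighborhoods $U_\varepsilon(\widetilde\gamma)$ and $U_\varepsilon(\eta\widetilde\gamma)$: since $\widetilde\alpha$ and $\eta\widetilde\alpha$ are proper curves confined to these tubes, they share the (pairwise linking) ideal endpoints of the two transverse geodesics, so a Jordan-curve argument forces an intersection which automatically lies in the bounded set $U_\varepsilon(\widetilde\gamma)\cap U_\varepsilon(\eta\widetilde\gamma)$ where the distance estimate applies. Your local version inside $B_r(\widetilde p_1)$ with cyclic alternation on $\partial B_r$ requires $r/\varepsilon$ large, which you cannot always arrange under the sole hypothesis $\varepsilon<R_M/2$ (e.g.\ when $\varepsilon$ is close to $R_M/2$ and $\theta_p$ is small), and it forces you to worry about re-entries of $\widetilde\alpha$ into the ball; the global tube/ideal-boundary formulation sidesteps both issues, so I would recommend replacing your local crossing argument by that one.
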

\begin{proof}
Since   $\varepsilon $ is smaller than $1/2$ of the injectivity radius $R_M$, there are lifts  $\widetilde \gamma , \widetilde \alpha : \R \to \widetilde M$ of curves $\gamma: \R  \to M$ and $\alpha: \R  \to M$ respectively such
that  $d_g( \widetilde\alpha(t), \widetilde \gamma(\tau(t)) < \varepsilon$. Since $\widetilde \gamma $ is a geodesic and the projection of $\widetilde p = \widetilde \gamma (0) $  is given by the self-intersection $p$,  there exists $\eta \in \Gamma$, the group of covering transformations of $\widetilde{M}$, such that
$\eta \widetilde \gamma \cap \widetilde \gamma = \eta\widetilde p = \widetilde \gamma(s_0)$. Let $U_\varepsilon (\widetilde \gamma)$ and  $U_\varepsilon ( \eta \widetilde \gamma)$ be the $\varepsilon$-tubular neighborhoods of $\widetilde \gamma$ and  $\eta \widetilde \gamma$. Then $\widetilde\alpha(t) \in U_\varepsilon (\widetilde \gamma)$
and $\eta \widetilde\alpha(t) \in U_\varepsilon (\eta \widetilde \gamma)$ for all $t \in \R$. Therefore the curves
 $\widetilde\alpha$ and  $\eta \widetilde \alpha$ intersects in the set $U_\varepsilon (\widetilde \gamma) \cap U_\varepsilon ( \eta \widetilde \gamma)$ and there exists $ t_0 < t_1$ with $t_1-t_0 <1$ and $\eta \widetilde \alpha(t_0) =\widetilde \alpha(t_1) \in U_\varepsilon (\widetilde \gamma) \cap U_\varepsilon ( \eta \widetilde \gamma)$. Since $\eta \widetilde \gamma (0) = \widetilde \gamma(s_0)$, we obtain that the loops $\gamma_{[0,s_0]}$
 and $\alpha_{[t_0, t_1]}$ are free homotopic.  For any $ x \in  U_\varepsilon (\widetilde \gamma) \cap U_\varepsilon ( \eta \widetilde \gamma)$, denote by $x_1$ (resp.  $x_2$) the orthogonal projections of $x$ onto the geodesics $\widetilde \gamma$ (resp.  $\eta \widetilde \gamma$).    Then
 $$
  d_g(x_1, x_2) \le d_g(x_1, x) + d_g(x, x_2) \le 2 \varepsilon
 $$ 
 Consider the triangle given by $x_1, x_2$ and  $x_0 = \widetilde \gamma (s_0) $ and define $a_i = d_g(x_i, x_0)$ for 
 $i \in \{1,2 \}$.  Since the intersection angle of $\eta \widetilde \gamma$ and 
$\widetilde \gamma$ is $\theta_p$, and since the curvature of the surface is nonpositive,
we obtain by triangle comparison and the laws of cosine (see, for example, \cite[Lecture I.B]{BGS85})
\begin{align*}
4 \varepsilon^2 &\ge d_g(x_1, x_2)^2 \ge a_1^2 + a_2^2 - 2 \cos{\theta_p}a_1 a_2 \\
&= (a_1-a_2)^2 + 2 a_1 a_2 - 2 a_p a_1a_2\\
&= (a_1-a_2)^2 + 2 a_1 a_2(1-a_p) \\
&\ge  2 a_1 a_2(1-a_p).
\end{align*}
Assume without loss of generality that $a_1 \le a_2$. Then
$$
4 \varepsilon^2 \ge 2 a_1^2 ((1-a_p)
$$
and therefore
$$
a_1 \le \varepsilon \frac{\sqrt{2} }{\sqrt{1-a_p}}.
$$
This implies for all $ x \in  U_\varepsilon (\widetilde \gamma) \cap U_\varepsilon ( \eta \widetilde \gamma)$,
$$
d_g(x, x_0) \le d_g(x,x_1)+d_g(x_0,x_1) \le a_1 + \varepsilon \le \varepsilon \bigg(\sqrt{\frac{2}{1-a_p}} +1\bigg).
$$
Let 
 $x=\widetilde \alpha(t_1) \in  U_\varepsilon (\widetilde \gamma) \cap U_\varepsilon ( \eta \widetilde \gamma)$ and let $q$ be the projection of $x$. Since the projection of $x_0$ is $p$
and since $d_g(p,q)\leq d_g(x,x_0)$, we obtain Inequality  \eqref{inequality,self-intersection-surfaces}. 
If the loop 
$\alpha:[t_0', t_1'] \to M$ is free homotopic to  $\gamma: [0, s_0] \to M$, then  $\eta \widetilde \alpha(t_0') = \widetilde \alpha(t_1')$. 
Therefore $\eta \widetilde\alpha(t_0') = \widetilde\alpha(t_1')  \in U_\varepsilon (\widetilde \gamma) \cap U_\varepsilon ( \eta \widetilde \gamma)$. Hence, the same argument gives that Inequality (\ref{inequality,self-intersection-surfaces}) holds for the projection point $q'$ of  $\widetilde \alpha(t_1')$.
\end{proof}

We are now able to discuss an example of $S(g_1,g_2)< L(g_1,g_2)$.  We first summarize some results from \cite{GR24}. Given sufficiently small $0<\varepsilon_1< \varepsilon_2$, Gogolev and Reber in \cite{GR24} constructed by perturbation method a pair of Riemannian metrics $g_1,g_2\in R^-(M)$ on a closed surface $M=S$ of genus $\mathcal{G}\geq 2$ with the following properties:
\begin{enumerate}
    \item There exists a closed $g_2$-geodesic $\gamma=\gamma^{g_2}$ with exactly one self-intersection $p = \gamma (0) =\gamma(s_0)$  such that  $\gamma=\gamma^{g_1}$ is also a $g_1$-geodesic up to parametrization.
    \item The two loops $\gamma_1,\gamma_2$ of $\gamma$ separated by $p$ have the following properties,
    $$L_{g_1}(\gamma_1)= L_{g_2} (\gamma_1)- \varepsilon_1, $$
    and 
    $$L_{g_1}(\gamma_2)= L_{g_2} (\gamma_2)+ \varepsilon_2. $$
    \item There exists some $\varepsilon>0$ so that for any $\alpha \in [\Gamma]=[\pi_1 S]$, we have 
    $$\frac{\ell_{g_2} (\alpha)}{\ell_{g_1} (\alpha)}\leq \frac{1}{1+\varepsilon}.$$
     In particular, $S(g_1,g_2) \leq \frac{1}{1+\varepsilon}< 1$.
\end{enumerate}

In the paper \cite{GR24}, the authors focus on the non-existence of shrinking diffeomorphism for such a pair of metrics. For us, since we care about Lipschitz maps homotopic to identity, we provide an argument here of non-existence of Lipschitz maps $f\in \Lip_{\id}(M,g_1,g_2)$ with $\Lip(f,g_1,g_2) < 1$.
\begin{proposition}
    Assume that the positive numbers $\varepsilon_1,\varepsilon_2$ and $\varepsilon_2-\varepsilon_1$ are small enough. Then for metrics $g_1,g_2$ introduced above, there does not exist a Lipschitz map $f\in  \Lip_{\id}(M,g_1,g_2)$ so that $\Lip(f,g_1,g_2) < 1$.  As a consequence, $L(g_1,g_2)\geq 1$.
\end{proposition}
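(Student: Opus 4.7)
The plan is a proof by contradiction. Suppose $f \in \Lip_{\id}(M,g_1,g_2)$ has $\Lip(f,g_1,g_2) = c < 1$. Since $f \circ \gamma$ represents the free homotopy class $[\gamma]$, of which $\gamma$ is the unique $g_2$-geodesic representative, we obtain the chain
\[
L_{g_2}(\gamma) \;\le\; L_{g_2}(f \circ \gamma) \;\le\; c\,L_{g_1}(\gamma).
\]
Combined with property (2) giving $L_{g_2}(\gamma) = L_{g_1}(\gamma) + \varepsilon_1 - \varepsilon_2$, this forces $c \ge 1 - (\varepsilon_2 - \varepsilon_1)/L_{g_1}(\gamma)$, which is close to $1$ for small $\varepsilon_2 - \varepsilon_1$, and also yields $L_{g_2}(f \circ \gamma) - L_{g_2}(\gamma) \le \varepsilon_2 - \varepsilon_1$. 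The curve $f \circ \gamma$ has $g_2$-Lipschitz constant at most $c \le 1$, so it lies in the space $\Lip_1([\gamma])$ appearing in Lemma \ref{lemma,LengthUniformConverge}. I would apply that lemma with metric $g_2$ and geodesic $\gamma$: for any prescribed $\tilde\varepsilon > 0$, by choosing $\varepsilon_2 - \varepsilon_1$ below the corresponding threshold $\delta(\tilde\varepsilon)$, we obtain a monotone continuous surjection $\tau\colon[0,1] \to [t_0, t_0+1]$ with $d_{g_2}(f \circ \gamma(t), \gamma(\tau(t))) < \tilde\varepsilon$ for every $t$. After reparametrizing $\gamma$ we may take $t_0 = 0$.

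The next step is to extract a sharp lower bound on $L_{g_2}(f \circ \gamma_1)$ by lifting to the universal cover $\widetilde M$. Fix a lift $\widetilde\gamma$ of $\gamma$ with $\widetilde\gamma(0) = \widetilde p_0$, so that $\widetilde\gamma(s_0) = \eta_1 \widetilde p_0$ where $\eta_1 \in \Gamma$ corresponds to the loop $\gamma_1$. Let $\widetilde q$ denote the lift of $f(p)$ closest to $\widetilde p_0$; evaluation of the closeness estimate at $t=0$ gives $d_{g_2}(\widetilde q, \widetilde p_0) < \tilde\varepsilon$. Lifting the path $f \circ \gamma$ starting at $\widetilde q$, a continuity-plus-discreteness argument — the deck transformation matching $\widetilde{f \circ \gamma}(t)$ to the closest lift of $\gamma(\tau(t))$ is locally constant, hence constant in $t$ — gives $d_{g_2}(\widetilde{f \circ \gamma}(t), \widetilde\gamma(\tau(t))) < \tilde\varepsilon$ throughout. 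Since the loop $f \circ \gamma_1$ represents the nontrivial free class $[\gamma_1]$, the reparametrization $\tau$ cannot be almost constant on $[0, s_0]$, so $\gamma(\tau(s_0))$ — forced to lie near $p$ — must sit in the unique return-neighborhood of $p$ on $\gamma$ inside $(0,1)$, namely at parameter near $s_0$. Consequently $\widetilde{f \circ \gamma}(s_0)$ is within $O(\tilde\varepsilon)$ of $\eta_1 \widetilde p_0$, and by uniqueness of lifts within half the $g_2$-injectivity radius, equals $\eta_1 \widetilde q$. Therefore
\[
L_{g_2}(f \circ \gamma_1) \;\ge\; d_{g_2}(\widetilde q, \eta_1 \widetilde q) \;\ge\; d_{g_2}(\widetilde p_0, \eta_1 \widetilde p_0) - 2\tilde\varepsilon \;=\; L_{g_2}(\gamma_1) - 2\tilde\varepsilon.
\]

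Combining this with the Lipschitz upper bound $L_{g_2}(f \circ \gamma_1) \le c\,L_{g_1}(\gamma_1) = c(L_{g_2}(\gamma_1) - \varepsilon_1)$ and rearranging gives
\[
(1 - c)\,L_{g_2}(\gamma_1) + c\,\varepsilon_1 \;\le\; 2\tilde\varepsilon.
\]
Provided $\varepsilon_2 - \varepsilon_1$ is small enough that $c \ge 1/2$, the left-hand side is bounded below by $\varepsilon_1 / 2 > 0$. Choosing $\tilde\varepsilon < \varepsilon_1 / 4$ at the outset, and then selecting $\varepsilon_2 - \varepsilon_1 < \delta(\varepsilon_1/4)$ so that Lemma \ref{lemma,LengthUniformConverge} delivers this $\tilde\varepsilon$, produces the desired contradiction. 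The principal technical obstacle will be the lift-control step in the second paragraph: guaranteeing that the topologically-correct lift of $f \circ \gamma_1$ actually connects $\widetilde q$ to $\eta_1 \widetilde q$ rather than to some other lift of $f(p)$. This requires carefully combining the $C^0$-closeness of Lemma \ref{lemma,LengthUniformConverge} (and, if desired, the self-intersection information of Lemma \ref{lemma,self-intersection-surfaces}) with the nontriviality of $[\gamma_1]$ to locate $\tau(s_0)$ near $s_0$; a weaker lower bound in terms of the shorter closed-geodesic length $\ell_{g_2}([\gamma_1]) < L_{g_2}(\gamma_1)$ would fail to produce the crucial $c\varepsilon_1$ term and the argument would collapse.
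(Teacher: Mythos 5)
Your strategy coincides with the paper's up through the application of Lemma \ref{lemma,LengthUniformConverge}, and your endgame (the inequality $(1-c)\,L_{g_2}(\gamma_1)+c\,\varepsilon_1\le 2\tilde\varepsilon$, contradicted by choosing $\tilde\varepsilon<\varepsilon_1/4$) is a clean repackaging of the paper's concatenation argument. However, the step you yourself flag as the ``principal technical obstacle'' is a genuine gap, and the sketch you offer for it does not work as written. First, ``after reparametrizing $\gamma$ we may take $t_0=0$'' is not available: shifting the parametrization of $\gamma$ so that $\tau(0)=0$ destroys the identity $\gamma(0)=\gamma(s_0)=p$, so evaluating the shadowing estimate at $t=0$ only tells you that $f(p)$ is $\tilde\varepsilon$-close to $\gamma(t_0)$ for an a priori arbitrary $t_0\in[0,1)$, not that it is close to $p$. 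Second, your argument that $\tau(s_0)$ must land near $s_0$ because $[\gamma_1]$ is nontrivial rules out $\tau$ being nearly constant on $[0,s_0]$, but it does not rule out the configuration $t_0\approx s_0$, $\tau(s_0)\approx 1$, in which $f\circ\gamma_1$ shadows $\gamma_2$ rather than $\gamma_1$; excluding that would require knowing that the loops $\gamma_1$ and $\gamma_2$ are not freely homotopic, which is not among the listed properties of the Gogolev--Reber construction.

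The paper closes exactly this gap with Lemma \ref{lemma,self-intersection-surfaces}, which you mention only parenthetically. Its ``moreover'' clause, proved via the angle $\theta_p$ at the self-intersection and a comparison-triangle estimate in the universal cover, shows that the particular self-intersection $q'=f(p)=f\circ\gamma(0)=f\circ\gamma(s_0)$ of the shadowing curve $f\circ\gamma$ (whose short loop is freely homotopic to $\gamma_1$ because $f$ is homotopic to the identity) satisfies $d_{g_2}(p,f(p))<\tilde\varepsilon\,(\sqrt{2/(1-a_p)}+1)$, and its proof identifies the correct deck transformation $\eta$; this is precisely the lift control you need to conclude $d_{g_2}(\widetilde p_0,\widetilde f(\widetilde p_0))=O(\tilde\varepsilon)$ and hence $L_{g_2}(f\circ\gamma_1)\ge L_{g_2}(\gamma_1)-O(\tilde\varepsilon)$. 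With that lemma invoked, your numerical contradiction goes through and is essentially equivalent to the paper's route, which instead forms the curve $\gamma_2\,\beta\,(f\circ\gamma_1)\,\beta^{-1}$ with $\beta$ a short geodesic from $p$ to $f(p)$ and contradicts the minimality of $\gamma$ in its free homotopy class. Without it, the proof is incomplete. Your closing observation is correct: the weaker bound $L_{g_2}(f\circ\gamma_1)\ge\ell_{g_2}([\gamma_1])$ would not produce the $c\,\varepsilon_1$ term and the argument would fail.
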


\begin{proof}
    We prove the proposition by contradiction. Suppose there exists $f_0\in  \Lip_{\id}(M,g_1,g_2)$ so that $\Lip(f_0,g_1,g_2) < 1$. Since $f_0\circ \gamma$ is a closed curve homotopic to $\gamma$, we have
  \begin{align*}
      L_{g_2} (\gamma)&\leq L_{g_2}(f_0\circ \gamma)\\
       &\leq \Lip(f_0,g_1,g_2) L_{g_1}(\gamma)\\
      &=\Lip(f_0,g_1,g_2) (L_{g_2}(\gamma)+ \varepsilon_2-\varepsilon_1) < L_{g_2}(\gamma)+ \varepsilon_2-\varepsilon_1.
      \end{align*}
      Therefore we obtain $0\leq L_{g_2}(f_0\circ \gamma) - L_{g_2}(\gamma) < \varepsilon_2 -\varepsilon_1$.
For any $\epsilon >0 $,   Lemma \ref{lemma,LengthUniformConverge} implies the existence
of $ t_0 \in [0,1) $ together with a monotone continuous and surjective map $\tau: [0,1] \to [t_0, t_0 +1]$ 
  such that
  $$
  d_g( f_0 \circ \gamma(t), \gamma (\tau(t))) \le \varepsilon
  $$
 for all $t \in [0,1]$ when $\delta= \varepsilon_2 -\varepsilon_1 $ is sufficiently small.
Since $p =\gamma(0) = \gamma(s_0)$ is the unique self-intersection
 of $\gamma$, $f_0(p) = f_0(\gamma(0)) = f_0(\gamma(s_0))$ is a self-intersection of $f_0\circ \gamma$.
 Since $f_0$ is homotopic to the identity, the loops $\gamma_{[0,s_0]}$ and $f_0 \circ \gamma_{[0,s_0]}$  are free homotopic. Extend $\tau:[0,1]\to [t_0,t_0+1]$ to $\tau:\mathbb{R} \to \mathbb{R}$ by letting $\tau(t+1)=\tau(t)+1$.
Using Lemma \ref{lemma,self-intersection-surfaces}, we obtain $d_{g_2}(p,f_0(p))< \frac{\varepsilon_1}{2}$ provided 
      $\varepsilon \bigg(\sqrt{\frac{2}{1-a_p}} +1 \bigg) \le \frac{\varepsilon_1}{2}$, where $a_p >0$ is the cosine of the angle of self-intersection of $\gamma$.
 Now let $\beta$ be the minimal geodesic connecting $p$ to $f_0(p)$ and $\eta_1=f_0\circ \gamma_1$. Then since we assume $\Lip(f_0,g_1,g_2)  <1$, we obtain
      $$L_{g_2}(\beta \eta_1 \beta^{-1}) \leq \varepsilon_1 + L_{g_2}(f_0 \circ \gamma_1) < \varepsilon_1 + L_{g_1}(\gamma_1)= L_{g_2}(\gamma_1).$$
      Therefore 
      $$L_{g_2}(\gamma_2 \beta\eta_1\beta^{-1})< L_{g_2}(\gamma_1)+ L_{g_2}(\gamma_2) = L_{g_2}(\gamma).$$
      Since the closed curve $\gamma_2  \beta\eta_1\beta^{-1}$ is homotopic to $\gamma$, the estimate above leads to a contradiction of the fact that $\gamma=\gamma^{g_2}$ is the shortest closed curve in its homotopy. Therefore, we conclude that there does not exist a Lipschitz map $f\in  \Lip_{\id}(M,g_1,g_2)$ so that $\Lip(f,g_1,g_2) < 1$.
\end{proof}

\begin{corollary}
     Assume that the positive numbers $\varepsilon_1,\varepsilon_2$ and $\varepsilon_2-\varepsilon_1$ are small enough. Then for metrics $g_1,g_2$ introduced as before, we have
     $$S(g_1,g_2) < L(g_1,g_2).$$
\end{corollary}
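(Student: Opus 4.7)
The plan is to obtain this as an immediate consequence of the two bounds on $S(g_1,g_2)$ and $L(g_1,g_2)$ already assembled in the preceding discussion. There is essentially nothing left to prove: the strict gap comes for free once one juxtaposes the upper bound on the maximal stretch built into the Gogolev--Reber construction with the lower bound on the least Lipschitz constant established in the proposition just above.

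More explicitly, I would first quote property (3) of the pair $(g_1,g_2)$ from \cite{GR24}, namely that there exists $\varepsilon>0$ such that
\[
S(g_1,g_2) \;=\; \sup_{[\alpha]\in[\Gamma]} \frac{\ell_{g_2}([\alpha])}{\ell_{g_1}([\alpha])} \;\le\; \frac{1}{1+\varepsilon} \;<\; 1.
\]
Then I would invoke the preceding proposition, which, under the assumption that $\varepsilon_1,\varepsilon_2,\varepsilon_2-\varepsilon_1$ are sufficiently small, rules out the existence of any $f\in\Lip_{\id}(M,g_1,g_2)$ with $\Lip(f,g_1,g_2)<1$, and hence forces
\[
L(g_1,g_2)\;\ge\;1.
\]
Chaining these two inequalities yields $S(g_1,g_2)\le \tfrac{1}{1+\varepsilon}<1\le L(g_1,g_2)$, which is the desired strict inequality.

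There is no genuine obstacle at this stage; all the substantive analysis (the perturbation construction producing property (3), and the self-intersection/length argument ruling out $\Lip<1$ maps via Lemmas \ref{lemma,LengthUniformConverge} and \ref{lemma,self-intersection-surfaces}) has already been carried out in the proposition. The only minor care needed is to make sure that the smallness hypotheses on $\varepsilon_1,\varepsilon_2,\varepsilon_2-\varepsilon_1$ invoked here are exactly those of the preceding proposition, so that both the upper bound $S(g_1,g_2)<1$ and the lower bound $L(g_1,g_2)\ge 1$ hold simultaneously for the same pair $(g_1,g_2)$. Once this compatibility is noted, the corollary is proved in a single line.
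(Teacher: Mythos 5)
Your argument is exactly the one the paper intends: property (3) of the Gogolev--Reber construction gives $S(g_1,g_2)\le \frac{1}{1+\varepsilon}<1$, and the preceding proposition gives $L(g_1,g_2)\ge 1$, so the strict inequality follows at once. The proposal is correct and matches the paper's (implicit) proof.
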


\section{Examples for which the stretch locus is a simple closed geodesic}
\label{appendix, C}
\hfill\\
Below, we provide examples of compact surfaces which do not satisfy the condition of Theorem  \ref{GK, geodesicLamination} but where the stretch locus $E(g_1,g_2)$ is a lamination given by a simple closed geodesic.
It is an interesting question to understand what a necessary and sufficient condition is for the stretch locus $E(g_1,g_2)$ to be a geodesic lamination (see Question \ref{question: stretchLocusLamination} in Appendix \ref{question: open question}).

Our example is given by perturbing metrics near on a simple closed geodesic. To give the construction, we first state a lemma. 
\begin{lemma} \label{lemma, fermiCoordinates}
    Let $M=S$ be a closed connected oriented surface of genus $\mathcal{G}\geq 2$. Let $g_0\in R^-(S)$ and let $\gamma$ be a simple closed $g_0$-geodesic on $S$. There exists a small tubular neighborhood $U$ of $\gamma$ and a closed set $V\subseteq U$ containing $\gamma$ such that for $0<s<1$, we can find a smooth bump function $\kappa_s: S\to \mathbb{R}$ satisfying,
\begin{enumerate}
    \item $\kappa_s|_{\gamma}\equiv (1-s)^2$.
     \item $\kappa_s|_{V}<1$.
    \item $\kappa_s|_{U^c}\equiv 1$.
    \item $g_s= \kappa_s g_0$ defines a new metric and the closed curve $\gamma$ is a $g_s$-geodesic up to reparametrization.
\end{enumerate}
\end{lemma}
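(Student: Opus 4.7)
The plan is to construct $\kappa_s$ as a function of the normal distance to $\gamma$ using Fermi coordinates. Since $S$ is orientable and $\gamma$ is a simple closed $g_0$-geodesic, its normal bundle is trivial, so for $\epsilon > 0$ less than the normal injectivity radius the map $\Phi(t,r) = \exp^{g_0}_{\gamma(t)}(r N(t))$ is a diffeomorphism from $S^1 \times (-\epsilon, \epsilon)$ onto a tubular neighborhood $U$, where $t$ is a $g_0$-arclength parameter on $\gamma$ and $N$ is a smooth unit normal field along $\gamma$. In these Fermi coordinates, $g_0$ takes the form $g_0 = G(t,r)\, dt^2 + dr^2$ with $G(t,0) \equiv 1$ and $\partial_r G(t,0) \equiv 0$; the first identity reflects that $t$ is $g_0$-arclength on $\gamma$, and the second is the standard Fermi identity expressing that the curve $r=0$ is a $g_0$-geodesic.

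Next I would fix $0 < \delta_1 < \delta_2 < \epsilon$ and a smooth even bump function $\rho : \mathbb{R} \to [0,1]$ with $\rho \equiv 1$ on $[-\delta_1, \delta_1]$ and $\rho \equiv 0$ outside $(-\delta_2, \delta_2)$, and then define
$$\kappa_s(\Phi(t,r)) := 1 - \bigl(1 - (1-s)^2\bigr)\rho(r) \quad \text{on } S^1 \times (-\epsilon, \epsilon),$$
extending by $\kappa_s \equiv 1$ on $S \setminus U$. Since $\rho$ vanishes near $\pm \delta_2$, this extension is smooth, and $\kappa_s$ takes values in $(0,1]$. Set $V := \Phi(S^1 \times [-\delta_1, \delta_1])$, a compact neighborhood of $\gamma$ contained in $U$. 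Properties (1)--(3) are then immediate: on $\gamma$ one has $\rho(0)=1$, so $\kappa_s|_\gamma \equiv (1-s)^2$; on $V$ one has $\rho \equiv 1$, so $\kappa_s|_V \equiv (1-s)^2 < 1$; and $\kappa_s \equiv 1$ on $U^c$ by construction.

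The main step is property (4). In Fermi coordinates, $g_s = \kappa_s(r) G(t,r)\, dt^2 + \kappa_s(r)\, dr^2$, and I would verify the geodesic equation for the curve $c(t) := \Phi(t,0)$ by applying the Euler--Lagrange equations to the energy Lagrangian $\mathcal{L} = g_{tt}\dot t^2 + g_{rr}\dot r^2$ at $(\dot t, \dot r) = (1, 0)$. The $t$-equation reduces to $\partial_t(\kappa_s(r) G(t,r))|_{r=0} = \kappa_s(0)\, \partial_t G(t,0) = 0$, using $G(t,0) \equiv 1$, while the $r$-equation reduces to $\partial_r(\kappa_s(r) G(t,r))|_{r=0} = \kappa_s'(0) G(t,0) + \kappa_s(0)\, \partial_r G(t,0) = 0$, using $\kappa_s'(0) = -(1-(1-s)^2)\rho'(0) = 0$ (because $\rho$ is even) together with the Fermi identity $\partial_r G(t,0) \equiv 0$. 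Hence $c$ is an affinely parametrized $g_s$-geodesic, so $\gamma$ is a $g_s$-geodesic up to reparametrization.

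The only slightly delicate ingredient is the appeal to the Fermi identity $\partial_r G(t,0) \equiv 0$, which holds because $r$ is the signed $g_0$-distance to $\gamma$ so that $\partial_r$ has unit $g_0$-length and is $g_0$-orthogonal to $\partial_t$ along $\gamma$; the remaining verifications are routine bump-function bookkeeping.
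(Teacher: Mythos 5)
Your proof is correct and follows essentially the same route as the paper: Fermi coordinates along $\gamma$, a conformal factor depending only on the signed normal distance, evenness of the profile to kill the first normal derivative at $\gamma$, and the Fermi identity $\partial_r G(t,0)\equiv 0$ to verify the geodesic equation. The one substantive difference is your choice of profile: you take $\kappa_s$ \emph{identically} equal to $(1-s)^2$ on a whole collar $V$ around $\gamma$, so that $g_s$ is a constant rescaling of $g_0$ near $\gamma$ and property (4) is in fact immediate (your Euler--Lagrange computation is fine but overkill there), whereas the paper's profile $m_s(\rho)^2$ attains $(1-s)^2$ only at $\rho=0$ and genuinely needs $\partial_\rho m_s(0)=0$. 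Both choices prove the lemma as stated, but be aware that the paper's non-locally-constant profile is exploited later (in the appendix example) through $\partial^2_\rho m_s(0)>0$ to obtain the \emph{strict} curvature inequality $K_{g_1}(p)<K_{g_2}(p)L(g_1,g_2)^2$ along $\gamma$; with your locally constant $\kappa_s$ one would get equality there instead, so your construction could not be substituted wholesale into that subsequent argument.
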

\begin{proof}
    Let $t\mapsto \gamma(t)$ be the $g_0$ arc-length parametrization of $\gamma$. Consider Fermi coordinate for $g_0$ based at $\gamma$. A point $p$ has Fermi coordinate 
 $(\rho,t)$ if $p$ is of distance $\rho$  to  $\gamma$ (i.e. to the orthogonal projection point on $\gamma$, say $p'$), and $p'=\gamma(t)$. The metric tensor then is 
    $$g_0|_p=d\rho^2+ f(\rho,t)^2 dt^2.$$
    Since $\gamma$ is a $g_0$-geodesic, we have $f(0,t)\equiv 1$ and $\partial_{\rho}f(\rho,t)\big|_{\rho=0}\equiv 0$.

    Let $\varepsilon$ be small so that the boundary of $\varepsilon$-tubular neighborhood $U$ of $\gamma$ are also simple closed curves. Let $0<s<1$ and $m_s(\rho)$ be a smooth function in $\overline{U}$ given by 
     \begin{equation*}
     m_s(\rho)=
\begin{cases}
    1-s,  \hspace{1cm}  \text{when } \rho=0, \\
     1-s \cdot\exp({1-\frac{1}{1-(\frac{\rho}{\varepsilon})^2}}) , \hspace{1cm}   \text{when } 0<|\rho| <\varepsilon, \\
   1,  \hspace{1cm}  \text{when } |\rho| = \varepsilon. 
  \end{cases}
\end{equation*}

For a fixed $s$, the function $m_s(\rho)$ is smooth and monotone in the interval $[0, \varepsilon]$ (resp. the interval $[-\varepsilon,0]$).   Let $\kappa_s: S\to \mathbb{R}$ be given by $\kappa_s\equiv 1$ in the complement of $U$ and $\kappa_s(\rho,t)=m_s(\rho)^2 \leq 1$ in $\overline{U}$. Consider $g_s= \kappa_s g_0$ . The metric tensor for $g_s$ in the same Fermi coordinate is $$g_s|_p= m_s(\rho)^2 d\rho^2+ (m_s(\rho)f(\rho,t))^2 dt^2.$$
The requirement that $\gamma$ is still a reparametrized geodesic for $g_s$ is 
$$\nabla^{g_s}_{\partial_t} \dot{\gamma}(t)\big|_{\rho=0}=h_s(t) \partial_t,$$
for some function $h_s(t)$. One checks from Christoffel symbol computation that this requirement is fulfilled when $\partial_{\rho}m_s(\rho)|_{\rho=0}=0$. This is satisfied with the above chosen $m_s(\rho)$. In this case $h_s(t)\equiv0$ and $t'=(1-s)t$ is the arc-length parameter for $g_s$. Moreover, letting the closed set $V\subset U$ be the closure of $\frac{\varepsilon}{2}$-tubular neighborhood concludes all statements needed. 

\end{proof}

We are now able to discuss the main example in this appendix.

\begin{example}
     Let $M=S$ be a closed connected oriented surface of genus $\mathcal{G}\geq 2$. Consider any hyperbolic metric $g_2$ on $S$. Suppose $\gamma_0$ is a simple closed $g_2$-geodesic on $S$. Let $s>0$ be small and let $g_1$ be a small perturbation of $g_2$ described as in Lemma \ref{lemma, fermiCoordinates}.  Then $g_1\in R^-(S)$ and 
    $$S(g_1,g_2)= L(g_1,g_2) =\frac{1}{1-s}>1,$$
    and  $E(g_1,g_2)=E_{\id}(g_1,g_2)=\{\gamma_0\}$ is a geodesic lamination. However, we have for any $p\in\gamma_0$, the sectional curvatures satisfy
        \begin{equation} \label{eqtn, CurvatureInequality}
       K_{g_1}(p) < K_{g_2}(p) L(g_1,g_2)^2.
         \end{equation} 
\end{example}
\begin{proof}
Since $g_s$ in Lemma \ref{lemma, fermiCoordinates} varies smoothly with respect to $s$, so is its sectional curvature $K_{g_s}$. Because $g_2=g_0$ is a hyperbolic metric. For $s$ small enough, we can ensure that $g_1=g_s$ is negatively curved.

By Corollary \ref{corollary, GeodesicStretchBusemannFunction}, we know 
$$
I_m(g_1,g_2) = \int_{S^{g_1}M} g_2(B^{g_2}(\pi (v), v^{g_1}_+), v) dm.
$$    

Since 
$$g_2(B^{g_2}(\pi (v), v^{g_1}_+), v) = \norm{v}_{g_2} \cos \theta_v, $$
\noindent where $v\in S^{g_1}M$ and $\theta_v$ is the angle formed from the vector $B^{g_2}(\pi (v), v^{g_1}_+)$ to $v$ counterclockwisely.  Since $g_1=k_s g_2$,
$$\norm{v}_{g_2}= \frac{1}{\sqrt{k_s(\pi(v))}} \cdot \norm{v}_{g_1} \leq \frac{1}{1-s}, $$
and $$ \cos \theta_v \leq 1.$$

Both equalites are realised if and only if  $v$ is tangent to $\gamma_0$ which is a geodesic for both $g_1$ and $g_2$. Therefore, 
$$S(g_1,g_2)=I_{\delta_{\gamma_0}}(g_1,g_2)=\frac{1}{1-s},$$
and 
$\mathcal{M}(g_1,g_2)=\{v\in S^{g_1}M \text{ }| \text{ } v  \text{ is tangent to } \gamma_0\}$.
On the other hand  (see, for example \cite[Proposition 5.2, (ii), (iv)]{DU22}),
$$L(g_1,g_2) \leq \Lip (\id,g_1,g_2)= \max_{v\in S^{g_1}M} \frac{\norm{v}_{g_2}}{\norm{v}_{g_1}}=\frac{1}{1-s}.$$
Again, the equality is realised when  $v$ is tangent to $\gamma_0$. We obtain $L(g_1,g_2)=S(g_1,g_2)$ and by  Proposition \ref{proposition,MatherStretchLocus},
$$E(g_1,g_2) \subset E_{\id}= \{\gamma_0 \}= \pi(\mathcal{M}(g_1,g_2))\subset E(g_1,g_2),$$
which yields $E(g_1,g_2)=\{\gamma_0 \}$ is a geodesic lamination. Moreover, the identity map is an optimal Lipschitz map and it maximally stretches $\gamma_0$ by $ L(g_1,g_2)$.  

 Next we want to verify Inequality \eqref{eqtn, CurvatureInequality}. We notice that the sectional curvature for metric of the form 
$g=A(u,v)^2du^2+B(u,v)^2 dv^2$
is
$$K_g(p)=\frac{-1}{AB} ({\partial_u}(A^{-1} \partial_{u}{B})+{\partial_v}(B^{-1} \partial_{v}{A})).$$
As discussed in Lemma \ref{lemma, fermiCoordinates}, we can take the Fermi coordinate based at $\gamma_0$ for $g_2$. The curvature for the metric
$g_2|_p=d\rho^2+ f(\rho,t)^2 dt^2$
is,
$$K_{g_2}(p)=\frac{- \partial^2_{\rho}f(\rho,t)}{f(\rho,t)}=-1.$$
In fact, we have $f(\rho,t)=\cosh \rho$.

Consider $p=\gamma_0(t)$ with Fermi coordinate $(0,t)$. Using the conditions  $\partial_{\rho}m_s(\rho)|_{\rho=0}=0$ together with $f(0,t)\equiv 1$ and $\partial_{\rho}f(\rho,t)\big|_{\rho=0}\equiv 0$, we obtain that the sectional curvature for metric $g_1=m_s(\rho)^2 d\rho^2+ m_s(\rho)^2f(\rho,t)^2 dt^2$ at $p=(0,t)$ is 
$$K_{g_1}(p)=\frac{- 1}{ m_s(0)^2} [\partial^2_{\rho}f(0,t) + \partial^2_{\rho} m_s(0)  m_s(0)^{-1} ].$$

Recall in Lemma \ref{lemma, fermiCoordinates}, the function $m_s(\rho) = 1-s \cdot\exp({1-\frac{1}{1-(\frac{\rho}{\varepsilon})^2}})  $ when  $ 0<|\rho| < \varepsilon $ and so $\partial^2_{\rho} m_s(0)>0$ . Also $0<m_s(0)=1-s<1$. Therefore,
$$\frac{K_{g_1}(p)}{K_{g_2}(p)}=\frac{\partial^2_{\rho}f(0,t) + \partial^2_{\rho} m_s(0) (1-s)^{-1} }{(1-s)^2 \partial^2_{\rho}f(0,t)}> \frac{1}{(1-s)^2}=L(g_1,g_2)>1.$$

Since $f_0=\id$ is an optimal Lipschitz map, this is also an example of $E(g_1,g_2)$ being a maximally stretched geodesic lamination while conditions of Corollary \ref{Cor, geodesicLamination} do not hold.
\end{proof}

\section{A simple analytic lemma}\label{Appendix simpleLemma}
Since we can not find a reference for the following lemma, we include a proof of it here.

\begin{lemma}\label{simpleLemma}
    Let $K$ be a compact topological space and $V$ an arbitrary metric space.
    Given a continuous function $F \colon K \times V \to \R$, then the function
    $G:V \to \R$ given by 
    $$
    G(p)= \max_{x \in K}F(x,p)
    $$
    is continuous as well.
\end{lemma}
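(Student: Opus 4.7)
The plan is to establish continuity of $G$ by proving the two one-sided inequalities separately at an arbitrary point $p_0 \in V$: the lower bound $G(p) \geq G(p_0) - \varepsilon$ (lower semicontinuity) and the upper bound $G(p) \leq G(p_0) + \varepsilon$ (upper semicontinuity) for $p$ in some neighborhood of $p_0$. First I would note that $G$ is well-defined, since for each fixed $p$ the map $x \mapsto F(x,p)$ is continuous on the compact space $K$, so the maximum is attained.

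For lower semicontinuity, I would pick a point $x_0 \in K$ that realizes $G(p_0) = F(x_0, p_0)$. By continuity of $F$ at the single point $(x_0, p_0)$, there exists a neighborhood $U$ of $p_0$ in $V$ such that $F(x_0, p) > F(x_0, p_0) - \varepsilon$ for all $p \in U$. Since $G(p) \geq F(x_0, p)$ by definition of the maximum, this yields $G(p) > G(p_0) - \varepsilon$ on $U$. This step uses only pointwise continuity of $F$ and does not require compactness of $K$.

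The upper semicontinuity direction is where compactness of $K$ enters, and this is the main (mild) obstacle. I would argue by contradiction: suppose there exists a sequence $p_n \to p_0$ in $V$ with $G(p_n) > G(p_0) + \varepsilon$ for all $n$. Choose $x_n \in K$ with $G(p_n) = F(x_n, p_n)$. By compactness of $K$, pass to a subsequence so that $x_n \to x^\ast \in K$ (here, since $V$ is only assumed metric, I avoid net arguments by working with the metrizable neighborhood of $p_0$; if $K$ is only assumed topologically compact and not metrizable one passes to a convergent subnet, but either way the continuity of $F$ applies). Then joint continuity of $F$ gives $F(x_n, p_n) \to F(x^\ast, p_0) \leq G(p_0)$, contradicting $F(x_n, p_n) > G(p_0) + \varepsilon$.

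Combining the two one-sided bounds yields $|G(p) - G(p_0)| < \varepsilon$ in a neighborhood of $p_0$, which is the desired continuity. The only subtlety to handle carefully is the subsequential (or subnet) extraction in the upper-semicontinuity step, but this is standard once compactness of $K$ is invoked.
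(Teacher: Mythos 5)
Your proof is correct and takes essentially the same route as the paper's: the lower bound comes from evaluating $F$ at a maximizer for the limit point, and the upper bound from compactness of $K$ applied to the maximizers at $p_n$ (the paper packages this second step as a uniform-convergence statement $\max_{x\in K}|F(x,p_n)-F(x,p)|\to 0$, itself proved by the same extraction-plus-contradiction argument you apply directly). Your subnet remark is in fact a point of extra care: the paper extracts a convergent subsequence from a sequence in $K$, which strictly speaking uses sequential compactness, whereas your formulation covers a general compact $K$.
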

\begin{proof}
Let $p_n$ be a sequence in $V$ converging to $p$.
We show first that
\begin{equation} \label{eqtn,LimitMax}
\lim_{n \to \infty} \max_{x \in K}|F(x,p_n) -F(x,p)| = 0.
\end{equation}
If not, there exists $\varepsilon >0 $ and a subsequence $p_{n_k}$ and a sequence $x_k \in K$
such that
$$
|F(x_k,p_{n_k}) -F(x_k,p)| \geq \varepsilon.
$$
By choosing a subsequence, we can assume that $x_k$ converges to $x_0 \in K$.
Then $(x_k,p_{n_k})$ converges to $(x_0, p)$. This leads to a contradiction with continuity of $F$.

Now assume
$$
\max_{x \in K}F(x,p) = G(p) = F(x_p,p)
$$
for some $x_p \in K$. Choose a sequence $x_n \in K$ converging to $x_p$.
Then $G(p_n) \ge F(x_n, p_n)$. Continuity of $F$ implies
$$
\varliminf_{n \to \infty}G(p_n) \ge \lim_{n \to \infty} F(x_n, p_n) =F(x_p, p) = G(p).
$$
On the other hand, if we let $y_n \in K$ be a sequence such that $F(y_n,p_n) =G(p_n)$. Using what we have shown in Equation \eqref{eqtn,LimitMax}, we obtain
$$
\lim_{n \to \infty}(F(y_n,p_n) -F(y_n,p)) =0.
$$
This yields
$$
\varlimsup_{n \to \infty}G(p_n)=\varlimsup_{n \to \infty} F(y_n,p_n) =\varlimsup_{n \to \infty} F(y_n,p) \le G(p).
$$
Hence, we obtain the continuity of G.
\end{proof} 

\section{Some open questions}  \label{question: open question}

We collect a few open questions which are already suggested in the note.

\begin{question} \label{question: Finsler}
    Consider the Riemannian metrics $R^-_1(M)$ of negative curvature and topological entropy $1$ on $M$. Does 
    \begin{equation*}\label{eqtn Kn95}
        d_T(g_1,g_2)=\log S(g_1,g_2)
    \end{equation*}
    descend to an asymmetric Finsler metric on the isometry classes of $R^-_1(M)$?
\end{question}
It has been shown in \cite{Kn95} (see also \cite[Proposition 5.4]{GKL22} and Theorem \ref{thm, proportMLS}) that $d_T$ defines a metric on the isometry classes of $R^-_1(M)$ if and only if marked length spectrum rigidity holds.
As proved in Section 5 of \cite{GKL22}, $d_T$ induces an asymmetric Finsler norm on the tangent space of $R^-_1(M)$, regardless of whether marked length spectrum rigidity holds.
Furthermore, the associated Finsler metric $d_F$ dominates $d_T$, that is, $d_F \geq d_T$.
If $d_F$ agrees with $d_T$, then $d_T$ is a metric, and global marked length spectrum rigidity would follow; see also Conjecture 5.7 in \cite{GKL22}.
So far, by the work of Thurston \cite{Th98}, the equality $d_F=d_T$ is known only for Teichm\"uller spaces.
A first step toward addressing the question of whether $d_F=d_T$ would be to study it for closed surfaces of variable negative curvature and normalized entropy, where marked length spectrum rigidity is known.

\begin{question} 
   Given $g_1,g_2\in R^-(M)$, what are necessary and sufficient conditions for $S(g_1,g_2)=L(g_1,g_2)$? 
\end{question}
 It is always true that $S(g_1,g_2)\leq L(g_1,g_2)$. There exist many examples, including the Teichmüller space, for which $S(g_1,g_2)=L(g_1,g_2)$. However, as already mentioned in the previous Appendix \ref{ExampleS<L}, there exist also examples for which $S(g_1,g_2)<L(g_1,g_2)$. It remains an interesting question to find geometric and analytic conditions for negatively curved metrics such that the equality holds.

\begin{question} \label{question: stretchLocusLamination}
   Given $g_1,g_2\in R^-(M)$, what are necessary and sufficient conditions for the stretch locus $E(g_1,g_2)$ and the projection of the Mather set $\pi(\mathcal{M}(g_1,g_2))$ to define geodesic laminations?
\end{question}
The work in \cite{GK17} provides at least an open neighborhood of examples in $R^-(M)\times R^-(M)$ near the Teichmüller space for which the stretch locus and the projection of the Mather set are geodesic laminations. However, as suggested by Appendix~\ref{appendix, C}, one should also expect examples beyond those guaranteed by the techniques of \cite{GK17}.  It would be interesting to understand a general reason for the appearance of geodesic laminations beyond manifolds of constant negative curvature. 

Regarding to Theorem \ref{mainThm3}, we have the following questions.

\begin{question}\label{question: WeightedLipMap}
     Given $g_1,g_2\in R^-(M),$ 
\begin{enumerate}
    \item 
   does there always exist a maximally stretched measure $m_0$ so that 
    $$S(g_1,g_2)=L_{m_0}(g_1,g_2) ?$$
    \item 
    If such a measure $m_0$ exist, does there  exist a $m_0$-weighted best Lipschitz map $f_0$ so that 
    $$L_{m_0}(g_1,g_2)=\int_{S^{g_1}M} ||Df_0(v)||_{g_2}dm_0(v)?$$
    \end{enumerate}
\end{question}

We recall that the $m$-weighted least Lipschitz constants always satisfy $S(g_1,g_2)\leq L_m(g_1,g_2)\leq L(g_1,g_2)$ regardless of the choice of a maximally stretched measure $m$. It is a more flexible alternative to the best Lipschitz constants in this general setting. We would like to know whether $m$-weighted least Lipschitz constants can coincide with the maximal stretch for good choices of maximally stretched measures.

The last two questions are related to Aubry and Mather sets.

\begin{question} \label{question,AubryPeierl}
   Given $g_1,g_2\in R^-(M)$, does the Aubry set $\mathcal{A}(g_1,g_2)$ agree with the Peierls barrier zero set $\mathcal{H}_0(g_1,g_2)$?
\end{question}

 We already know that $\mathcal{H}_0(g_1,g_2)\subset \mathcal{A}(g_1,g_2)$ is true. However, we do not know whether these two sets are equal. In an analogous setting in \cite{Fat08} for Lagrangian dynamics, these two sets are equal.

\begin{question}
     Given $g_1,g_2\in R^-(M),$ what are the relation between the stretch locus $E(g_1,g_2)$ and the Aubry set $\mathcal{A}(g_1,g_2)$? Furthermore, what happens if we impose the additional condition $S(g_1,g_2)=L(g_1,g_2)$?
\end{question}
This question is motivated by the fact that the Aubry set always contains the Mather set and the stretch locus contains the projection of the Mather set when $S(g_1,g_2)=L(g_1,g_2)$ (see Proposition \ref{proposition,MatherStretchLocus}). Both the stretch locus and the Aubry set are in some sense the non-measure-theoretic counterpart of the Mather set. Therefore, it seems natural to ask whether the projection of the Aubry set and the stretch locus are equal, or if the projection of the Aubry set is contained in the stretch locus or vice versa when $S(g_1,g_2)=L(g_1,g_2)$.

\bibliography{stretch}

@book{Ab63,
  title={Lectures of Smale on Differential Topology},
  author={Abraham, R. and Smale, S.},
  series={Columbia University. Dept. of Mathematics. [Notes, lectures and papers]},
  url={https://books.google.de/books?id=6eM-AAAAIAAJ},
  year={1963},
  publisher={Columbia University}
}

@InProceedings{Au78,
author="Aubry, Serge",
editor="Bishop, Alan R.
and Schneider, Toni",
title="The New Concept of Transitions by Breaking of Analyticity in a Crystallographic Model",
booktitle="Solitons and Condensed Matter Physics",
year="1978",
publisher="Springer Berlin Heidelberg",
address="Berlin, Heidelberg",
pages="264--277",
abstract="Phase transitions such as for example, incommensurate to commensurate modulated cristal [1] metal-insulator in Peierls quasi-one dimensional metals [2] , non registered to registered in adsorbed (or intersticial) atomic monolayers [3] exhibit new and unexpected features. Despite other problems like the zero-width central peak phenomena [4] , the Anderson-Mott metal-insulator transition [5] , the spin-glass transitions [6] look very different, we believe that the ideas that we develop in this paper, should be relevant to suggest a new approach to them. The main topic of this paper is the description of the phase transitions versus parameters of a very simple one-dimensional model for epitaxy at zero degree K. Most results are rigorous but their proofs which are too long, are omitted. We try to emphasize the role of two coupled concepts of defectibility and of frustration in the origin of this new kind of phase transitions.",
isbn="978-3-642-81291-0"
}

@article {BCLS18,
    AUTHOR = {Bridgeman, Martin and Canary, Richard and Labourie, François and Sambarino, Andres},
     TITLE = {Simple root flows for {H}itchin representations},
   JOURNAL = {Geom. Dedicata},
  FJOURNAL = {Geometriae Dedicata},
    VOLUME = {192},
      YEAR = {2018},
     PAGES = {57--86},
      ISSN = {0046-5755,1572-9168},
   MRCLASS = {53D25 (37D35 37D40 53C24 53C35)},
  MRNUMBER = {3749423},
MRREVIEWER = {Boris\ Hasselblatt},
       DOI = {10.1007/s10711-017-0305-2},
       URL = {https://doi.org/10.1007/s10711-017-0305-2},
}

@book {BGS85,
    AUTHOR = {Ballmann, Werner and Gromov, Mikhael and Schroeder, Viktor},
     TITLE = {Manifolds of nonpositive curvature},
    SERIES = {Progress in Mathematics},
    VOLUME = {61},
 PUBLISHER = {Birkh\"auser Boston, Inc., Boston, MA},
      YEAR = {1985},
     PAGES = {vi+263},
      ISBN = {0-8176-3181-X},
   MRCLASS = {53C20},
  MRNUMBER = {823981},
MRREVIEWER = {Gudlaugur\ Thorbergsson},
       DOI = {10.1007/978-1-4684-9159-3},
       URL = {https://doi.org/10.1007/978-1-4684-9159-3},
}

@article {BCG94,
    AUTHOR = {Besson, G\'erard and Courtois, Gilles and Gallot, Sylvestre},
     TITLE = {Volumes, entropies et rigidit\'es des espaces localement
              sym\'etriques de courbure strictement n\'egative},
   JOURNAL = {C. R. Acad. Sci. Paris S\'er. I Math.},
  FJOURNAL = {Comptes Rendus de l'Acad\'emie des Sciences. S\'erie I.
              Math\'ematique},
    VOLUME = {319},
      YEAR = {1994},
    NUMBER = {1},
     PAGES = {81--84},
      ISSN = {0764-4442},
   MRCLASS = {53C20 (53C21)},
  MRNUMBER = {1285903},
MRREVIEWER = {Noureddine\ Rahmani},
}

@book {BH99,
    AUTHOR = {Bridson, Martin R. and Haefliger, Andr\'{e}},
     TITLE = {Metric spaces of non-positive curvature},
    SERIES = {Grundlehren der mathematischen Wissenschaften [Fundamental
              Principles of Mathematical Sciences]},
    VOLUME = {319},
 PUBLISHER = {Springer-Verlag, Berlin},
      YEAR = {1999},
     PAGES = {xxii+643},
      ISBN = {3-540-64324-9},
   MRCLASS = {53C23 (20F65 53C70 57M07)},
  MRNUMBER = {1744486},
MRREVIEWER = {Athanase\ Papadopoulos},
       DOI = {10.1007/978-3-662-12494-9},
       URL = {https://doi.org/10.1007/978-3-662-12494-9},
}

@article {Bon88,
    AUTHOR = {Bonahon, Francis},
     TITLE = {The geometry of {T}eichm\"{u}ller space via geodesic currents},
   JOURNAL = {Invent. Math.},
  FJOURNAL = {Inventiones Mathematicae},
    VOLUME = {92},
      YEAR = {1988},
    NUMBER = {1},
     PAGES = {139--162},
      ISSN = {0020-9910},
   MRCLASS = {32G15 (30F10 57N05)},
  MRNUMBER = {931208},
MRREVIEWER = {Scott Wolpert},
       DOI = {10.1007/BF01393996},
       URL = {https://doi.org/10.1007/BF01393996},
}

@incollection {Bon91,
    AUTHOR = {Bonahon, Francis},
     TITLE = {Geodesic currents on negatively curved groups},
 BOOKTITLE = {Arboreal group theory ({B}erkeley, {CA}, 1988)},
    SERIES = {Math. Sci. Res. Inst. Publ.},
    VOLUME = {19},
     PAGES = {143--168},
 PUBLISHER = {Springer, New York},
      YEAR = {1991},
      ISBN = {0-387-97518-7},
   MRCLASS = {57M05 (20F05 20F32 20F34 53C21 57M07)},
  MRNUMBER = {1105332},
MRREVIEWER = {Christopher\ W.\ Stark},
       DOI = {10.1007/978-1-4612-3142-4\_5},
       URL = {https://doi.org/10.1007/978-1-4612-3142-4_5},
}

@book {Bo08,
    AUTHOR = {Bowen, Rufus},
     TITLE = {Equilibrium states and the ergodic theory of {A}nosov
              diffeomorphisms},
    SERIES = {Lecture Notes in Mathematics},
    VOLUME = {470},
   EDITION = {revised},
      NOTE = {With a preface by David Ruelle,
              Edited by Jean-Ren\'{e} Chazottes},
 PUBLISHER = {Springer-Verlag, Berlin},
      YEAR = {2008},
     PAGES = {viii+75},
      ISBN = {978-3-540-77605-5},
   MRCLASS = {37C40 (28D05 37A25 37D20)},
  MRNUMBER = {2423393},
}

@article{Bou95,
 author = {Bourdon, Marc},
 title = {Conformal structure on the boundary and geodesic flow of a {{\(\text{CAT}(-1)\)}}-space},
 fjournal = {L'Enseignement Math{\'e}matique. 2e S{\'e}rie},
 journal = {Enseign. Math. (2)},
 issn = {0013-8584},
 volume = {41},
 number = {1-2},
 pages = {63--102},
 year = {1995},
 language = {French},
 keywords = {37D40,53D25,53C23},
 zbMATH = {819166},
 Zbl = {0871.58069}
}

@article {BR75,
    AUTHOR = {Bowen, Rufus and Ruelle, David},
     TITLE = {The ergodic theory of {A}xiom {A} flows},
   JOURNAL = {Invent. Math.},
  FJOURNAL = {Inventiones Mathematicae},
    VOLUME = {29},
      YEAR = {1975},
    NUMBER = {3},
     PAGES = {181--202},
      ISSN = {0020-9910,1432-1297},
   MRCLASS = {58F15 (28A65)},
  MRNUMBER = {380889},
MRREVIEWER = {L.\ A.\ Bunimovich},
       DOI = {10.1007/BF01389848},
       URL = {https://doi.org/10.1007/BF01389848},
}

@article {BS85,
    AUTHOR = {Birman, Joan S. and Series, Caroline},
     TITLE = {Geodesics with bounded intersection number on surfaces are
              sparsely distributed},
   JOURNAL = {Topology},
  FJOURNAL = {Topology. An International Journal of Mathematics},
    VOLUME = {24},
      YEAR = {1985},
    NUMBER = {2},
     PAGES = {217--225},
      ISSN = {0040-9383},
   MRCLASS = {57N05},
  MRNUMBER = {793185},
MRREVIEWER = {Michel\ Boileau},
       DOI = {10.1016/0040-9383(85)90056-4},
       URL = {https://doi.org/10.1016/0040-9383(85)90056-4},
}

@article {Cr90,
    AUTHOR = {Croke, Christopher B.},
     TITLE = {Rigidity for surfaces of nonpositive curvature},
   JOURNAL = {Comment. Math. Helv.},
  FJOURNAL = {Commentarii Mathematici Helvetici},
    VOLUME = {65},
      YEAR = {1990},
    NUMBER = {1},
     PAGES = {150--169},
      ISSN = {0010-2571,1420-8946},
   MRCLASS = {53C20 (53C22 58F17)},
  MRNUMBER = {1036134},
MRREVIEWER = {Chi-Keung\ Cheung},
       DOI = {10.1007/BF02566599},
       URL = {https://doi.org/10.1007/BF02566599},
}

@article {CD04,
    AUTHOR = {Croke, Christopher B. and Dairbekov, Nurlan S.},
     TITLE = {Lengths and volumes in {R}iemannian manifolds},
   JOURNAL = {Duke Math. J.},
  FJOURNAL = {Duke Mathematical Journal},
    VOLUME = {125},
      YEAR = {2004},
    NUMBER = {1},
     PAGES = {1--14},
      ISSN = {0012-7094,1547-7398},
   MRCLASS = {53C22 (37A20 37D40 53C24 53C65)},
  MRNUMBER = {2097355},
MRREVIEWER = {Dorothee\ Schueth},
       DOI = {10.1215/S0012-7094-04-12511-4},
       URL = {https://doi.org/10.1215/S0012-7094-04-12511-4},
}

@book{CEG06,
    place={Cambridge}, 
    series={London Mathematical Society Lecture Note Series}, 
    title={Fundamentals of Hyperbolic Manifolds: Selected Expositions}, publisher={Cambridge University Press}, 
    year={2006}, 
    collection={London Mathematical Society Lecture Note Series}
}

@ARTICLE{CDPW24,
       author = {{Carvajales}, Le{\'o}n and {Dai}, Xian and {Pozzetti}, Beatrice and {Wienhard}, Anna},
        title = "{Thurston's asymmetric metrics for Anosov representations}",
      journal = {arXiv e-prints},
     keywords = {Mathematics - Differential Geometry, Mathematics - Dynamical Systems, Mathematics - Geometric Topology},
         year = 2022,
        month = oct,
          eid = {arXiv:2210.05292},
        pages = {arXiv:2210.05292},
          doi = {10.48550/arXiv.2210.05292},
archivePrefix = {arXiv},
       eprint = {2210.05292},
 primaryClass = {math.DG},
       adsurl = {https://ui.adsabs.harvard.edu/abs/2022arXiv221005292C},
      adsnote = {Provided by the SAO/NASA Astrophysics Data System}
}

@article {Da23,
    AUTHOR = {Dai, Xian},
     TITLE = {Geodesic coordinates for the pressure metric at the {F}uchsian
              locus},
   JOURNAL = {Geom. Topol.},
  FJOURNAL = {Geometry \& Topology},
    VOLUME = {27},
      YEAR = {2023},
    NUMBER = {4},
     PAGES = {1391--1478},
      ISSN = {1465-3060,1364-0380},
   MRCLASS = {53B20 (37D35)},
  MRNUMBER = {4602418},
       DOI = {10.2140/gt.2023.27.1391},
       URL = {https://doi.org/10.2140/gt.2023.27.1391},
}

@book {DoC92,
    AUTHOR = {do Carmo, Manfredo Perdig\~ao},
     TITLE = {Riemannian geometry},
    SERIES = {Mathematics: Theory \& Applications},
   EDITION = {Portuguese},
 PUBLISHER = {Birkh\"auser Boston, Inc., Boston, MA},
      YEAR = {1992},
     PAGES = {xiv+300},
      ISBN = {0-8176-3490-8},
   MRCLASS = {53-01},
  MRNUMBER = {1138207},
MRREVIEWER = {Bang-yen\ Chen},
       DOI = {10.1007/978-1-4757-2201-7},
       URL = {https://doi.org/10.1007/978-1-4757-2201-7},
}

@article{DU24,
author = {Georgios Daskalopoulos and Karen Uhlenbeck},
title = {{Transverse measures and best Lipschitz and least gradient maps}},
volume = {127},
journal = {Journal of Differential Geometry},
number = {3},
publisher = {Lehigh University},
pages = {969 -- 1018},
year = {2024},
doi = {10.4310/jdg/1721071495},
URL = {https://doi.org/10.4310/jdg/1721071495}
}

@unpublished{DU22,
AUTHOR = {Daskalopoulos, Georgios and Uhlenbeck, Karen},
TITLE = {Analytic properties of stretch maps and geodesic laminations},
Note = {arXiv:2205.08250 },
Year ={2022},
}

@ARTICLE{DU24II,
       author = {{Daskalopoulos}, Georgios and {Uhlenbeck}, Karen},
        title = "{Best Lipschitz maps and Earthquakes}",
      journal = {arXiv e-prints},
     keywords = {Mathematics - Differential Geometry},
         year = 2024,
        month = oct,
          eid = {arXiv:2410.08296},
        pages = {arXiv:2410.08296},
          doi = {10.48550/arXiv.2410.08296},
archivePrefix = {arXiv},
       eprint = {2410.08296},
 primaryClass = {math.DG},
       adsurl = {https://ui.adsabs.harvard.edu/abs/2024arXiv241008296D},
      adsnote = {Provided by the SAO/NASA Astrophysics Data System}
}

@article {Eb72II,
    AUTHOR = {Eberlein, Patrick},
     TITLE = {Geodesic flows on negatively curved manifolds. {I}},
   JOURNAL = {Ann. of Math. (2)},
  FJOURNAL = {Annals of Mathematics. Second Series},
    VOLUME = {95},
      YEAR = {1972},
     PAGES = {492--510},
      ISSN = {0003-486X},
   MRCLASS = {58F15},
  MRNUMBER = {310926},
MRREVIEWER = {Leon\ W.\ Green},
       DOI = {10.2307/1970869},
       URL = {https://doi.org/10.2307/1970869},
}

@book {EG15,
    AUTHOR = {Evans, Lawrence C. and Gariepy, Ronald F.},
     TITLE = {Measure theory and fine properties of functions},
    SERIES = {Textbooks in Mathematics},
   EDITION = {Revised},
 PUBLISHER = {CRC Press, Boca Raton, FL},
      YEAR = {2015},
     PAGES = {xiv+299},
      ISBN = {978-1-4822-4238-6},
   MRCLASS = {28-01},
  MRNUMBER = {3409135},
}

@book{Fat08,
  title={The Weak KAM Theorem in Lagrangian Dynamics},
  author={Fathi, Albert},
  isbn={9780521822282},
  series={Cambridge Studies in Advanced Mathematics},
  url={https://books.google.fr/books?id=en-mkQEACAAJ},
  year={2008},
  publisher={Cambridge University Press}
}

@article {FS04,
    AUTHOR = {Fathi, Albert and Siconolfi, Antonio},
     TITLE = {Existence of {$C^1$} critical subsolutions of the
              {H}amilton-{J}acobi equation},
   JOURNAL = {Invent. Math.},
  FJOURNAL = {Inventiones Mathematicae},
    VOLUME = {155},
      YEAR = {2004},
    NUMBER = {2},
     PAGES = {363--388},
      ISSN = {0020-9910,1432-1297},
   MRCLASS = {37J50 (49L20 70H20)},
  MRNUMBER = {2031431},
MRREVIEWER = {Gabriel\ P.\ Paternain},
       DOI = {10.1007/s00222-003-0323-6},
       URL = {https://doi.org/10.1007/s00222-003-0323-6},
}

@book {Jo97,
    AUTHOR = {Jost, J\"{u}rgen},
     TITLE = {Nonpositive curvature: geometric and analytic aspects},
    SERIES = {Lectures in Mathematics ETH Z\"{u}rich},
 PUBLISHER = {Birkh\"{a}user Verlag, Basel},
      YEAR = {1997},
     PAGES = {viii+108},
      ISBN = {3-7643-5736-3},
   MRCLASS = {53C21 (58E20)},
  MRNUMBER = {1451625},
MRREVIEWER = {Raul\ Quiroga-Barranco},
       DOI = {10.1007/978-3-0348-8918-6},
       URL = {https://doi.org/10.1007/978-3-0348-8918-6},
}

@incollection {KT03,
    AUTHOR = {Sturm, Karl-Theodor},
     TITLE = {Probability measures on metric spaces of nonpositive
              curvature},
 BOOKTITLE = {Heat kernels and analysis on manifolds, graphs, and metric
              spaces ({P}aris, 2002)},
    SERIES = {Contemp. Math.},
    VOLUME = {338},
     PAGES = {357--390},
 PUBLISHER = {Amer. Math. Soc., Providence, RI},
      YEAR = {2003},
      ISBN = {0-8218-3383-9},
   MRCLASS = {60B05 (28C15 28C99 53C21)},
  MRNUMBER = {2039961},
MRREVIEWER = {Vladimir\ I.\ Bogachev},
       DOI = {10.1090/conm/338/06080},
       URL = {https://doi.org/10.1090/conm/338/06080},
}

@article {GK17,
    AUTHOR = {Gu\'{e}ritaud, Fran\c{c}ois and Kassel, Fanny},
     TITLE = {Maximally stretched laminations on geometrically finite
              hyperbolic manifolds},
   JOURNAL = {Geom. Topol.},
  FJOURNAL = {Geometry \& Topology},
    VOLUME = {21},
      YEAR = {2017},
    NUMBER = {2},
     PAGES = {693--840},
      ISSN = {1465-3060,1364-0380},
   MRCLASS = {30F60 (20H10 32Q05 53A35 57S30)},
  MRNUMBER = {3626591},
MRREVIEWER = {Makoto\ Masumoto},
       DOI = {10.2140/gt.2017.21.693},
       URL = {https://doi.org/10.2140/gt.2017.21.693},
}

@article{Gr00,
	Author = {Gromov, Mikhael},
	Date-Modified = {2020-11-17 15:21:34 +0100},
	Fjournal = {L'Enseignement Math\'{e}matique. Revue Internationale. 2e S\'{e}rie},
	Issn = {0013-8584},
	Journal = {Enseign. Math. (2)},
	Mrclass = {53D25 (20F65 37C35 37D40 53C24)},
	Mrnumber = {1805410},
	Mrreviewer = {Michel Coornaert},
	Number = {3-4},
	Pages = {391--402},
	Title = {Three remarks on geodesic dynamics and fundamental group},
	Volume = {46},
	Year = {2000}}

@article {GL19,
    AUTHOR = {Guillarmou, Colin and Lefeuvre, Thibault},
     TITLE = {The marked length spectrum of {A}nosov manifolds},
   JOURNAL = {Ann. of Math. (2)},
  FJOURNAL = {Annals of Mathematics. Second Series},
    VOLUME = {190},
      YEAR = {2019},
    NUMBER = {1},
     PAGES = {321--344},
      ISSN = {0003-486X,1939-8980},
   MRCLASS = {53C24 (37C27 37D40 53C22)},
  MRNUMBER = {3990606},
MRREVIEWER = {Rafael\ Oswaldo\ Ruggiero},
       DOI = {10.4007/annals.2019.190.1.6},
       URL = {https://doi.org/10.4007/annals.2019.190.1.6},
}

@misc{GR24,
      title={A counterexample to marked length spectrum semi-rigidity}, 
      author={Andrey Gogolev and James Marshall Reber},
      year={2024},
      eprint={2309.10882},
      archivePrefix={arXiv},
      primaryClass={math.DG},
      url={https://arxiv.org/abs/2309.10882}, 
}

@book {GT01,
    AUTHOR = {Gilbarg, David and Trudinger, Neil S.},
     TITLE = {Elliptic partial differential equations of second order},
    SERIES = {Classics in Mathematics},
      NOTE = {Reprint of the 1998 edition},
 PUBLISHER = {Springer-Verlag, Berlin},
      YEAR = {2001},
     PAGES = {xiv+517},
      ISBN = {3-540-41160-7},
   MRCLASS = {35-02 (35Jxx)},
  MRNUMBER = {1814364},
}

@article {GKL22,
    AUTHOR = {Guillarmou, Colin and Knieper, Gerhard and Lefeuvre, Thibault},
     TITLE = {Geodesic stretch, pressure metric and marked length spectrum
              rigidity},
   JOURNAL = {Ergodic Theory Dynam. Systems},
  FJOURNAL = {Ergodic Theory and Dynamical Systems},
    VOLUME = {42},
      YEAR = {2022},
    NUMBER = {3},
     PAGES = {974--1022},
      ISSN = {0143-3857,1469-4417},
   MRCLASS = {37D40 (37C27 37D35)},
  MRNUMBER = {4374964},
       DOI = {10.1017/etds.2021.75},
       URL = {https://doi.org/10.1017/etds.2021.75},
}

@article {Ka88,
    AUTHOR = {Katok, Anatole},
     TITLE = {Four applications of conformal equivalence to geometry and
              dynamics},
   JOURNAL = {Ergodic Theory Dynam. Systems},
  FJOURNAL = {Ergodic Theory and Dynamical Systems},
    VOLUME = {8$^*$},
      YEAR = {1988},
     PAGES = {139--152},
      ISSN = {0143-3857,1469-4417},
   MRCLASS = {58F17 (53C22)},
  MRNUMBER = {967635},
MRREVIEWER = {Holger\ Kantz},
       DOI = {10.1017/S0143385700009391},
       URL = {https://doi.org/10.1017/S0143385700009391},
}

@BOOK{KH95,
author = {Anatole Katok and Boris Hasselblatt}, year = 1995,
title = {Introduction to the modern theory of dynamical systems},
publisher = {Cambridge University Press},
adress = {New York} }

@article {Kn95,
    AUTHOR = {Knieper, Gerhard},
     TITLE = {Volume growth, entropy and the geodesic stretch},
   JOURNAL = {Math. Res. Lett.},
  FJOURNAL = {Mathematical Research Letters},
    VOLUME = {2},
      YEAR = {1995},
    NUMBER = {1},
     PAGES = {39--58},
      ISSN = {1073-2780},
   MRCLASS = {58F17 (53C20 58F11)},
  MRNUMBER = {1312976},
MRREVIEWER = {Vadim\ A.\ Ka\u{\i}manovich},
       DOI = {10.4310/MRL.1995.v2.n1.a5},
       URL = {https://doi.org/10.4310/MRL.1995.v2.n1.a5},
}

@incollection {Kn02,
    AUTHOR = {Knieper, Gerhard},
     TITLE = {Hyperbolic dynamics and {R}iemannian geometry},
 BOOKTITLE = {Handbook of dynamical systems, {V}ol.\ 1{A}},
     PAGES = {453--545},
 PUBLISHER = {North-Holland, Amsterdam},
      YEAR = {2002},
   MRCLASS = {37D25 (37A35 53C24)},
  MRNUMBER = {1928523},
MRREVIEWER = {Rafael Oswaldo Ruggiero},
       DOI = {10.1016/S1874-575X(02)80008-X},
       URL = {https://doi.org/10.1016/S1874-575X(02)80008-X},
}

@article {LT05,
    AUTHOR = {Lopes, Artur O. and Thieullen, Philippe},
     TITLE = {Sub-actions for {A}nosov flows},
   JOURNAL = {Ergodic Theory Dynam. Systems},
  FJOURNAL = {Ergodic Theory and Dynamical Systems},
    VOLUME = {25},
      YEAR = {2005},
    NUMBER = {2},
     PAGES = {605--628},
      ISSN = {0143-3857,1469-4417},
   MRCLASS = {37D20},
  MRNUMBER = {2129112},
MRREVIEWER = {Oliver\ Jenkinson},
       DOI = {10.1017/S0143385704000732},
       URL = {https://doi.org/10.1017/S0143385704000732},
}

@article {Mo24,
    AUTHOR = {Morse, Harold Marston},
     TITLE = {A fundamental class of geodesics on any closed surface of
              genus greater than one},
   JOURNAL = {Trans. Amer. Math. Soc.},
  FJOURNAL = {Transactions of the American Mathematical Society},
    VOLUME = {26},
      YEAR = {1924},
    NUMBER = {1},
     PAGES = {25--60},
      ISSN = {0002-9947},
   MRCLASS = {53C22},
  MRNUMBER = {1501263},
       DOI = {10.2307/1989225},
       URL = {https://doi.org/10.2307/1989225},
}

@article {Na03,
    AUTHOR = {Anantharaman, Nalini},
     TITLE = {Counting geodesics which are optimal in homology},
   JOURNAL = {Ergodic Theory Dynam. Systems},
  FJOURNAL = {Ergodic Theory and Dynamical Systems},
    VOLUME = {23},
      YEAR = {2003},
    NUMBER = {2},
     PAGES = {353--388},
      ISSN = {0143-3857,1469-4417},
   MRCLASS = {37J50 (37C30 37D40 53C22)},
  MRNUMBER = {1972225},
MRREVIEWER = {Daniel\ Massart},
       DOI = {10.1017/S0143385702001372},
       URL = {https://doi.org/10.1017/S0143385702001372},
}

@article {Ot90,
    AUTHOR = {Otal, Jean-Pierre},
     TITLE = {Le spectre marqu\'{e} des longueurs des surfaces \`a courbure
              n\'{e}gative},
   JOURNAL = {Ann. of Math. (2)},
  FJOURNAL = {Annals of Mathematics. Second Series},
    VOLUME = {131},
      YEAR = {1990},
    NUMBER = {1},
     PAGES = {151--162},
      ISSN = {0003-486X,1939-8980},
   MRCLASS = {58E10 (53C22 58F17 58G25)},
  MRNUMBER = {1038361},
MRREVIEWER = {Carolyn\ Gordon},
       DOI = {10.2307/1971511},
       URL = {https://doi.org/10.2307/1971511},
}

@article {PP90,
    AUTHOR = {Parry, William and Pollicott, Mark},
     TITLE = {Zeta functions and the periodic orbit structure of hyperbolic
              dynamics},
   JOURNAL = {Ast\'{e}risque},
  FJOURNAL = {Ast\'{e}risque},
    NUMBER = {187-188},
      YEAR = {1990},
     PAGES = {268},
      ISSN = {0303-1179},
   MRCLASS = {58F20 (58F11 58F15)},
  MRNUMBER = {1085356},
MRREVIEWER = {Nicola\u{\i} T. A. Haydn},
}

@book {Ph66,
    AUTHOR = {Phelps, Robert R.},
     TITLE = {Lectures on {C}hoquet's theorem},
 PUBLISHER = {D. Van Nostrand Co., Inc., Princeton, N.J.-Toronto,
              Ont.-London},
      YEAR = {1966},
     PAGES = {v+130},
   MRCLASS = {46.00 (46.01)},
  MRNUMBER = {193470},
MRREVIEWER = {D.\ R.\ Henney},
}

@ARTICLE{PW22,
       author = {{Pan}, Huiping and {Wolf}, Michael},
        title = "{Ray structures on Teichm{\"u}ller Space}",
      journal = {arXiv e-prints},
     keywords = {Mathematics - Geometric Topology, Mathematics - Complex Variables, Mathematics - Differential Geometry},
         year = 2022,
        month = jun,
          eid = {arXiv:2206.01371},
        pages = {arXiv:2206.01371},
          doi = {10.48550/arXiv.2206.01371},
archivePrefix = {arXiv},
       eprint = {2206.01371},
 primaryClass = {math.GT},
       adsurl = {https://ui.adsabs.harvard.edu/abs/2022arXiv220601371P},
      adsnote = {Provided by the SAO/NASA Astrophysics Data System}
}

@article {Sig72,
    AUTHOR = {Sigmund, Karl},
     TITLE = {On the space of invariant measures for hyperbolic flows},
   JOURNAL = {Amer. J. Math.},
  FJOURNAL = {American Journal of Mathematics},
    VOLUME = {94},
      YEAR = {1972},
     PAGES = {31--37},
      ISSN = {0002-9327},
   MRCLASS = {28A65 (54H20)},
  MRNUMBER = {302866},
MRREVIEWER = {Robert Ellis},
       DOI = {10.2307/2373591},
       URL = {https://doi.org/10.2307/2373591},
}

@article {ST21,
    AUTHOR = {Schapira, Barbara and Tapie, Samuel},
     TITLE = {Regularity of entropy, geodesic currents and entropy at
              infinity},
   JOURNAL = {Ann. Sci. \'{E}c. Norm. Sup\'{e}r. (4)},
  FJOURNAL = {Annales Scientifiques de l'\'{E}cole Normale Sup\'{e}rieure.
              Quatri\`eme S\'{e}rie},
    VOLUME = {54},
      YEAR = {2021},
    NUMBER = {1},
     PAGES = {1--68},
      ISSN = {0012-9593,1873-2151},
   MRCLASS = {37D40 (37B40 53C21 53D25)},
  MRNUMBER = {4245867},
MRREVIEWER = {Weisheng\ Wu},
       DOI = {10.24033/asens.2455},
       URL = {https://doi.org/10.24033/asens.2455},
}

@article {Su79,
    AUTHOR = {Sullivan, Dennis},
     TITLE = {The density at infinity of a discrete group of hyperbolic
              motions},
   JOURNAL = {Inst. Hautes \'Etudes Sci. Publ. Math.},
  FJOURNAL = {Institut des Hautes \'Etudes Scientifiques. Publications
              Math\'ematiques},
    NUMBER = {50},
      YEAR = {1979},
     PAGES = {171--202},
      ISSN = {0073-8301,1618-1913},
   MRCLASS = {58F17 (22E40 28C10 30C85)},
  MRNUMBER = {556586},
MRREVIEWER = {Troels\ J\o rgensen},
       URL = {http://www.numdam.org/item?id=PMIHES_1979__50__171_0},
}

@ARTICLE{Th98,
author = {William P. Thurston},
year = 1998,
title = {Minimal Stretch maps between hyperbolic surfaces}, 
journal= {Preprint, arXiv:math/9801039}
}

@book {Wa82,
    AUTHOR = {Walters, Peter},
     TITLE = {An introduction to ergodic theory},
    SERIES = {Graduate Texts in Mathematics},
    VOLUME = {79},
 PUBLISHER = {Springer-Verlag, New York-Berlin},
      YEAR = {1982},
     PAGES = {ix+250},
      ISBN = {0-387-90599-5},
   MRCLASS = {28Dxx (54H20 58F11)},
  MRNUMBER = {648108 (84e:28017)},
MRREVIEWER = {M. A. Akcoglu},
}

@article {Zeg91,
    AUTHOR = {Zeghib, A.},
     TITLE = {Laminations et hypersurfaces g\'eod\'esiques des vari\'et\'es
              hyperboliques},
   JOURNAL = {Ann. Sci. \'Ecole Norm. Sup. (4)},
  FJOURNAL = {Annales Scientifiques de l'\'Ecole Normale Sup\'erieure.
              Quatri\`eme S\'erie},
    VOLUME = {24},
      YEAR = {1991},
    NUMBER = {2},
     PAGES = {171--188},
      ISSN = {0012-9593},
   MRCLASS = {53C40 (53C20 53C22)},
  MRNUMBER = {1097690},
MRREVIEWER = {James\ J.\ Hebda},
       URL = {http://www.numdam.org/item?id=ASENS_1991_4_24_2_171_0},
}
\bibliographystyle{alpha}

\end{document}